\newtheorem{theorem}{Theorem}[section]
\newtheorem{lemma}[theorem]{Lemma}
\newtheorem{prop}[theorem]{Proposition}
\newtheorem{corollary}[theorem]{Corollary}
\newtheorem{rem}[theorem]{Remark}
\newtheorem{question}[theorem]{Question}
\newcommand{\colvec}[1]{\begin{pmatrix} #1 \end{pmatrix}}
\newcommand*{\rom}[1]{\expandafter\@slowromancap\romannumeral #1@}
\newcommand{\quotes}[1]{``#1''}
\newcommand{\RR}{\mathbb{R}}
\newcommand{\N}{\mathbb{N}}
\newcommand{\ep}{\epsilon}
\newcommand{\pa}{\partial}
\newenvironment{proof}{\begin{trivlist} \item[] {\em Proof:}}{\hfill $\Box$
                       \end{trivlist}}
\newenvironment{proofthm}[1]{\begin{trivlist} \item[] {\em Proof of Theorem \ref{#1}:}}{\hfill $\Box$
                       \end{trivlist}}
\newenvironment{proofprop}[1]{\begin{trivlist} \item[] {\em Proof of Proposition \ref{#1}:}}{\hfill $\Box$
                       \end{trivlist}}
\newenvironment{prooflem}[1]{\begin{trivlist} \item[] {\em Proof of Lemma \ref{#1}:}}{\hfill $\Box$
                       \end{trivlist}}
\renewcommand*\l@section{\@dottedtocline{1}{0em}{1.5em}}
\renewcommand*\l@subsection{\@dottedtocline{2}{1.5em}{2.3em}}
\renewcommand*\l@subsubsection{\@dottedtocline{3}{3.8em}{3.7em}}
\numberwithin{equation}{section}
\begin{document}
\title{Existence of non-trivial non-concentrated compactly supported stationary solutions of the 2D Euler equation with finite energy}

\author{Javier G\'omez-Serrano, Jaemin Park and Jia Shi}

\maketitle
\begin{abstract}
In this paper, we prove the existence of locally non-radial solutions to the stationary 2D Euler equations with compact support but non-concentrated around one or several points. Our solutions are of patch type, have analytic boundary, finite energy and sign-changing vorticity and are new to the best of our knowledge. The proof relies on a new observation that finite energy, stationary solutions with simply-connected vorticity have compactly supported velocity, and an application of the Nash-Moser iteration procedure.
\end{abstract}

\tableofcontents
\section{Introduction}

Throughout this paper we will work with the two-dimensional incompressible Euler equations in vorticity form. The evolution of the vorticity $\omega$ is given by
\begin{equation}\label{euler}
\begin{cases}
\partial_t \omega+ u \cdot \nabla \omega = 0 \ &\text{ in } \mathbb{R}^2 \times \mathbb{R}_+,  \\
 u(\cdot,t) = -\nabla^{\perp}(-\Delta)^{-1}\omega(\cdot,t) &\text{ in }\mathbb{R}^2,\\
 \omega(\cdot,0) = \omega_0&\text{ in }\mathbb{R}^2,
\end{cases}
\end{equation}
where $\nabla^\perp := (-\partial_{x_2}, \partial_{x_1})$. Note that we can express $u$ as 
$
u(\cdot,t) = \nabla^\perp(\omega(\cdot,t) * \mathcal{N}),
$
where $\mathcal{N}(x) := \frac{1}{2\pi}\ln |x|$ is the Newtonian potential in two dimensions.  

In this paper we will be focusing on constructing non-radial stationary solutions to equations \eqref{euler}. We will work in the \emph{patch} setting, where $\omega(\cdot,t)= \sum \Theta_i 1_{D_i(t)}$ is an sum of indicator functions of bounded sets that move with the fluid, although some of our results translate into the smooth setting as well (where $\omega(\cdot,t)$ is smooth and compactly-supported in $x$). \color{black} See Remark~\ref{remark1}. \color{black} For well-posedness results for patch solutions, see the global well-posedness results  \cite{Bertozzi-Constantin:global-regularity-vortex-patches,Chemin:persistance-structures-fluides-incompressibles}.

\color{black}
Stationary solutions of the Euler equations are an important building block since they might play a role in many different directions: for example understanding turbulence both in 2D \cite{Caglioti-Lions-Marchioro-Pulvirenti:stationary-2d-euler} and in 3D realizing turbulent flows as a superposition of Beltrami flows (which are particular stationary solutions of 3D Euler whose curl is proportional to themselves) \cite{Constantin-Majda:beltrami-spectrum,Dombre-Frisch-Greene-Henon-Mehr-Soward:chaotic-streamlines-abc,Pelz-Yakhot-Orszag-Shtilman-Levich:velocity-vorticity-patterns-turbulence}. In the context of numerical simulation, steady solutions of the 2D Euler equations were used by Chorin \cite{Chorin:numerical-study-slightly-viscous-flow} to perform numerical simulations of the 2D Navier-Stokes equations with small viscosity, approximating the NS solutions as a superposition of steady eddies of constant vorticity that solve the 2D Euler equations. In the context of convex integration \cite{DeLellis-Szekelyhidi:turbulence-geometry-nash-onsager,Buckmaster-Vicol:convex-integration-survey}, Beltrami flows and Mikado flows are classes of stationary solutions of the Euler equations that have been used within an iteration scheme to generate fast oscillating perturbations in order to construct weak solutions.

\color{black}

In \cite{GomezSerrano-Park-Shi-Yao:radial-symmetry-stationary-solutions} we showed that any stationary solution (in both the patch and smooth settings) for which $\omega \geq 0$ and was compactly supported had to be radial. In this paper we want to address the necessity of the hypothesis $\omega \geq 0$ by answering (on the positive) the following question:
\begin{question} Do there exist non-trivial stationary solutions for which $\omega$ changes sign?
\label{question1}
\end{question}

Our first main result immediately gives the answer to the above question.
\begin{theorem}[Corollary~\ref{infinite_energy_solution}]
\label{firsttheorem}
There exist non-radial, sign-changing vortex patch solutions with analytic boundary to the 2D Euler equation~\eqref{euler} whose kinetic energy is infinite, that is, $\int_{\mathbb{R}^2}|\nabla^\perp \omega * \mathcal{N}|^2dx = \infty$.
\end{theorem}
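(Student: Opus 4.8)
The plan is to construct these solutions by local bifurcation from an explicit radial two-valued patch, adapting the contour-dynamics bifurcation strategy used for rotating patches to the stationary regime (angular velocity $\Omega=0$), where the sign change of the vorticity is precisely what makes such a bifurcation possible — consistent with the rigidity result of \cite{GomezSerrano-Park-Shi-Yao:radial-symmetry-stationary-solutions} that rules it out when $\omega\ge 0$.

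First I would fix the trivial branch. For parameters $a>0>b$ and $R>1$, the radial, piecewise-constant patch
\[
\omega_0 = a\,1_{\{|x|<1\}}+b\,1_{\{1<|x|<R\}}
\]
has support equal to the disk $\{|x|<R\}$ (in particular its vorticity support is simply connected), and, being radial, its velocity $u_0=\nabla^\perp(\omega_0*\mathcal N)$ is azimuthal, hence tangent to every circle centred at $0$; thus $\omega_0$ solves \eqref{euler} stationarily for every $(a,b,R)$. I would keep the total circulation $\int \omega_0 = \pi a+\pi b(R^2-1)$ bounded away from $0$; this is allowed precisely because we are aiming at \emph{infinite} energy, and it is what ultimately forces $\int|u|^2=\infty$. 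Next comes the functional reformulation: parametrise a nearby patch by two even, $2\pi$-periodic graphs $r_1(\theta)=1+f_1(\theta)$, $r_2(\theta)=R+f_2(\theta)$, let $D_1(f),D_2(f)$ be the resulting core and shell regions and $\omega_f=a\,1_{D_1(f)}+b\,1_{D_2(f)}$. Stationarity of $\omega_f$ is equivalent to the stream function $\psi_f:=\omega_f*\mathcal N$ being constant along each of the two interface curves, i.e. to the vanishing of the tangential derivatives $F_j(f):=\partial_\theta\big[\psi_f(r_j(\theta)e^{i\theta})\big]$, $j=1,2$. This defines an analytic map $F=(F_1,F_2)$ between spaces of even, mean-zero, $2\pi$-periodic analytic functions (with a prescribed $m_0$-fold symmetry), and $F(0;a,b,R)\equiv 0$. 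Linearising at $f=0$ and using the classical Fourier expansions of $\int_0^{2\pi}\ln|e^{i\theta}-\rho e^{i\theta'}|\cos(m\theta')\,d\theta'$, the differential $DF(0)$ acts on each mode $\cos(m\theta)$ as an explicit $2\times2$ matrix $M_m(a,b,R)$ on $(\widehat f_1(m),\widehat f_2(m))$.

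The heart of the argument is the spectral study of $M_m$. Because there is no rotation term $-\tfrac{\Omega}{2}|x|^2$ in the effective stream function, the entries of $M_m$ depend only on $(a,b,R)$, and I would show that $\det M_m(a,b,R)$ — viewed as a function of the parameters — changes sign, exploiting the sign difference between $a$ and $b$ to produce the needed cancellation; then I would fix the symmetry mode $m_0\ge 2$ and tune, say, $b/a$ and $R$ so that $\det M_{m_0}=0$ while $\det M_m\ne 0$ for all remaining admissible $m$, so that $\ker DF(0)$ is one-dimensional, spanned by a genuinely non-radial direction $(\alpha,\beta)\cos(m_0\theta)$, and I would check the Crandall-Rabinowitz transversality condition. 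With this in hand, the Crandall-Rabinowitz theorem (or, if the nonlinear estimates exhibit a loss of derivatives, the Nash-Moser scheme used in the rest of the paper) produces a one-parameter curve of nontrivial zeros of $F$ bifurcating from $f=0$ at the selected parameters. Each such zero is a stationary patch $a\,1_{D_1}+b\,1_{D_2}$ with $D_1,D_2$ near the disks of radii $1$ and $R$; it is non-radial because the branch is tangent to the non-radial kernel direction, its boundary is analytic because $\psi_f$ restricted to the boundary is an analytic function of the analytic profile (equivalently by elliptic/Cauchy-Kovalevskaya regularity of the level-set formulation), and its vorticity changes sign since $a>0>b$. Finally, since the circulation stays $\int\omega_f=\pi a+\pi b(R^2-1)+O(\|f\|)\ne 0$ along the branch, one has $u(x)=\tfrac{1}{2\pi}\big(\textstyle\int\omega_f\big)\,\tfrac{x^\perp}{|x|^2}+O(|x|^{-2})$ as $|x|\to\infty$, so $\int_{\RR^2}|u|^2=+\infty$, which proves Theorem~\ref{firsttheorem}.

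The main obstacle is this spectral step: with the rotation switched off one must still exhibit parameters at which a non-radial eigenvalue of the linearisation crosses zero simply and transversally. This is exactly where sign-changing vorticity is indispensable — it is what creates the freedom in the $2\times2$ block $M_m$ needed for $\det M_{m_0}$ to vanish while remaining nondegenerate — and verifying it requires a careful analysis of the dispersion relation $\det M_m(a,b,R)=0$: monotonicity or asymptotics in $m$ to exclude all the other modes, and nondegeneracy of $M_{m_0}$ to keep the kernel one-dimensional.
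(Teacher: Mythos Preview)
Your proposal is correct and follows essentially the same route as the paper: a Crandall--Rabinowitz bifurcation from a two-layer sign-changing radial patch, with the linearised operator block-diagonal in Fourier modes as $2\times 2$ matrices $M_m$, a one-dimensional kernel at a carefully chosen parameter value, and infinite energy from nonzero total circulation. The only implementation differences are cosmetic: the paper fixes the radii ($b_1=1$, $b_2=b$) and bifurcates in the vorticity parameter $\Theta$ rather than in $(b/a,R)$, computes $\det M_m(\Theta)$ explicitly as a quadratic in $\Theta$ to locate the bifurcation points and rule out the other modes, and obtains analytic boundary by working from the outset in spaces of analytic functions $X^{k,m}_c$ rather than by a posteriori regularity; also, no Nash--Moser is needed here---that machinery enters only in the finite-energy theorem.
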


\begin{rem}
In \cite[Theorem A]{GomezSerrano-Park-Shi-Yao:radial-symmetry-stationary-solutions}, it was shown that any non-negative stationary vortex patch must be radially symmetric up to a translation. Theorem~\ref{teoremaestacionarias}  implies that by allowing an arbitrarily small portion of negative vorticity, one can find a non-radial stationary vortex patch. More precisely, for any $\epsilon>0$, one can find a non-radial stationary vortex patch $\omega$ such that $\int_{\RR^2} \omega^-(x)dx <\epsilon$ while $\int_{\RR^2} \omega^+(x)dx$ is uniformly bounded from below, where $\omega^{-}(x) := -\omega(x)1_{\left\{ x\in \RR^2 : \omega(x) < 0 \right\}}$ and $\omega^+(x) := \omega(x)1_{\left\{ x\in \RR^2 : \omega(x) >0 \right\}}$.
 \end{rem}

Our second main theorem concerns the solutions with \textit{finite} kinetic energy. For the radial vorticity $\omega$ with zero-average, $\int_\RR \omega(x)dx=0$, its velocity vanishes outside the support of $\omega$. With this observation, one can easily produce \quotes{globally non-radial} solutions by placing multiple copies of such vorticity so that their supports do not overlap. See Figure~\ref{diagram3}. However the flow on each connected component of the support of the velocity is still circular (around different points). From now on, we say that such a solution is locally radial. More precisely a solution to the 2D Euler equation~\eqref{euler} is locally radial if each connected component of $\omega$ is radial up to a translation and $\int_{\mathcal{C}}\omega dx = 0$ for each connected component $\mathcal{C}$ of $\text{supp}(\omega)$. The next theorem states that there exist more non-trivial stationary solutions. 

\begin{figure}[h!]
\begin{center}
\includegraphics[scale=0.8]{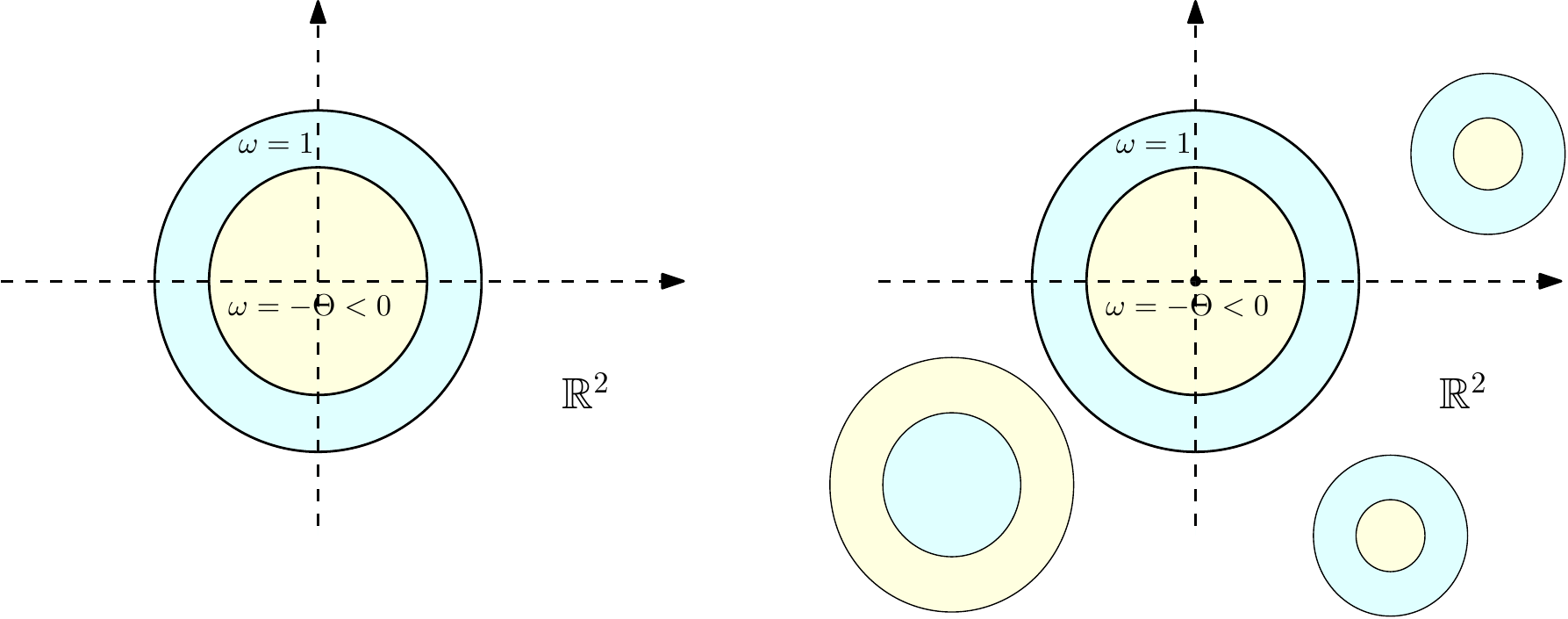}
\caption{One can choose $\Theta$ so that the radial vorticity has zero average (left). Since the support of such velocity and vorticity coincide, one can place such solutions to produce non-radial solutions (right). \label{diagram3}}
\end{center}
\end{figure}

\begin{theorem}[Corollary~\ref{corollary_1}, Theorem~\ref{ofc}]
\label{secondtheorem}
There exist  vortex patch solutions to the 2D Euler equation~\eqref{euler} that are not locally radial, with finite kinetic energy and analytic boundary. Furthermore, the solutions have compactly supported velocity.
\end{theorem}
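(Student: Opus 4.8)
The plan is to obtain the solution as a non-radial bifurcation off an explicit radial one, realized through a Nash--Moser scheme, and then to read off the remaining properties from the observation quoted in the introduction. First, the reduction: it suffices to construct a stationary vortex patch whose vorticity has simply connected support, vanishing total circulation $\int_{\RR^2}\omega\,dx=0$, analytic boundary, and which is not radial. Indeed, the proof of the observation --- the stream function $\psi=\omega*\mathcal N$ is harmonic on the unbounded component $U$ of the complement of $\supp\omega$, is constant on $\partial U$ by stationarity, and carries no logarithmic term at infinity once the circulation vanishes --- forces $\psi\equiv\mathrm{const}$ on $U$ by a Green's identity, so $u\equiv 0$ there and the kinetic energy is finite; and a solution with a single non-radial connected component of $\supp\omega$ is, by definition, not locally radial. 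This is the role of Corollary~\ref{corollary_1}, given Theorem~\ref{ofc}.

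As the starting point I would take a concentric patch $\omega_\star=\Theta_1 1_{B_{\rho_1}}+\Theta_2 1_{B_{\rho_2}\setminus B_{\rho_1}}$ (a further concentric layer may be needed) with $\Theta_i,\rho_i$ tuned so that the total circulation vanishes; this forces $\Theta_1\Theta_2<0$, so $\omega_\star$ is automatically sign-changing. Its radial stream function $\psi_\star$ is piecewise of the form $c_1 r^2+c_2\log r+c_3$ and, by zero circulation, is \emph{constant} for $r\ge\rho_{\mathrm{out}}$; in particular $\psi_\star'(\rho_{\mathrm{out}})=0$. Parametrizing the two patch boundaries as graphs $r=\rho_i\big(1+a_i(\theta)\big)$, I would recast stationarity as a system $F(a)=0$, the $i$-th component of $F(a)$ being $\psi|_{\partial D_i}-\mathrm{const}_i$ with $\psi$ the stream function of the perturbed patch, supplemented with area/normalization constraints (or, equivalently, moving one radius or amplitude along the branch so as to preserve zero circulation), and with the trivial $n=0,1$ modes (scaling, translation) quotiented out.

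The heart of the matter is the linearization $DF(0)$ at $\omega_\star$, a matrix Fourier multiplier in the angular frequency: its inner-boundary row carries a bounded-below multiplication by $\psi_\star'(\rho_{\mathrm{in}})\neq 0$, but its \emph{outer}-boundary row degenerates to a smoothing (essentially order $-1$) operator precisely because $\psi_\star'(\rho_{\mathrm{out}})=0$ --- the very identity responsible for the compact support of the velocity. Thus $DF(0)$ is not boundedly invertible between the same spaces, ordinary implicit-function and Crandall--Rabinowitz arguments are unavailable, and one must instead: (i) tune the free parameter so that the determinant of the symbol vanishes at exactly one frequency $n_0\ge 2$ --- a transcendental condition to be met by a continuity argument over the admissible parameter range --- giving a one-dimensional, simple kernel; (ii) establish tame estimates, with a fixed finite loss of derivatives, for a right inverse of $DF(a)$ near $0$ transverse to that kernel, controlling the symbols via the explicit logarithmic kernels of the single-layer-type operators that appear; and (iii) run a Nash--Moser-assisted Lyapunov--Schmidt reduction --- solving the infinite-dimensional part by the hard implicit function theorem and the residual scalar equation by hand --- to produce a curve of non-radial solutions issuing from $\omega_\star$. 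I expect step (ii) to be the main obstacle: pinning down the right scale of spaces and the exact derivative loss, and handling the coupling of the two layers at each frequency, all in the presence of the outer-boundary degeneracy. (Relaxing the zero-circulation constraint removes this degeneracy, and ordinary Crandall--Rabinowitz bifurcation then suffices; that is the route to the infinite-energy solutions of Theorem~\ref{firsttheorem}.)

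It remains to upgrade the regularity of the free boundaries from the H\"older/Sobolev class delivered by Nash--Moser to analytic: each boundary is a regular level set $\{\psi=\mathrm{const}\}$ with $\psi$ solving $\Delta\psi=\mathrm{const}$ on either side, so a Kinderlehrer--Nirenberg-type argument (or a suitable complexification/reflection) yields real-analyticity of the interfaces. One then checks that the mode-$n_0$ amplitude stays nonzero along the branch, so the solution is genuinely non-radial, and concludes via the reduction step that its velocity is compactly supported, its kinetic energy finite, and --- being non-radial --- that it is not locally radial.
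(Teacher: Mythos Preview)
Your outline is broadly on target and captures the key phenomenon: the degeneracy $\psi_\star'(\rho_{\mathrm{out}})=0$ is exactly what forces Nash--Moser. But there are two concrete gaps you should be aware of.

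First, two layers do \emph{not} suffice; your hedge ``a further concentric layer may be needed'' hides a genuine obstruction. For the two-layer zero-circulation ansatz $\Theta_1 1_{B_{\rho_1}}+\Theta_2 1_{B_{\rho_2}\setminus B_{\rho_1}}$ the linearized symbol matrix at the radial solution has nonvanishing determinant at \emph{every} frequency $n\ge 2$ (this is a short explicit computation, see Remark~\ref{zero_mean_bifurcation1}); in particular there is no $n_0$ at which the kernel opens up, and step~(i) of your plan fails. The paper must pass to three concentric layers to produce a one-dimensional kernel while keeping zero circulation.

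Second, and more structurally, your step~(ii) asks for a tame right inverse of $DF(a)$ transverse to the kernel. Such a right inverse does not exist, even with derivative loss, because the outer-row degeneracy is not an accident of the base point: Lemma~\ref{zero mean} shows that at \emph{every} stationary zero-circulation patch the velocity vanishes on the outer boundary, so the leading-order multiplication by $u^\theta$ in the outer row of $DF$ vanishes at every solution along the branch. Hence $DF(a)$ is not Fredholm at any solution, and the Lyapunov--Schmidt step you propose (even Nash--Moser assisted) is unavailable. The paper instead decomposes $DG=a+A$, where $a$ is the degenerate outer-row piece satisfying $|a(\Theta,R)[h]|\lesssim |G(\Theta,R)|\,|h|$ (Proposition~\ref{a_estimate11}, a Dirichlet--Neumann-type estimate), and $A$ is genuinely Fredholm with tame estimates. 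One then builds an \emph{approximate} inverse $T_s^{-1}$ from $A$ together with $\partial_\Theta G$ in the kernel direction, and runs Newton with $T_s^{-1}$; the error $(D\tilde G_s\circ T_s^{-1}-I)$ is controlled by $|G|$ itself, which is what drives convergence. This approximate-inverse mechanism is the missing idea in your sketch.

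A minor point on analyticity: the outermost boundary is \emph{not} a regular level set of $\psi$, since $\nabla\psi=0$ there. The paper handles the inner boundaries exactly as you say (nondegenerate level sets, \cite{Kinderlehrer-Nirenberg-Spruck:regularity-elliptic-free-boundary}), but for the outer boundary it invokes the overdetermined free-boundary result \cite[Theorem~1]{Kinderlehrer-Nirenberg:regularity-free-boundary}, which uses both $\psi=\mathrm{const}$ and $\nabla\psi=0$ as Cauchy data.
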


\begin{rem}\label{remark1}
In this paper, we construct patch-type solutions with compactly supported velocity. This is a consequence of the existence of stationary solutions with finite kinetic energy and our key Lemma~\ref{zero mean}. We note that a smooth stationary solution with finite kinetic energy also has compactly supported velocity (see Remark~\ref{smooth_app}). 
\end{rem}

\color{black}

\subsection{2D Euler rigidity and construction of stationary solutions} 
In this subsection we will summarize some of the history of stationary solutions, mostly focusing on the rigidity (only trivial solutions exist) vs flexibility (non-trivial solutions exist) dichotomy. The first result goes back to Fraenkel \cite[Chapter 4]{Fraenkel:book-maximum-principles-symmetry-elliptic}, who proved that if $D$ is a stationary, simply connected patch, then $D$ must be a disk. The main idea uses the fact that in this setting, the stream function $\psi=1_D*\mathcal{N}$ solves a semilinear elliptic equation $\Delta \psi = g(\psi)$ in $\mathbb{R}^2$ with $g(\psi)=1_{\{\psi<C\}}$, and one can apply the moving plane method developed in \cite{Serrin:symmetry-moving-plane,Gidas-Ni-Nirenberg:symmetry-maximum-principle} using the monotonicity of $g$ to obtain the symmetry of $\psi$. However, this result does not cover the non simply-connected case due to the fact that $\psi$ may take different values on the different parts of the boundary and thus one can not apply moving plane techniques. This was solved by the authors and Yao in \cite{GomezSerrano-Park-Shi-Yao:radial-symmetry-stationary-solutions} using a variational approach that does not require this condition, and generalized to the smooth case as long as the vorticity is non-negative. Fraenkel's result (and method) was generalized to other classes of active scalar equations such as the generalized SQG (where the velocity in \eqref{euler} is given by the perpendicular gradient of the convolution with $\frac{1}{|x|^{\alpha}}$ as opposed to the Newtonian potential) by
 Reichel \cite[Theorem 2]{Reichel:balls-riesz-potentials}, Lu--Zhu \cite{Lu-Zhu:overdetermined-riesz-potential} and Han--Lu--Zhu \cite{Han-Lu-Zhu:characterization-balls-bessel-potentials} in the case of $\alpha \in[0,1)$ and Choksi--Neumayer--Topaloglu \cite{Choksi-Neumayer-Topaloglu:anisotropic-liquid-drop-models} in the case $\alpha \in[0,\frac53)$. In \cite{GomezSerrano-Park-Shi-Yao:radial-symmetry-stationary-solutions} we closed the problem for the full range $\alpha \in [0,2)$.

In the last few years, there has been an emergence of results on rigidity conditions, namely under which hypotheses we can guarantee that the solution has some rigid features in order to ultimately characterize stationary solutions by other geometric properties (such as being a shear or being radial). These are usually referred as `Liouville'' type of results. In the case of 2D Euler, Hamel--Nadirashvili in \cite{Hamel-Nadirashvili:liouville-euler,Hamel-Nadirashvili:shear-flow-euler-strip-halfspace} proved that any stationary solution without a stagnation point must be a shear flow whenever the domain is a strip and also in \cite{Hamel-Nadirashvili:rigidity-euler-annulus} proved the corresponding rigidity (radial symmetry) result whenever the domain is a two-dimensional bounded annulus, an exterior circular domain, a punctured disk or a punctured plane. Constantin--Drivas--Ginsberg \cite{Constantin-Drivas-Ginsberg:rigidity-flexibility-MHD, Constantin-Drivas-Ginsberg:rigidity-flexibility} obtained rigidity and flexibility results for Euler and other equations (such as MHD) in both 2D and 3D. Coti-Zelati--Elgindi--Widmayer \cite{CotiZelati-Elgindi-Widmayer:stationary-kolmogorov-poiseuille} constructed stationary solutions close to the Kolmogorov and Poiseuille flows in $\mathbb{T}^2$. In the case of 2D Navier--Stokes, Koch--Nadirashvili--Seregin--\v{S}ver\'ak also proved a Liouville theorem in \cite{Koch-Nadirashvili-Seregin-Sverak:liouville-navier-stokes}. See also \cite{GomezSerrano-Park-Shi-Yao:rotating-solutions-vortex-sheet,GomezSerrano-Park-Shi-Yao:rotating-solutions-vortex-sheet-rigidity} where together with Yao we proved rigidity and flexibility results for the vortex sheet problem.

We now review some additional results related to the characterization or construction of nontrivial stationary solutions to 2D Euler (flexibility). Nadirashvili \cite{Nadirashvili:stationary-2d-euler}, following Arnold \cite{Arnold:geometrie-differentielle-dimension-infinie,Arnold:apriori-estimate-hydrodynamic-stability,Arnold-Khesin:topological-methods-hydrodynamics} studied the geometry and the stability of stationary solutions. When the problem is posed on a surface, Izosimov--Khesin \cite{Izosimov-Khesin:characterization-steady-solutions-2d-euler} characterized stationary solutions of 2D Euler.  Choffrut--\v{S}ver\'ak \cite{Choffrut-Sverak:local-structure-steady-euler} showed that locally near each stationary smooth solution there exists a manifold of stationary smooth solutions transversal to the foliation, and Choffrut--Sz\'ekelyhidi \cite{Choffrut-Szekelyhihi:weak-solutions-stationary-euler} showed that there is an abundant set of stationary weak ($L^{\infty}$) solutions near a smooth stationary one. Shvydkoy--Luo \cite{Luo-Shvydkoy:2d-homogeneous-euler,Luo-Shvydkoy:addendum-homogeneous-euler} looked at stationary smooth solutions of the form $v = \nabla^{\perp}(r^{\gamma}f(\omega))$, where $(r,\omega)$ are polar coordinates and were able to obtain a classification of them. In a different direction, Turkington \cite{Turkington:stationary-vortices} used variational methods to construct stationary vortex patches of a prescribed area in a bounded domain, imposing that the patch is a characteristic function of the set $\{\Psi > 0\}$, and also studied the asymptotic limit of the patches tending to point vortices. He also studied the case of unbounded domains.  We emphasize that those solutions do not have finite energy, unless the domain is bounded. Long--Wang--Zeng \cite{Long-Wang-Zeng:concentrated-steady-vortex-patches} studied the regularity in the smooth setting (see also \cite{Cao-Wang:nonlinear-stability-patches-bounded-domains}) as well as their stability. For other variational constructions close to point vortices, we would like to mention the work done by Cao--Liu--Wei \cite{Cao-Liu-Wei:regularization-point-vortices}, Cao--Peng--Yan \cite{Cao-Peng:planar-vortex-patch-steady} and Smets--van Schaftingen \cite{Smets-VanSchaftingen:desingularization-vortices-euler}. Musso--Pacard--Wei \cite{Musso-Pacard-Wei:stationary-solutions-euler} constructed nonradial smooth stationary solutions with finite energy but without compact support in $\omega$.
 Our solutions are different from all of these constructions since they are not close to point vortices.

The (nonlinear $L^1$) stability of circular patches was proved by Wan--Pulvirenti \cite{Wan-Pulvirenti:stability-circular-patches} and later Sideris--Vega gave a shorter proof \cite{Sideris-Vega:stability-L1-patches}. See also Beichman--Denisov \cite{Beichman-Denisov:stability-rectangular-strip} for similar results on the strip. {Recently, Choi--Lim \cite{Choi-Lim:stability-monotone-vorticities} generalized the stability results for radial patches to radially symmetric monotone vorticity.}  
Lately, Gavrilov \cite{Gavrilov-stationary-euler-3d,Gavrilov:stationary-euler-helix} managed to construct nontrivial stationary solutions of 3D Euler with compactly supported velocity, which was further simplified and extended to other equations by Constantin--La--Vicol \cite{Constantin-La-Vicol:remarks-gavrilov-stationary}. In \cite{Dominguez-Enciso-PeraltaSalas:piecewise-smooth-stationary-euler}, Dom\'inguez-V\'azquez--Enciso--Peralta-Salas construct a different family of non-localizable, stationary solutions of 3D Euler that are axisymmetric with swirl. Regarding the stability results in 3D, we refer to the work of Choi \cite{Choi:stability-hill-vortex} for solutions near Hill's spherical vortex.

\subsection{Structure of the proofs}

The skeleton of our proof employs bifurcation theory, trying to find a perturbation of trivial solutions (radial vorticity).  A very natural procedure is to frame it using a Crandall-Rabinowitz Theorem approach (see \cite{Castro-Cordoba-GomezSerrano:uniformly-rotating-smooth-euler,Castro-Cordoba-GomezSerrano:global-smooth-solutions-sqg,Castro-Cordoba-GomezSerrano:existence-regularity-vstates-gsqg,Castro-Cordoba-GomezSerrano:analytic-vstates-ellipses,Castro-Lear:traveling-waves-couette,Garcia:Karman-vortex-street,Garcia:vortex-patch-choreography,Garcia-Hmidi-Soler:non-uniform-vstates-euler,Garcia-Hmidi-Mateu:time-periodic-3d-qg,GomezSerrano:stationary-patches,Hassainia-Hmidi:v-states-generalized-sqg,
Hmidi-Mateu-Verdera:rotating-vortex-patch,Hmidi-delaHoz-Mateu-Verdera:doubly-connected-vstates-euler,Hmidi-Mateu-Verdera:rotating-doubly-connected-vortices,Hmidi-Mateu:bifurcation-kirchhoff-ellipses} for applications in the context of fluid mechanics).  In a suitable functional setting, the problem boils down to finding non-trivial zeros of a nonlinear functional of the form (see \eqref{stationary_equation}) :
\[
\mathcal{F}(\Theta,R)=0, \quad F:\mathbb{R}\times H^{k+1}(\mathbb{T})\mapsto H^{k}(\mathbb{T}), \quad k\ge 3, \text{ given that $\mathcal{F}(\Theta,0) = 0 $ for any $\Theta\in \mathbb{R}$}.
\]
Note that the loss of derivatives attributes to the fact that each component of the functional in \eqref{stationary_equation} takes account of the tangent vector of the boundaries.
 An easy observation is that the non-degeneracy of the velocity at the trivial solution (more precisely, the angular velocity $\ne 0$) guarantees that the linearized functional is Fredholm. For a radial vorticity with infinite energy, this can be more explicitly revealed in the linearized operators obtained in Proposition~\ref{Linearized_operator}, where the the angular velocity contributes to the diagonal elements of the matrix $M_n$, which can also be seen in \eqref{derivative_matrix_3}. However, for a radial vorticity whose average vanishes,  the corresponding velocity field, which can be explicitly computed, vanishes outside the support of the vorticity. At such trivial solutions, the linearized functional fails to be Fredholm and one cannot directly apply the Crandall-Rabinowitz Theorem. Indeed, the degeneracy of the velocity on the outmost boundary yields a mismatch of the image spaces of the functional at the nonlinear level and the linear level in terms of the regularity. One possible attempt to overcome this issue is to find two bifurcation curves emanating from negative-average vorticity and positive-average vorticity that are close to each other and show that the two curves merge together forming a loop, using the strategy in \cite{Hmidi-Renault:existence-small-loops-doubly-connected-euler} (see Figure~\ref{diagram1}). It is not difficult to find (in the three-layer setting) values for $\Theta^+$ and $\Theta^-$ and doing numerical continuation one can observe those loops.
  
\begin{figure}[h!]
\begin{center}
\includegraphics[scale=1.1]{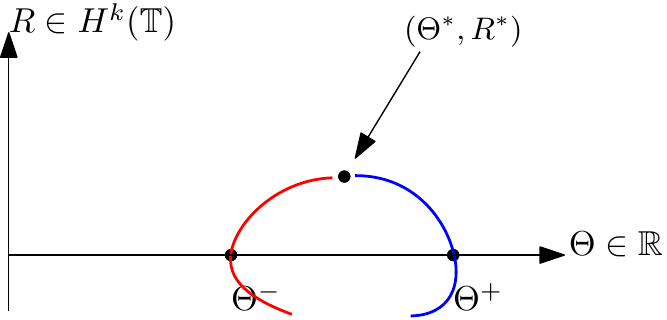}
\caption{One possible strategy: Find two bifurcation curves emanating from negative average and positive average and show that those two curves are connected. If it is possible, then there must be a solution with zero-average  $(\Theta^*,R^*)$ by  continuity. \label{diagram1}}
\end{center}
\end{figure}

 Our key Lemma~\ref{zero mean} shows that the degeneracy of the velocity is not a special case at the trivial solutions, but a generic phenomenon. More precisely, if the vorticity $\omega$ is stationary (not necessarily radial) with zero-average, then $u = 0$ outside the support of $\omega$ as long as $\text{supp}(\omega)$ is simply connected. This gives two crucial implications:
  \begin{itemize}
  \item   A non-trivial stationary solution near a trivial one cannot be obtained by using the implicit function theorem. In Figure~\ref{diagram1}, $D_R\mathcal{F}(\Theta^*,R^*)$, the linearized operator at $(\Theta^*,R^*)$ (if it exists) cannot be an isomorphism.
  
  \item  The mismatch in regularity between the image spaces described above may be treated in the sprit of Nash-Moser scheme with the use of an \textit{approximate inverse}.
  \end{itemize}
 
The first implication shows that one cannot use the Lyapunov-Schmidt reduction which is a crucial tool in the strategy in \cite{Hmidi-Renault:existence-small-loops-doubly-connected-euler} to find a loop of bifurcation curves and even more importantly in the original proof of the Crandall-Rabinowitz theorem. Regarding the second implication, the Nash-Moser scheme has been well-adapted in the context of steady-state solutions \cite{Choffrut-Sverak:local-structure-steady-euler,Iooss-Plotnikov:small-divisor,Iooss-Plotnikov-Toland:standing-waves-infinitely-deep-gravity}, dynamical solutions \cite{Lannes:well-posedness-water-waves,Rodrigo:evolution-sharp-fronts-qg} and more recently, embedded into a KAM scheme in the context of quasiperiodic solutions for the water waves problem (see \cite{Baldi-Berti-Haus-Montalto:quasiperiodic-gravity-waves-finite-depth} and the references therein).  In Section~\ref{Section4}, we will find a bifurcation curve without the use of such a {reduction technique}, and combine it with the Nash-Moser scheme in the following way:  
\begin{itemize}
\item[1)] In Subsection~\ref{main_finite_1}, we slightly modify our functional setting given in Section~\ref{functional_setting} so that $(\Theta,R)$ represents a zero-average vorticity. We are led to find non-trivial zeros of the modified functional (see \eqref{stationarR_equation3} for the definition of $G$) $$G:\mathbb{R}\times H^{k+1}\mapsto H^{k}, \quad \text{given that ${G}(\Theta,0) = 0 $ for any $\Theta\in \mathbb{R}$ }.$$  
See also \eqref{def_G_stream}, where it is shown that each {component} of $G$ is  the tangential derivative of the stream function on each boundary component.
\item[2)] We analyze the linearized operator $D_RG$ to find $\Theta^*\in \mathbb{R}$ such that $D_RG(\Theta^*,0)$ satisfies (Subsection~\ref{Spectral})
\begin{itemize}
\item[$\bullet$] Ker$(D_RG(\Theta^*,0))$ is one-dimensional, that is, Ker$(D_RG(\Theta^*,0)) = \text{span}\left\{ v\right\}$ for some $v\in C^\infty$.
\item[$\bullet$] Im$(D_RG(\Theta^*,0))^\perp$ is one-dimensional.
\item[$\bullet$] $(D_RG(\Theta^*,0))$ satisfies the transversality condition: $\partial_\Theta D_R(G(\Theta^*,0))[v]\notin \text{Im}D_RG(\Theta^*,0)$.
\end{itemize} This indicates that a possible non-trivial solution can be found as a zero of the following functional for sufficiently small $s>0$:
\begin{align}\label{tilde_g_intro}
\tilde{G}_s:H^{k+1}\mapsto H^{k}, \quad \tilde{G}_s(R):=G(\Theta^*+v\cdot R,sv + (I-P)[R]),
\end{align}
where $P:H^{k+1}\mapsto C^\infty$ is the projection to $\text{Ker}(D_RG(\Theta^*,0))$, and $I$ is the identity operator (see \eqref{def_tilde_g}).
\item[3)] We perform Newton's method for the functional $\tilde{G}_s$. The two important ingredients for Newton's method to work are 1) a sufficiently good initial guess and 2) the invertibility of the linearized operator $D\tilde{G}_s$. For the initial guess, we already have $\tilde{G}_s = O(s^2)$, since $v\in \text{Ker}(D_RG(\Theta^*,0))$. 
\item[4)] Regarding the second ingredient, we observe that the operator $D_RG(\Theta,R)$  can be decomposed 
\[ D_RG = a + A \text{ as in \eqref{decomp_11},} 
\] where the operator $a(\Theta,R)$ vanishes if $(\Theta,R)$ corresponds to a stationary solution. This is due to the fact that the velocity on the outmost boundary vanishes by Lemma~\ref{zero mean}. This observation leads us to decompose $D\tilde{G}_s$, which can be immediately computed from \eqref{tilde_g_intro} as
\[
D\tilde{G}_s = \partial_\Theta G\circ P + D_RG\circ (I-P) = \underbrace{ \partial_\Theta G\circ P + A\circ (I-P)}_{=: T_s}  + a\circ (I-P), \quad \text{ see \eqref{T_sdef}}.
\]
Thanks to the transversality condition, the one dimensionality of  $\text{Im}(D_RG(\Theta^*,0))^\perp (=\text{Im}A(\Theta^*,0)^\perp$ since $a(\Theta^*,0)=0$) is compensated by $\partial_\Theta G\circ P$. Indeed, it is the main goal of Subsection~\ref{Analysis_T} to show that $T_s$ is invertible. 
\item[5)] As described in 4) we do not have the invertibility of $D\tilde{G}_s$, however, the invertibility of $T_s$ is enough for the iteration in \eqref{definition_of_approx_sol} to work since $a$ tends to $0$ along the iteration steps towards the solution. More precisely, $T_s$ plays a role of the approximate inverse of $D\tilde{G}_s$. The loss of derivatives occurring at each iteration step is treated by the use of regularizing operator in \eqref{regularizing1} in the spirit of the Nash-Moser scheme.
\end{itemize}

%We decompose the linearized operator as in \eqref{decomp_11} (see that each component of the nonlinear operator $G$ given in \eqref{def_G_stream} is the tangential derivative of the stream function on each boundary). By Lemma~\ref{zero mean}, $a(\Theta^*,R^*)$ vanishes at the solution $(\Theta^*,R^*)$. The invertibility of $A$ in \eqref{decomp_11}, which can be achieved choosing the parameters properly, will play a role of the approximate inverse in the iteration step. 
\color{black}
 We note that our finite energy solutions consist of three-layered vortex patches, unlike the infinite energy case (See Subsection~\ref{main_infinite} and \ref{main_finite_1}). From the technical point of view, the two-layered, zero-average trivial solutions do not give a linearized operator that has finite dimensional Kernel. More precisely, one cannot find a pair of parameters $(b,\Theta)$ such that 1) the matrix $M_n$ in Proposition~\ref{Linearized_operator} has zero-determinant and 2) the corresponding $\omega$ determined by $(b,\Theta)$ has zero-average (see Remark~\ref{zero_mean_bifurcation1}). An important aspect of such two-layered stationary vorticity is that its stream function $\psi$ always satisfies
 \begin{align}\label{stream_formulation}
 \Delta \psi = g(\psi), \text{ for some $g:\mathbb{R}\mapsto \mathbb{R}$}.
 \end{align}
We emphasize that the solutions that we construct in this paper \textbf{cannot} be captured by \eqref{stream_formulation} unlike the earlier works, for example, \cite{Smets-VanSchaftingen:desingularization-vortices-euler,CotiZelati-Elgindi-Widmayer:stationary-kolmogorov-poiseuille,Constantin-Drivas-Ginsberg:rigidity-flexibility}. That is, there is no $g$ such that the stream function $\psi$ solves \eqref{stream_formulation}. This feature can been seen from the fact that the trivial solutions from which we bifurcate to obtain finite energy solutions exhibit non-monotone stream functions in the radial direction. %It is worth mentioning that our key Lemma~\ref{zero mean} and \cite[Theorem 1]{Serrin:symmetry-moving-plane} implies that a smooth, non-trivial solution with finite energy cannot satisfy \eqref{stream_formulation}.
 
 Lastly, we point out that the desingularization from point vortices is not applicable to find a stationary solution with finite energy, since a steady configuration of point vortices  has zero vortex angular momentum, which together with the finite energy hypothesis implies that the individual circulations have to be all zero (see \cite[Lemma 1.2.1]{ONeil:stationary-point-vortices}).

\subsection{Organization of the paper}
The paper is organized in the following way: Section \ref{functional_setting} sets up the functional framework and the equation that a stationary solution has to satisfy. Section \ref{patchsetting} proves Theorem \ref{firsttheorem}, in the easier case where the finite energy hypothesis is dropped. Finally, section \ref{Section4} proves Theorem \ref{secondtheorem} in the full generality setting using the Nash-Moser scheme. The Appendix contains some technical background results used throughout the proof, as well as basic bifurcation theory and some integrals needed for the spectral study.

 %----------------------------------------------------------------------------------------------------------------
\section{Functional equations and functional setting for stationary vortex patches}\label{functional_setting}
Let us consider vorticity $\omega$ of the form $\omega:=\sum_{i=1}^n\Theta_i1_{D_{i}}$ for some $n\in \mathbb{N}$, where $\Theta_i\in \RR$ and $D_i$ is a simply-connected domain for each $i=1,\ldots,n$. Suppose the boundary of $D_i$ is parametrized by a time-dependent $2\pi$-periodic curve $z_i(\cdot,t):\mathbb{T}\mapsto \RR^2$. The evolution equation of the boundary can be written as a system of equations, 

\begin{align}
\label{evolution_patch}
\partial_tz_j(x,t)\cdot\partial_xz_j(x,t)^{\perp}  & = u(z_j(x,t),t) \cdot\partial_xz_j(x,t)^{\perp}.
\end{align}
where $u(\cdot,t):=\nabla^{\perp}\left(\omega(\cdot,t) *\frac{1}{2\pi}\log|\cdot| \right)$ is the velocity vector.  We will look for stationary solutions to $\eqref{evolution_patch}$. Let us assume that each $z_i$ can be written as 
 \begin{align}\label{bd_parametrization}
 z_i(x)=(b_i+R_i(x))(\cos(x),\sin(x)), \quad x\in \mathbb{T},
 \end{align}
  where $b_i$ is a positive constant for each $i$. By plugging these parametrizations into \eqref{evolution_patch}, we are led to solve the following system for $b:=(b_1,\ldots,b_n)$, $\Theta:=(\Theta_1,\ldots,\Theta_n)$ and $R:=(R_1,\ldots,R_n)$:
 \begin{align}\label{stationary_equation}
0 =\mathcal{F}_j(b,\Theta,R):= u(z_j(\cdot)) \cdot z_j'^{\perp} &:= \sum_{i=1}^n \frac{1}{4\pi}\Theta_i u_{i,j}(R)\cdot z_{j}'^{\perp}=\sum_{i=1}^n \frac{1}{4\pi}\Theta_i  \underbrace{\left( u_{i,j}^{\theta}(R)R_j' - u_{i,j}^{r}(R)(b_j+R_j) \right)}_{=:S_{ij}(b_i,b_j,R_i,R_j) }.
\end{align}
where $u_{i,j}(R)(x) := \nabla^{\perp}\left( 1_{D_{i}}*\log|\cdot|^2 \right) (z_j(x))$, which can be thought of as a contribution of the $i$th patch on the $j$th curve, and $u_{i,j}^{\theta}$ and $u_{i,j}^r$ are the angular and the radial components of $u_{i,j}$. More explicitly, we have
\begin{align}\label{sij}
S_{ij}(b_i,b_j,R_i,R_j) & = \int_0^{2\pi} \cos(x-y)((b_i + R_i(y))R_j'(x) - (b_j + R_j(x))R_i'(y)) \nonumber\\
& \times \log((b_j + R_j(x))^2 + (b_i + R_i(y))^2 - 2(b_j + R_j(x))(b_i + R_i(y))\cos(x-y))dy \nonumber\\
& - \int_0^{2\pi} \sin(x-y)((b_i + R_i(y))(b_j + R_j(x)) + R_i'(y)R_j'(x)) \nonumber\\
& \times \log((b_j + R_j(x))^2 + (b_i + R_i(y))^2 - 2(b_j + R_j(x))(b_i + R_i(y))\cos(x-y))dy.
 \end{align}
 It is clear that $S_{ij}(b_i,b_j,0,0) = 0,$ since  $u_{i,j}^{r} = 0$ for  radial patches. Thus we have
 \begin{align}\label{trivial_one}
\mathcal{F}(b,\Theta,0) = 0, \text{ for any $b,\Theta$.}
 \end{align}
 In what follows, we will pick one of $\Theta_i$ as a bifurcation parameter, while the others are fixed.
 
We now proceed to discuss the functional spaces that we will use. In Section \ref{patchsetting} we will work with the following analytic spaces. Following \cite{Castro-Cordoba-GomezSerrano:analytic-vstates-ellipses}, we denote the space of analytic functions in the strip $\left\{ z \in \mathbb{C} : \text{Im}(z) \le c\right\}$ by $\mathcal{C}_w(c)$. For $k \in \mathbb{Z}$, we will consider the following spaces of $\frac{2\pi}{m}$-periodic functions:
\begin{align*}
X^{k,m}_{c} := \left\{ f(x) \in \mathcal{C}_{w}(c), \quad f(x) = \sum_{j=1}^{\infty}a_{jm}\cos(jmx),\quad  \rVert f \rVert_{X^{k,m}_c} < \infty \right\}, \\
Y^{k,m}_{c} :=\left\{ f(x) \in \mathcal{C}_{w}(c), \quad f(x) = \sum_{j=1}^{\infty}a_{jm}\sin(jmx),\quad  \rVert f \rVert_{Y^{k,m}_c}  < \infty \right\},
\end{align*}
where $\rVert f \rVert_{X^{k,m}_c} = \rVert f \rVert_{Y^{k,m}_c}:= \sum_{j=1}^\infty |a_{jm}|^2(1+jm)^{2k}(\cosh(cjm)^2 + \sinh(cjm)^2)$.

In Section \ref{Section4} we will consider the following spaces of $\frac{2\pi}{m}$-periodic functions:
\begin{align*}
X^{k,m} := \left\{ f(x) \in H^k(\mathbb{T}): \quad f(x) = \sum_{j=1}^{\infty}a_{jm}\cos(jmx),\quad  \rVert f \rVert_{X^{k,m}} < \infty \right\}, \\
Y^{k,m} :=\left\{ f(x) \in H^k(\mathbb{T}): \quad f(x) = \sum_{j=1}^{\infty}a_{jm}\sin(jmx),\quad  \rVert f \rVert_{Y^{k,m}}  < \infty \right\},
\end{align*}
where $\rVert f \rVert_{X^{k,m}} = \rVert f \rVert_{Y^{k,m}}:= \rVert f \rVert_{H^k(\mathbb{T})}$.
 
 %----------------------------------------------------------------------------------------------------------------
 %----------------------------------------------------------------------------------------------------------------
\section{Warm-up: Existence of non-radial stationary vortex patches with infinite energy}\label{patchsetting}
As a warm-up, in this section we aim to show that there exists a non-trivial patch solution with infinite kinetic energy, $\frac{1}{2\pi}\int_{\RR^2} |\nabla \left( \omega * \log|\cdot|\right)|^2 dx = \infty$. Recall that (\cite[Proposition 3.3]{Majda-Bertozzi:vorticity-incompressible-flow}),
\begin{align}\label{mean_energy}
\int_{\RR^2} |\nabla \left( \omega * \log|\cdot|\right)|^2 dx < \infty \iff \int_{\RR^2}\omega(x)dx = 0.
\end{align}
 In our proof, we will find a continuous bifurcation curve, emanating from a two-layered vortex patch whose vorticity does not has zero mean.

\subsection{Main results for infinite energy}\label{main_infinite}
Let us consider two-layer vortex patches, that is, $i\in\left\{ 1,2\right\}$ in the setting in Section~\ref{functional_setting}. For $b,\Theta \in (0,1)$, using the scaling invariance of the equations, we will choose the parameters to be
 \begin{align}\label{parameters1}
 b_1=1,\quad b_2=b,\quad \Theta_1=\Theta,\quad \Theta_2=-1,
 \end{align}
 so that if $R_1=R_2=0$, then 
 \begin{align}\label{vortexpatch}
 \omega = 
 \begin{cases}
 \Theta  & \text{ in }B_1\backslash B_b\\
 \Theta -1  & \text{ in } B_b,
 \end{cases}
 \end{align}
 where $B_r$ denotes the unit disk centered at the origin.
  Note that the case $\Theta=1$ corresponds to an annulus. Later, we will fix $b$ as well, and let $\Theta$ play the role as a  bifurcation parameter. With this setting, the system \eqref{stationary_equation} is equivalent to
 \begin{align}
 \label{stationary_equation2}
 0=\mathcal{F}(\Theta,R)=(F^1(\Theta,R),F^2(\Theta,R)), \quad R=(R_1,R_2),
 \end{align}
where we omit the dependence of $\Theta_2$ and $b_1$, $b_2$ for notational simplicity.

\begin{theorem}
\label{teoremaestacionarias}
Let $k,m$ be such that  $k\geq 3$, $2 \le m \in \mathbb{N}$. Let $b$ satisfy the condition in Lemma \ref{propbstar}.  Then for some $c>0$ and $s_0=s_0(k,m,c,b)>0$, there exist two bifurcation curves $[0,s_0)\ni s\mapsto (\Theta^{\pm}(s),R^{\pm}(s)) \in \RR \times (X^{k,m}_{c} )^2$ such that for each $s \in (0,s_0)$, $(\Theta^{\pm}(s),R^{\pm}(s))$ is a solution of the equation \eqref{stationary_equation2} and $ R^{\pm}(s) \ne 0 \in (X^{k,m}_c)^2$. The bifurcation curve emanates from $(\Theta^{\pm}(0),R^{\pm}(0)) = (\Theta_m^{\pm},0)$, where $\Theta_m^{\pm}$ are defined in Lemma \ref{propbstar}.
\end{theorem}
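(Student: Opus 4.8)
\emph{Proof strategy.} The plan is to obtain the two curves as local bifurcation branches from the trivial family of two-layer radial patches $\{(\Theta,0):\Theta\in\RR\}$, by applying the Crandall--Rabinowitz theorem (recalled in the Appendix) to the functional $\mathcal{F}=(F^1,F^2)$ of \eqref{stationary_equation2}. First I would fix the functional-analytic framework: following \cite{Castro-Cordoba-GomezSerrano:analytic-vstates-ellipses}, for $k\ge 3$, $2\le m\in\N$ and a suitable $c>0$, the map $(\Theta,R)\mapsto\mathcal{F}(\Theta,R)$ is well defined and real-analytic from a neighborhood of $\RR\times\{0\}$ in $\RR\times(X^{k,m}_{c})^2$ into $(Y^{k-1,m}_{c})^2$. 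The loss of exactly one derivative comes, as noted in the Introduction, from the tangent vector $z_j'^\perp$ in \eqref{stationary_equation}: one has $\mathcal{F}_j(x)=\frac{d}{dx}\bigl[\psi(z_j(x))\bigr]$ with $\psi=\omega*\frac1{2\pi}\log|\cdot|$, and the two smoothing derivatives of the Newtonian potential are only partially compensated after differentiating along the analytic curve. The $\frac{2\pi}{m}$-periodicity and the cosine/sine structure of $X^{k,m}_c$ and $Y^{k,m}_c$ encode the $m$-fold symmetry and remove the trivial rotational and translational kernel. By \eqref{trivial_one}, $\mathcal{F}(\Theta,0)=0$ for every $\Theta$, so the trivial branch is available.

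Next I would analyze the linearization. By Proposition~\ref{Linearized_operator} (see also \eqref{derivative_matrix_3}), $D_R\mathcal{F}(\Theta,0)$ is diagonal in the Fourier basis: on the $n$-th mode, $n=jm$ with $j\ge1$, it acts by multiplication by an explicit $2\times2$ matrix $M_n=M_n(b,\Theta)$, affine in $\Theta$, whose $\ell$-th diagonal entry is $n\,\Omega_\ell$ plus a bounded-in-$n$ remainder, where $\Omega_\ell=\Omega_\ell(b,\Theta)$ is (up to a fixed constant) the angular velocity of the trivial patch on the $\ell$-th circle; in the present setting $\Omega_1,\Omega_2\ne0$ as soon as $\Theta\notin\{1,b^2\}$. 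Hence the leading part $n\,\mathrm{diag}(\Omega_1,\Omega_2)$ is an isomorphism $(X^{k,m}_c)^2\to(Y^{k-1,m}_c)^2$, the remainder is compact, and $D_R\mathcal{F}(\Theta,0)$ is Fredholm of index zero. The spectral step is then to pick $b$ — precisely the hypothesis of Lemma~\ref{propbstar} — and the bifurcation values $\Theta=\Theta_m^\pm$, namely the two roots of the quadratic $\Theta\mapsto\det M_m(b,\Theta)$, so that $\det M_m(b,\Theta_m^\pm)=0$ while $\det M_{jm}(b,\Theta_m^\pm)\ne0$ for all $j\ge2$, and moreover $\Theta_m^\pm\notin\{1,b^2\}$ so that both angular velocities stay nonzero and the patch \eqref{vortexpatch} has nonzero mean $\pi(\Theta_m^\pm-b^2)$. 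Granting this, $\ker D_R\mathcal{F}(\Theta_m^\pm,0)$ is one-dimensional, spanned by $v^\pm=(\alpha^\pm\cos(mx),\beta^\pm\cos(mx))$ with $(\alpha^\pm,\beta^\pm)$ spanning $\ker M_m(b,\Theta_m^\pm)$, and $\operatorname{Im}D_R\mathcal{F}(\Theta_m^\pm,0)$ has codimension one, the missing direction living in the $m$-th mode inside the cokernel of $M_m$.

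It then remains to check the transversality condition $\partial_\Theta D_R\mathcal{F}(\Theta_m^\pm,0)[v^\pm]\notin\operatorname{Im}D_R\mathcal{F}(\Theta_m^\pm,0)$. As everything decouples mode by mode, this reduces to the scalar statement that $\partial_\Theta M_m(b,\Theta_m^\pm)\,(\alpha^\pm,\beta^\pm)^{\mathsf T}$ does not lie in the range of $M_m(b,\Theta_m^\pm)$, equivalently $\frac{d}{d\Theta}\det M_m(b,\Theta)\big|_{\Theta=\Theta_m^\pm}\ne0$, which holds because the two roots $\Theta_m^\pm$ are simple — a fact one builds into the choice of $b$ in Lemma~\ref{propbstar} and reads off from the explicit entries. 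With all hypotheses in place, the Crandall--Rabinowitz theorem produces, for each sign, a real-analytic curve $[0,s_0)\ni s\mapsto(\Theta^\pm(s),R^\pm(s))\in\RR\times(X^{k,m}_c)^2$ with $(\Theta^\pm(0),R^\pm(0))=(\Theta_m^\pm,0)$, $R^\pm(s)=sv^\pm+o(s)$ in $(X^{k,m}_c)^2$, and $\mathcal{F}(\Theta^\pm(s),R^\pm(s))=0$; in particular $R^\pm(s)\ne0\in(X^{k,m}_c)^2$ for $0<s<s_0$, which is exactly the assertion of the theorem.

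The main obstacle is the spectral step, that is, Lemma~\ref{propbstar}: one must exhibit a value of $b$ for which exactly the harmonic $n=m$ of the $2\times2$ Fourier symbol degenerates while all the higher multiples $n=jm$ ($j\ge2$) stay non-degenerate, with $\Theta_m^\pm$ simple and away from the excluded values $\{1,b^2\}$. This demands a careful study of the not-particularly-transparent entries of $M_n(b,\Theta)$ (built from the integrals collected in the Appendix): understanding their monotonicity and large-$n$ asymptotics, solving $\det M_m(b,\Theta)=0$ explicitly for $\Theta$, and running a counting/ordering argument over $j\ge2$ to exclude accidental coincidences. Everything else — analyticity of $\mathcal{F}$ in the analytic class, the Fredholm property, the codimension-one range, the transversality reduction, and (for Corollary~\ref{infinite_energy_solution} via \eqref{mean_energy}) the persistence of a nonzero mean along the branch — is, by contrast, a routine consequence of the established contour-dynamics machinery once the spectral picture is pinned down.
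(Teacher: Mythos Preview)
Your proposal is correct and follows the same overall route as the paper: verify the hypotheses of the Crandall--Rabinowitz theorem for $\mathcal{F}$ acting between $(X^{k,m}_c)^2$ and $(Y^{k-1,m}_c)^2$, using the mode-by-mode diagonalization of $D_R\mathcal{F}(\Theta,0)$ via the matrices $M_n$ of Proposition~\ref{Linearized_operator}, Lemma~\ref{propbstar} for the spectral step, and the nondegeneracy $\Theta_m^\pm\notin\{1,b^2\}$ (which the paper records as Lemma~\ref{bstar_theta}) to get the Fredholm property and the codimension-one range.

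The one place where you diverge slightly is the transversality check. The paper (Proposition~\ref{transv_prop}) computes $\partial_\Theta M_m(\Theta_m^\pm)v_0$ explicitly, pairs it against a generator of $\operatorname{Im}(M_m)$, and rules out the resulting algebraic condition by a short contradiction argument. You instead invoke the standard identity that, for a rank-one $2\times2$ matrix, $\partial_\Theta M_m\,v\notin\operatorname{Im}(M_m)$ is equivalent to $\tfrac{d}{d\Theta}\det M_m\neq0$, and then observe that this follows immediately from the strict positivity of the discriminant $D_m$ already established in Lemma~\ref{propbstar} (so $\Theta_m^\pm$ are simple roots of the quadratic $\Delta_m$). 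This is a cleaner and more conceptual way to close the argument; it buys you the transversality for free from the spectral lemma, whereas the paper's explicit computation is self-contained but somewhat opaque.
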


Theorem~\ref{teoremaestacionarias} immediately implies the existence of non-radial stationary vortex patches with infinite energy.
 
\begin{corollary}\label{infinite_energy_solution}
Let $2\le m\in \N$. Then there is an $m$-fold symmetric stationary patch solution for the 2D Euler equation with analytic boundary and infinite kinetic energy, that is \[\int_{\RR^2} |\nabla \left( \omega * \frac{1}{2\pi}\log|\cdot| \right)|^2 dx = \infty.\] 
\end{corollary}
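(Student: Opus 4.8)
The plan is to deduce the statement directly from Theorem~\ref{teoremaestacionarias}. Fix $2\le m\in\N$, pick $k\ge 3$, and let $b$ satisfy the hypothesis of Lemma~\ref{propbstar}; the theorem then furnishes $c>0$, $s_0>0$ and, say, the $+$ branch of the bifurcation curve, $s\mapsto(\Theta^{+}(s),R^{+}(s))\in\RR\times(X^{k,m}_c)^2$, which is continuous, satisfies $(\Theta^{+}(0),R^{+}(0))=(\Theta^{+}_m,0)$, has $R^{+}(s)=(R_1^{+}(s),R_2^{+}(s))\ne 0$ for $s\in(0,s_0)$, and solves $\mathcal{F}(\Theta^{+}(s),R^{+}(s))=0$ in the sense of \eqref{stationary_equation2}. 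For $s\in(0,s_0)$ let $D_i^{+}(s)$ be the domain whose boundary is the curve \eqref{bd_parametrization} with radial profile $b_i+R_i^{+}(s)$, with $(b_1,b_2,\Theta_1,\Theta_2)$ chosen as in \eqref{parameters1}, and set $\omega^{+}(s):=\Theta^{+}(s)1_{D_1^{+}(s)}-1_{D_2^{+}(s)}$. By the derivation of \eqref{stationary_equation} from \eqref{evolution_patch}, the identity $\mathcal{F}(\Theta^{+}(s),R^{+}(s))=0$ is exactly the statement that the normal component of the velocity vanishes on every boundary, i.e.\ that $\omega^{+}(s)$ is a stationary patch solution of \eqref{euler}. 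Since $R_i^{+}(s)\in X^{k,m}_c\subset\mathcal{C}_w(c)$ the boundaries are analytic, and since $R_i^{+}(s)$ is $\tfrac{2\pi}{m}$-periodic the patch is $m$-fold symmetric; moreover, shrinking $s_0$ if necessary and using $\Theta^{+}_m\in(0,1)$ (Lemma~\ref{propbstar}), we may assume $\overline{D_2^{+}(s)}\subset D_1^{+}(s)$ and $0<\Theta^{+}(s)<1$ for $s\in(0,s_0)$, so that $\omega^{+}(s)$ takes the two values $\Theta^{+}(s)>0$ and $\Theta^{+}(s)-1<0$, hence is sign-changing; and it is non-radial because $R^{+}(s)\ne 0$ forces at least one boundary curve to be non-circular (recall $X^{k,m}_c$ contains no constant functions).

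It remains to arrange that the kinetic energy is infinite. By \eqref{mean_energy} this amounts to showing $\int_{\RR^2}\omega^{+}(s)\,dx\ne 0$ for some $s\in(0,s_0)$. Viewing $\omega^{+}(s)$ as a superposition of indicator functions we have
\[
\int_{\RR^2}\omega^{+}(s)\,dx=\Theta^{+}(s)\,\lvert D_1^{+}(s)\rvert-\lvert D_2^{+}(s)\rvert ,
\]
and the right-hand side depends continuously on $s$ because each area $\lvert D_i^{+}(s)\rvert$ is a continuous function of $R_i^{+}(s)$ and $s\mapsto R^{+}(s)$ is continuous. At $s=0$, $R^{+}(0)=0$ gives $D_1^{+}(0)=B_1$ and $D_2^{+}(0)=B_b$, so the expression equals $\pi(\Theta^{+}_m-b^2)$, which is nonzero: the trivial two-layer patch \eqref{vortexpatch} at $\Theta=\Theta^{+}_m$ is required to have nonzero mean (equivalently $\Theta^{+}_m\ne b^2$), this being precisely what keeps the linearized operator Fredholm in the warm-up regime and what is recorded in Lemma~\ref{propbstar}. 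Hence, shrinking $s_0$ once more, $\int_{\RR^2}\omega^{+}(s)\,dx\ne 0$ for every $s\in(0,s_0)$, so $\omega^{+}(s)$ has infinite kinetic energy. This proves the corollary, and together with the properties noted above it also yields Theorem~\ref{firsttheorem}.

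Since the construction is a short extraction from Theorem~\ref{teoremaestacionarias}, there is no real obstacle at this stage; the only point deserving attention is that the vorticity not accidentally become mean-zero along the curve, and this is handled by continuity together with the fact that the base patch has nonzero mean by design. The genuine difficulties --- the spectral study that produces the bifurcation values $\Theta^{\pm}_m$ (Lemma~\ref{propbstar}), the verification of the one-dimensional kernel, one-codimensional range and transversality condition, and the analytic Crandall--Rabinowitz bifurcation argument in the spaces $X^{k,m}_c$ --- all live inside the proof of Theorem~\ref{teoremaestacionarias} itself.
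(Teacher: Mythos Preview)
Your proof is correct and follows essentially the same approach as the paper's. The one minor imprecision is that the key fact $\Theta_m^{+}\ne b^{2}$ is not literally recorded in Lemma~\ref{propbstar} but in Lemma~\ref{bstar_theta} (which proves $b^{2}<\Theta_m^{\pm}$ from $\Theta_m^{+}\Theta_m^{-}=b^{2}$ and $0<\Theta_m^{\pm}<1$); your Fredholm heuristic is essentially the content of Remark~\ref{zero_mean_bifurcation1}, so either route suffices.
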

\begin{proof}
 By Theorem~\ref{teoremaestacionarias}, there are two continuous bifurcation curves  $\Psi^{\pm} : [0,s_0) \mapsto \RR \times \left( X^{k,m}_c\right)^2$ of solutions of \eqref{stationary_equation2}  such that
 
 \begin{align*}
  \Psi^{\pm}(s) = \left( \Theta^{\pm}(s),R(s) \right), \quad F(\Psi^{\pm}(s))=0, \quad \text{ and } \quad \Psi^{\pm}(0)=\left( \Theta_m^{\pm},0\right),
\end{align*}
for some $s_0>0$. From \eqref{evolution_patch} and \eqref{stationary_equation}, it is clear that for each $s\in (0,s_0)$ and for each choice of $\pm$, 

\begin{align}\label{vorticity}
\omega^s(x) :=  
\begin{cases}
\Theta-1 & \text{ if }x\in D_2(s), \\
\Theta & \text{ if }x\in D_1(s)\backslash \overline{D_2(s)}\\
0 & \text { otherwise, }
\end{cases}
\end{align}
is a stationary solution to the Euler equation, where  $D_1(s)$ and $D_2(s)$ are the bounded domains determined by \eqref{bd_parametrization} with $R(s)$. Since $R(s)\in \left( X^{k,m}_c\right)^2$, the boundaries are analytic.  

Now we consider the kinetic energy of the solution.  From \eqref{mean_energy}, it suffices to show that $\int_{\RR^2} \omega^0(x) dx \ne 0$. By the continuity of the bifurcation curve, this immediately implies that $\int_{\RR^2} \omega^s(x) dx \ne 0$ for small $s>0$.  Then it follows from Lemma~\ref{bstar_theta} that $b^2 < \Theta_m^{\pm}$, hence
\begin{align*}
\int_{\RR^2}\omega^0(x)dx=\pi\left(-b^2+\Theta_m^{\pm}\right)>0.
\end{align*}
 This completes the proof.
 \end{proof}

  The rest of this section will be devoted to prove Theorem~\ref{teoremaestacionarias}. The proof will be divided into 5 steps. These steps correspond to check the hypotheses of the Crandall-Rabinowitz theorem~\ref{CRtheorem} for our functional $F$ in  \eqref{stationary_equation2}. The hypotheses in Theorem~\ref{CRtheorem} can be read as follows in our setting:
\begin{enumerate}
\item The functional $\mathcal{F}$ satisfies $$\mathcal{F}(\Theta,R)\,:\, (0,1)\times \{V^{\epsilon}\}\mapsto (Y^{k-1,m}_{c})^2,$$ where $V^{\epsilon}$ is an open neighborhood of 0,
\begin{align*}
V^{\epsilon}=\left\{
(f,g)\in (X^{k,m}_{c})^2\,:\, ||f||_{X^{k,m}_{c}}+||g||_{X^{k,m}_{c}}<\epsilon \right\}
\end{align*}
for some $\epsilon>0$ and $k\geq 3$.
\item $\mathcal{F}(\Theta,0) = 0$ for every $0 < \Theta< 1$.
\item The partial derivatives $\partial_{\Theta} \mathcal{F}$, $D\mathcal{F}$, $\partial_{\Theta} D\mathcal{F}$ exist and are continuous, where $D\mathcal{F}$ is Gateaux derivative of $\mathcal{F}$ with respect to the functional variable $R$.
\item Ker$(D\mathcal{F}(\Theta_m^{\pm},0)) \subset (X^{k,m}_c)^2$ and $(Y^{k-1,m}_{c})^2$/Im($D\mathcal{F}(\Theta_m^{\pm},0)$) are one-dimensional (see Proposition \ref{propbstar} for the definition of $\Theta_{m}^{\pm}$).
\item $\pa_{\Theta} D\mathcal{F}(\Theta_m^{\pm},0)[v_0] \not \in$ Im($D\mathcal{F}(\Theta_m^{\pm},0)$), where $v_0$ is a non-zero element in Ker$(D\mathcal{F}(\Theta_m^{\pm},0))$.
\end{enumerate}

\begin{rem}\label{analyticity_of_S}
We remark that if $i \ne j$ then the functions inside the logarithm in $S_{i,j}$ in \eqref{sij} are uniformly bounded from below in $y$ for all $x$ by a strictly positive constant depending on the parameters. Then we can analytically extend the integrand in $x$ to the strip $|\Im(z)| \leq c$ in such a way that the real part of this extension stays uniformly bounded away from 0 for a small enough $c$. The case $i=j$ can be treated similarily as in \cite[Remark 2.1]{Castro-Cordoba-GomezSerrano:analytic-vstates-ellipses}.

%For the choices of $u$ that will appear in the Theorem (of the form constant $+ O(\varepsilon)$), the function inside the parenthesis in $T_1(u)$ is uniformly bounded from below in $y$ for every $x$
%by a strictly positive constant. Then we can analytically extend the integrand in $x$ to the strip $|\Im(z)| \leq c$ for a small enough $c$.
\end{rem}
\subsection{Proof of Theorem~\ref{teoremaestacionarias}}
\subsubsection{Steps 1,2 and 3: Regularity}\label{regularity_step}
In order to check the first three steps, it suffices to check if $S_{ij}$ in \eqref{sij} satisfies the hypotheses, since $F$ is  a linear combination of $S_{ij}$. As mentioned in Remark~\ref{analyticity_of_S}, the case $i\ne j$ is trivial since there is no singularity in the integrand and analytically extended into a strip in $\mathbb{C}$, if $(R_1,R_2)$ is in a sufficiently small neighborhood of $(0,0)$.  For $i=j$, the first three steps with slightly different settings were already done in the literature. For example, step 1 can be done in the same way as in \cite{delaHoz-Hassainia-Hmidi:doubly-connected-vstates-gsqg}. Step 2 follows immediately from \eqref{trivial_one}. Existence and continuity of the Gateaux derivatives for the gSQG equation was done in \cite{GomezSerrano:stationary-patches} and the same proof can be adapted to our setting straightforwardly.
\subsubsection{Step 4: Analysis of the linear part.}
 In this section, we will focus on the spectral study of the Gateaux derivative $D\mathcal{F}(\Theta,0):=D_{R}\mathcal{F}(\Theta,0)$.
\paragraph{Calculation of $D\mathcal{F}$}
We aim to express the Gateaux derivative of $\mathcal{F}$ around $(\Theta,0)$ in the direction $(H(x),h(x))$ in terms of Fourier series.
 \begin{lemma}\label{linearpart1}
Let $S_{ij}$ be defined as in \eqref{sij}. Then:
\begin{align*}
\left.\frac{d}{dt}S_{ij}(b_i,b_j,th_i,th_j)\right|_{t=0} & = \int b_i (h_j'(x)\cos(x-y)-h_i'(y))\log(b_j^2 + b_i^2 - 2b_jb_i\cos(x-y))dy =: \mathcal{L}_1+\mathcal{L}_2.
\end{align*}
\end{lemma}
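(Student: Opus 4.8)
The plan is to differentiate the explicit formula \eqref{sij} directly. Writing $R_i=th_i$ and $R_j=th_j$, split $S_{ij}(b_i,b_j,th_i,th_j)=I(t)-II(t)$ according to the two lines of \eqref{sij} (so $I$ integrates against $\cos(x-y)$ and $II$ against $\sin(x-y)$), and abbreviate $A_0:=b_i^2+b_j^2-2b_ib_j\cos(x-y)$ and $L_0:=\log A_0$. The structural point for $I$ is that its prefactor $(b_i+th_i(y))th_j'(x)-(b_j+th_j(x))th_i'(y)$ carries an overall factor $t$, hence vanishes at $t=0$; therefore only the values of the remaining factors at $t=0$ survive the differentiation, and
\[
I'(0)=\int_0^{2\pi}\cos(x-y)\big(b_ih_j'(x)-b_jh_i'(y)\big)L_0\,dy .
\]
For $II$ the prefactor equals $b_ib_j+O(t)$, so one must also differentiate the logarithm: a one-line computation shows that at $t=0$ the $t$-derivative of the logarithm equals $\frac{2\big(b_jh_j(x)+b_ih_i(y)-(b_ih_j(x)+b_jh_i(y))\cos(x-y)\big)}{A_0}$, and hence $II'(0)$ is the sum of the $L_0$-term $\int_0^{2\pi}\sin(x-y)\big(b_ih_j(x)+b_jh_i(y)\big)L_0\,dy$ and a term whose integrand carries the factor $A_0^{-1}$.

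It then remains to show that $I'(0)-II'(0)$ collapses to the claimed $\int_0^{2\pi}b_i\big(h_j'(x)\cos(x-y)-h_i'(y)\big)L_0\,dy$, and I would do this in three moves. First, every contribution that is ``pure in $h_j(x)$'' (where $h_j(x)$ is constant in the integration variable $y$) drops out, since $\int_0^{2\pi}\sin(x-y)L_0\,dy$, $\int_0^{2\pi}\frac{\sin(x-y)}{A_0}\,dy$ and $\int_0^{2\pi}\frac{\sin(x-y)\cos(x-y)}{A_0}\,dy$ all vanish, being integrals of odd functions of $x-y$ over a full period. Second, using the identity $\partial_y L_0=-\frac{2b_ib_j\sin(x-y)}{A_0}$ I would rewrite the remaining $A_0^{-1}$-terms — each of which multiplies a smooth function of $y$ — as $y$-derivatives and integrate by parts; by $2\pi$-periodicity there are no boundary terms. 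Third, after these integrations by parts the $b_i\int_0^{2\pi}h_i'(y)L_0\,dy$ contributions cancel and the leftover terms cancel pairwise, using once more $\partial_y\cos(x-y)=\sin(x-y)$, leaving exactly the asserted expression. This bookkeeping is the only real work in the lemma; conceptually it reflects the fact that, to first order, the normal velocity induced by a radial patch on a nearby concentric circle is a tangential derivative of a single-layer potential, which is why the whole linearization collapses to a single $L_0$-integral.

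It remains to address the integrability of the integrands. For $i\neq j$ there is nothing delicate: by Remark~\ref{analyticity_of_S} the argument of the logarithm stays bounded away from $0$, so differentiation under the integral sign and all of the integrations by parts above are immediately justified. For $i=j$ (so $b_i=b_j$) the argument of the logarithm vanishes at $y=x$, but the $\cos(x-y)$-prefactor in $I$ and the $\sin(x-y)$-prefactor in $II$ also vanish there, and in the surviving $A_0^{-1}$-terms the numerator contains the very factor $1-\cos(x-y)$ that cancels the vanishing of $A_0$; grouping the terms accordingly leaves only integrable $\log|x-y|$-type singularities, and the manipulations are then justified exactly as in \cite[Remark 2.1]{Castro-Cordoba-GomezSerrano:analytic-vstates-ellipses}.
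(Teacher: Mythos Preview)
Your approach is correct and essentially the same as the paper's: both compute the three first-order contributions and integrate by parts via $\partial_y L_0=-\tfrac{2b_ib_j\sin(x-y)}{A_0}$ to convert the $A_0^{-1}$ terms into $L_0$-integrals (your oddness shortcut for the $h_j(x)$ pieces is a harmless variant of the paper's direct cancellation against what it calls $V_{ij}^{2,10}$). One small slip in your ``third move'' wording: the term $-b_i\int h_i'(y)L_0\,dy$ does \emph{not} cancel but survives as half of the final answer --- it is the two $b_jh_i'(y)\cos(x-y)$ contributions, one from $I'(0)$ and one from $II'(0)$, that cancel each other.
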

\begin{proof}
Let $V^{1,ab}_{ij}$ (resp. $V^{2,ab}_{ij}$) be the contribution of the first term (resp. second term) of \eqref{sij} where the first factor contributes with a $t^a$ and the second with $t^b$. We are looking for all combinations such that $a+b = 1$. We start looking at the first summand. We have that:
\begin{align*}
V_{ij}^{1,10} & = \int \cos(x-y)(b_ih_j'(x)-b_jh_i'(y))\log(b_j^2 + b_i^2 - 2b_jb_i\cos(x-y))dy.
\end{align*}
Similarly, for the second one,
\begin{align*}
V_{ij}^{2,01} & = -2\int \sin(x-y)(b_i b_j) \frac{h_j(x)(b_j-b_i\cos(x-y)) + h_i(y)(b_i-b_j\cos(x-y))}{b_j^2+b_i^2-2b_jb_i\cos(x-y)}dy, \\
V_{ij}^{2,10} & = -\int \sin(x-y)(b_ih_j(x) + b_jh_i(y))\log(b_j^2 + b_i^2 - 2b_jb_i\cos(x-y))dy,
\end{align*}
where we have used Lemma \ref{lemaexpansionlog}. Integrating by parts in $V_{ij}^{2,01}$:
\begin{align*}
V_{ij}^{2,01} & = \int ((b_i h_j(x) + b_jh_i(y))\sin(x-y) - h_i'(y)(b_i - b_j\cos(x-y))) \log(b_j^2+b_i^2-2b_jb_i\cos(x-y)) dy,
\end{align*}

Finally, adding all the log terms and the non-log terms together:
\begin{align*}
V_{ij}^{1,10} + V_{ij}^{2,01} + V_{ij}^{2,10} & = \int b_i (h_j'(x)\cos(x-y)-h_i'(y))\log(b_j^2 + b_i^2 - 2b_jb_i\cos(x-y))dy,
\end{align*}
as we wanted to prove.
\end{proof}

\begin{lemma}\label{linearpart2}
Let $h_i = A_i \cos(mx)$ and $r = \frac{\min\{b_i,b_j\}}{\max\{b_i,b_j\}}$. Then:
\begin{align*}
\left.\frac{d}{dt}S_{ij}(b_i,b_j,th_i,th_j)\right|_{t=0}  =
2\pi\sin(mx)m b_i \left(A_j r - A_i \frac{r^m}{m}\right).
\end{align*}
\end{lemma}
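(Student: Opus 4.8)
Start from the closed form of the Gateaux derivative given by Lemma~\ref{linearpart1} and reduce everything to elementary trigonometric integrals via the Fourier expansion of the logarithmic kernel. Write $\rho := \max\{b_i,b_j\}$, so that $r = \min\{b_i,b_j\}/\rho$ and Lemma~\ref{lemaexpansionlog} gives
\begin{align*}
\log\big(b_j^2+b_i^2-2b_jb_i\cos\theta\big) = 2\log\rho - 2\sum_{k\geq 1}\frac{r^k}{k}\cos(k\theta),
\end{align*}
valid for $\theta\in\mathbb{T}$, with uniform convergence when $r<1$ and termwise integrability against smooth functions in the borderline case $r=1$. Substituting $h_i = A_i\cos(m\,\cdot)$, hence $h_i'(y) = -A_i m\sin(my)$ and $h_j'(x) = -A_j m\sin(mx)$, into the formula of Lemma~\ref{linearpart1} splits the derivative as $\mathcal{L}_1+\mathcal{L}_2$ with $\mathcal{L}_1 = b_i h_j'(x)\int_0^{2\pi}\cos(x-y)\log(\cdots)\,dy$ and $\mathcal{L}_2 = -b_i\int_0^{2\pi}h_i'(y)\log(\cdots)\,dy$.

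For $\mathcal{L}_1$, insert the expansion and use $\int_0^{2\pi}\cos(x-y)\,dy = 0$ together with $\int_0^{2\pi}\cos(x-y)\cos(k(x-y))\,dy = \pi\delta_{k1}$: only the $k=1$ term survives, so $\int_0^{2\pi}\cos(x-y)\log(\cdots)\,dy = -2\pi r$ and $\mathcal{L}_1 = -2\pi r\,b_i h_j'(x) = 2\pi m r\,b_i A_j\sin(mx)$. For $\mathcal{L}_2$, the constant term $2\log\rho$ integrates to zero against $h_i'$ by periodicity, and expanding $\cos(k(x-y)) = \cos(kx)\cos(ky)+\sin(kx)\sin(ky)$ the relations $\int_0^{2\pi}\sin(my)\cos(ky)\,dy = 0$ and $\int_0^{2\pi}\sin(my)\sin(ky)\,dy = \pi\delta_{km}$ isolate the $k=m$ mode, giving $\mathcal{L}_2 = -2\pi b_i A_i r^m\sin(mx)$. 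Adding the two contributions,
\begin{align*}
\mathcal{L}_1+\mathcal{L}_2 = 2\pi m\,b_i\sin(mx)\Big(A_j r - A_i\tfrac{r^m}{m}\Big),
\end{align*}
which is the claimed identity.

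There is no serious obstacle: this is a routine computation once Lemma~\ref{linearpart1} and the logarithmic Fourier expansion are available. The only points needing care are the bookkeeping of which Fourier mode survives each orthogonality integral (the $k=1$ mode in $\mathcal{L}_1$ versus the $k=m$ mode in $\mathcal{L}_2$, which is precisely why the two terms carry different powers of $r$) and the justification of termwise integration of the expansion — immediate for $i\neq j$, where $r<1$ and the series converges uniformly, and handled by the standard conditional-convergence argument in the degenerate case $r=1$.
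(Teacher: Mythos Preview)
Your proof is correct and follows essentially the same route as the paper: invoke Lemma~\ref{linearpart1} and then evaluate the two resulting integrals, which the paper does by quoting Corollary~\ref{integral_a0} while you unpack the same content via the Fourier series of the log kernel. One small correction: the identity $\log(1+r^2-2r\cos\theta)=-2\sum_{k\ge1}\frac{r^k}{k}\cos(k\theta)$ is not the content of Lemma~\ref{lemaexpansionlog} (that lemma is a Taylor expansion in $t$, not a Fourier expansion in $\theta$); the expansion you use is standard and is effectively what Corollary~\ref{integral_a0} encodes, so just cite that instead.
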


\begin{proof}
From Lemma~\ref{linearpart1} and Corollary~\ref{integral_a0}, we have that 
\begin{align*}
\mathcal{L}_{1} & = -m b_i A_j\sin(mx) \mathcal{A}_{0}(r,1), \\
\mathcal{L}_{2} & = m b_i A_i \sin(mx) \mathcal{A}_{0}(r,m),
\end{align*}
Adding the two contributions gives the desired result.
\end{proof}
 Note that the functional $\mathcal{F}$ is a linear combination of $S_{ij}$ (see \eqref{stationary_equation}). Using the above two lemmas, we obtain the following proposition:
\begin{prop}\label{Linearized_operator}
Let $h(x) = \sum_{n}a_n \cos(nx),$ $H(x) = \sum_{n}A_n\cos(nx)$, then we have that:
\begin{align*}
D\mathcal{F}(\Theta,0)[H,h] = \left(\begin{array}{c}U(x) \\ u(x) \end{array}\right),
\end{align*}
where
\begin{align*}
 U(x) = \sum_{n}U_n \sin(nx), \quad u(x) = \sum_{n} u_n \sin(nx),
\end{align*}
and the coefficients satisfy, for any $n$:
\begin{align*}
\left(\begin{array}{c}U_n \\ u_n \end{array}\right)
:=
(-n) M_n(\Theta)
\left(\begin{array}{c}A_n \\ a_n \end{array}\right)
:= 
(-n) \left(
\begin{array}{cc}
 \frac{b^{2}}{2} - \frac{\Theta}{2} + \frac{\Theta}{2n} & - \frac{b^{n+1}}{2n} \\
\Theta\frac{b^{n}}{2n} & -\frac{b}{2n} + \frac{b}{2}(1-\Theta)
\end{array}
\right)
\left(\begin{array}{c}A_n \\ a_n \end{array}\right).
\end{align*}
\end{prop}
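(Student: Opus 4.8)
The plan is to reduce the statement to the two computational lemmas already in hand, Lemma~\ref{linearpart1} and Lemma~\ref{linearpart2}, and then assemble the matrix $M_n(\Theta)$ by bookkeeping the linear combination defining $\mathcal{F}$. Recall from \eqref{stationary_equation} that $\mathcal{F}_j = \sum_{i=1}^{n}\frac{1}{4\pi}\Theta_i S_{ij}$, and in the two-layer setting \eqref{parameters1} we have fixed $b_1=1$, $b_2=b$, $\Theta_1=\Theta$, $\Theta_2=-1$. Thus $F^1 = \frac{1}{4\pi}(\Theta S_{11} - S_{21})$ and $F^2 = \frac{1}{4\pi}(\Theta S_{12} - S_{22})$. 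The first step is to linearize each of these four terms around $R=0$ by applying Lemma~\ref{linearpart2} with the correct values of $b_i,b_j$: for $S_{11}$ one has $b_i=b_j=1$, so $r=1$; for $S_{22}$ one has $b_i=b_j=b$, so again $r=1$; for $S_{12}$ (contribution of patch $1$ on curve $2$, so $b_i=1$, $b_j=b$) one has $r=b$; and for $S_{21}$ ($b_i=b$, $b_j=1$) one has $r=b$ as well.

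Second, since $D\mathcal{F}(\Theta,0)$ is linear and each Fourier mode decouples, it suffices to work mode by mode: plug $h_i = A_i\cos(mx)$ with a general frequency (renamed $n$) into Lemma~\ref{linearpart2}, obtaining that the $n$-th linearized contribution of $S_{ij}$ is $2\pi\sin(nx)\, n\, b_i\bigl(A_j r - A_i \tfrac{r^n}{n}\bigr)$. Applying this to each of the four pieces and grouping the coefficients of $A_n$ (the coefficient of the outer radius perturbation $R_1$) and $a_n$ (that of $R_2$), the factor $\tfrac{1}{4\pi}$ cancels the $2\pi$ up to a $\tfrac12$, the overall $\sin(nx)$ and the factor $-n$ can be pulled out, and what remains is precisely the $2\times 2$ matrix displayed in the statement. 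Concretely, the $(1,1)$ entry comes from $\Theta S_{11}$ (giving $-\tfrac{\Theta}{2}+\tfrac{\Theta}{2n}$ after extracting $-n$) and $-S_{21}$ (giving $+\tfrac{b^2}{2}$, using $b_i=b$ and the ``$A_j r$'' term with $j$ the outer patch); the $(1,2)$ entry is $-\tfrac{b^{n+1}}{2n}$ from the $-S_{21}$ self-term $A_i\tfrac{r^n}{n}$ with $b_i=b$, $r=b$; the $(2,1)$ entry is $\Theta\tfrac{b^n}{2n}$ from $\Theta S_{12}$; and the $(2,2)$ entry $-\tfrac{b}{2n}+\tfrac{b}{2}(1-\Theta)$ combines the $\Theta S_{12}$ self-term with the $-S_{22}$ contributions. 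One must be careful that in $S_{12}$ the variable $y$ integrates over patch $1$ (so $A_j=A_1=A_n$) while in $S_{21}$ it integrates over patch $2$, which determines which of $A_n,a_n$ multiplies the $r$ versus the $r^n/n$ term.

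The main obstacle, and really the only thing requiring care, is the index bookkeeping: getting the roles of $i$ and $j$ (hence of $b_i$ versus $b_j$, and of the ``$A_j r$'' versus ``$A_i r^n/n$'' terms) correctly matched to the physical meaning of $S_{ij}$ as the contribution of the $i$-th patch on the $j$-th curve, and then tracking the signs coming from $\Theta_2=-1$ and the factor $\tfrac{1}{4\pi}$. Since Lemmas~\ref{linearpart1} and~\ref{linearpart2} (together with Corollary~\ref{integral_a0}) already provide the closed-form linearization of a single $S_{ij}$, no new analysis is needed; the proof is just the substitution of the four pairs $(b_i,b_j)\in\{(1,1),(b,b),(1,b),(b,1)\}$ and collecting terms. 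I would present it as: recall $\mathcal{F}_j = \sum_i \tfrac{1}{4\pi}\Theta_i S_{ij}$; apply Lemma~\ref{linearpart2} to each summand with the stated parameter values; sum and read off the matrix; conclude by linearity and mode-decoupling that $D\mathcal{F}(\Theta,0)$ acts as claimed on each Fourier mode.
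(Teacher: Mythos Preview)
Your proposal is correct and follows exactly the approach of the paper: the paper's proof simply cites \eqref{parameters1}, \eqref{stationary_equation}, \eqref{stationary_equation2}, and Lemma~\ref{linearpart2}, and what you have written is precisely the unpacking of that citation---applying Lemma~\ref{linearpart2} to each $S_{ij}$ with the four parameter choices $(b_i,b_j)\in\{(1,1),(b,1),(1,b),(b,b)\}$, combining via $\mathcal{F}_j=\tfrac{1}{4\pi}\sum_i\Theta_iS_{ij}$ with $\Theta_1=\Theta$, $\Theta_2=-1$, and reading off the matrix mode by mode. Your index bookkeeping (in particular that in $S_{ij}$ the ``$A_i r^n/n$'' term carries the perturbation of patch $i$ while ``$A_j r$'' carries that of patch $j$) is correct, and your entry-by-entry identification matches the claimed $M_n(\Theta)$.
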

\begin{proof}
It follows from \eqref{parameters1}, the definition of $\mathcal{F}$ in \eqref{stationary_equation2}, \eqref{stationary_equation} and Lemma~\ref{linearpart2}.
\end{proof}
\paragraph{One dimensionality of the Kernel of the linear operator.}
Our goal here is to verify the one dimensionality of Ker($D\mathcal{F}(\Theta,0)$) for some $\Theta$. More precisely, we will prove the following proposition:
\begin{prop}\label{onedimensionality}
Fix $2\leq m \in \mathbb{N}$ and take any $b\in (0,b_m)$, where $b_m$ is as in Lemma~\ref{propbstar}.   Then there exist two $\Theta_m^{\pm} \in (0,1)$, such that Ker$(D\mathcal{F}(\Theta_m^{\pm},0))$ is one-dimensional. Furthermore, 
\begin{align*}
\text{Ker}(D\mathcal{F}(\Theta_m^{\pm},0,0)) = \text{span}\left\{ v_0(\Theta_m^{\pm})\cos(mx) := \begin{pmatrix}\frac{1}{2m}b - \frac{b}{2}(1-\Theta_m^{\pm})\\ \frac{\Theta_m^{\pm}}{2m}b^m \end{pmatrix} \cos(mx) \right\} \subset X^{k,m}_c \times X^{k,m}_c.
\end{align*}
\end{prop}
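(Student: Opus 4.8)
The plan is to reduce the one-dimensionality statement to the vanishing of the determinant $\det M_n(\Theta)$ for exactly one Fourier mode $n=m$, and to the non-vanishing of $\det M_n(\Theta)$ for all other $n$. From Proposition~\ref{Linearized_operator} we know that $D\mathcal{F}(\Theta,0)$ acts diagonally on Fourier modes: the coefficient pair $(A_n,a_n)$ of the input is sent to $(-n)M_n(\Theta)(A_n,a_n)^{\mathrm{T}}$. Hence $(H,h)\in\mathrm{Ker}(D\mathcal{F}(\Theta,0))$ if and only if for every $n\ge 1$ the vector $(A_n,a_n)^{\mathrm{T}}$ lies in $\mathrm{Ker}(M_n(\Theta))$ (the factor $-n$ is harmless since $n\ge 1$). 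So the kernel is one-dimensional precisely when there is a unique $n$, which we will call $n=m$, such that $\det M_n(\Theta)=0$, and moreover for that $n$ the matrix $M_m(\Theta)$ has rank exactly $1$ (so its kernel is one-dimensional, not two-dimensional).

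First I would compute $\det M_n(\Theta)$ explicitly from the matrix in Proposition~\ref{Linearized_operator}:
\begin{align*}
\det M_n(\Theta) = \left(\tfrac{b^2}{2}-\tfrac{\Theta}{2}+\tfrac{\Theta}{2n}\right)\left(-\tfrac{b}{2n}+\tfrac{b}{2}(1-\Theta)\right) + \Theta\,\tfrac{b^{n+1}}{2n}\cdot\tfrac{b^{n}}{2n}.
\end{align*}
This is, for fixed $b$ and $n$, a quadratic polynomial in $\Theta$ (the $b^{2n+1}$ term is linear in $\Theta$, and the product of the two diagonal entries contributes the $\Theta^2$ term). I would then invoke Lemma~\ref{propbstar} (stated earlier and available to us), which is precisely the place where the threshold $b_m$ and the two roots $\Theta_m^{\pm}\in(0,1)$ are produced: for $b\in(0,b_m)$ the equation $\det M_m(\Theta)=0$ has two distinct roots $\Theta_m^{\pm}$ in $(0,1)$, while simultaneously $\det M_n(\Theta_m^{\pm})\neq 0$ for all $n\neq m$. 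The latter non-degeneracy across the other modes is the content that makes the kernel exactly one mode wide; I expect Lemma~\ref{propbstar} to handle the low modes $n=1$ by direct inspection (note $M_1$ has a special structure) and the high modes $n\ge m+1$ by an asymptotic/monotonicity argument exploiting that the off-diagonal entries carry factors $b^{n}$, $b^{n+1}$ that decay geometrically, so $\det M_n(\Theta)$ is dominated by the product of the diagonal entries, which stays bounded away from $0$ on the relevant $\Theta$-range.

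Once $\det M_m(\Theta_m^{\pm})=0$ is known, it remains to identify the kernel vector and check $M_m(\Theta_m^{\pm})\neq 0$ (rank exactly one). I would read off a kernel vector of $M_m$ directly: since the second row of $M_m(\Theta)$ is $\left(\Theta\tfrac{b^{m}}{2m},\ -\tfrac{b}{2m}+\tfrac{b}{2}(1-\Theta)\right)$, the vector $v_0(\Theta_m^{\pm}) = \left(\tfrac{b}{2m}-\tfrac{b}{2}(1-\Theta_m^{\pm}),\ \Theta_m^{\pm}\tfrac{b^m}{2m}\right)^{\mathrm{T}}$ is annihilated by that row (it is the "rotate and negate" of the row entries), and by $\det M_m=0$ it is then annihilated by the first row as well; this gives exactly the claimed spanning vector. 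To see $v_0\neq 0$ I note its second component $\Theta_m^{\pm}b^m/(2m)$ is strictly positive since $\Theta_m^{\pm}\in(0,1)$ and $b>0$, so $M_m(\Theta_m^{\pm})$ cannot be the zero matrix, hence has rank exactly $1$ and one-dimensional kernel. Finally, the smoothness/regularity of $v_0(\Theta_m^{\pm})\cos(mx)$ as an element of $X^{k,m}_c\times X^{k,m}_c$ is immediate since it is a single-mode trigonometric polynomial, which lies in every such space.

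The main obstacle I anticipate is the uniformity over all modes $n\neq m$: showing $\det M_n(\Theta_m^{\pm})\neq 0$ requires controlling a family of quadratics in $\Theta$ simultaneously in $n$. The bulk of the real work is therefore buried in Lemma~\ref{propbstar}, whose role is exactly to pin down $b_m$ small enough that (i) the mode-$m$ determinant has its two roots in $(0,1)$ and (ii) no other mode's determinant vanishes at those roots; here I would expect a separate treatment of the finitely many small modes $n<m$ versus a geometric-decay estimate for $n>m$. Everything else — the diagonalization over Fourier modes, the determinant computation, and the explicit kernel vector — is routine linear algebra.
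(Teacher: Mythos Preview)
Your approach is essentially the same as the paper's: reduce via Proposition~\ref{Linearized_operator} to the mode-by-mode matrix equations, use Lemma~\ref{propbstar} to get $\det M_m(\Theta_m^{\pm})=0$ with rank one, and read off the kernel vector from the second row. Two clarifications are worth making. First, since the domain is $(X^{k,m}_c)^2$, only Fourier modes $jm$ with $j\ge 1$ appear, so there are no ``low modes $n<m$'' or ``$n=1$'' to handle at all; the paper's proof simply takes $H(x)=\sum_j A_{jm}\cos(jmx)$ and checks $j=1$ versus $j>1$. Second, the non-degeneracy $\det M_{jm}(\Theta_m^{\pm})\ne 0$ for $j>1$ is not contained in Lemma~\ref{propbstar} but in the separate Lemma~\ref{propbstar2}, and the mechanism is not a geometric-decay estimate on the off-diagonal entries but a monotonicity argument: one shows the sum of roots $\Theta_n^{+}+\Theta_n^{-}=\frac{(n-1)^2+b^2n^2-b^{2n}}{n(n-1)}$ is strictly increasing in $n$ while the product $\Theta_n^{+}\Theta_n^{-}=b^2$ is constant, which forces $\Theta_{jm}^{-}<\Theta_m^{-}<\Theta_m^{+}<\Theta_{jm}^{+}$.
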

The proof of the above proposition relies on the analysis of the matrix $M_n$ in Lemma~\ref{propbstar} and \ref{propbstar2}, which we will prove below.
\begin{lemma}\label{propbstar}
Let $\Delta_{m}(\Theta)$ be 
\begin{align}\label{determinantm}
 \Delta_{m}(\Theta) := \frac{4m^2}{b}\text{det}(M_{m}(\Theta)) = \left(\Theta b^{2m} + b^{2}m(m(1-\Theta) - 1) + \Theta(1-m)(m(1-\Theta)-1)\right)
\end{align}
Then, for any $m \geq 2$, there exists $0 < b_m < 1$ such that  for any $0<b<b_m<1$, there exists $0<\Theta^{-}_m<\Theta^{+}_m<1$ such that $\Delta_{m}(\Theta_{m}^{\pm}) = 0$. We also have that rk$(M_{m}(\Theta_{m}^{\pm})) = 1$ for those values of $\Theta^{+}_m$, $\Theta^{-}_m$, where $rk(A)$ is the rank of a matrix $A$.
\end{lemma}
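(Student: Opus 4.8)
The plan is to treat $\Delta_m$ as a quadratic polynomial in the single variable $\Theta$, with $b\in(0,1)$ and $m\ge 2$ as parameters, and to locate its roots by elementary means. Expanding the product in \eqref{determinantm} and collecting powers of $\Theta$ gives
\[
\Delta_m(\Theta) = m(m-1)\,\Theta^2 + \left(b^{2m} - b^2 m^2 - (m-1)^2\right)\Theta + b^2 m(m-1),
\]
so the leading coefficient $m(m-1)$ is strictly positive for $m\ge 2$ and $\Delta_m(\cdot)$ is an upward parabola. I would first record the endpoint values: $\Delta_m(0) = b^2 m(m-1) > 0$ for every $b>0$, and $\Delta_m(1) = (m-1) + b^{2m} - b^2 m$, which is positive as soon as $b^2 < (m-1)/m$ (since $b^{2m}\ge 0$). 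Thus, for such $b$, both $\Theta=0$ and $\Theta=1$ lie outside the interval bounded by the roots (if any).

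Next I would show that for $b$ small the parabola actually dips below zero strictly inside $(0,1)$, which forces two distinct real roots $\Theta_m^- < \Theta_m^+$; combined with the endpoint sign information, both must then lie in $(0,1)$. There are two equivalent ways to verify the dip. One is to compute the vertex abscissa $\Theta^\star = \dfrac{b^2 m^2 + (m-1)^2 - b^{2m}}{2m(m-1)}$, which tends to $\dfrac{m-1}{2m}\in(0,1)$ as $b\to 0$ and stays in $(0,1)$ once $b$ is below an explicit $m$-dependent threshold, together with the minimum value $\Delta_m(\Theta^\star) = b^2 m(m-1) - \dfrac{(b^{2m} - b^2 m^2 - (m-1)^2)^2}{4m(m-1)}$, whose limit as $b\to 0$ is $-\dfrac{(m-1)^3}{4m} < 0$. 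The other is to note directly that the discriminant $(b^{2m} - b^2 m^2 - (m-1)^2)^2 - 4m^2(m-1)^2 b^2$ tends to $(m-1)^4 > 0$ as $b\to 0$, so it is strictly positive for $b$ small, and positivity of the discriminant together with $\Delta_m(0),\Delta_m(1)>0$ and $\Theta^\star\in(0,1)$ places both roots in $(0,1)$. Either way, I would fix $b_m\in(0,1)$ small enough that all the relevant limits dominate the error terms, and conclude that for every $0<b<b_m$ the equation $\Delta_m(\Theta)=0$ — equivalently $\det M_m(\Theta)=0$ — has two distinct solutions $\Theta_m^\pm\in(0,1)$.

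Finally, for the rank assertion: at $\Theta=\Theta_m^\pm$ we have $\det M_m(\Theta_m^\pm)=0$, so $\mathrm{rk}(M_m(\Theta_m^\pm))\le 1$; conversely the $(1,2)$ entry of $M_m$ equals $-\,b^{m+1}/(2m)\ne 0$ because $b>0$, so $M_m(\Theta_m^\pm)$ is not the zero matrix and $\mathrm{rk}(M_m(\Theta_m^\pm))\ge 1$. Hence the rank is exactly $1$.

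I do not anticipate a genuine obstacle here: the content is an elementary analysis of a quadratic. The only care required is bookkeeping — keeping track of the (explicit, $m$-dependent) upper bounds on $b$ imposed by each of the three conditions "$\Delta_m(1)>0$", "$\Theta^\star\in(0,1)$", and "discriminant $>0$" — and then defining $b_m$ as their minimum so that all conditions hold simultaneously for $0<b<b_m$.
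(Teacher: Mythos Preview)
Your proposal is correct and follows the same overall strategy as the paper --- write $\Delta_m$ as an upward quadratic in $\Theta$, show it has two distinct roots, and use the sign at the endpoints $\Theta=0,1$ to trap them in $(0,1)$. The rank argument via the nonvanishing $(1,2)$ entry is exactly what the paper does as well.

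The one genuine difference is in how the positivity of the discriminant (equivalently, the existence of two real roots) is obtained. You argue by continuity: the discriminant tends to $(m-1)^4>0$ as $b\to 0$, hence stays positive for $b$ below some unspecified $b_m$. The paper instead factors the discriminant exactly as
\[
D_m = (m-1-bm-b^m)(m-1-bm+b^m)\bigl((m-1)^2+b^2m^2-b^{2m}+2m(m-1)b\bigr),
\]
shows the last two factors are positive for all $b\in(0,1)$, and then defines $b_m$ explicitly as the unique zero of the first factor $m-1-bm-b^m$ in $(0,1)$. The paper also shows $\Delta_m(1)>0$ for \emph{all} $b\in(0,1)$ (via the factorization $(1-b^2)(m-1-b^2-\cdots-b^{2m-2})$), whereas you impose the extra restriction $b^2<(m-1)/m$. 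Your argument is shorter and perfectly adequate for the lemma as stated; the paper's buys an explicit, sharp threshold $b_m$ and incidentally records the Vieta identity $\Theta_m^+\Theta_m^-=b^2$, which is reused in later lemmas (e.g.\ to show $b^2<\Theta_m^\pm$).
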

\begin{proof}
 For fixed $m$, we study the polynomial $\Delta_{m}(\Theta)$.
 We need to solve
 \begin{align}\label{thetaequation}
     \Delta_{m}(\Theta)=m(m-1)\Theta^2-((m-1)^2+b^2m^2-b^{2m})\Theta+b^2m(m-1)=0.
 \end{align}
 Since $\Delta_m(\Theta)$ is a quadratic function, we only need to show that the discriminant is positive. We have
 \begin{align*}
     &D_m:=((m-1)^2+b^2m^2-b^{2m})^2-4m^2(m-1)^2b^2\\
     &=((m-1)^2+b^2m^2-b^{2m}-2m(m-1)b)((m-1)^2+b^2m^2-b^{2m}+2m(m-1)b)\\
     &=(m-1-bm-b^{m})(m-1-bm+b^{m})((m-1)^2+b^2m^2-b^{2m}+2m(m-1)b)\\
     &=:D_{m,1}\cdot D_{m,2}\cdot D_{m,3}.
 \end{align*}
 Since $m\geq 2$, $0<b<1$, we have $D_{m,3}>1-1=0$. We also have
 \[
 D_{m,2}=m(1-b)+b^m-1=(1-b)[m-\frac{1-b^m}{1-b}]=(1-b)(m-(1+b+b^2+...+b^{m-1}))>0.
 \]
 $D_{m,1}$ is decreasing in $b$ when $b\geq 0$ and $D_{m,1}(0)=m-1>0$, $D_{m,1}(1)=-2$. Let $b_m$ be the only  zero point of $D_{m,1}$ in (0,1). If we take $0<b<b_m$, we have
 \begin{align}\label{d1mnonzero}
 D_{1,m}>0, D_{m}>0. 
 \end{align}
 Hence $\Delta_m(\Theta)$ has two different solutions $\Theta_m^-<\Theta_m^+$.
 Moreover, the matrix does not vanish when $b\neq 0$, implying rk$(M_{m}(\Theta_{m}^{\pm})) = 1$. 
 Now we are left to show $0<\Theta_m^{\pm}<1$. We have \[
\Theta_m^{+}\Theta_m^{+}=b^2<1,
\] and
\[
\Theta_m^{+}+\Theta_m^{-}=\frac{((m-1)^2+b^2m^2-b^{2m})}{m(m-1)}\geq \frac{1-1}{m(m-1)}>0.
\]
Hence, $0<\Theta_m^{-}<1$ and $0<\Theta_m^{+}$. If $\Theta_m^{+}\geq1$, then $\Delta_{m}(1)\leq 0$. However,
\begin{align}\label{deltam1}
    &\Delta_m(1)=b^{2m}-mb^2-(1-m)\\\nonumber
    &=(1-b^2)(m-\frac{1-b^{2m}}{1-b^2})\\\nonumber
    &=(1-b^2)(m-1-b^2-...-b^{2m-2})>0.
\end{align}
Therefore $0<\Theta_m^{\pm}<1$.
\end{proof}

We now show that $\Delta_{jm}(\Theta_{m}^{\pm})\neq 0$ for any $j \neq 1$. 

\begin{lemma}\label{propbstar2}
 Let $j > 1$ and let $\Theta_{m}^{\pm}$ and $\Theta_{jm}^{\pm}$ be defined as in the previous Lemma. Then $\Theta_{jm}^{+}>\Theta_{m}^{+}>\Theta_{m}^{-}>\Theta_{jm}^{-}$. Hence, $M_{jm}(\Theta_{m}^{\pm})$ is non-singular for all $j>1$.
\end{lemma}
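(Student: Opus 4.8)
\textbf{Proof proposal for Lemma~\ref{propbstar2}.}

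The plan is to exploit a structural feature of the quadratic $\Delta_n(\Theta)$ from \eqref{thetaequation}: by Vieta's formulas its two roots satisfy $\Theta_n^{+}\Theta_n^{-}=b^{2}$ \emph{independently of $n$}, while their sum is $\sigma(n):=\frac{(n-1)^{2}+b^{2}n^{2}-b^{2n}}{n(n-1)}$. Thus for every $n\ge 2$ the pair $(\Theta_n^{-},\Theta_n^{+})$ lies on the fixed hyperbola $\Theta\cdot\Theta'=b^{2}$, and the only thing that varies with $n$ is how far apart the two roots are, which is governed monotonically by $\sigma(n)$: writing $\Theta_n^{\pm}=b e^{\pm t_n}$ with $t_n>0$ one has $\sigma(n)=2b\cosh t_n$, so $\Theta_{jm}^{+}>\Theta_m^{+}>\Theta_m^{-}>\Theta_{jm}^{-}$ would follow at once from $\sigma(jm)>\sigma(m)$. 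Hence the whole lemma reduces to showing that $n\mapsto\sigma(n)$ is strictly increasing on $\{2,3,\dots\}$.

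To establish this I would first simplify $\sigma(n)-(1+b^{2})$. A short computation collapses the numerator to $1-b^{2n}-n(1-b^{2})$, and then factoring $1-b^{2n}=(1-b^{2})\sum_{k=0}^{n-1}b^{2k}$ together with $n(n-1)=2\sum_{k=1}^{n-1}k$ gives the clean identity
\[
\sigma(n)\;=\;1+b^{2}-\frac{1-b^{2}}{2}\cdot\frac{\sum_{k=1}^{n-1}(1-b^{2k})}{\sum_{k=1}^{n-1}k}.
\]
So $\sigma$ is increasing in $n$ exactly when $\phi(n):=\frac{\sum_{k=1}^{n-1}(1-b^{2k})}{\sum_{k=1}^{n-1}k}$ is decreasing. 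This is a ``mediant'' statement. Since $\frac{1-b^{2k}}{k}=(1-b^{2})\cdot\frac{1+b^{2}+\cdots+b^{2(k-1)}}{k}$ is $(1-b^{2})$ times the running average of the strictly decreasing sequence $1,b^{2},b^{4},\dots$, it is itself strictly decreasing in $k$ (adding a term smaller than the current minimum strictly lowers the average); and whenever $u_k/v_k$ is strictly decreasing with $v_k>0$, the ratio $\frac{\sum_{k\le N}u_k}{\sum_{k\le N}v_k}$ is strictly decreasing in $N$ — the one‑step inequality $\frac{u_{N+1}}{v_{N+1}}\le\frac{\sum_{k\le N}u_k}{\sum_{k\le N}v_k}$ follows by summing the cross‑multiplied inequalities $u_{N+1}v_k\le u_kv_{N+1}$. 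Applying this with $u_k=1-b^{2k}$, $v_k=k$, $N=n-1$ yields $\phi(jm)<\phi(m)$, hence $\sigma(jm)>\sigma(m)$ for all $j>1$, $m\ge2$, $b\in(0,1)$.

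With the monotonicity of $\sigma$ in hand I would conclude by a sandwiching argument, which also re‑establishes that $\Delta_{jm}$ has two distinct real roots without separately invoking Lemma~\ref{propbstar} at the index $jm$. Write the monic quadratic $p_n(\Theta):=\Theta^{2}-\sigma(n)\Theta+b^{2}$, so that $\Delta_n=n(n-1)p_n$ and $p_m(\Theta_m^{\pm})=0$, while $0<\Theta_m^{-}<\Theta_m^{+}$ (positivity is forced by product $b^{2}>0$ and sum $\sigma(m)>0$). Then
\[
p_{jm}(\Theta_m^{\pm})\;=\;p_m(\Theta_m^{\pm})+(\sigma(m)-\sigma(jm))\,\Theta_m^{\pm}\;=\;(\sigma(m)-\sigma(jm))\,\Theta_m^{\pm}\;<\;0 .
\]
Since $p_{jm}$ is an upward parabola strictly negative at the two distinct points $\Theta_m^{-}<\Theta_m^{+}$, it has two distinct real roots and its interval of negativity contains both, i.e. $\Theta_{jm}^{-}<\Theta_m^{-}<\Theta_m^{+}<\Theta_{jm}^{+}$. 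Finally, non‑singularity of $M_{jm}(\Theta_m^{\pm})$ follows from \eqref{determinantm}, which gives $\det M_{jm}(\Theta)=\frac{b}{4(jm)^{2}}\Delta_{jm}(\Theta)=\frac{b}{4(jm)^{2}}\,jm(jm-1)\,p_{jm}(\Theta)$, together with $p_{jm}(\Theta_m^{\pm})\ne0$ just shown. The only genuinely non‑routine point is the monotonicity of $\sigma(n)$; the mild subtlety there is to set up the mediant argument with the correct weights — it is essential that $b^{2k}$ is paired with weight $k$ (not weight $1$), which is exactly what the factor $n(n-1)$ in the denominator of $\sigma$ supplies — and to keep all inequalities strict, which holds as soon as $b\in(0,1)$ and $j>1$.
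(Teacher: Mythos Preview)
Your proof is correct and follows the same overall strategy as the paper: both observe via Vieta that $\Theta_n^{+}\Theta_n^{-}=b^{2}$ is independent of $n$, so the ordering $\Theta_{jm}^{-}<\Theta_m^{-}<\Theta_m^{+}<\Theta_{jm}^{+}$ reduces to strict monotonicity of the sum $\sigma(n)=\Theta_n^{+}+\Theta_n^{-}=\frac{(n-1)^2+b^2n^2-b^{2n}}{n(n-1)}$.

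The difference lies in how monotonicity of $\sigma$ is established. The paper computes $\sigma(m+1)-\sigma(m)$ by brute force and, after several algebraic reductions, arrives at the positivity of $\sum_{k=1}^{m}(1-b^{2k})(1-b^{2(m-k)})$. Your route is more structural: you derive the closed identity $\sigma(n)=1+b^{2}-\tfrac{1-b^{2}}{2}\,\phi(n)$ with $\phi(n)$ a ratio of partial sums, and then show $\phi$ decreases by a clean mediant argument (averages of the strictly decreasing sequence $b^{2k}$ decrease, hence $\tfrac{1-b^{2k}}{k}$ decreases, hence the weighted partial-sum ratio decreases). This is a genuinely different and arguably more transparent proof of the same monotonicity. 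Your final sandwiching step, evaluating $p_{jm}$ at $\Theta_m^{\pm}$ to get $p_{jm}(\Theta_m^{\pm})<0$, is also a nice addition: it makes the root ordering explicit and, as you note, produces the two real roots $\Theta_{jm}^{\pm}$ directly, so you do not need to separately verify the hypothesis $b<b_{jm}$ of Lemma~\ref{propbstar} at the index $jm$ --- a point the paper leaves implicit.
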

\begin{proof}
Since $\Theta_{m}^{\pm}$ solves the equation
\[
\Theta^2-\frac{((m-1)^2+b^2m^2-b^{2m})}{m(m-1)}\Theta+b^2=0.
\]
We only need to show $F(b,m):=\frac{((m-1)^2+b^2m^2-b^{2m})}{m(m-1)}$ is strictly increasing with respect to $m$.
We have
\begin{align*}
    &F(b,m+1)-F(b,m)\\
    &=\frac{(m^2+b^2(m+1)^2-b^{2(m+1)})}{(m+1)m}-\frac{((m-1)^2+b^2m^2-b^{2m})}{m(m-1)}\\
    &=\frac{(m^2+b^2(m+1)^2-b^{2(m+1)})(m-1)-((m-1)^2+b^2m^2-b^{2m})(m+1)}{(m+1)m(m-1)}\\
    &=\frac{b^2((m+1)^2(m-1)-m^2(m+1))+m^2(m-1)-(m+1)(m-1)^2-b^{2m+2}(m-1)+(m+1)b^{2m}}{(m+1)m(m-1)}\\
    &=\frac{-b^2(m+1)+m-1-b^{2m+2}(m-1)+b^{2m}(m+1)}{(m+1)m(m-1)}.
\end{align*}
Thus
\begin{align*}
   &F(b,m+1)-F(b,m)>0\Leftrightarrow -b^2(m+1)+m-1-b^{2m+2}(m-1)+b^{2m}(m+1)>0\\
   &\Leftrightarrow (1+b^2)(-1+b^{2m})-(-1+b^2)(1+b^{2m})m>0\Leftrightarrow -(1+b^2)(b^{2m-2}+b^{2m-4}...+b^2+1)+(1+b^{2m})m>0\\
     &\Leftrightarrow -\sum_{k=1}^{m}b^{2m-2k}-\sum_{k=1}^{m}b^{2k}+(1+b^{2m})m>0 \Leftrightarrow \sum_{k=1}^{m}(1-b^{2k})(1-b^{2m-k})>0.
\end{align*} 
It is easy to show the last inequality since $0<b<1$.
\end{proof}

\begin{proofprop}{onedimensionality}
Let $m \ge 2$  and let $b$, $\Theta_m^{\pm}$ be as defined in Lemma~\ref{propbstar}. Assume that $H(x) = \sum_{j}A_{jm}\cos(jmx)$ and $h(x) = \sum_{j}a_{jm} \cos(jmx)$ satisfy 
\[
DF(\Theta_m^{\pm},0)[H,h] = (0,0).
\]
 Then it follows from Proposition~\ref{Linearized_operator} that
\begin{align*}
M_{jm}(\Theta_m^{\pm})
\begin{pmatrix}
A_{jm}\\
a_{jm}
\end{pmatrix}
= 
\begin{pmatrix}
0\\
0
\end{pmatrix}.
\end{align*}
For all $j>1$, it follows from Lemma~\ref{propbstar2} that $M_{jm}(\Theta_m^{\pm})$ is invertible, thus $A_{jm} = a_{jm} = 0$. For $j=1$, Lemma~\ref{propbstar} tells us that $(A_m,a_m)\in \text{Ker}(M_m(\Theta_m^{\pm})) = \text{span}\left\{ v_0(\Theta_m^{\pm}) := \begin{pmatrix}\frac{1}{2m}b - \frac{b}{2}(1-\Theta_m^{\pm})\\ \frac{\Theta_m^{\pm}}{2m}b^m \end{pmatrix} \right\}$. This finishes the proof.
\end{proofprop}

\paragraph{Codimension of the image of the linear operator.}
 We now characterize the image of $D\mathcal{F}(\Theta_m^{\pm},0)$. We have the following proposition:
\begin{prop}\label{codimension_one}
Let
\begin{align*}
Z = \left\{(Q,q) \in Y^{k-1,m}_c \times Y^{k-1,m}_c, Q(x) = \sum_{j=1}^{\infty}Q_{jm}\sin(jmx), q(x) = \sum_{j=1}^{\infty}q_{jm}\sin(jmx), \right.\\ \left.\exists \lambda_{Q,q} \in \mathbb{R} \text{ s.t.} 
\left(\begin{array}{c}Q_{m} \\ q_{m}\end{array}\right)= \lambda_{Q,q} \left(
\begin{array}{c}
 -\frac{1}{2m}b^{m+1} \\
-\frac{1}{2m}b + \frac{b}{2}(1-\Theta_m^{\pm})
\end{array}
\right)\right\}.
\end{align*}
Then $Z = \text{Im}\left(D\mathcal{F}(\Theta_m^{\pm},0,0)\right)$.
\end{prop}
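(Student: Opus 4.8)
The plan is to show the two inclusions $Z \subseteq \mathrm{Im}(D\mathcal{F}(\Theta_m^{\pm},0))$ and $\mathrm{Im}(D\mathcal{F}(\Theta_m^{\pm},0)) \subseteq Z$ by exploiting the block-diagonal structure of the linearized operator in Proposition~\ref{Linearized_operator}: since $D\mathcal{F}(\Theta,0)$ acts Fourier-mode by Fourier-mode via the $2\times 2$ matrices $(-n)M_n(\Theta)$, the question of which $(Q,q)$ lie in the image reduces, for each frequency $jm$, to whether $\binom{Q_{jm}}{q_{jm}}$ lies in the column space of $M_{jm}(\Theta_m^{\pm})$. For $j>1$, Lemma~\ref{propbstar2} tells us $M_{jm}(\Theta_m^{\pm})$ is invertible, so every choice of $(Q_{jm},q_{jm})$ is hit and imposes no constraint; the only constraint comes from $j=1$, where $M_m(\Theta_m^{\pm})$ has rank $1$. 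So the whole proof amounts to identifying the column space (range) of the rank-one matrix $M_m(\Theta_m^{\pm})$ and checking it equals the span of the vector $\left(-\tfrac{1}{2m}b^{m+1},\, -\tfrac{1}{2m}b+\tfrac{b}{2}(1-\Theta_m^{\pm})\right)^{T}$ appearing in the definition of $Z$.

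First I would set up the Fourier decomposition: write $(Q,q)\in (Y^{k-1,m}_c)^2$ with $Q = \sum_j Q_{jm}\sin(jmx)$, $q = \sum_j q_{jm}\sin(jmx)$, and note that by Proposition~\ref{Linearized_operator}, $(Q,q) = D\mathcal{F}(\Theta_m^{\pm},0)[H,h]$ with $H = \sum_j A_{jm}\cos(jmx)$, $h = \sum_j a_{jm}\cos(jmx)$ if and only if $\binom{Q_{jm}}{q_{jm}} = (-jm)M_{jm}(\Theta_m^{\pm})\binom{A_{jm}}{a_{jm}}$ for all $j$. For $j>1$ this equation is solvable for $(A_{jm},a_{jm})$ by invertibility, and one should record a bound $|(A_{jm},a_{jm})| \lesssim |(Q_{jm},q_{jm})|$ (uniform in $j$, which follows from the asymptotic behavior of $M_{jm}^{-1}$ as $j\to\infty$ — the determinant $\Delta_{jm}$ grows like $jm(jm-1)\Theta_m^{\pm 2}$ while entries stay bounded) so that $(H,h)$ genuinely lands back in $(X^{k,m}_c)^2$ with the analytic weight; this is the one place where a small quantitative check is needed rather than pure linear algebra. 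For $j=1$, the image of $(-m)M_m(\Theta_m^{\pm})$ is the one-dimensional column space of $M_m(\Theta_m^{\pm})$.

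The key computation is then to verify that the column space of $M_m(\Theta_m^{\pm})$ is spanned by $\binom{-\frac{1}{2m}b^{m+1}}{-\frac{1}{2m}b+\frac{b}{2}(1-\Theta_m^{\pm})}$. Since $M_m(\Theta_m^{\pm})$ has rank $1$, its column space is spanned by either of its (nonzero) columns; taking the second column $\binom{-\frac{b^{m+1}}{2m}}{-\frac{b}{2m}+\frac{b}{2}(1-\Theta_m^{\pm})}$ directly gives exactly the claimed vector, so this is immediate — one only needs to note that this column is nonzero (true since $b\neq 0$, $b<1$) and that $\Delta_m(\Theta_m^{\pm})=0$ guarantees the rank is exactly $1$ (Lemma~\ref{propbstar}). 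One should double-check consistency by also confirming the first column $\binom{\frac{b^2}{2}-\frac{\Theta_m^\pm}{2}+\frac{\Theta_m^\pm}{2m}}{\Theta_m^\pm\frac{b^m}{2m}}$ is a scalar multiple of the same vector, which is equivalent to the vanishing of $\det M_m(\Theta_m^{\pm})$ — i.e., to $\Delta_m(\Theta_m^{\pm})=0$ — so it is automatic. Putting the frequencies together: $(Q,q)\in\mathrm{Im}(D\mathcal{F})$ iff the only nontrivial constraint, the one at $j=1$, holds, which is precisely the membership condition defining $Z$.

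The main obstacle is not the linear algebra at a single frequency — that is essentially free once one reads off a column of $M_m(\Theta_m^{\pm})$ — but rather making sure the reconstructed preimage $(H,h)$ has the correct \emph{analytic} regularity, i.e. lies in $(X^{k,m}_c)^2$ and not just in some larger space. Concretely, one must show $\sum_j |(A_{jm},a_{jm})|^2(1+jm)^{2k}(\cosh(cjm)^2+\sinh(cjm)^2) < \infty$ given the analogous sum for $(Q_{jm},q_{jm})$ with weight $(1+jm)^{2(k-1)}$. The gain of one derivative is supplied by the factor $(-jm)$ and the $1/\det M_{jm}\sim 1/(jm)^2$ decay, which together beat the weight mismatch; I would isolate this as a short lemma on the asymptotics of $M_{jm}(\Theta_m^\pm)^{-1}$, or simply invoke that $D\mathcal{F}(\Theta_m^\pm,0)$ is, modulo the rank-one defect at $j=1$, an isomorphism from $(X^{k,m}_c)^2$ onto the closed subspace $Z$ of $(Y^{k-1,m}_c)^2$ — a fact that also underlies Step 1's regularity claims. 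The surjectivity direction ($Z\subseteq\mathrm{Im}$) and the necessity direction ($\mathrm{Im}\subseteq Z$) then follow together from this mode-by-mode analysis.
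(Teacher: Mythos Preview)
Your approach is correct and essentially identical to the paper's: both prove the two inclusions via the Fourier block-diagonal structure, identify the second column of $M_m(\Theta_m^{\pm})$ as the generator of the image at frequency $m$, and then verify that the mode-wise preimage $(A_{jm},a_{jm}) = (-jm)^{-1}M_{jm}(\Theta_m^{\pm})^{-1}(Q_{jm},q_{jm})$ lies in $(X^{k,m}_c)^2$. One small correction on the asymptotics you sketched: the leading behavior of $\Delta_{jm}(\Theta_m^{\pm})$ is $(jm)^2(b^2-\Theta_m^{\pm})(1-\Theta_m^{\pm})$, not $(jm)^2(\Theta_m^{\pm})^2$, so $\det M_{jm}(\Theta_m^{\pm})$ tends to a nonzero constant (not $(jm)^2$); the paper isolates precisely this as Lemma~\ref{bstar_theta}, where the crucial point is proving $b^2<\Theta_m^{\pm}$ so that this leading coefficient does not vanish.
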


%Note: The spaces $Y^{k-1,m}$ are the spaces of odd functions that have $m$-fold symmetry and $H^{k}$ (or $C^k$) regularity. We can do analytic spaces here if we want, as well.

\begin{proof}
In view of Proposition~\ref{Linearized_operator},  $\text{Im}\left( D\mathcal{F}(\Theta_m^{\pm},0) \right) \subset Z$ is trivial, since $M_{jm}(\Theta_m^{\pm})$ is non-singular for $j>1$, and $\text{Im}(M_m(\Theta_m^{\pm})) = \text{span}\left\{ \begin{pmatrix}
 -\frac{1}{2m}b^{m+1} \\
-\frac{1}{2m}b + \frac{b}{2}(1-\Theta)
\end{pmatrix}
\right\}.$ In order to prove $\text{Im}\left( D\mathcal{F}(\Theta_m^{\pm},0) \right) \supset Z$, we need to check whether the possible preimage satisfies the desired regularity. To do so, we have the following asymptotic lemma:
\begin{lemma}\label{bstar_theta}
For fixed $m\geq 2$ and $b$ defined as in Proposition~\ref{onedimensionality}, we have $b^2-\Theta_m^{\pm}< 0 $ and
\begin{align}
\frac{b}{4j^2m^2}\Delta_{jm}(\Theta_m^{\pm})= \frac{b}{4}\left( b^2-\Theta_m^{\pm} \right)(1-\Theta_m^{\pm})+O\left( \frac{1}{jm} \right), \quad \text{ as }j\to\infty,
\end{align}
Consequently, we have
\begin{align}\label{asymptote}
\text{det}{(M_{jm}(\Theta_m^{\pm}))^{-1}} \lesssim_{m,\Theta} 1, \quad \text{ for sufficiently large }j.
\end{align}
\end{lemma}
\begin{rem}\label{zero_mean_bifurcation1}
As shown in the above lemma, there is no bifurcation curve from the two-layered vortex patch with zero-average. This is due to the fact that the radial vorticity $\omega$ determined by $b$ and $\Theta^{\pm}$ as in \eqref{vortexpatch} satisfies $\int_{\mathbb{R}^2}\omega dx = \pi\left(-b^2+\Theta_m^{\pm}\right) > 0$. Note that if we require $\Theta=b^2$ to ensure $\int \omega dx = 0$, it follows from \eqref{determinantm} that $\Delta_m(b^2)=mb^2(1-b^2)+b^2(b^{2m}-1)$, which does not vanish for any $m\ge 2$ unless $b=0$ or $b=1$. Therefore, for any $0<b<1$, the linearized operator is an isomorphism and the implicit function theorem shows that there cannot be a bifurcation.
\end{rem}
\color{black}
\begin{prooflem}{bstar_theta}
First we show that $b^2< \Theta_m^{\pm}$. By Lemma \ref{propbstar}, $\Theta_m^+ \Theta_m^-=b^2$ and $0<\Theta_m^{\pm}<1$. Thus $\Theta_m^{\pm}>b^2$.  Therefore the first assertion is proved. The second assertion follows directly from \eqref{determinantm} since
\begin{align*}
\frac{b}{4(jm)^2}\Delta_{jm}(\Theta_m^{\pm})=\frac{b}{4(jm)^2}\left((b^2-\Theta_m^{\pm})(1-\Theta_m^{\pm})(jm)^2+O\left( m \right) \right)=\frac{b}{4}\left(b^2-\Theta_m^{\pm}\right)(1-\Theta_m^{\pm})+O\left( \frac{1}{jm} \right).
\end{align*}
Lastly, by choosing $j$ large so that $|\frac{b}{4(jm)^2}\Delta_{jm}(\Theta_m^{\pm})|>\frac{b}{8}(\Theta_m^{\pm}-b^2)(1-\Theta_m^{\pm})>0$, we have
\begin{align*}
|\text{det}(M_{jm}(\Theta_m^{\pm})^{-1})| =  \frac{1}{|\frac{b}{4(jm)^2}\Delta_{jm}(\Theta_m^{\pm})|}\lesssim 1,
\end{align*}
which proves \eqref{asymptote}.
\end{prooflem}
Now for an element $(Q,q)\in Z$, let $(H,h)$ be such that

\begin{align*}
H(x)=\sum_{j=1}^\infty A_{jm}\cos(jmx), \quad h(x)=\sum_{j=1}^\infty a_{jm}\cos(jmx),
\end{align*}
with
\begin{align*}
\begin{pmatrix}
A_m\\
a_m
\end{pmatrix}
=-\frac{1}{m}
\begin{pmatrix}
0\\
\lambda_{Q,q}
\end{pmatrix},
\quad
\begin{pmatrix}
A_{jm}\\
a_{jm}
\end{pmatrix}
=
(-jm)^{-1}M_{jm}(\Theta_m^{\pm})^{-1}
\begin{pmatrix}
Q_{jm}\\
q_{jm}
\end{pmatrix}
\quad \text{ for $j>1$}.
\end{align*}
It is clear from \eqref{Linearized_operator} that $D\mathcal{F}(\Theta_m^{\pm},0,0)(H,h)=(Q,q).$ We will prove that $(H,h) \in X^{k,m}_c \times X^{k,m}_c$.
From Lemma~\ref{bstar_theta} and the fact that $M_{jm}(\Theta_m^{\pm})$ is nonsingular for $j>1$,  it follows that
\begin{align}\label{asymptote2}
|A_{jm}|^2+|a_{jm}|^2 \lesssim (jm)^{-2}\left(|Q_{jm}|^2+|q_{jm}|^2\right) \quad \text{ for all }j>1.
\end{align}
Thus, we obtain
\begin{align*}
\rVert H\rVert_{X^{k,m}_c}^2+\rVert h\rVert_{X^{k,m}_c}^2 &= \sum_{j=1}^{\infty}\left( |A_{jm}|^2+|a_{jm}|^2 \right)(1+jm)^{2k}(\cosh(cjm)^2+\sinh(cjm)^2)\\
 &  \lesssim \frac{1}{m^{2}}\lambda_{Q,q}^{2}(1+m)^{2k}(\cosh(cm)^{2} + \sinh(cm)^{2}) \\
& + \sum_{j=2}^{\infty} (jm)^{-2}\left(|Q_{jm}|^2+|q_{jm}|^2\right)  (1+jm)^{2k}(\cosh(cjm)^{2} + \sinh(cjm)^{2})\\
&\lesssim 1+(\rVert Q\rVert_{Y^{k-1,m}_c}^2+\rVert q\rVert_{Y^{k-1,m}_c}^2)\\
& <\infty.
\end{align*}
This proves that $(H,h)\in X^{k,m}_c\times X^{k,m}_c$, and therefore $Z\subset \text{Im}\left( D\mathcal{F}(\Theta_m^{\pm},0) \right)$.
\end{proof}

\subsubsection{Step 5: Transversality}\label{Step_5_Transversality}
\begin{prop}\label{transv_prop} We have that
\begin{align}\label{transversality_1}
\pa_{\Theta} D\mathcal{F}(\Theta_m^{\pm},0)[v_0] \not \in \text{Im}(D\mathcal{F}(\Theta_m^{\pm},0)),
\end{align}
where $v_0=v_0(\Theta_m^{\pm})$ is as given in Proposition~\ref{onedimensionality}.
\end{prop}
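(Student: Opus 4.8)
The plan is to collapse \eqref{transversality_1} to a statement about the single $2\times 2$ matrix $M_m(\Theta)$ from Proposition~\ref{Linearized_operator}, and then settle that statement with the adjugate identity for $M_m$. First I would record the Fourier bookkeeping: the kernel element $v_0 = v_0(\Theta_m^{\pm})\cos(mx)$ lives in the single mode $\cos(mx)$, and $D\mathcal{F}(\Theta,0)$ acts mode-by-mode through the matrices $M_n(\Theta)$; hence $\partial_\Theta D\mathcal{F}(\Theta_m^{\pm},0)[v_0]$ is supported purely on the $\sin(mx)$ mode, with Fourier coefficient vector $(-m)\,\partial_\Theta M_m(\Theta_m^{\pm})\,v_0(\Theta_m^{\pm})\in\mathbb{R}^2$. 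By the description of $Z=\mathrm{Im}(D\mathcal{F}(\Theta_m^{\pm},0))$ in Proposition~\ref{codimension_one}, a function supported on the $\sin(mx)$ mode lies in $Z$ precisely when its coefficient vector lies in $\mathrm{Im}(M_m(\Theta_m^{\pm}))$. Therefore \eqref{transversality_1} is equivalent to
\[
\partial_\Theta M_m(\Theta_m^{\pm})\,v_0(\Theta_m^{\pm})\ \notin\ \mathrm{Im}\big(M_m(\Theta_m^{\pm})\big).
\]

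Since $\mathrm{rk}\,M_m(\Theta_m^{\pm})=1$ by Lemma~\ref{propbstar}, it is enough to produce a nonzero row vector $\eta$ with $\eta\, M_m(\Theta_m^{\pm})=0$ (so that the line $\mathrm{span}(\eta)^{\perp}$ is exactly $\mathrm{Im}(M_m(\Theta_m^{\pm}))$) and to check that $\eta\cdot\big(\partial_\Theta M_m(\Theta_m^{\pm})\,v_0(\Theta_m^{\pm})\big)\neq 0$. I would take $\eta:=(-M_{21},M_{11})$ evaluated at $\Theta_m^{\pm}$, i.e. the second row of $\mathrm{adj}(M_m(\Theta_m^{\pm}))$: it is nonzero because $M_{21}=\Theta_m^{\pm}b^{m}/(2m)\neq 0$, and $\eta\, M_m(\Theta_m^{\pm})=(0,\det M_m(\Theta_m^{\pm}))=(0,0)$. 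Now differentiate $\mathrm{adj}(M_m(\Theta))\,M_m(\Theta)=\det(M_m(\Theta))\,I$ in $\Theta$ and evaluate at $\Theta_m^{\pm}$; right-multiplying by $v_0(\Theta_m^{\pm})$ and using $M_m(\Theta_m^{\pm})v_0(\Theta_m^{\pm})=0$ kills the $\partial_\Theta(\mathrm{adj}\,M_m)$ term and leaves
\[
\mathrm{adj}\big(M_m(\Theta_m^{\pm})\big)\,\partial_\Theta M_m(\Theta_m^{\pm})\,v_0(\Theta_m^{\pm})=\partial_\Theta\big(\det M_m\big)(\Theta_m^{\pm})\,v_0(\Theta_m^{\pm}).
\]
Reading off the second component gives $\eta\cdot\big(\partial_\Theta M_m(\Theta_m^{\pm})\,v_0(\Theta_m^{\pm})\big)=\partial_\Theta(\det M_m)(\Theta_m^{\pm})\cdot\big(v_0(\Theta_m^{\pm})\big)_2$.

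Finally I would check that neither factor on the right vanishes. By \eqref{determinantm}, $\det M_m=\tfrac{b}{4m^{2}}\Delta_m$, so $\partial_\Theta(\det M_m)(\Theta_m^{\pm})=\tfrac{b}{4m^{2}}\Delta_m'(\Theta_m^{\pm})$, which is nonzero: $\Delta_m$ is a genuine quadratic in $\Theta$ with leading coefficient $m(m-1)>0$ and strictly positive discriminant $D_m$ (this is exactly \eqref{d1mnonzero} in the proof of Lemma~\ref{propbstar}), so $\Theta_m^{-}<\Theta_m^{+}$ are simple roots. And $\big(v_0(\Theta_m^{\pm})\big)_2=\Theta_m^{\pm}b^{m}/(2m)>0$ since $\Theta_m^{\pm}\in(0,1)$ and $b\in(0,1)$. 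Hence $\eta\cdot\big(\partial_\Theta M_m(\Theta_m^{\pm})\,v_0(\Theta_m^{\pm})\big)\neq 0$, which gives \eqref{transversality_1}. I do not expect a genuine obstacle here: the only delicate point is the bookkeeping in the first step — confirming that both $\mathrm{Im}(D\mathcal{F}(\Theta_m^{\pm},0))$ restricted to the relevant subspace and $\partial_\Theta D\mathcal{F}(\Theta_m^{\pm},0)[v_0]$ remain confined to the single mode $\sin(mx)$, so that the infinite-dimensional question is genuinely reduced to $2\times2$ linear algebra — after which the argument is elementary and lives entirely inside the spectral picture for $M_m$ already set up in Lemma~\ref{propbstar}.
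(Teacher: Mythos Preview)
Your proof is correct and follows a genuinely different route from the paper's. Both arguments first reduce the infinite-dimensional statement to the $2\times 2$ claim $\partial_\Theta M_m(\Theta_m^{\pm})\,v_0\notin\mathrm{Im}(M_m(\Theta_m^{\pm}))$, but they diverge from there. The paper computes $w_1:=\partial_\Theta M_m(\Theta_m^{\pm})\,v_0$ explicitly, writes down the image generator $w$ explicitly, and checks that $w_1$ and $w$ are not parallel by computing the $2\times 2$ determinant; this boils down to the factored condition $\big(\Theta_m^{\pm}-(1-\tfrac{1+b^m}{m})\big)\big(\Theta_m^{\pm}-(1-\tfrac{1-b^m}{m})\big)\neq 0$, which is then verified by a contradiction argument that feeds back into the equation $\Delta_m(\Theta_m^{\pm})=0$ and forces $\Theta_m^{\pm}=b$, contradicting $\Theta_m^{+}\Theta_m^{-}=b^{2}$ with $\Theta_m^{+}\neq\Theta_m^{-}$. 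Your approach bypasses all of this explicit computation: differentiating the adjugate identity and using $M_m(\Theta_m^{\pm})v_0=0$ shows directly that the transversality pairing equals $\partial_\Theta(\det M_m)(\Theta_m^{\pm})\cdot(v_0)_2$, so the condition is exactly the simplicity of $\Theta_m^{\pm}$ as a root of $\Delta_m$, which is already contained in Lemma~\ref{propbstar} via $D_m>0$. Your argument is shorter and more conceptual (it is the standard ``eigenvalue crosses zero with nonzero speed'' picture), and it makes transparent \emph{why} the transversality holds; the paper's computation, while more hands-on, yields an explicit algebraic identity that is perhaps useful if one later wants quantitative information about the bifurcation.
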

\begin{proof}
  For $h(x) = \sum_{j}a_{jm} \cos(jmx),$ $H(x) = \sum_{j}A_{jm}\cos(jmx)$, we have that (see Proposition~\ref{Linearized_operator}):
\begin{align*}
\partial_\Theta D\mathcal{F}(\Theta_m^{\pm},0)[H,h] = \left(\begin{array}{c}U(x) \\ u(x) \end{array}\right),
\end{align*}
where
\begin{align*}
 U(x) = \sum_{j}U_{jm} \sin(jmx), \quad u(x) = \sum_{j} u_{jm} \sin(jmx),
\end{align*}
and the coefficients satisfy, for any $j$:
\begin{align*}
\left(\begin{array}{c}U_{jm} \\ u_{jm} \end{array}\right)
:=
(-jm) \partial_{\Theta}M_{jm}(\Theta_m^{\pm})
\left(\begin{array}{c}A_{jm} \\ a_{jm} \end{array}\right)
:= 
(-jm) \left(
\begin{array}{cc}
 -\frac12 + \frac{1}{2m} & 0 \\
\frac{b^{m}}{2m} & -\frac{b}{2}
\end{array}
\right)
\left(\begin{array}{c}A_{jm} \\ a_{jm} \end{array}\right).
\end{align*}

Letting 

\begin{align*}
 v_{0}(\Theta_m^{\pm}) = 
\left(
\begin{array}{c}
 \frac{b}{2m} - \frac{b}{2}(1-\Theta_m^{\pm}) \\
\frac{\Theta_m^{\pm}}{2m}b^{m}
\end{array}
\right), \quad
 w(\Theta_m^{\pm}) = 
\left(
\begin{array}{c}
 -\frac{1}{2m}b^{m+1} \\
-\frac{1}{2m}b + \frac{b}{2}(1-\Theta_m^{\pm})
\end{array}
\right), 
\end{align*}

be the generators of Ker$(M_{m}(\Theta_m^{\pm}))$ and Im$(M_{m}(\Theta_m^{\pm}))$ respectively, the transversality condition is equivalent to prove that $w_1(\Theta_m^{\pm})$ and $w(\Theta_m^{\pm})$ are not parallel, where

\begin{align*}
w_{1}(\Theta_m^{\pm}) & =  \pa_{\Theta} M_{m}(\Theta_m^{\pm}) v_{0}(\Theta_m^{\pm}) =
\left(
\begin{array}{c}
\frac{b}{4}\left(\frac{1}{m}-1\right)\left(\frac{1}{m}-(1-\Theta_m^{\pm})\right)\\
\frac{b^{m+1}}{4m}\left(\frac{1}{m}-1\right)
\end{array}
\right)
\end{align*}

This is equivalent to prove that:

\begin{align*}
0 \neq -\frac{b^{2}}{8}\left(\frac{1}{m}-1\right)\left(\frac{1}{m}-(1-\Theta_m^{\pm})\right)^{2} + \frac{b^{2m+2}}{8m^{2}}\left(\frac{1}{m}-1\right) \Leftrightarrow \left(\Theta_m^{\pm} - (1 - \frac{1+b^m}{m})\right)\left(\Theta_m^{\pm} - (1 - \frac{1-b^m}{m})\right)\neq 0.
\end{align*}
We prove it by contradiction. If $\Theta_m^{\pm} = 1 - \frac{1+b^m}{m}$, we have $b^m=m(1-\Theta_m^{\pm})-1$.
Moreover, by \eqref{determinantm},we have
\[
\Theta_m^{\pm}b^{2m}+b^{m+2}m+\Theta_m^{\pm}(1-m)b^m=0.
\]
Hence, 
\begin{align*}
   & \Theta_m^{\pm}b^{m}+b^{2}m+\Theta_m^{\pm}(1-m)=0\\
   &\Rightarrow \Theta_m^{\pm}(m(1-\Theta_m^{\pm})-1)+b^{2}m+\Theta_m^{\pm}(1-m)=0\\
   &\Rightarrow -m(\Theta_m^{\pm})^2+b^2m=0.
\end{align*}
Since $\Theta_m^{\pm}>0, b>0$, we have
\[
\Theta_m^{\pm}=b,
\]
implying a contradiction since $\Theta_m^+ \Theta_m^- = b^2$ and $\Theta_m^+ \neq \Theta_m^-$. If $\Theta_m^{\pm} = 1 - \frac{1-b^m}{m}$, we can follow the same way to get $\Theta_m^{\pm}=b$ and get a contradiction.
%So $b=1-\frac{1+b^m}{m}$. But by \eqref{d1mnonzero}, we have $D_1=1+b^m-m-mb\neq 0$. Then we have a contradiction. 
\end{proof}

\begin{proofthm}{teoremaestacionarias}
All the hypotheses of the Crandall-Rabinowitz theorem were checked in Propositions~\ref{onedimensionality}, \ref{codimension_one} and \ref{transv_prop}. Therefore the desired result follows immediately.
\end{proofthm}
%\subsubsection{Step 6} 
%This step was already done for gSQG equations in \cite{castro2016} and the same proof can be easily adapted to Euler equation. This can be seen by doing the change of variables $y \rightarrow -y$ and $y \rightarrow y+\frac{2\pi}{m}$ inside the integral operators, $S_{i,j}$, which yields that $F(b,r,R)$ is $m$-fold symmetric and odd if $(r,R)$ is $m$-fold symmetric and even. This completes the proof of Theorem~\ref{teoremaestacionarias}.

%----------------------------------------------------------------------------------------
%----------------------------------------------------------------------------------------
\section{Existence of non-radial stationary vortex patches with finite energy}

\label{Section4}
 
In this section, we aim to prove that there exist non-trivial patch solutions with finite kinetic energy, $\frac{1}{2\pi}\int_{\RR^2} |\nabla \left( \omega * \log|\cdot|\right)|^2 dx < \infty$. As mentioned in \eqref{mean_energy}, this property is equivalent to $\int_{\RR}\omega dx = 0$. By Remark \ref{zero_mean_bifurcation1}, we can not use two-layer patches and instead we will consider three-layer patches.
\subsection{Main results for finite energy}\label{main_finite_1}
We consider vortex patches with three layers, that is, $i\in \left\{1,2,3\right\}$ in the setting in Section~\ref{functional_setting}. The total vorticity that we consider is of the form $\omega= \sum_{i=1}^3 \Theta_i 1_{D_i}$, where $D_i$ is determined by $\partial D_i = \left\{ (b_i + R_i(x))(\cos x,\sin x) : x \in \mathbb{T} \right\}$.  We will look for a bifurcation curve from the radial one, $\sum_{i=1}^3 \Theta_i 1_{B_{b_i}}$, where $B_r$ denotes the disk with radius $r$ centered at the origin. We have the following parameters and functional variables:

\begin{itemize}
 \item $b_i \in \RR$: the radii of the different layers of the annuli. We will have $1 =: b_1 > b_2 > b_3$.

\item $\Theta_i\in \RR$: the vorticity at the different layers. We will choose $\Theta_1 := 1$.

\item $R:=(R_1,R_2,R_3) \in (X^{k,m})^3$, for some $3\le k\in \mathbb{N}$ : the functional variables that determine the boundaries. 
\end{itemize}

In the rest of this section, we will fix $m$, $b_2$ and $\Theta_2$ so that for $2 \le m \in \mathbb{N}$,
\begin{align}\label{parameter_3}
0< b_2<\left(\frac{1}{2}\right)^{\frac{1}{2m}} \quad \text{ and }\quad \frac{m({b_2}^2-1)}{(1-{b_2}^{2m})b_2^2}< \Theta_2< \min\bigg\{2\frac{{b_2}^{2m-2}(b_2^2-1)m}{1-b_2^{2m}},\ \frac{-1}{b_2^2}\bigg\}.
\end{align}
 Given $R$, $\Theta_i$, $b_1$ and $b_2$, we choose $b_3$  so that
 \begin{align}\label{Theta b relation}
\int_{\mathbb{R}^2}\omega(x)dx = \sum_{i=1}^3\int_0^{2\pi}\Theta_i(b_i+R_i(x))^2dx=0. 
\end{align}
Since $b_1$, $b_2$ and $\Theta_1$, $\Theta_2$ are fixed constants, \eqref{Theta b relation} implies that $b_3$ is a function of $\Theta_3$ and $R$, more precisely, 
\begin{align}\label{def_b3}
b_3 &= b_3(\Theta_3,R) \nonumber \\
& = \sqrt{-\frac{1}{2\pi\Theta_3} \left( \Theta_1\int_0^{2\pi} (b_1 + R_1(x))^2dx + \Theta_2 \int_0^{2\pi} (b_2 + R_2(x))^2dx + \Theta_3\int_0^{2\pi}R_3(x)^2dx \right)  }
\end{align}
If $\Theta_3,b_3(\Theta_3,R)\ne 0$, then its derivative with respect to $R$ is given by
\begin{align}\label{b_der_R}
Db_3(\Theta_3,R)[h] & := \frac{d}{dt}b_3(\Theta_3,R+th)\bigg|_{t=0} \nonumber\\
& = -\frac{1}{2\pi \Theta_3 b_3(\Theta_3,R)}\left( \Theta_1\int_{\mathbb{T}}R_1(x)h_1(x)dx  +  \Theta_2\int_{\mathbb{T}}R_2(x)h_2(x)dx  + \Theta_3\int_{\mathbb{T}}R_3(x)h_3(x)dx\right),
\end{align}
where we used $\int_\mathbb{T}h_i(x)dx = 0$ for $h_i\in X^{k,m}$.

Note that  for sufficiently small $\|R_i\|_{L^\infty}$ and $|\Theta_3-\Theta^*_{3,m}|$, where $\Theta^*_{3,m}$ is as defined in Lemma~\ref{kernel}, we can choose $b_3=b_3(\Theta_3,R_i)$ so that \eqref{Theta b relation} is compatible with $b_2>b_3>0$ (see Lemma~\ref{kernel}).  Therefore, a 4-tuple $(\Theta_3,R_1,R_2,R_3)=:(\Theta_3,R)$ uniquely determines $\omega= \sum_{i=1}^3 \Theta_i 1_{D_i}$ such that the boundary of the $i$th patch surrounds the $j$th patch if $i < j$ and $\int_{\RR}\omega dx = 0$. In the proof, $\Theta_3$ will play the role of the bifurcation parameter and we will look for a bifurcation from $(\Theta^*_{3,m},0)\in \RR\times (X^{k,m})^3$.

 %Let $m$ be fixed. %For simplicity we may think that $m = 2$, and this will allow us to do the calculations explicitly but we could remove that requirement. 
%We choose $2\le m\in \mathbb{N}$. Now, $(\Theta_2(0), \Theta_3(0), b_2(0), b_3(0))$ satisfing the condition in the proposition \ref{Fredholm} and also the zero mean-condicion.
%\begin{align}
%1+\Theta_2(0)b_2(0)^2+\Theta_3(0)b_3(0)^2=0.
%\end{align}
%During the bifurcation process, we will fix $\Theta_2, b_2$. So 
%\[
%\Theta_2\equiv\Theta_2(0), \ b_2\equiv b_2(0).
%\]
% We also do the bifurcation in the zero mean space. So we choose $b_3=b_3(\Theta_3, R)>0$ satifying
%$b_3$ is well defined when $\|R\|_{L^\infty}$ and $|\Theta_3-\Theta_3(0)|$ are sufficiently small.

With this setting, the system \eqref{stationary_equation} is equivalent to
\begin{align}\label{stationarR_equation3}
 0=G(\Theta_3, R) :=(G_1(\Theta_3, R), G_2(\Theta_3, R), G_3(\Theta_3, R)),
 \end{align}
 where
  \begin{align}\label{stationarR_equation4}
G_j(\Theta_3, R):=\mathcal{F}_j(b(\Theta_3,R),\Theta_3,R)), \text{ and }b(\Theta_3,R):=(b_1,b_2,b_3(\Theta_3,R)), \quad j=1,2,3.
  \end{align}
Now, we are ready to state the main theorem of this section:

\begin{theorem}
\label{teoremaestacionarias2}
 Let $k\geq 3$ and  $2 \le m\in \mathbb{N}$, $\Theta_1 = b_1= 1$ and $b_2$ and $\Theta_2$ as in \eqref{parameter_3}. Then for some $s_0=s_0(m,k,b_2,\Theta_2) > 0$, there exists a bifurcation curve $[0,s_0) \ni s\mapsto (\Theta_3(s),R(s)) \in \RR \times (X^{k,m})^3$  such that for each $s\in (0,s_0)$, $(\Theta_3(s),R(s))$ is a solution of the equation \eqref{stationarR_equation3} and $(R(s)) \ne 0\in (X^{k,m})^3$. The bifurcation curve emanates from $(\Theta_3(0),R(0)) = (\Theta^*_{3,m},0)$, where $\Theta^*_{3,m}$ is defined in Lemma~\ref{kernel}.
 %$\ep_0>0$ such that for all $0<\ep<\ep_0$, there exist a curve of nontrival continous solutions $\Psi(t) : [0,\epsilon] \mapsto \RR\times X^{k,m}_c\times X^{k,m}_c \times X^{k,m}_c$, that is,
 
% \begin{align}\label{main}
% G(\Psi(t))=0, \quad \Psi(t)=\left(\Theta_3(t),R_1(t),R_2(t),R_3(t)\right) \quad \text{ for all }t\in [ 0,\epsilon],
% \end{align}
%where $\Psi(0)=(\Theta_3(0),0,0,0)$.
\end{theorem}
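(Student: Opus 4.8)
The plan is to realize Theorem~\ref{teoremaestacionarias2} as a Nash--Moser/Newton construction, since, as explained in the introduction, the linearized operator $D_RG(\Theta^*_{3,m},0)$ fails to be Fredholm between the spaces $H^{k+1}$ and $H^k$ because the velocity of a zero-average radial patch vanishes on the outermost boundary (Lemma~\ref{zero mean}). Concretely, I would carry out the five-step scheme sketched in Subsection~``Structure of the proofs'': (1) set up $G$ and $b_3(\Theta_3,R)$ as in \eqref{stationarR_equation3}--\eqref{stationarR_equation4}, verifying the regularity of $G$ and of $Db_3$ on a small ball, and record the identity $G(\Theta_3,0)=0$ for all $\Theta_3$ coming from \eqref{trivial_one}; (2) compute the linearized matrix symbol $M_n(\Theta_3)$ of $D_RG(\Theta_3,0)$ analogously to Proposition~\ref{Linearized_operator}, and find the critical $\Theta^*_{3,m}$ (this is the content I would attribute to Lemma~\ref{kernel}) at which $\mathrm{Ker}(D_RG(\Theta^*_{3,m},0))$ is one-dimensional, spanned by some $v\cos(mx)$ with $v\in\RR^3$, $\mathrm{Im}(D_RG(\Theta^*_{3,m},0))^\perp$ is one-dimensional, and the transversality condition $\partial_{\Theta_3}D_RG(\Theta^*_{3,m},0)[v]\notin\mathrm{Im}\,D_RG(\Theta^*_{3,m},0)$ holds; the constraint in \eqref{parameter_3} on $(b_2,\Theta_2)$ is exactly what makes such a $\Theta^*_{3,m}\in(0,\infty)$ with $b_2>b_3>0$ exist.

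Next I would pass from $G$ to the reduced functional $\tilde G_s(R):=G(\Theta^*_{3,m}+v\cdot R,\ sv+(I-P)[R])$ as in \eqref{tilde_g_intro}, where $P$ is the projection onto $\mathrm{Ker}(D_RG(\Theta^*_{3,m},0))$, so that a genuine solution corresponds to a zero of $\tilde G_s$ near $0$; because $v$ lies in the kernel one automatically gets $\tilde G_s(0)=O(s^2)$, which will serve as the ``good initial guess'' for Newton's method. The crucial structural input is the decomposition $D_RG=a+A$ as in \eqref{decomp_11}, with $a(\Theta_3,R)$ vanishing whenever $(\Theta_3,R)$ is an actual stationary solution (again a consequence of Lemma~\ref{zero mean}), inducing $D\tilde G_s=T_s+a\circ(I-P)$ with $T_s:=\partial_{\Theta_3}G\circ P+A\circ(I-P)$. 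I would then prove, following Subsection~``Analysis of $T$'', that $T_s$ is invertible with bounds that lose a fixed number of derivatives, using the transversality condition to absorb the one-dimensional cokernel of $A$ into the $\partial_{\Theta_3}G\circ P$ term, together with the asymptotics of $M_{jm}(\Theta^*_{3,m})^{-1}$ as $j\to\infty$ in the spirit of Lemma~\ref{bstar_theta}. This $T_s$ plays the role of an \emph{approximate inverse} of $D\tilde G_s$: along the Newton iteration $R_{n+1}=R_n-S_{N_n}T_s^{-1}\tilde G_s(R_n)$ with smoothing operators $S_N$ as in \eqref{regularizing1}, the error term $a\circ(I-P)$ evaluated at the iterates tends to $0$ because the iterates converge to a stationary configuration, so the scheme still contracts.

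Finally I would run the standard Nash--Moser fixed-point/convergence argument: choosing $s_0$ small (depending on $m,k,b_2,\Theta_2$), a well-chosen sequence of regularization parameters $N_n$ growing like $N_0^{(3/2)^n}$, and tame estimates for $G$, $D\tilde G_s$, $T_s^{-1}$ and the quadratic remainder of $G$ on a small ball in $(X^{k+\mu,m})^3$, I would show that the iterates $R_n$ stay in that ball, form a Cauchy sequence in $(X^{k,m})^3$, and converge to $R_\infty=R_\infty(s)$ with $\tilde G_s(R_\infty)=0$ and $R_\infty\ne 0$ for $s\in(0,s_0)$; unwinding the definition of $\tilde G_s$ gives $(\Theta_3(s),R(s))=(\Theta^*_{3,m}+v\cdot R_\infty,\ sv+(I-P)R_\infty)$ solving \eqref{stationarR_equation3}, with $R(s)\ne 0$ since its kernel-component is $sv\ne 0$. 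Continuity of the curve in $s$ and $(\Theta_3(0),R(0))=(\Theta^*_{3,m},0)$ follow from the uniform bounds. The main obstacle I expect is step (4): establishing the invertibility of the approximate inverse $T_s$ with uniform-in-$s$, derivative-loss-controlled estimates — in particular checking that the transversality term genuinely restores invertibility on the full sequence of Fourier blocks $jm$, $j\ge1$, not just the resonant block $j=1$, and that the loss of derivatives is exactly compensable by the smoothing operators; verifying the spectral hypotheses of Lemma~\ref{kernel} for the three-layer matrix under the constraints \eqref{parameter_3} is the other delicate, computation-heavy point.
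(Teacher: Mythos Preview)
Your proposal is correct and follows essentially the same route as the paper: reduce to the functional $\tilde G_s$, decompose $D_RG=a+A$ with $a$ controlled by $G$ via the Dirichlet--Neumann type estimate, use $T_s=\partial_{\Theta_3}G\circ P+A\circ(I-P)$ as approximate right inverse, and close a Nash--Moser iteration with smoothing. Two small caveats worth noting: the bound on $T_s(R)^{-1}$ is \emph{not} uniform in $s$ but carries a factor $c_0/s$ (see \eqref{Tinvbound0}), which is precisely compensated by $\tilde G_s(0)=O(s^2)$; and the quantitative control $|a(\Theta_3,R)[h]|\lesssim |G(\Theta_3,R)|\,|h|$ in \eqref{approxinverse1} is not an immediate consequence of the qualitative Lemma~\ref{zero mean} but requires the layer-potential argument of Proposition~\ref{a_estimate11}.
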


   %The goal of this section is to show that there is a non-radial stationary patch solution $\omega = 1_D$, whose kinetic energy $\int_{\RR^2} |\nabla \left(\omega * \log|\cdot| \right)|^2 dx$ is finite.
% As mention in the proof of Corollary~\ref{infinite_energy_solution}, this property is equivalent to $\int_{\RR}\omega dx = 0$. A very simple maximum principle argument implies that such a stationary patch has compactly supported velocity $\nabla^\perp (\omega * \mathcal{N})$.
Theorem~\ref{teoremaestacionarias2} immediately implies the existence of non-radial stationary vortex patches with finite kinetic energy.
\begin{corollary}\label{finite_energy_solution}
Let $2\le m\in \N$ and $k\ge 3$. Then there is an $m$-fold symmetric stationary patch solution of the 2D Euler equation with $H^k$-regular boundary and finite kinetic energy, that is 
\[ \int_{\RR^2} \left|\nabla \left( \omega * \frac{1}{2\pi}\log|\cdot| \right)\right|^2 dx < \infty.
\] 
\end{corollary}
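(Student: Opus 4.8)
The plan is to read off the corollary from Theorem~\ref{teoremaestacionarias2}, exactly in the spirit of the proof of Corollary~\ref{infinite_energy_solution}. First I would apply Theorem~\ref{teoremaestacionarias2} with the given $m\ge 2$, $k\ge 3$ and $b_2,\Theta_2$ as in \eqref{parameter_3} to obtain a bifurcation curve $[0,s_0)\ni s\mapsto(\Theta_3(s),R(s))\in\RR\times(X^{k,m})^3$ of solutions of \eqref{stationarR_equation3} with $R(s)\ne 0$ for $s\in(0,s_0)$, emanating from $(\Theta^*_{3,m},0)$. Fix $s\in(0,s_0)$ and set $b_3(s):=b_3(\Theta_3(s),R(s))$ via \eqref{def_b3}; shrinking $s_0$ if necessary, the discussion preceding \eqref{stationarR_equation3} together with Lemma~\ref{kernel} guarantees $b_1>b_2>b_3(s)>0$, so the three curves $\partial D_i(s)=\{(b_i+R_i(s)(x))(\cos x,\sin x):x\in\mathbb{T}\}$ are simple, nested and $\tfrac{2\pi}{m}$-periodic.

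Next, since $(\Theta_3(s),R(s))$ solves \eqref{stationarR_equation3}, unwinding the definitions \eqref{stationarR_equation3}--\eqref{stationarR_equation4}, \eqref{stationary_equation} and \eqref{evolution_patch} shows that each $\partial D_i(s)$ is moved tangentially to itself by the velocity field generated by $\omega^s:=\sum_{i=1}^3\Theta_i 1_{D_i(s)}$ (with $\Theta_1=1$, $\Theta_2$ fixed and $\Theta_3=\Theta_3(s)$); hence $\omega^s$ is a stationary solution of \eqref{euler}. Its boundary is $H^k$-regular because $R_i(s)\in X^{k,m}\subset H^k(\mathbb{T})$, and $m$-fold symmetric because each $R_i(s)$ is a cosine series in multiples of $mx$; moreover $\omega^s$ is non-trivial (in particular non-radial) since $R(s)\ne 0$.

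For the energy, recall that $G$ in \eqref{stationarR_equation3}--\eqref{stationarR_equation4} was set up so that the choice \eqref{def_b3} of $b_3$ is precisely the zero-average constraint \eqref{Theta b relation}; therefore $\int_{\RR^2}\omega^s\,dx=\sum_{i=1}^3\int_0^{2\pi}\Theta_i(b_i+R_i(s)(x))^2\,dx=0$. By the dichotomy \eqref{mean_energy}, this is equivalent to $\int_{\RR^2}\left|\nabla\left(\omega^s*\tfrac{1}{2\pi}\log|\cdot|\right)\right|^2\,dx<\infty$, which is the claimed finiteness of the kinetic energy. The corollary therefore requires no new argument beyond Theorem~\ref{teoremaestacionarias2}; the only genuine work — the construction of the bifurcation curve in a regime where the linearized operator is not Fredholm in the usual sense — is carried out via the Nash--Moser scheme in the proof of that theorem, and the only bookkeeping point here is to ensure that $b_3(s)$ stays in $(0,b_2)$ along the curve, which is built into Theorem~\ref{teoremaestacionarias2} and Lemma~\ref{kernel}.
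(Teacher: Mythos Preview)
Your proposal is correct and follows essentially the same approach as the paper's proof: you invoke Theorem~\ref{teoremaestacionarias2} to obtain the bifurcation curve, observe that the constraint \eqref{Theta b relation}/\eqref{def_b3} built into the functional setting forces $\int_{\RR^2}\omega^s\,dx=0$, and then apply the equivalence \eqref{mean_energy} to conclude finite kinetic energy. The paper's own proof is just these last two sentences; the additional details you spell out (nesting of the boundaries, $H^k$-regularity, $m$-fold symmetry, non-triviality) are all correct and simply make explicit what the paper leaves implicit.
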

\begin{proof}
By the definition of $b_3$ in \eqref{Theta b relation}, each $\omega=\sum_{i=1}^3 \Theta_i 1_{D_i}$ which is determined by $(\Theta_3(s),R(s))$ for $s\in (0,s_0)$, satisfies $\int_\RR \omega dx = 0$. This is equivalent to $\int_{\RR^2} |\nabla \left( \omega * \frac{1}{2\pi}\log|\cdot| \right)|^2 dx < \infty$, (see \eqref{mean_energy}).
\end{proof}

 The existence of the bifurcation curves will be proved by means of a Nash-Moser iteration scheme (Theorem~\ref{theorem1}). The proof of Theorem~\ref{teoremaestacionarias2} will be accomplished in Subsection~\ref{checking_subsection} by checking the hypotheses of Theorem~\ref{theorem1}.

\subsection{Compactly supported velocity}
In this subsection, we digress briefly to observe an interesting consequence of Theorem~\ref{teoremaestacionarias2}. Thanks to a simple maximum principle lemma, it can be shown that each stationary solution on the bifurcation curves has compactly supported velocity. 

\begin{lemma}(the key Lemma)\label{zero mean} Assume that $\omega\in L^{1}\cap L^{\infty}(\RR^n)$ for $n\ge 2$ is compactly supported and let $\Omega$ be the unbounded connected component of $\text{supp}(\omega)^c$. We additionally assume that $\int_{\RR^n}\omega dx = 0$. Then for $f := \omega * \mathcal{N}$, where
\begin{align*}
\mathcal{N}(x) =
\begin{cases}
\frac{1}{2\pi}\log|x| & \text{ if }n=2 \\
\frac{1}{n(2-n)V_n}|x|^{2-n}, \quad V_n:=\frac{\pi^{\frac{n}{2}}}{\Gamma(\frac{n}{2}+1)} & \text{ if } n>2,
\end{cases}
\end{align*}
it holds that
\begin{align*}
\sup_{x\in \Omega}f(x)=\max_{x\in \partial \Omega}f(x) \quad \text{and} \quad \inf_{x\in \Omega}f(x)=\min_{x\in \partial \Omega}f(x).
\end{align*}
Consequently, if $f$ is constant on $\partial \Omega$, then $f$ is constant in $\Omega$.
\end{lemma}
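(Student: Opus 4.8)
<br>

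The plan is to use that $f:=\omega*\mathcal{N}$ solves $\Delta f=\omega$ in $\mathbb{R}^n$, so that $f$ is harmonic in $\Omega$ (where $\omega\equiv 0$), and to run a maximum-principle argument on the unbounded domain $\Omega$. As preliminaries: since $\omega\in L^1\cap L^\infty$ has compact support, elliptic regularity gives $f\in C^{1,\alpha}_{\mathrm{loc}}(\mathbb{R}^n)$, so $f$ is continuous on $\overline\Omega$ and real-analytic in $\Omega$; writing $\text{supp}(\omega)\subseteq B_{R_0}$, the set $\{|x|>R_0\}$ is connected and contained in $\text{supp}(\omega)^c$, hence in $\Omega$, while $\partial\Omega\subseteq\partial(\text{supp}(\omega))\subseteq B_{R_0}$ is bounded. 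I would first record that $f(x)\to 0$ as $|x|\to\infty$: for $n\geq 3$ this follows from $|\mathcal{N}(x-y)|\lesssim|x-y|^{2-n}$ and dominated convergence, while for $n=2$ it uses the hypothesis $\int\omega=0$ via $f(x)=\tfrac{1}{2\pi}\int\log\tfrac{|x-y|}{|x|}\,\omega(y)\,dy$, with $\log\tfrac{|x-y|}{|x|}\to 0$ uniformly for $y$ in the support.

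The crucial step, and the place where $\int\omega=0$ really enters, is to show that the spherical average $\overline f(R):=\tfrac{1}{|\partial B_R|}\int_{\partial B_R}f\,dS$ vanishes for every $R>R_0$. Indeed, by the divergence theorem $\int_{\partial B_R}\partial_r f\,dS=\int_{B_R}\Delta f\,dx=\int_{\mathbb{R}^n}\omega\,dx=0$ for $R>R_0$, i.e.\ $\overline f{}'(R)=0$; since also $\overline f(R)\to 0$ as $R\to\infty$ by the previous paragraph, $\overline f$ is constant with limit $0$ on $(R_0,\infty)$, hence $\overline f\equiv 0$ there. (Equivalently, one computes $\tfrac{1}{|\partial B_R|}\int_{\partial B_R}\mathcal{N}(x-y)\,dS_x$ directly: as a function of $y\in B_R$ it is a bounded radial harmonic function, hence constant, so it factors out of the integration against $\omega$.)

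Next I would rule out that $f$ is strictly one‑signed on $\partial\Omega$. Suppose $f>0$ on $\partial\Omega$; then $m_0:=\min_{\partial\Omega}f>0$, and applying the maximum principle to $-f$ on the bounded sets $\Omega\cap B_R$ (using $f\to 0$ at infinity and $\partial\Omega\subset B_R$ for $R>R_0$) and letting $\varepsilon\to 0$ gives $f\geq 0$ throughout $\Omega$. Since $\Omega$ is connected and $f$ is harmonic there, either $f>0$ everywhere in $\Omega$ or $f\equiv 0$ in $\Omega$; the latter would force $f\equiv 0$ on $\partial\Omega$, contradicting $f>0$ there, so $f>0$ in $\Omega$, hence on every sphere $\partial B_R$ with $R>R_0$, which contradicts $\overline f(R)=0$. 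Applying the same argument to $-\omega$ rules out $f<0$ on all of $\partial\Omega$, so $\min_{\partial\Omega}f\leq 0\leq\max_{\partial\Omega}f$.

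Finally I would conclude with the standard maximum principle on an unbounded domain. Set $M:=\max_{\partial\Omega}f\geq 0$. For $\varepsilon>0$ pick $R>R_0$ with $|f|<\varepsilon$ on $\{|x|\geq R\}$; the weak maximum principle on $\Omega\cap B_R$, whose boundary lies in $\partial\Omega\cup\partial B_R$ with $f\leq M$ on $\partial\Omega$ and $f<\varepsilon\leq M+\varepsilon$ on $\overline\Omega\cap\partial B_R$, gives $f\leq M+\varepsilon$ on $\Omega\cap B_R$, while $f<\varepsilon\leq M+\varepsilon$ on $\Omega\setminus B_R$; letting $\varepsilon\to 0$ yields $f\leq M$ on $\Omega$, and continuity of $f$ up to $\partial\Omega$ gives $\sup_\Omega f\geq\max_{\partial\Omega}f$, hence $\sup_\Omega f=\max_{\partial\Omega}f$. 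The statement for $\inf$ follows by replacing $f$ with $-f$, and if $f$ is constant on $\partial\Omega$ then $\max_{\partial\Omega}f=\min_{\partial\Omega}f$, so $\sup_\Omega f=\inf_\Omega f$ and $f$ is constant in $\Omega$ (the constant being $0$, by the third step). The only genuinely delicate point is that third step: the identity in the lemma is false for a general harmonic $f$ on an exterior domain with $f\to 0$ (e.g.\ $f=|x|^{2-n}$ for $n\geq 3$), and it is precisely the vanishing of the spherical means — a direct consequence of $\int\omega=0$ — that excludes such behaviour; the rest is the routine unbounded maximum principle.
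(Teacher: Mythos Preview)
Your proof is correct and shares the paper's key observation: the spherical mean $\phi(r)=\tfrac{1}{|\partial B_r|}\int_{\partial B_r}f$ satisfies $\phi'(r)=\tfrac{1}{|\partial B_r|}\int_{B_r}\omega=0$ for $r>R_0$, which is exactly where the hypothesis $\int\omega=0$ enters. The organization, however, differs. The paper argues by the strong maximum principle directly: setting $M=\sup_\Omega f$ and $A=\{x\in\Omega:f(x)=M\}$, the open--closed argument forces $A=\emptyset$ or $A=\Omega$; in the nontrivial case $A=\emptyset$ with $M>\max_{\partial\Omega}f$, the supremum is approached only at infinity, so (using that $f$ has a limit at infinity, which is where $\int\omega=0$ enters for $n=2$) $\lim_{r\to\infty}\phi(r)=M$, whence $\phi\equiv M$ on large spheres and $M=\phi(r)\le\max_{|x|=r}f\le M$ forces $A\ne\emptyset$, a contradiction.

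Your route instead runs the weak maximum principle on the truncations $\Omega\cap B_R$, which requires your preliminary ``sign'' step (ruling out $f>0$ on $\partial\Omega$ via $\overline f(R)=0$) to guarantee $M\ge 0$, so that the boundary contribution $\varepsilon$ on $\partial B_R$ is dominated by $M+\varepsilon$. This extra step is the price for avoiding the strong maximum principle in the final argument; on the other hand you actually compute the limiting value $\overline f(R)=0$ (the paper only uses that $\phi$ is constant), and thereby obtain the slightly sharper conclusion that the constant in the last clause is $0$. Both proofs are short and rest on the same idea; the paper's version is a single clean contradiction, while yours trades that for a more elementary truncation argument at the cost of one additional lemma-within-the-proof.
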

\begin{rem}  The above lemma does not hold without the assumption $\int_{\RR^n} \omega dx = 0$. For example, $f:=1_{B_1} * \mathcal{N}$ is harmonic in $\Omega:=B_1^c$, while $\max_{\Omega}f$ is unbounded in  $\Omega^c$.
\end{rem}

\begin{proof}
It suffices to prove the maximum part since we can apply the argument to $-f$ for the minimum part. The proof is classical but we present a proof for the sake of completeness.

  Let $M:=\sup_{x\in \Omega}f(x)$. Then $A:=\left\{ x\in \Omega : f(x)=M\right\}$ is relatively closed in  $\Omega$, since $f$ is continuous. Furthermore, since $f$ is harmonic in $\Omega$, the mean value property yields that $A$ is open. Therefore $A$ must be either $\emptyset$ or $\Omega$ since $\Omega$ is connected. If $A=\Omega$, then the result follows trivially. Now,  let us suppose that $A=\emptyset$. Towards a  contradiction, assume that $M > \max_{x\in \partial \Omega}f(x)$. Since $f$ is bounded in $\RR^n$ (this property still holds when $n=2$, thanks to $\int_{\RR^2}\omega dx =0$), therefore the only possible case is that
 \begin{align}\label{maximum}
 \lim_{|x|\to \infty}f(x)=M.
 \end{align}
 Let us consider $\phi(r):=\frac{1}{|\partial B_r|}\int_{|x|=r}f(x)d\sigma(x)$. For any sufficiently large $r$ such that $\Omega^c \subset B_r$, where $B_r$ is the ball centered at the origin with radius r, it follows that
 \begin{align*}
 \frac{d}{dr}\phi(r) & = \frac{d}{dr}\frac{1}{|S_{n-1}|}\int_{|x|=1}f(rx)d\sigma(x) = \frac{1}{|\partial B_r|}\int_{|x|<r} \omega (x)dx=0,
 \end{align*}
 Furthermore, \eqref{maximum} yields that
 \begin{align*}
 \lim_{r\to\infty}\phi(r)=M,
 \end{align*}
 hence, we have $\phi(r)=M$ for all sufficiently large $r$. For such $r>0$, we have
 \begin{align*}
 M=\frac{1}{|\partial B_r|}\int_{|x|=r}f(x)d\sigma\le \max_{x\in \partial B_r}f(x)\le M.
 \end{align*}
 This implies $A$ cannot be empty, which is a contradiction. Hence $M=\max_{x\in \partial \Omega}f(x)$.
\end{proof}

 \begin{corollary}\label{corollary_1}
 Let $2\le m\in \N$ and $k\ge 3$. There exists an m-fold symmetric stationary patch solution of the 2D Euler equation with $H^k$-regular boundary and compactly supported velocity.
 \end{corollary}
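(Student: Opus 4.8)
The plan is to combine Theorem~\ref{teoremaestacionarias2} with the key Lemma~\ref{zero mean}. By Theorem~\ref{teoremaestacionarias2}, for each fixed $2\le m\in\mathbb{N}$ and $k\ge 3$ there is a bifurcation curve $s\mapsto(\Theta_3(s),R(s))$ with $R(s)\ne 0$ in $(X^{k,m})^3$, and each point on the curve yields a stationary $m$-fold symmetric vortex patch $\omega^s = \sum_{i=1}^3 \Theta_i 1_{D_i(s)}$ with $H^k$-regular boundary. The functional setting in Subsection~\ref{main_finite_1} was rigged precisely so that $b_3$ is chosen via \eqref{Theta b relation} to make $\int_{\mathbb{R}^2}\omega^s\,dx=0$; hence these solutions already satisfy the zero-average hypothesis of Lemma~\ref{zero mean} (this is also the content of Corollary~\ref{finite_energy_solution}). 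One should also note that $\omega^s\in L^1\cap L^\infty(\mathbb{R}^2)$ with compact support, since it is a finite sum of bounded indicator functions of bounded domains, so Lemma~\ref{zero mean} applies with $n=2$.

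Next I would argue that the velocity vanishes outside the support. Since $\text{supp}(\omega^s)=\overline{D_1(s)}$ is the closure of a simply-connected domain with $H^k$, $k\ge3$, boundary, the complement $\text{supp}(\omega^s)^c$ is connected and equals $\Omega$, the unbounded component in Lemma~\ref{zero mean}. The stream function $\psi^s := \omega^s * \mathcal{N}$ is harmonic in $\Omega$; moreover, because $\omega^s$ corresponds to a \emph{stationary} solution, the velocity $u = \nabla^\perp \psi^s$ is tangent to $\partial D_1(s)$, which forces $\psi^s$ to be constant on $\partial\Omega = \partial D_1(s)$ (the outermost boundary component); here one uses that the outer boundary is a single connected curve so there is no ambiguity about the constant. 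Applying the last assertion of Lemma~\ref{zero mean}, $\psi^s$ is constant on $\Omega$, hence $u = \nabla^\perp\psi^s \equiv 0$ on $\Omega$. Therefore $\text{supp}(u)\subset \overline{D_1(s)}$ is compact, and choosing any $s\in(0,s_0)$ gives a genuinely non-radial (since $R(s)\ne 0$) $m$-fold symmetric stationary patch solution with $H^k$-regular boundary and compactly supported velocity, as claimed.

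The only delicate point — and the step I expect to require the most care — is the passage from ``stationary'' to ``$\psi^s$ is constant on the outermost boundary $\partial D_1(s)$.'' On each boundary curve $\partial D_j(s)$ the stationarity equation \eqref{evolution_patch} says $u\cdot z_j'^\perp = 0$, i.e. $u$ is tangent to the boundary, which is equivalent to $\partial_x(\psi^s\circ z_j) = \nabla\psi^s\cdot z_j' = -u^\perp\cdot z_j' = $ (up to sign) the normal component... one must be careful: $u\cdot z_j'^\perp=0$ means $u$ is parallel to $z_j'$, and since $u=\nabla^\perp\psi^s = (-\partial_{x_2}\psi^s,\partial_{x_1}\psi^s)$, this says $\nabla\psi^s$ is parallel to $z_j'^\perp$, i.e. the tangential derivative $\partial_x(\psi^s\circ z_j)=\nabla\psi^s\cdot z_j' = 0$, so $\psi^s$ is constant along $\partial D_j(s)$. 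This is exactly the observation recorded in the structure-of-proof discussion (see \eqref{def_G_stream}, where each component of $G$ is the tangential derivative of the stream function on each boundary component), so for a solution of \eqref{stationarR_equation3} it holds on every $\partial D_j(s)$ and in particular on $\partial\Omega=\partial D_1(s)$. With that in hand the maximum-principle Lemma~\ref{zero mean} finishes the argument immediately, and the same reasoning applies verbatim in the smooth setting mentioned in Remark~\ref{smooth_app}.
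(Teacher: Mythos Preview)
Your proof is correct and follows essentially the same approach as the paper: take a solution from Theorem~\ref{teoremaestacionarias2}, use stationarity to see that the stream function is constant on the outermost boundary $\partial D_1(s)$, and then apply Lemma~\ref{zero mean} (together with the built-in zero-mean condition \eqref{Theta b relation}) to conclude the stream function is constant on $D_1(s)^c$, hence the velocity is compactly supported. Your write-up in fact supplies more detail than the paper's own proof, including the explicit verification via \eqref{def_G_stream} that $G_1=0$ forces $\psi^s$ constant on $\partial D_1(s)$.
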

 \begin{proof}
 Let $\omega = \sum_{i=1}^3\Theta_i1_{D_i}$ be the vorticity determined by $(\Theta_3(s),R(s))$ for $s\in (0,s_0)$ in Theorem~\ref{teoremaestacionarias2}. From \eqref{evolution_patch} and \eqref{stationary_equation}, it follows that its stream function $f:=\frac{1}{2\pi}\omega * \log|\cdot|$ is constant on the outmost boundary $\partial D_1$. However, Lemma~\ref{zero mean} implies that $\sup_{D_1^c} f = \inf_{D_1^c}f$, therefore, $f$ is constant in $D_1^c$. This proves that $\text{supp}(\nabla^\perp f) \subset D_1$.
 \end{proof}

\begin{rem}\label{smooth_app}
In this paper we focus on patch type solutions $\omega=\sum_{i=1}^n\Theta_i 1_{D_i}$. Lemma~\ref{zero mean} is still applicable for smooth $\omega$ as well as long as the boundary of $\Omega:=\text{supp}(\omega)$ can be approximated by regular level curves of $\omega$ (See Figure~\ref{diagram2}). This is due to the fact that the stream function of the smooth stationary $\omega$ must be constant on each regular level set of $\omega$ (see \cite[Section 1]{GomezSerrano-Park-Shi-Yao:radial-symmetry-stationary-solutions}), hence the stream function has to be constant on each connected component of  $\partial \Omega$. If $\partial \Omega$ has only one connected component, then the velocity vanishes in the unbounded component of $\Omega^c$.
\end{rem}

\begin{figure}[h!]
\begin{center}
\includegraphics[scale=0.9]{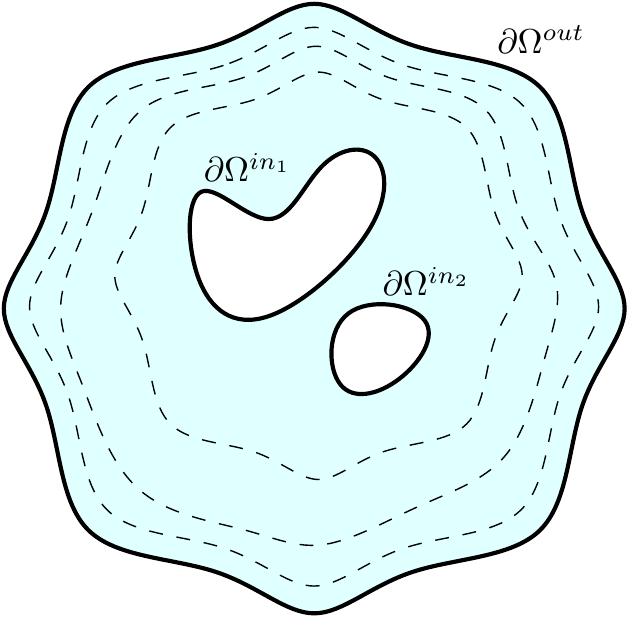}
\caption{Illustration for smooth stationary $\omega$ where $\Omega:=\text{supp}(\omega)$ is colored in blue, whose boundary is not necessarily connected. The dashed lines are regular level sets of $\omega$, which converges to the outermost boundary. In such case, the stream function is constant on the outermost boundary $\partial \Omega^{out}$, thus Lemma~\ref{zero mean} implies that the velocity vanishes in the unbounded part of $\Omega^c$.  \label{diagram2}}
\end{center}
\end{figure}

\color{black}
%\begin{rem}
%One cannot use the so-called stream function method \cite{smets2010desingularization,constantin2021flexibility,zelati2020stationary} cannot be adapted to construct a non-radial stationary solution $\omega$ with finite energy such that $\omega$ is close to a radial one and supported on a simply-connected domain with $C^2$ boundary. To see this, suppose its stream function $\Psi := \omega * \frac{1}{2\pi}\log|\cdot|$ solves $\Delta \Psi = F(\Psi)   \text{ in }\Omega$ for some $F\in C^1(\RR)$. By Lemma~\ref{zero mean}, $\Psi\in C^{2}(\bar{\Omega})$ solves the following overdetermined system:
%\[
%\begin{cases}
%\Delta \Psi = F(\Psi)  & \text{ in }\Omega,\\
%\Psi = 0  & \text{ on } \partial \Omega, \\
%|\nabla \Psi | = 0 & \text{ on  }\partial \Omega.
%\end{cases}
%\]

 %\end{rem}

\subsection{Nash-Moser theorem}
We first prove a bifurcation theorem using the Nash-Moser scheme under some assumptions, which will turn out to be satisfied by our nonlinear functional. We follow the ideas from Berti \cite{Berti:nash-moser-tutorial}.

  Let $2\le m\in \N$ fixed. We denote 
\[ 
X^{k}:=X^{k,m}, \quad  Y^k := Y^{k,m},\quad C^\infty:=C^\infty(\mathbb{T}), \quad   R:=(R_1,R_2,R_3)\in \left( X^{k+1}\right)^3 \cap (C^{\infty})^3, \quad  \partial_{\Theta}:=\partial_{\Theta_3},\]  for simplicity. Furthermore, for a Banach space $X$ and an element $R\in X$, we denote the norm of $R$ by $|R|_{X}:=\rVert R \rVert_{X}$. In addition, we use the notation $A\lesssim_{a,b} B$ if there exists a constant $C=C(a,b)>0$ depending on some variables $a,b$ such that $A\le CB$. We also use $c_0,c_1,....$ to denote universal constants that may vary from line to line.

\begin{theorem}\label{theorem1} Assume that there exists $\Theta_3^* \in \RR$ and  an open neighborhood  $I\times V^{3}$ of $(\Theta^*_3,0)\in \RR\times \left( X^{3}\right)^3$ such that for each $2 \le k \in \mathbb{N}$,  $G : I \times \left(X^{k+1} \right)^3 \mapsto \left(Y^{k}\right)^3$ satisfies the following: For $(\Theta_3,R)\in I\times \left(V^3 \cap (C^\infty)^3\right)$
\begin{itemize}
\item[(a)] (Existence of a curve of trivial solutions) $G(\Theta_3,0) = 0$ for all $\Theta_3 \in I$.

\item[(b)] (Regularity) It holds that
\begin{align}
&|G(\Theta_3,R)|_{(Y^{k})^3} \lesssim_{k} 1 + |R|_{(X^{k+1})^3}, \label{lineargrowth1}\\
&\left|\partial_\Theta G(\Theta_3,R)\right|_{(Y^{k})^3} \lesssim_{k} 1+\left| R \right|_{(X^{k+1})^3}\label{lineargrowth2}\\
&| D^2 G(\Theta_3,R)[h,h]|_{(Y^{k})^3} \lesssim_{k} (1+| R |_{(X^{k+3})^3})|h|_{(X^{k+1})^3}^2\label{D2G}\\
&| \partial_\Theta D G(\Theta_3,R)[h]|_{(Y^{k})^3} \lesssim_{k} (1+| R |_{(X^{k+3})^3})| h |_{(X^{k+1})^3}\label{dtDG}\\
&|\partial_\Theta D^2G(\Theta^*_3, R)[h,h]|_{(Y^{k})^3} \lesssim_{k} (1+| R |_{(X^{k+3})^3})| h |_{(X^{k+1})^3}^2\label{dtDG2}\\
&| \partial_{\Theta\Theta} D G(\Theta_3,R)[h]|_{(Y^{k})^3} \lesssim_{k} (1+| R |_{(X^{k+3})^3})| h |_{(X^{k+1})^3}\label{dttDG}
\end{align}

\item[(c)] (Decomposition of $DG$) $DG(\Theta_3,R)\in \mathcal{L}((X^{k+1})^3;(Y^k)^3)$  has the following decomposition:
\begin{align*}
DG(\Theta_3,R)[h] = a(\Theta_3,R)[h] + A(\Theta_3,R)[h],
\end{align*}
such that $a(\Theta_3,R)\in \mathcal{L}((X^{k+1})^3,(Y^{k})^3)$, $A(\Theta_3,R) \in \mathcal{L}((X^{k+1})^3,Y^{k+1}\times (Y^{k})^2)$. Also, there exists $0 < \eta <1$ such that if $\rVert R \rVert_{(H^{k+2})^3}\le \eta$, then
\begin{align}
&{|a(\Theta_3,R)[h]|_{(Y^{k})^3} \lesssim_{k}  |G(\Theta_3,R)|_{(Y^{k})^3}|h|_{(X^{k+1})^3} }\label{approxinverse1}, \\
&|A(\Theta_3,R)[h]|_{Y^{k+1}\times (Y^{k})^2} \lesssim_{k} (1+| R |_{(X^{k+3})^3}) |h|_{(X^{k+1})^{3}},\label{approxinverse2}
\end{align}
\item[($\tilde{c}$-1)]   $A:\RR\times (X^{k+3})^3\mapsto \mathcal{L}((X^{k+1})^3,Y^{k+1}\times (Y^{k})^2)$ is Lipschitz continuous. That is, if $$(\Theta_3^1,R^1),\ (\Theta_3^2,R^2)\in I\times (V^3\cap (C^\infty)^3),$$ and $\rVert R^1\rVert_{(H^{k+3}(\mathbb{T}))^2}, \rVert R^2 \rVert_{(H^{k+3}(\mathbb{T}))^2} \le 1$,
it holds that 
\begin{align}\label{NM_lip}
|A(\Theta_3^1,R^1)[h] - A(\Theta_3^2,R^2)|_{(Y^{k})^3} \lesssim_k \left( |\Theta_3^1 - \Theta_3^2| + |R^1-R^2|_{(X^{k+3})^3}\right) |h|_{(X^{k+1})^3}.
\end{align}
\item[($\tilde{c}$-2)] (Tame estimates) There exists $0 < \eta < 1$ such that if $\rVert R \rVert_{(H^{k+3}(\mathbb{T}))^3} \le \eta$, and $A(\Theta_3,R)[h] =z$ for some  $z\in (C^\infty)^3$ and $h \in \text{Ker}(A(\Theta^*_3,0))^{\perp}$,  then $h\in (C^\infty)^3$. Furthermore, for any even $\sigma \in \mathbb{N}\cup \left\{ 0 \right\}$,  it holds that
\begin{align}\label{tame1}
\left| h \right|_{(X^{k+1+\sigma})^3} \lesssim_{k,\sigma} (1 + \left| R \right|_{(X^{k+4+\sigma})^3} )\left| z \right|_{Y^{k+1}\times (Y^{k})^{2}} + \left| z \right|_{Y^{k+1+\sigma}\times (Y^{k+\sigma})^{2}}.
\end{align}

\item[(d)] (Fredholm index zero) There exist non-zero vectors $ v$ and $w$ such that $v$ and $w$ are supported on the $m$-th Fourier mode and 
\begin{align*}
\text{Ker}(A(\Theta_{3}^{*},0)) = \text{span}\left\{ v\right\}, \quad \text{Im}(A(\Theta_{3}^{*},0))^{\perp} = \text{span}\left\{w\right\}, 
\end{align*}
\item[(e)] (Transversality) $\partial_{\Theta} A(\Theta_{3}^{*},0)[v] \notin \text{Im}(A(\Theta_{3}^{*},0))$.
\end{itemize}
Then, for any $k_0\ge 2$,  there exist a constant $s_0=s_0(k_0)>0$ and a curve $[0,s_0) \ni s\mapsto (\Theta_3(s),R(s))\in I\times (X^{k_0+1})^3$ such that $G(\Theta_3,R)=0$ and $R(s)\ne 0$ for $s>0$. The curve emanates from $(\Theta_3(0),R(0)) = (\Theta^*_3,0)$.
\end{theorem}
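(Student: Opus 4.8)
The plan is to realize the abstract Nash–Moser scheme as a modified Newton iteration applied to the reduced functional $\tilde{G}_s$ described in the introduction, following Berti's framework. First I would set up the Lyapunov–Schmidt-flavored change of variables: let $P$ be the $L^2$-orthogonal projection onto $\mathrm{Ker}(A(\Theta_3^*,0))=\mathrm{span}\{v\}$, and for small $s>0$ define
\[
\tilde{G}_s(R):=G\bigl(\Theta_3^*+v\cdot R,\ sv+(I-P)[R]\bigr),
\]
so that zeros of $\tilde{G}_s$ near $R=0$ produce the desired bifurcation curve with $R(s)=sv+O(s^2)$. Because $v\in\mathrm{Ker}(A(\Theta_3^*,0))$ and $a(\Theta_3^*,0)=0$, hypothesis (a) together with a Taylor expansion gives $\tilde{G}_s(0)=O(s^2)$ in every $Y^k$-norm, which supplies the \emph{good initial guess} required for a Newton scheme; the quantitative statement is that $|\tilde{G}_s(0)|_{(Y^k)^3}\lesssim_k s^2$, using \eqref{D2G} and the regularity estimates in (b).

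Next I would differentiate $\tilde{G}_s$: from the chain rule, $D\tilde{G}_s(R)=\partial_\Theta G\circ P+DG\circ(I-P)$, and using the decomposition (c), $DG=a+A$, this splits as
\[
D\tilde{G}_s(R)=\underbrace{\partial_\Theta G\circ P+A\circ(I-P)}_{=:T_s(R)}\ +\ a\circ(I-P).
\]
The key point — and here is where (d) and (e) enter — is that $T_s(0)$ restricted appropriately is an isomorphism: the one-dimensional defect of $A(\Theta_3^*,0)$ (its cokernel $\mathrm{span}\{w\}$) is exactly compensated by the transversality term $\partial_\Theta A(\Theta_3^*,0)[v]$, which by (e) is not in $\mathrm{Im}(A(\Theta_3^*,0))$; meanwhile the kernel direction $v$ is removed by composing with $(I-P)$ and reintroduced through the $\partial_\Theta G\circ P$ piece. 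Combined with the Lipschitz continuity ($\tilde{c}$-1) and a Neumann-series perturbation argument, $T_s(R)$ remains invertible on the relevant subspace for $R$ in a small ball and $s$ small, with an inverse that satisfies the \emph{tame} bound \eqref{tame1} — this is the crucial gain that lets us run Nash–Moser despite the loss of derivatives in the image space $Y^{k+1}\times(Y^k)^2$ versus the actual range $(Y^k)^3$.

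With $T_s$ playing the role of an \emph{approximate inverse} of $D\tilde{G}_s$, I would then set up the iteration: using smoothing operators $S_N$ (the projections $\Pi_{\le N}$ onto low Fourier modes, as in \eqref{regularizing1}) with a lacunary sequence $N_n=N_0^{(3/2)^n}$, define recursively
\[
R_{n+1}=R_n-S_{N_{n+1}}T_s(R_n)^{-1}\,\tilde{G}_s(R_n),
\]
and control the errors in a decreasing scale of norms. The error decomposes into the usual three pieces: the quadratic Newton error (bounded by \eqref{D2G}), the smoothing-truncation error (controlled by the tame estimate \eqref{tame1} and standard properties of $S_N$), and — crucially — the \emph{approximate-inverse error} $a(\Theta_3,R_n)\circ(I-P)$, which by \eqref{approxinverse1} is bounded by $|G(\Theta_3,R_n)|\cdot|h|$, hence by (roughly) $|\tilde{G}_s(R_n)|$ times the Newton step; since $\tilde{G}_s(R_n)\to 0$ super-exponentially along the iteration, this term is absorbed. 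The standard bootstrap then shows that if $s_0$ is small enough, the sequence $(\Theta_3^{(n)},R_n)=(\Theta_3^*+v\cdot R_n,\ sv+(I-P)[R_n])$ converges in $(X^{k_0+1})^3$ to a limit solving $G=0$, with $R(s)\ne 0$ for $s>0$ because the leading term $sv$ is nonzero, and $R(0)=0$, $\Theta_3(0)=\Theta_3^*$ by construction.

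The main obstacle I expect is the verification that the approximate inverse $T_s(R)^{-1}$ satisfies the tame estimate \eqref{tame1} \emph{uniformly} along the iteration and simultaneously interacts correctly with the projection $P$: one must carefully track that inverting $T_s$ on the complement of $\mathrm{span}\{v\}$ genuinely yields the stated one-derivative-tame bound — this is where the structural input that $A$ maps into the \emph{better} space $Y^{k+1}\times(Y^k)^2$ (and not merely $(Y^k)^3$), encoded in \eqref{approxinverse2} and ($\tilde{c}$-2), is indispensable. A secondary technical nuisance is bookkeeping the dependence of all constants on $k$ and on the smoothing parameter $\sigma$ so that the convergence scheme closes; these are routine but delicate, and I would organize them by first proving a clean quantitative lemma on $T_s$ (invertibility plus tame bound plus Lipschitz dependence) before entering the iteration proper.
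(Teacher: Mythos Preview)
Your approach is essentially the same as the paper's: the reduced functional $\tilde{G}_s$, the splitting $D\tilde{G}_s=T_s+a\circ(I-P)$, the role of $T_s$ as approximate inverse, the smoothed Newton iteration, and the three-piece error decomposition all match the paper's argument.

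There is, however, one quantitative point you have understated and that the paper tracks carefully: the inverse $T_s(R)^{-1}$ is \emph{not} uniformly bounded in $s$, but blows up like $1/s$. The reason is that by hypothesis (a) one has $\partial_\Theta G(\Theta_3^*,0)=0$, so in $T_s(R)[h]=\partial_\Theta G(\Theta_3^*+v\cdot R,\,sv+(I-P)R)\,(v\cdot h)+A[(I-P)h]$ the first summand is of size $O(s)$ (its leading term being $s\,\partial_\Theta DG(\Theta_3^*,0)[v]=s\,\partial_\Theta A(\Theta_3^*,0)[v]$, which is where (e) enters). Consequently the cokernel direction $w$ is only filled at order $s$, and one obtains
\[
|T_s(R)^{-1}[z]|_{(X^{k_0+1})^3}\le \frac{c_0}{s}\,|z|_{Y^{k_0+1}\times(Y^{k_0})^2},
\]
together with a matching $1/s$-loss in the tame higher-norm estimate. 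This $1/s$ factor propagates through every step of the iteration (it is why the paper's initial-error bound $|\tilde{G}_s(0)|\lesssim s^2$ is needed with exponent strictly larger than $1$, and why the smoothing scale $N_n=s^{-\frac{1}{16}(67/64)^n}$ is chosen to depend on $s$). Your sketch treats $T_s^{-1}$ as if bounded independently of $s$; once you account for the $1/s$ loss the bookkeeping becomes genuinely more delicate---the paper organizes it through a separate lemma establishing \eqref{Tinvbound0}--\eqref{highernorm_inversion} before the iteration, and then runs an induction on four simultaneous statements $(P1)_n$--$(P4)_n$ with explicit $s$-dependent bounds.
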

\begin{rem}
Note that the evenness of $\sigma$ for the tame estimate in $(\tilde{c}-2)$ is simply because $X^k$ is a space of even functions and any odd order derivatives of $h\in X^k$ are even. 
\end{rem}
The rest of this section is devoted to prove Theorem~\ref{theorem1}.

Towards the proof, let $k_0 \ge 2$ be fixed. We define the projections $P:(X^{k_0+1})^3 \mapsto \text{Ker}(A(\Theta_3^*, 0))$ and $Q: (Y^{k_0})^3\mapsto \text{Im}(A(\Theta_{3}^{*}, 0))^{\perp}$ by
\begin{align}\label{projections}
PR:=\left( v\cdot R\right) v \quad \text{ and } \quad QR:=(w\cdot R)w,
\end{align}
where $(f\cdot g)$ denotes the usual  $L^2$ inner product. Note that from the assumptions $(d)$ and $(e)$, we make an ansatz that for sufficiently small $s>0$, the bifurcation curve $s\mapsto(\Theta_3(s),R(s))$ can be written as
\[
(\Theta_3(s),R(s)) = \left(\Theta^*_3 + v\cdot \tilde{R}(s), sv+(I-P)\tilde{R}(s) \right),
\]
for some $\tilde{R}(s) \in (X^{k_0+1})^3$ such that $|(I-P)\tilde{R}|_{(X^{k_0+1})^3} = o(s)$. From this ansatz, we define a family of functionals $\tilde{G_s}: (X^{k_0+1})^3 \mapsto  (Y^{k_0})^3$ by
\begin{align}\label{def_tilde_g}
\tilde{G}_s(R) := G(\Theta^*_3 + v\cdot R, sv + (I-P)R),
\end{align}
and look for  $R\in (X^{k_0+1})^3$ such that $\tilde{G}_s(R) = 0$ and $|(I-P)R|_{(X^{k_0+1})^3} = o(s)$ for  sufficiently small $s>0$. This will be achieved by Newton's method, where the first approximate solution is $R=0$. To perform Newton's method, we need to study the linearized operator $D\tilde{G}_s$ at each approximate solution $R \ne 0$: For $h\in (X^{k_0+1})^3$, which can be directly computed from \eqref{def_tilde_g},
\begin{align}
D\tilde{G}_s(R)[h]&  := D_R\tilde{G}_s(R)[h]\nonumber\\
& = \partial_{\Theta}G(\Theta_{3}^{*}+v\cdot R,sv+(I-P)R)(v\cdot h) + DG(\Theta_{3}^{*}+v\cdot R, sv+(I-P)R)[(I-P)h]\nonumber\\
& =: T_s(R)[h] + a(\Theta_{3}^{*}+v\cdot R, sv+(I-P)R)[(I-P)h],\label{dGanda}
\end{align}
where 
\begin{align}\label{T_sdef}
T_s(R)[h] := \partial_{\Theta}G(\Theta_{3}^{*}+v\cdot R,sv+(I-P)R)(v\cdot h) + A(\Theta_{3}^{*}+v\cdot R, sv+(I-P)R)[(I-P)h].
\end{align}
The above decomposition of $D\tilde{G}_s$ into $T_s + a$ follows from the assumption $(c)$. Recall that Newton's method relies on the \quotes{invertibility} of $T_s(R)$ for small $s>0$. However, as we will see in the next lemma, $T_s$ is not fully invertible between $(X^{k_0+1})^3$ and $(Y^{k_0})^3$, because of its loss of derivatives. This is a motivation of adapting a Nash-Moser scheme in our proof.  However, \eqref{approxinverse1} suggests that the inverse of $T_s$ is a good approximate right inverse of $D\tilde{G}_s$. In the next subsection, we will focus on the properties of $T_s(R)$.

\subsubsection{Analysis of ${T}_s$}\label{Analysis_T}
 We will look for a solution $R$ to $\tilde{G}_s (R) = 0$  in $(X^{k_0+1})^3$ for small $s>0$.  In each step of Newton's method (Nash-Moser iteration), we will regularize the approximate solution $R_n$ (see \eqref{definition_of_approx_sol}). The theorem will be achieved by proving that $R_n$ converges in $(X^{k_0+1})^3$. However, we will also obtain boundedness of the sequence $R_n$ in the higher norms, which is necessary because of the extra regularity conditions as in (c), $(\tilde{c}-1)$ and $(\tilde{c}-2)$.  For this reason, we will establish several lemmas assuming that an approximate solution $R$ is more regular then $(X^{k_0+1})^3$, which will turn out to be true at the end of the proof.
%We begin with the following lemma:

\begin{lemma}\label{approxinv}
Let  $0 < \epsilon < 1 $ and $2\le k_0\in \mathbb{N}$ be fixed. There exist positive constants  $s_0(\epsilon,k_0),\  c_0(\epsilon,k_0)$ and $0<\delta(\epsilon,k_0)<1$ such that for each $0< s  < s_0$, the following holds: If 
\begin{align}
&|PR|_{(X^{k_0+2})^3} \leq  s^{\epsilon},\label{assumptionfory1}\\
&|(I-P)R|_{(X^{k_0+2})^3}\leq  s^{1+\epsilon},\label{assumptionfory2}\\
&|R|_{(H^{k_0+4})^3}\leq \delta,\label{assumptionfory3}
\end{align}
then
\begin{itemize}
\item[(A)] For all $t\in[0,1]$, 
\begin{align}\label{containedinV}
(\Theta^*_3 + v\cdot R, sv + (I-P)R)\in I\times V^3,
\end{align}
where $V^{3}$ is as in Theorem \ref{theorem1}. Therefore, 
\begin{align*}
T_s(R)[h] := \partial_{\Theta}G(\Theta_{3}^{*}+v\cdot R,sv+(I-P)R)(v\cdot h) + A(\Theta_{3}^{*}+v\cdot R, sv+(I-P)R)[(I-P)h],
\end{align*}
is well-defined.
\item[(B)] $T_s(R):(X^{k_0+1})^3\mapsto Y^{k_0+1}\times(Y^{k_0})^2$ is an isomorphism and 
\begin{align}
&|T_s(R)[h]|_{Y^{k_0+1}\times(Y^{k_0})^2} \le c_0 |h|_{(X^{k_0+1})^3},\label{Tbound}\\
&|T_s(R)^{-1}[z]|_{(X^{k_0+1})^3} \le \frac{c_0}{s}|z|_{Y^{k_0+1}\times(Y^{k_0})^2}\label{Tinvbound0},\\
&\left|\left(D\tilde{G}_s(R)\circ T_s(R)^{-1} - I\right)[z]\right|_{(Y^{k_0})^3} \le c_0  |\tilde{G}_s(R)|_{(Y^{k_0})^3}|T_s(R)^{-1}[z]|_{(X^{k_0+1})^3},\label{Approximate_inv}
\end{align}
for some $c_0=c_0(\epsilon,k_0)>0$.

If $R\in C^{\infty}$, then for any even $\sigma\in \mathbb{N}\cup \left\{ 0 \right\}$ and $z\in X^{k_0+1+\sigma}\times (X^{k_0+\sigma})^{2}$, we have $T_s(R)^{-1}[z] \in \left( X^{k_0+1+\sigma}\right)^3$. Also, we have that 
\begin{align}\label{highernorm_inversion}
\left| T_s(R)^{-1}[z] \right|_{\left( X^{k_0+1+\sigma}\right)^3} \le \frac{c_0}{s} \left( (1 +  |R|_{(X^{k_0+4+\sigma})^3})|z|_{Y^{k_0+1}\times (Y^{k_0})^{2}} + |z|_{Y^{k_0+1+\sigma}\times (Y^{k_0+\sigma})^{2}}\right),
\end{align}
where  $c_0$ may depend on not only $\epsilon,k_0$ but also $\sigma$.
\item[(C)]Furthermore, we can choose $c_0$ so that when $R=0$,  the following hold:
\begin{align}
|P(T_s(0)^{-1}[z])|_{(X^{k_0+1})^{3}} \le \frac{c_0}{s}|z|_{Y^{k_0+1}\times(Y^{k_0})^{2}},\label{Tinvbound1}\\
|(I-P)(T_s(0)^{-1}[z])|_{(X^{k_0+1})^{3}} \le c_0 |z|_{Y^{k_0+1}\times(Y^{k_0})^{2}}.\label{Tinvbound2}
\end{align}
\end{itemize}\end{lemma}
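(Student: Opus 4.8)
The plan is to establish $(A)$ first, since it is essentially bookkeeping: under the smallness assumptions \eqref{assumptionfory1}--\eqref{assumptionfory3} and for $s_0$ small enough, both $|v\cdot R|$ and $|sv+(I-P)R|_{(X^3)^3}$ are controlled by $s^\epsilon + s$, so the pair lies in the prescribed neighborhood $I\times V^3$ and $T_s(R)$ is well-defined. Next I would analyze $T_s(0) = \partial_\Theta G(\Theta_3^*,sv)(v\cdot h) + A(\Theta_3^*, sv)[(I-P)h]$. The heart of the matter is that $A(\Theta_3^*,0)$ is Fredholm of index zero by $(d)$, with one-dimensional kernel $\mathrm{span}\{v\}$ and cokernel $\mathrm{span}\{w\}$, and the transversality condition $(e)$ guarantees that adjoining the column $\partial_\Theta G(\Theta_3^*,0)[\cdot]\,v = \partial_\Theta A(\Theta_3^*,0)[v]$ (using $(a)$ and $(c)$ to identify $\partial_\Theta G(\Theta_3^*,0) = \partial_\Theta(a+A)(\Theta_3^*,0)$, where the $a$-part vanishes since $G(\Theta_3^*,0)=0$) restores surjectivity. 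Thus $T_s(0)$ is invertible at $s>0$ as a map from $(X^{k_0+1})^3$ onto $Y^{k_0+1}\times(Y^{k_0})^2$, by a standard bordered-operator argument; the only subtlety is the factor $1/s$ in \eqref{Tinvbound0}, which arises because the kernel direction $v$ is only ``seen'' at order $s$ — the $P$-component of $T_s(0)^{-1}$ picks up the $1/s$ while the $(I-P)$-component stays $O(1)$, which is exactly the content of $(C)$, \eqref{Tinvbound1}--\eqref{Tinvbound2}.

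With the invertibility of $T_s(0)$ in hand, I would pass to general $R$ by perturbation. Writing $T_s(R) = T_s(0) + \big(T_s(R) - T_s(0)\big)$, the difference is controlled using the Lipschitz estimate $(\tilde c\text{-}1)$ for $A$ together with \eqref{dtDG}--\eqref{dttDG} for the $\partial_\Theta G$ term: the operator norm of $T_s(R)-T_s(0)$ from $(X^{k_0+1})^3$ to $Y^{k_0+1}\times(Y^{k_0})^2$ is $\lesssim |v\cdot R| + |sv + (I-P)R - sv|_{(X^{k_0+3})^3} \lesssim s^\epsilon$. Since $|T_s(0)^{-1}| \lesssim 1/s$ (in the sense that the $P$-part is $O(1/s)$ and $(I-P)$-part $O(1)$), a Neumann series converges provided $s^\epsilon \cdot (1/s)$ is small — which forces a constraint but is arranged by choosing $\epsilon$ close to $1$; more carefully, one splits $h = Ph + (I-P)h$ and tracks the two components separately, so the genuinely dangerous term is $s^\epsilon\cdot s^{-1}$ hitting $Ph$, handled by absorbing via $\delta$ and $s_0$ small. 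This gives \eqref{Tbound} and \eqref{Tinvbound0}. Then \eqref{Approximate_inv} is immediate: from \eqref{dGanda}, $D\tilde G_s(R)\circ T_s(R)^{-1} - I = a(\Theta_3^*+v\cdot R, sv+(I-P)R)[(I-P)\circ T_s(R)^{-1}]$, and \eqref{approxinverse1} bounds the $a$-factor by $|\tilde G_s(R)|_{(Y^{k_0})^3}$, yielding precisely the stated inequality.

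Finally, the higher-regularity statement \eqref{highernorm_inversion} follows by bootstrapping: given $z \in X^{k_0+1+\sigma}\times(X^{k_0+\sigma})^2$ and $R\in C^\infty$, one knows $h := T_s(R)^{-1}[z]\in (X^{k_0+1})^3$; decomposing $h$ along $\mathrm{Ker}(A(\Theta_3^*,0))$ and its complement, the $\mathrm{Ker}$-component is finite-dimensional (supported on the $m$-th mode, hence smooth), and for the complementary part one applies the tame estimate $(\tilde c\text{-}2)$, \eqref{tame1}, to the equation $A(\Theta_3^*+v\cdot R, sv+(I-P)R)[(I-P)h] = z - (\text{finite-mode remainder})$, which upgrades $(I-P)h$ to $(X^{k_0+1+\sigma})^3$ with the claimed tame bound; the $1/s$ again enters through inverting the bordered system on the kernel direction. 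I expect the \textbf{main obstacle} to be tracking the $s$-dependence carefully enough in the perturbation argument of part $(B)$ — in particular, proving that the Neumann series for $T_s(R)^{-1}$ genuinely converges uniformly in $s$ despite the $1/s$ blow-up of $T_s(0)^{-1}$ in the $P$-direction. This requires the refined component-wise splitting (treating $P$ and $I-P$ parts with different powers of $s$, matching \eqref{assumptionfory1}--\eqref{assumptionfory2}) rather than a crude operator-norm estimate, and getting the constants $s_0,c_0,\delta$ to depend only on $\epsilon,k_0$ (and $\sigma$) is the delicate part.
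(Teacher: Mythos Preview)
Your outline for parts $(A)$, the approximate inverse estimate \eqref{Approximate_inv}, and the higher-norm bound \eqref{highernorm_inversion} matches the paper closely. The main divergence is in how you invert $T_s(R)$ for general $R$: you propose to invert $T_s(0)$ first and then run a Neumann series for $T_s(R) = T_s(0) + (T_s(R)-T_s(0))$, whereas the paper never perturbs from $R=0$. Instead it applies a stability-of-Fredholm-operators lemma directly to $\tilde A := A(\Theta_3^*+v\cdot R,\,sv+(I-P)R)[(I-P)\,\cdot\,]$, obtaining a one-dimensional cokernel $\mathrm{span}\{w_1\}$ with $|w_1-w|\lesssim \delta$ and a uniform bound on $\tilde A^{-1}$ restricted to $\mathrm{Ker}(A(\Theta_3^*,0))^\perp$; it then proves the \emph{lower} bound $|w_1\cdot\partial_\Theta G|\ge cs$ at general $R$ by Taylor expansion, and solves the resulting $2\times 2$ bordered system explicitly.

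Your perturbative route can be made to work, but as written it has a gap. You assert that the operator norm of $T_s(R)-T_s(0)$ is $\lesssim s^\epsilon$; this is not what the hypotheses give. The Lipschitz estimate \eqref{NM_lip} for $A$ requires $|(I-P)R|_{(X^{k_0+3})^3}$, which is controlled only by $\delta$ via \eqref{assumptionfory3}, not by any power of $s$. So the crude operator-norm bound is $\lesssim s^\epsilon+\delta$, and against $|T_s(0)^{-1}|\lesssim 1/s$ the Neumann series does not converge. You recognise the danger and propose the $P/(I-P)$ splitting, but your description of the cancellation --- ``$s^\epsilon\cdot s^{-1}$ hitting $Ph$, handled by absorbing via $\delta$ and $s_0$ small'' --- is incorrect: $s^{\epsilon-1}$ does not become small for $\epsilon<1$, and $\epsilon$ is fixed by hypothesis, not chosen. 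The actual mechanism is that the perturbation $\partial_\Theta G|_R-\partial_\Theta G|_0$, which is the only piece acting on $Ph$, is of size $O(s^{1+\epsilon})$, not $O(s^\epsilon)$. This extra factor of $s$ comes from two sources: $\partial_{\Theta\Theta}G(\cdot,\,sv+(I-P)R)=O(s)$ since $G$ vanishes along the trivial line, and $|(I-P)R|_{(X^{k_0+2})^3}\le s^{1+\epsilon}$ from \eqref{assumptionfory2}. With this, the $P$-contribution to $(T_s(0)-T_s(R))T_s(0)^{-1}$ is $O(s^{1+\epsilon})\cdot O(1/s)=O(s^\epsilon)$, and the $(I-P)$-contribution is $O(s^\epsilon+\delta)\cdot O(1)$, so the Neumann series closes. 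This is precisely the same Taylor-expansion computation the paper performs (its terms $L_1,L_2,L_3$) to obtain the lower bound $|Q\partial_\Theta G|\ge cs$; once you execute the splitting correctly, your argument and the paper's converge to the same estimates, just organised differently.
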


%\textcolor{red}{Note that we impose higher regularity. This is because $\partial_\Theta F$ has one less regularity than $R$. However, what we actually need is $Q\partial_\Theta G$ component only! }%
We  will frequently use 
\begin{align}\label{crudebound}
|T_s(R)^{-1}z|_{\left(X^{k_0+1}\right)^3}  \le \frac{c_0}{s}|z|_{(Y^{k_0+1})^3},
\end{align}
 which is more crude than \eqref{Tinvbound0}.
\begin{proof}
Let us fix $\epsilon>0$ and $k_0\ge2$. We will show that if \eqref{assumptionfory1}-\eqref{assumptionfory3} hold for sufficiently small $s>0$ and  for some small $\delta>0$ depending on $\epsilon$, then  (A),(B) and (C) hold for some $c_0>0$.

\textbf{Proof of (A).} To see (A), notice that \eqref{containedinV} is trivial. From the assumptions (b) and (c) in Theorem~\ref{theorem1}, the linear operator $T_s(R)$ is well-defined.
 
\textbf{Proof of (B).} The estimate \eqref{Tbound}, follows from \eqref{T_sdef}, \eqref{approxinverse2}, \eqref{lineargrowth2} and
\[
| sv + (I-P)R |_{\left(H^{k_0+3}\right)^3} \lesssim  1 + \delta \lesssim 1,
\]
where the last inequality follows from \eqref{assumptionfory3}. Before proving \eqref{Tinvbound0}, we first prove that
\begin{align}\label{novanishingQ}
|Q\partial_\Theta G(\Theta_{3}^{*}+v\cdot R, sv+(I-P)R) |_{X^{k_0+1}\times (X^{k_0})^2}\ge cs,
\end{align}
for some $c=c(\epsilon)>0$. The above inequality is a consequence of the transversality condition $(e)$ in Theorem~\ref{theorem1}. In what follows, $c,c_1,c_2,\ldots$ denote positive constants that may change from line to line and depend only on $\epsilon$ but not on $s$.

By the regularity assumptions  of $G$ in (b) in Theorem~\ref{theorem1}, we estimate the quantity in \eqref{novanishingQ} using a Taylor expansion up to linear order: For a fixed $R$, we set $f(s,y):=Q\partial_\Theta G(\Theta_{3}^{*}+v\cdot R, sv+(I-P)y)$, so that
\begin{align}\label{fisQG}
f(s,R) = Q\partial_\Theta G(\Theta_{3}^{*}+v\cdot R, sv+(I-P)R).
\end{align}
 Using the fundamental theorem of calculus, we can write
\begin{align*}
f(s,R) &:=  \int_0^{1}\frac{d}{dt}\left( f(s,tR)\right) dt + f(s,0)\\
 & = \int_0^{1}\frac{d}{dt}\left( f(s,tR)\right) dt  + f(0,0) + \int_0^{s}\frac{d}{dt}f(t,0)dt.
\end{align*}
In terms of $G$, \eqref{fisQG} and the above equality give us that
\begin{align}\label{QGestimate}
|Q\partial_\Theta G(\Theta_{3}^{*}&+v\cdot R, sv+(I-P)R) |_{Y^{k_0+1}\times (Y^{k_0})^2} =|f(s,R)|_{Y^{k_0+1}\times (Y^{k_0})^2}\nonumber\\
& \ge -{\sup_{0\leq t\leq 1}\|Q\partial_\Theta DG(\Theta_{3}^{*}+v\cdot R, sv+t((I-P)R))\|_{\mathcal{L}((X^{k_0+2})^3,(Y^{k_0+1})^3)}|(I-P)R|_{\left(X^{k_0+2}\right)^3}} \nonumber\\
&\ - {|Q\partial_\Theta G(\Theta_{3}^{*}+v\cdot R,0)|_{Y^{k_0+1}\times (Y^{k_0})^2}}\nonumber\\
& \ +{\bigg|\int_{0}^{s} Q\partial_\Theta DG(\Theta_{3}^{*}+v\cdot R, tv)[v] dt \bigg|_{Y^{k_0+1}\times (Y^{k_0})^2}}\nonumber\\
& =: -L_1 - L_2 + L_3,
\end{align}
where the first term after the inequality follows from b) in Theorem~\ref{theorem1} and 
\begin{align*}
\int_0^1 \left| \frac{d}{dt}\left( f(s,tR)\right) \right|_{Y^{k_0+1}\times (Y^{k_0})^2} dt  & \le \int_0^1 \left| \frac{d}{dt}\left( f(s,tR)\right)\right|_{(Y^{k_0+1})^3} dt \\
& \le \sup_{0\le t \le 1}\left| D_Rf(s,tR)[R] \right|_{(Y^{k_0+1})^3},
\end{align*}
 and the rest follows simply from the triangular inequality. Then the regularity assumption (b) in Theorem~\ref{theorem1} yields that (recall $\rVert R \rVert_{(X^{k_0+4})^3}<\delta < 1$,)
\begin{align}\label{l1}
L_1 \le c_1 |(I-P)R|_{\left(X^{k_0+2}\right)^3} \le c_1 s^{1+\ep},
\end{align}
where the last inequality follows from \eqref{assumptionfory2}. For $L_2$, we use (a) in Theorem~\ref{theorem1} to obtain 
\begin{align}\label{l2}
L_2 = 0.
\end{align}
To estimate $L_3$, we compute
\begin{align}\label{L_3estimate112}
L_3  &\ge \bigg| \int_{0}^{s}Q\partial_{\Theta} DG(\Theta_{3}^{*},tv)[v]dt\bigg|_{Y^{k_0+1}\times (Y^{k_0})^2} - \int_{0}^{s}\sup_{u\in(0,|v\cdot R|)}\bigg|Q\partial_{\Theta\Theta}DG(\Theta_{3}^{*}+u,tv)[v]\bigg|_{Y^{k_0+1}\times (Y^{k_0})^2}|v\cdot R|dt.
\end{align}
For the first integral, we can write
\begin{align*}
Q\partial_\Theta DG(\Theta^*_3,tv)[v] &= Q\partial_\Theta DG(\Theta^*_3,0)[v] + \int_0^{t}Q\partial_\Theta D^2G(\Theta^*_3, uv)[v,v]du\\
& = QA(\Theta^*_3,0)[v] + \int_0^{t}Q\partial_\Theta D^2G(\Theta^*_3, uv)[v,v]du,
\end{align*}
where the last equality follows from (a) and (c) in Theorem~\ref{theorem1}, which shows $a(\Theta^*_3,0)=0$. For the integral term,
using \eqref{dtDG2}, we have 
\[
\left| Q\partial_\Theta D^2 G(\Theta^*_3,uv)[v,v]\right|_{Y^{k_0+1}\times (Y^{k_0})^2} \lesssim \left| Q\partial_\Theta D^2 G(\Theta^*_3,uv)[v,v]\right|_{(Y^{k_0+1})^3} \lesssim (1+\rVert R \rVert_{(X^{k_0+4})^3})\rVert v \rVert_{(X^{k_0+2})^3} \le c,
\] where the last inequality follows from \eqref{assumptionfory3}. Therefore
\begin{align*}
\bigg| \int_{0}^{s}Q\partial_{\Theta} DG(\Theta_{3}^{*},tv)[v]dt\bigg|_{X^{k_0+1}\times (X^{k_0})^2} &\ge s \left| QA(\Theta^*_3,0)[v] \right|_{X^{k_0+1}\times (X^{k_0})^2} - \int_0^{s}\int_0^t c dudt \\
& \ge s \left| Q\partial_\Theta DG(\Theta^*_3,0)[v] \right|_{X^{k_0+1}\times (X^{k_0})^2} - c_2 s^2 \\
& \ge c_3 s - c_2 s^2,
\end{align*}
where the last inequality follows from the transversality (e).  For the second integral in \eqref{L_3estimate112}, we have that for sufficiently small $s>0$,
\[
\sup_{u\in (0,|v\cdot R|),t\in (0,s)}\left| \partial_{\Theta\Theta}DG(\Theta^*_3 + u,tv)[v] \right|_{Y^{k_0+1}\times (Y^{k_0})^2} \lesssim \sup_{t\in (0,s)}(1+|  tv |_{(X^{k_0+4})^3})| v |_{(X^{k_0+2})^3}\le c_1,
\]
for sufficiently small $s$, which follows from \eqref{dttDG}.  
Hence using \eqref{assumptionfory1}, we obtain from \eqref{L_3estimate112} that
\begin{align}\label{l3}
L_3 \ge  c_3 s - c_2s^2 - c_1 s^{1+\epsilon}
\end{align}
Thus the claim \eqref{novanishingQ} follows from \eqref{QGestimate} \eqref{l1}, \eqref{l2} and \eqref{l3} for small $s$, depending on $\epsilon$. 

 Towards the proof of \eqref{Tinvbound0}, we will consider how to  invert $A(\Theta_3^*+v\cdot R,sv+(I-P)R)[(I-P)]$.  Since the dimension of $\text{Im}(DG(\Theta_3^*,0)^\perp)$ is one and  $w$ is the basis of $\text{Im}(DG(\Theta_{3}^{*},0))^{\perp}$, the above claim \eqref{novanishingQ} proves that
\begin{align}\label{novanishingQ2}
\left| w\cdot \partial_\Theta G(\Theta_{3}^{*}+v\cdot R, sv+(I-P)R) \right| > c s.
\end{align}
To simplify the notations,
we denote
\begin{align} 
&\tilde{A}[h] := A(\Theta_{3}^{*}+v\cdot R, sv+(I-P)R)[(I-P)h],\label{Atilde_def1}\\
&\partial_\Theta G:=\partial_{\Theta}G(\Theta^*+v\cdot R,sv+(I-P)R).\nonumber
\end{align}
We denote by $Q_1$ the projection from $Y^{k_0+1}\times \left( Y^{k_0}\right)^2$ into $\text{Im}(\tilde{A})^\perp$. Since the norm of $R$ is small, we expect that the functional structure of $\tilde{A}$ should be similar to the structure of $A(\Theta^*_3,0)$. Indeed, by continuity hypotheses \eqref{NM_lip}, \eqref{assumptionfory3} and Lemma~\ref{functional_stability}, where we can think of $\tilde{A}$ as a linear map from $\text{Ker}(A(\Theta^*_3,0))$, we can choose $\delta$ small enough so that if \eqref{assumptionfory3} holds, then there exists $ 0\ne w_1\in Y^{k_0+1}\times (Y^{k_0})^2$ such that
\begin{align}
&\text{Im}(\tilde{A})^{\perp}=\text{span}\left\{w_1\right\} \label{Atilde1}\\
&|w - w_1|_{Y^{k_0+1}\times (Y^{k_0})^2} \le c\delta, \label{Atilde_2} \\
&|\tilde{A}|_{\mathcal{L}(\text{Ker}\left(A(\Theta_{3}^{*},0)\right)^{\perp}, \text{Im}(\tilde{A}))} \le c, \label{Atilde_3} \\
&|\tilde{A}^{-1}|_{\mathcal{L}( \text{Im}(\tilde{A}),\text{Ker}\left(A(\Theta_{3}^{*},0)\right)^{\perp})} \le c.\label{Atilde_4}
\end{align}
 Now, we claim that we can further restrict $\delta$ if necessary so that
\begin{align}\label{claimforw1}
\left| w_1\cdot \partial_\Theta G \right| > c s.
\end{align}
In fact, note that \eqref{assumptionfory1}-\eqref{assumptionfory2} imply that $R\in (X^{k_0+2})^3$, thus by \eqref{lineargrowth2}, we have $\partial_\Theta G\in  (Y^{k_0+1})^3\subset Y^{k_0+1}\times (Y^{k_0})^2$. Also we have
\begin{align*}
\partial_\Theta G &= \int_0^{1} \partial_t\left( \partial_\Theta G(\Theta^*_3+v\cdot R, t(sv + (I-P)R))\right)dt \\
& = \int_0^{1}\partial_\Theta DG(\Theta^*_3+v\cdot R,t(sv+(I-P)R))[sv+(I-P)R]dt,
\end{align*}
thus 
\begin{align}\label{claimforw2}
\left| \partial_\Theta G\right|_{Y^{k_0+1}\times (Y^{k_0})^2} \le \left| \partial_\Theta G\right|_{(Y^{k_0+1})^3} \le c |sv+(I-P)R|_{(X^{k_0+2})^2} \le cs,
\end{align}
where the second inequality follows from \eqref{lineargrowth2} and the last inequality follows from \eqref{assumptionfory2}. Then it follows that (recall that $f\cdot g$ is the dot product in $L^2$ space),
  \begin{align*}
  |\partial_\Theta G\cdot w_1|  & \geq |\partial_{\Theta}G\cdot w| -| \partial_{\Theta}G \cdot (w_1 - w)| \\
  &\ge c_ 1s - |\partial_{\Theta}G\cdot (w_1-w)|\\
  &\ge c_1s -cs|w-w_1|_{X^{k_0+1}\times (X^{k_0})^2}\\
  &\ge c_1s - c\delta s,
  \end{align*}
  where we used \eqref{novanishingQ2}, \eqref{claimforw2} and \eqref{Atilde_2} for the second, third and the fourth inequality, respectively. Hence, assuming $\delta$ is small, we have \eqref{claimforw1}.
  
 To prove \eqref{Tinvbound0}, pick an arbitrary $z\in Y^{k_0+1}\times(Y^{k_0})^{2}$. There exists a unique $\eta\in \text{Ker}\left(A(\Theta_{3}^{*},0)\right)$ and a unique $h\in \text{Ker}\left(A(\Theta_{3}^{*},0)\right)^{\perp}$ such that
\begin{align}
 & (v\cdot \eta) Q_1\partial_\Theta G = Q_1z \label{uniqueeta}\\
 &(v\cdot \eta)(I-Q_1)\partial_\Theta G  + \tilde{A}[h] = (I-Q_1)z.\label{uniqueh}
 \end{align}
 In fact, there exists a unique $\eta$ in \eqref{uniqueeta} thanks to \eqref{claimforw1} and that $Q_1$ is the projection to a one-dimensional space spanned by $w_1$. Once $\eta$ is fixed, the existence and the uniqueness of $h$ in \eqref{uniqueh} follows from \eqref{Atilde_4}.
   Once $\eta$ and $h$ are determined,  it is clear that 
 \begin{align}\label{tsinverse}
  T_s(R)[h+\eta] = (v\cdot (h+\eta))\partial_\Theta G  + \tilde{A}[\eta+h] = (v\cdot \eta)\partial_\Theta G + \tilde{A}[h] = z,
  \end{align}
  where we used $v\cdot h = 0$ and $\tilde{A}[\eta] = \tilde{A}[(I-P)\eta] = 0$, which follows from the definition of $\tilde{A}$ in \eqref{Atilde_def1}.
  Therefore $T(R)^{-1}z = h+\eta$. Furthermore we have from \eqref{uniqueeta} and \eqref{uniqueh} that 
  \begin{align}
 & |\eta|_{(X^{k_0+1})^3} \le c \frac{|z\cdot w_1|}{|\partial_{\Theta}G \cdot w_1|}\label{bound1}\\
&  |h|_{(X^{k_0+1})^3} \le |\tilde{A}^{-1}(I-Q_1)\partial_{\Theta}G (v\cdot \eta)|_{{(X^{k_0+1})^3}} +| \tilde{A}^{-1}(I-Q_1)z|_{(X^{k_0+1})^3}\label{bound2}.
  \end{align}
%  However, by continuity of $G$ and \eqref{novanishingQ2}, we have that $|\partial_{\Theta}G\cdot w_1| \gtrsim t$. Indeed, 
  
%  \begin{align*}
%  |\partial_\Theta G\cdot w_1|  & \gtrsim |\partial_{\Theta}G\cdot w| -| \partial_{\Theta}G \cdot (w_1 - w)| \\
%  &\gtrsim t - |\partial_{\Theta}G(\Theta_3+v\cdot R,tv+(I-P)R)\cdot (w_1-w)|\\
%  &\gtrsim t -|tv+(I-P)R|_{(H^{k+2})^3}|w-w_1|_{H^{k+1}\times (H^{k})^2}\\
%  &\gtrsim t - t|w-w_1|_{H^{k+1}\times (H^{k})^2}
 % \end{align*}
  
%  For small $t>0$ and $\|R\|_{(C^{K+3})^3}$, we claim that 
%\begin{align*}
%\|w-w_1\|_{H^{k+1}\times (H^{k})^2}\leq \epsilon{\left(\|A(\Theta_3^{*},0)-\tilde{A}\|_{ \mathcal{L}((H^{k+1})^3,H^{k+1}\times (H^{k})^2)}\right)},
%\end{align*} 
%where $\lim_{x\to 0}\epsilon(x)=0$. By the hypothesis $\tilde{c}$, $A$ is Lipschitz, therefore we have
Using \eqref{claimforw1} and \eqref{Atilde_4}, we obtain
  \begin{align}
  &|\eta|_{(X^{k_0+1})^3} \le c  \frac{|z|_{ Y^{k_0+1}\times(Y^{k_0})^{2}}}{s},\label{bound_3}\\
  &|h|_{(X^{k_0+1})^3} \le c |\partial_{\Theta}G|_{Y^{k_0+1}\times (Y^{k_0})^2}\frac{|z|_{ Y^{k_0+1}\times(Y^{k_0})^{2}}}{s} +c |z|_{ Y^{k_0+1}\times(Y^{k_0})^{2}} \le c \frac{|z|_{ Y^{k_0+1}\times(Y^{k_0})^{2}}}{s},\label{bound3}
  \end{align}
  where we used \eqref{claimforw2}. This proves \eqref{Tinvbound0}.
  
    To show \eqref{Approximate_inv}, we compute 
  \begin{align*}
\left(D\tilde{G}_s(R)\circ T_s(R)^{-1}-I\right)[z] =  a(\Theta_3^{*}+v\cdot R,sv+(I-P)R)\circ T(R)^{-1}[z],
\end{align*}
which follows from \eqref{dGanda}. This implies
\begin{align*}
|\left(D\tilde{G}_s(R)\circ T_s(R)^{-1}-I\right)[z]|_{\left(X^{k_0}\right)^3} & \le c|a(\Theta_3^{*}-v\cdot R,sv+(I-P)R)\circ T(R)^{-1}[z]|_{\left(X^{k_0}\right)^3}\\
&\le c |G(\Theta_3^{*}+v\cdot R, sv+(I-P)R)|_{\left(Y^{k_0}\right)^3}|T_s(R)^{-1}[z]|_{\left(X^{k_0+1}\right)^3}\\
& \le c |\tilde{G}_s(R)|_{\left(Y^{k_0}\right)^3}|T_s(R)^{-1}[z]|_{\left(X^{k_0+1}\right)^3},
\end{align*}
where we used \eqref{approxinverse1} to get the second inequality and used the definition of $\tilde{G}_s(R)$ to obtain the last inequality.  

In order to prove \eqref{highernorm_inversion}, we improve on the estimates in \eqref{bound1} and \eqref{bound2}. For \eqref{bound1}, recall that $\eta$ is in a one-dimensional space, span$\left\{ v \right\}$ and $v$ is supported on the $m$-th Fourier mode. Thus, 
\begin{align}\label{higher_1}
|\eta|_{(X^{k_0+1 + \sigma})^3} \le c |\eta|_{(X^{k_0+1})^3} \le c  \frac{|z|_{ Y^{k_0+1}\times(Y^{k_0})^{2}}}{s},
\end{align}
where $c$ may depend on $\sigma$.  
Furthermore, $(\tilde{c}$-2) in Theorem~\ref{theorem1} and \eqref{uniqueh} give us that
\begin{align}
|h|_{(X^{k_0+1+\sigma})^3} & \le c (1+  \left| R \right|_{(X^{k_0+4+\sigma})^3}) \left( \left| (v\cdot \eta)\partial_\Theta G \right|_{Y^{k_0+1}\times (Y^{k_0})^2} + \left| z \right|_{Y^{k_0+1}\times (Y^{k_0})^2} \right) \nonumber \\
 & \ +  c\left( \left| (v\cdot \eta) \partial_\Theta G\right|_{Y^{k_0+1+\sigma}\times (Y^{k_0+\sigma})^2} + \left| z \right|_{Y^{k_0+1+\sigma}\times (Y^{k_0+\sigma})^2} \right). \label{higher_estimate}
\end{align}
 Using \eqref{claimforw2} and \eqref{bound_3}, we have
 \begin{align*}
 \left| (v\cdot \eta)\partial_\Theta G \right|_{Y^{k_0+1}\times (Y^{k_0})^2} \le  \left| \partial_\Theta G \right|_{{Y^{k_0+1}\times (Y^{k_0})^2}}|v\cdot \eta| \le c\left| z \right|_{Y^{k_0+1}\times (Y^{k_0})^2}.
 \end{align*}
For the higher norm of $\partial_\Theta G$, we use \eqref{lineargrowth2} and and \eqref{bound_3} and obtain
\begin{align*}
\left |(v\cdot \eta)\partial_\Theta G\right|_{Y^{k_0+1+\sigma}\times (Y^{k_0+\sigma})^2} &\le c \left| \partial_\Theta G \right|_{(Y^{k_0+1+\sigma})^3}|v\cdot \eta| \\
& \le \frac{c}{s}\left( 1 + \left| R \right|_{(X^{k_0+2+\sigma})^3}\right)\left| z \right|_{Y^{k_0+1}\times (Y^{k_0})^2}.
\end{align*}
Therefore, \eqref{higher_estimate} gives us
\begin{align*}
|h|_{(X^{k_0+1+\sigma})^3}  \le \frac{c}{s}\left( (1 + \left| R \right|_{(X^{k_0+4+\sigma})^3} )\left| z \right|_{Y^{k_0+1}\times (Y^{k_0})^2}  + \left| z \right|_{Y^{k_0+1+\sigma}\times (Y^{k_0+\sigma})^2 }  \right).
\end{align*}
Combining with \eqref{higher_1}, we obtain \eqref{highernorm_inversion}.

 \textbf{Proof of (C).} Finally, if $R=0$, then we have $|\partial_\Theta G|_{Y^{k_0+1}\times (Y^{k_0})^2} = |\partial_\Theta G(\Theta_3,sv)|_{{Y^{k_0+1}\times (Y^{k_0})^2}}\le c s$. Therefore we can improve \eqref{bound3} and obtain
  \begin{align*}
  |h|_{(X^{k_0+1})^3} \le c |z|_{ Y^{k_0+1}\times(Y^{k_0})^{2}},
  \end{align*}
  which implies \eqref{Tinvbound2}. \eqref{Tinvbound0} follows trivially from \eqref{Tinvbound1}. This completes the proof.
\end{proof}

\begin{lemma}\label{conditionsforH}
Let  $\tilde{G}_s$ be as in \eqref{def_tilde_g}. Then there exists an open set $V^3$ near $0\in (X^3)^3$ such that for all $R,\tilde{R}\in V^3$ and $k\ge 2$,   
\begin{itemize}
\item[(a')] (Initial value) $|\tilde{G}_s(0)|_{(X^{k})^3} \lesssim_k s^2.$
\item[(b')] (Taylor estimate) If $\rVert R \rVert_{(X^{k+3})^3}, \rVert \tilde{R} \rVert_{(X^{k+3})^3} \le \eta$ for some $0 < \eta=\eta(k) < 1$, then 
\begin{align*}
\begin{cases}
|D\tilde{G}_s(R)[h]|_{(Y^{k})^3} \lesssim_{k} |h|_{(X^{k+1})^3}\\
|\tilde{G}_s(\tilde{R})-\tilde{G}_s(R)-D\tilde{G}_s(R)[\tilde{R}-R]|_{(Y^{k})^3} \lesssim_{k} |\tilde{R}-R|_{(X^{k+1})^3}^2\\
|\tilde{G}_s(\tilde{R})-\tilde{G}_s(R)-D\tilde{G}_s(R)[\tilde{R}-R]-\frac{1}{2}D^2\tilde{G}_s[\tilde{R}-R,\tilde{R}-R]|_{(Y^{k})^3} \lesssim_{k} |\tilde{R}-R|_{(X^{k+1})^3}^3. 
\end{cases}
\end{align*}

\end{itemize}
\end{lemma}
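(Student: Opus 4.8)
The plan is to deduce everything from Taylor's theorem applied to $\tilde{G}_s(R)=G(\Phi(R))$, where $\Phi(R):=(\Theta^*_3+v\cdot R,\,sv+(I-P)R)$ is \emph{affine} in $R$, combined with the estimates (a)--(e) of Theorem~\ref{theorem1}. I would use throughout that $v$ is a \emph{fixed} element of $(C^\infty)^3$, so $|v|_{(X^j)^3}$ is a finite constant for each $j$, that $I-P$ is bounded on every $X^j$, and that $s<s_0<1$; these facts absorb powers of $s$ and the norms of $v$ into the implicit constants. After shrinking $V^3$ and $s_0$ one has $\Phi(R)\in I\times(V^3\cap(C^\infty)^3)$ for $R\in V^3\cap(C^\infty)^3$ small (the Nash--Moser iterates are smooth, which is the case that matters), with $|sv+(I-P)R|_{(X^j)^3}$ small for all $j$; in particular the growth factors $(1+|\cdot|_{(X^{k+3})^3})$ appearing in (b) stay bounded along the relevant segments, so the hypotheses of Theorem~\ref{theorem1} may be applied at $\Phi(R)$.

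For (a'), note $\tilde{G}_s(0)=G(\Theta^*_3,sv)$, that $G(\Theta^*_3,0)=0$ by (a), and that $DG(\Theta^*_3,0)[v]=a(\Theta^*_3,0)[v]+A(\Theta^*_3,0)[v]=0$ --- the first term because $|a(\Theta^*_3,0)[v]|_{(Y^k)^3}\lesssim_k|G(\Theta^*_3,0)|_{(Y^k)^3}=0$ by \eqref{approxinverse1}, the second because $v\in\text{Ker}(A(\Theta^*_3,0))$ by (d). Hence the first two terms of the Taylor expansion of $\tau\mapsto G(\Theta^*_3,\tau v)$ at $\tau=0$ vanish, giving
\begin{align*}
\tilde{G}_s(0)=s^2\int_0^1(1-t)\,D^2G(\Theta^*_3,tsv)[v,v]\,dt,
\end{align*}
so \eqref{D2G} yields $|\tilde{G}_s(0)|_{(Y^k)^3}\lesssim_k s^2(1+s|v|_{(X^{k+3})^3})|v|_{(X^{k+1})^3}^2\lesssim_k s^2$.

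For (b'), the first inequality is immediate from the decomposition \eqref{dGanda}, $D\tilde{G}_s(R)[h]=\partial_\Theta G(\Phi(R))(v\cdot h)+a(\Phi(R))[(I-P)h]+A(\Phi(R))[(I-P)h]$, by applying \eqref{lineargrowth2} to the first term (together with $|v\cdot h|\lesssim|h|_{(X^{k+1})^3}$), \eqref{approxinverse1} and \eqref{lineargrowth1} to the second, and \eqref{approxinverse2} to the third. For the second inequality I would use Taylor with integral remainder, $\tilde{G}_s(\tilde{R})-\tilde{G}_s(R)-D\tilde{G}_s(R)[\delta]=\int_0^1(1-t)\,D^2\tilde{G}_s(R+t\delta)[\delta,\delta]\,dt$ with $\delta:=\tilde{R}-R$, and then, since $\Phi$ is affine, expand
\begin{align*}
D^2\tilde{G}_s(R')[\delta,\delta]=\partial_{\Theta\Theta}G(\Phi(R'))(v\cdot\delta)^2+2\,\partial_\Theta DG(\Phi(R'))[(I-P)\delta](v\cdot\delta)+D^2G(\Phi(R'))[(I-P)\delta,(I-P)\delta].
\end{align*}
Here $\partial_{\Theta\Theta}G(\Theta_3,0)=0$ by (a), so $\partial_{\Theta\Theta}G(\Theta_3,\rho)=\int_0^1\partial_{\Theta\Theta}DG(\Theta_3,u\rho)[\rho]\,du$ and \eqref{dttDG} bounds it by $\lesssim_k 1$; \eqref{dtDG} and \eqref{D2G} handle the other two terms, giving $|D^2\tilde{G}_s(R')[\delta,\delta]|_{(Y^k)^3}\lesssim_k|\delta|_{(X^{k+1})^3}^2$, hence the second inequality.

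For the third inequality I would carry the expansion one order further, $\tilde{G}_s(\tilde{R})-\tilde{G}_s(R)-D\tilde{G}_s(R)[\delta]-\tfrac12 D^2\tilde{G}_s(R)[\delta,\delta]=\tfrac12\int_0^1(1-t)^2 D^3\tilde{G}_s(R+t\delta)[\delta,\delta,\delta]\,dt$, so it reduces to the third-order tame bound $|D^3\tilde{G}_s(R')[\delta,\delta,\delta]|_{(Y^k)^3}\lesssim_k|\delta|_{(X^{k+1})^3}^3$. This is the step I expect to be the main obstacle: expanding $D^3\tilde{G}_s$ through the affine $\Phi$ produces terms built from $\partial_{\Theta\Theta\Theta}G$, $\partial_{\Theta\Theta}DG$, $\partial_\Theta D^2G$ and $D^3G$ evaluated at $\Phi(R')$, and while the first two can be absorbed as before (the $\partial_{\Theta\Theta\Theta}G$ term via $\partial_{\Theta\Theta\Theta}G(\Theta_3,0)=0$ and \eqref{dttDG}-type bounds), the $\partial_\Theta D^2G$ term requires \eqref{dtDG2} on a neighbourhood of $\Theta^*_3$ and the $D^3G$ term a genuine third-order tame bound $|D^3G(\Theta_3,R)[h,h,h]|_{(Y^k)^3}\lesssim_k(1+|R|_{(X^{k+3})^3})|h|_{(X^{k+1})^3}^3$ --- neither of which is literally among \eqref{lineargrowth1}--\eqref{dttDG}. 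For the concrete functional of Section~\ref{Section4} these extra third-order tame estimates hold for exactly the same reason as the ones in hypothesis (b): all derivatives of $G$ arise by differentiating under the integral sign in \eqref{sij}, whose integrands are real-analytic away from removable singularities, so they may be adjoined to (b) at no cost. With them, each of the four terms is $\lesssim_k|\delta|_{(X^{k+1})^3}^3$ after absorbing the bounded factors $(1+|sv+(I-P)R'|_{(X^{k+3})^3})$, and integrating in $t$ finishes the proof. Apart from this point, the only care needed is the domain bookkeeping --- keeping $\Phi(R+t\delta)$ inside $I\times(V^3\cap(C^\infty)^3)$ and the growth factors $O(1)$ along $[R,\tilde{R}]$ --- which is exactly what the hypotheses $|R|_{(X^{k+3})^3},|\tilde{R}|_{(X^{k+3})^3}\le\eta<1$ provide.
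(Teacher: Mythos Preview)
Your proposal is correct and follows the same approach as the paper: the paper's proof is the two-line remark that (a') follows from $v\in\text{Ker}(DG(\Theta_3^*,0))$ and $G(\Theta_3^*,0)=0$, while (b') ``is due to (b) in Theorem~\ref{theorem1}'', which is precisely your Taylor-through-the-affine-map-$\Phi$ strategy. Your write-up is in fact more careful than the paper's sketch: you correctly observe that the third inequality in (b') needs a tame bound on $D^3G$ (and $\partial_\Theta D^2G$ on a neighbourhood of $\Theta_3^*$) not literally listed in \eqref{lineargrowth1}--\eqref{dttDG}, and that these follow for the concrete functional by the same routine integral estimates as in Proposition~\ref{higer_derivative_estimates}; the paper simply suppresses this point.
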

\begin{proof}
(a') follows from the fact that $v \in \text{Ker}(DG(\Theta_3^{*},0))$ and $G(\Theta_3^*,0)=0$. (b') is due to (b)  in Theorem~\ref{theorem1}.
\end{proof}

\subsubsection{Nash-Moser iteration}
For $\beta,N>0$ and $2\le k\in \mathbb{N}$, we consider a regularizing operator $S(N): (X^{k})^3\mapsto (C^{\infty})^3$ such that
\begin{align}\label{regularizing1}
\begin{cases}
|S(N)R|_{(X^{k+\beta})^3} \lesssim_{k,\beta} N^{\beta} |R|_{(X^{k})^3} & \forall \ R\in (H^{k})^3\\
|(I-S(N))R|_{(H^{k})^3} \lesssim_{k,\beta} N^{-\beta} |R|_{(X^{k+\beta})^3}  & \forall \ R\in (H^{k+\beta})^3.
\end{cases}
\end{align}
Note that we can choose $S$ so that $PS(N) = S(N)P$, since $v$ is supported on the $m$-th Fourier mode (see (d) in Theorem~\ref{theorem1}).

 For  $N_n>1$ and even integer $\beta>0$, which will be chosen later, we set
\begin{align}\label{definition_of_approx_sol}
\begin{cases}
R_{n+1} = R_n - S(N_n)T_s(R_n)^{-1}[\tilde{G}_s(R_n)] & \text{ for }n\ge0\\
R_0 = 0,
\end{cases}
\end{align}
 and
 \begin{align}\label{def_of_sequences}
 \begin{cases}
 a_n:=|R_{n+1}-R_n|_{(X^{k_0+1})^3}\\
 a'_n:=|R_{n+1}-R_n|_{(X^{k_0+2})^3}\\
a''_n=|R_{n+1}-R_n|_{(X^{k_0+4})^3}\\
 b_n:=|\tilde{G}_s(R_n)|_{(Y^{k_0})^3}\\
 C_n:=|T_s(R_n)^{-1}[\tilde{G}_s(R_n)]|_{(X^{k_0+1+\beta})^3}.
  \end{cases}
\end{align}

Note that our goal is to show that $\sum_{n=0}^\infty a_n < \infty$, which implies that $R_n$ converges in $(X^{k_0+1})^3$. In order to prove the assumptions in \eqref{assumptionfory1}-\eqref{assumptionfory3}, we will need the boundedness of $a_n'$ and $a_n''$ as well.
  
 \begin{lemma}\label{iteration1} Let $\epsilon>0$ and $k_0\ge2$ fixed and let $s_0$ and $\delta$ be defined as in Lemma~\ref{approxinv}. We also assume that $\delta$ is even smaller if necessary, so that $\delta < \eta(k_0)$ where $\eta(k_0)$ is as in (b') in Lemma~\ref{conditionsforH}.   For any $n\ge 0$ such that  $R_n$ satisfies \eqref{assumptionfory1}, \eqref{assumptionfory2}  and \eqref{assumptionfory3}  for some $s \in (0,s_0)$, then
  we have
 \begin{align}
&a_{0}\lesssim_{\ep,k_0}  s, \label{recursion3}\\
& a_{n}\lesssim_{\ep,k_0} \frac{1}{s}N_nb_{n} &\text{ for }n\ge 1, \label{recursion1}\\
&b_{0} \lesssim_{\ep,k_0,\beta} s^2 \label{recursion221}\\
&b_{1} \lesssim_{\ep,k_0,\beta} (N_0^3+1)s^{3} + N_0^{-\beta}C_{0}. \label{recursion4}\\
&b_{n+1} \lesssim_{\ep,k_0,\beta} a_{n}^2 + \frac{1}{s}N_{n}b_{n}^2 + N_{n}^{-\beta}(1+b_{n})C_{n} & \text{ for }n\ge 1,\label{recursion2}\\
&a'_n \lesssim_{\ep,k_0} \frac{1}{s}N_n^2 b_n & \text{ for }n\ge 0, \label{recursion5}\\
&a''_n\lesssim_{\ep,k_0}\frac{1}{s}N_n^4 b_n & \text{ for }n\ge 0. \label{recursion6}
 \end{align}
 Furthermore, we have
 \begin{align}
 |PR_1|_{(X^{k_0+2})^3}\lesssim_{\ep,k_0} N_0 s \label{firststep1}\\
 |(I-P)R_1|_{(X^{k_0+2})^3} \lesssim_{\ep,k_0}  N_0 s^2. \label{firststep2}
 \end{align}
 \end{lemma}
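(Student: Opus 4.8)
The plan is to substitute the definition \eqref{definition_of_approx_sol} of the iterates $R_n$ into each of the quantities in \eqref{def_of_sequences} and estimate term by term, combining the mapping and inversion bounds for $T_s(R)^{-1}$ from Lemma~\ref{approxinv}, the Taylor/tame estimates of Lemma~\ref{conditionsforH} (equivalently the regularity assumptions in Theorem~\ref{theorem1}(b)), and the two properties of the smoothing operator in \eqref{regularizing1}. The running hypotheses \eqref{assumptionfory1}--\eqref{assumptionfory3} are invoked at the outset precisely to keep $(\Theta^*_3+v\cdot R_n,\, sv+(I-P)R_n)$ inside the neighborhood $I\times V^3$ where all of these estimates are valid.

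For the base step $n=0$ I would use $R_1=-S(N_0)T_s(0)^{-1}[\tilde G_s(0)]$ together with $|\tilde G_s(0)|_{(Y^{k})^3}\lesssim_k s^2$ from Lemma~\ref{conditionsforH}(a') (which holds because $v\in\text{Ker}(DG(\Theta^*_3,0))$). Since $v$ is supported on the $m$-th mode, $S(N_0)$ commutes with $P$, so $PR_1=-S(N_0)P\,T_s(0)^{-1}[\tilde G_s(0)]$ and $(I-P)R_1=-S(N_0)(I-P)T_s(0)^{-1}[\tilde G_s(0)]$; the refined $R=0$ estimates \eqref{Tinvbound1}--\eqref{Tinvbound2} give $|P\,T_s(0)^{-1}[\tilde G_s(0)]|_{(X^{k_0+1})^3}\lesssim s$ and $|(I-P)T_s(0)^{-1}[\tilde G_s(0)]|_{(X^{k_0+1})^3}\lesssim s^2$, and feeding these through \eqref{regularizing1} yields \eqref{recursion3} (with $\beta=0$) as well as \eqref{firststep1}--\eqref{firststep2} (with $\beta=1$, which costs one factor $N_0$). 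The bound \eqref{recursion221}, $b_0=|\tilde G_s(0)|_{(Y^{k_0})^3}\lesssim s^2$, is immediate. For \eqref{recursion4} I would Taylor-expand $\tilde G_s(R_1)=\tilde G_s(0)+D\tilde G_s(0)[R_1]+\tfrac12 D^2\tilde G_s(0)[R_1,R_1]+O(|R_1|_{(X^{k_0+1})^3}^3)$ and use $D\tilde G_s(0)=T_s(0)+a(\Theta^*_3,sv)\circ(I-P)$ (from \eqref{dGanda}) to get $\tilde G_s(0)+D\tilde G_s(0)[R_1]=T_s(0)(I-S(N_0))T_s(0)^{-1}[\tilde G_s(0)]+a(\Theta^*_3,sv)[(I-P)R_1]$; the first term is $O(N_0^{-\beta}C_0)$ by the second line of \eqref{regularizing1}, the second is $O(s^3)$ by \eqref{approxinverse1}, and the quadratic and cubic terms are $O(s^3)$ after splitting $R_1=PR_1+(I-P)R_1$ and using $\partial_{\Theta\Theta}G(\Theta^*_3,0)=0$ (a consequence of assumption (a)).

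For $n\ge1$ the increment is $R_{n+1}-R_n=-S(N_n)T_s(R_n)^{-1}[\tilde G_s(R_n)]$. To prove \eqref{recursion1} I measure this in $(X^{k_0+1})^3$: since $\tilde G_s(R_n)$ is only controlled in $(Y^{k_0})^3$ (by $b_n$) while $T_s(R_n)^{-1}$ requires one more derivative on the first component, I route that derivative through the smoothing operator, paying one factor $N_n$ via the first line of \eqref{regularizing1}, and use the inversion bound \eqref{Tinvbound0} (or the cruder \eqref{crudebound}) for the factor $1/s$; the $P$-component of the increment lies in $\text{span}\{v\}$ and is controlled even more cheaply. The estimates \eqref{recursion5}--\eqref{recursion6} for $a_n'$ and $a_n''$ are identical, except that $S(N_n)$ must now supply two, resp.\ four, extra derivatives, producing the factors $N_n^2$ and $N_n^4$ from \eqref{regularizing1} (the exponents are even because $X^k$ consists of even functions). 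For the error recursion \eqref{recursion2} I Taylor-expand $\tilde G_s(R_{n+1})=\tilde G_s(R_n)+D\tilde G_s(R_n)[R_{n+1}-R_n]+O(a_n^2)$; writing $S(N_n)=I-(I-S(N_n))$ in the middle term, the ``$I$'' part combines with the approximate-inverse identity \eqref{Approximate_inv} to give $|\tilde G_s(R_n)+D\tilde G_s(R_n)[-T_s(R_n)^{-1}\tilde G_s(R_n)]|_{(Y^{k_0})^3}\lesssim b_n\,|T_s(R_n)^{-1}\tilde G_s(R_n)|_{(X^{k_0+1})^3}\lesssim \tfrac1s N_n b_n^2$, while the ``$(I-S(N_n))$'' part is bounded using the tame estimate $|D\tilde G_s(R_n)[h]|_{(Y^{k_0})^3}\lesssim |h|_{(X^{k_0+1})^3}$ of Lemma~\ref{conditionsforH}(b') and the smallness of $I-S(N_n)$ in \eqref{regularizing1}, producing the $N_n^{-\beta}(1+b_n)C_n$ term.

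The main obstacle — and the reason a plain Newton scheme does not suffice — is the single loss of derivative in the approximate inverse: the first component of the error $\tilde G_s(R_n)$ is estimated only at the level $Y^{k_0}$, whereas $T_s(R_n)^{-1}$ needs $Y^{k_0+1}$, so each Newton step degrades regularity. Handling this cleanly requires showing that exactly one power of $N_n$ from the smoothing operator compensates the gap, with all implied constants uniform in $n$ and independent of $s$, and exploiting the decomposition $DG=a+A$ of assumption (c), in which $a$ is controlled by $G$ itself so that the low-regularity part of $D\tilde G_s$ is quadratically small along the iteration and $T_s$ (rather than $D\tilde G_s$) plays the role of approximate inverse. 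Keeping the bookkeeping of the higher norms tracked by $a_n'$, $a_n''$ and $C_n$ consistent with the $(X^{k_0+1})^3$-convergence is the technical heart of the lemma.
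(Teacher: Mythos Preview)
Your proposal is correct and follows essentially the same approach as the paper: the same use of the refined $R=0$ bounds \eqref{Tinvbound1}--\eqref{Tinvbound2} for the base step, the same Taylor expansion $b_{n+1}\le J_1+J_2$ with $J_1\lesssim a_n^2$ (and the sharper $J_1\lesssim s^3$ at $n=0$ via the explicit computation of $D^2\tilde G_s(0)[R_1,R_1]$ using $\partial_{\Theta\Theta}G(\Theta^*_3,0)=0$), and the same splitting $S(N_n)=I-(I-S(N_n))$ combined with \eqref{Approximate_inv} and \eqref{regularizing1} for $J_2$. Your treatment of the linear term at $n=0$ is organized slightly differently (you unpack $D\tilde G_s(0)=T_s(0)+a(\Theta^*_3,sv)(I-P)$ directly rather than citing \eqref{Approximate_inv}), but this is the same computation and in fact yields a marginally sharper bound.
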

 
\begin{proof}
\textbf{Proof of \eqref{recursion3}, \eqref{recursion1}, \eqref{firststep1} and \eqref{firststep2}.} We first claim that
\begin{align}
&|PR_1|_{(X^{k_0+1})^3} \lesssim s\label{claim1}\\
&|(I-P)R_1|_{(X^{k_0+1})^3} \lesssim s^2, \label{claim2}
\end{align}
Let us prove \eqref{claim1} first. Thanks to (a') in Lemma~\ref{conditionsforH} and \eqref{Tinvbound1}, we have
\begin{align*}
|PR_1|_{(X^{k_0+1})^3} = |S(N_0)PT_s(0)^{-1}[\tilde{G}_s(0)]|_{(X^{k_0+1})^3} \lesssim  |PT_s(0)^{-1}[\tilde{G}_s(0)]|_{(X^{k_0+1})^3} \lesssim \frac{1}{s}|\tilde{G}_s(0)|_{(Y^{k_0}+1)^3} \lesssim s,
\end{align*}
where we used that $S(N_0)$ and $P$ commute to obtain the first equality. To prove \eqref{claim2}, we use \eqref{Tinvbound2} and compute
\begin{align*}
|(I-P)R_1|_{(X^{k_0+1})^3}& \lesssim |S(N_0)(I-P)T_s(0)^{-1}[\tilde{G}_s(0)]|_{(X^{k_0+1})^3} \\
& \lesssim  |(I-P)T_s(0)^{-1}[\tilde{G}_s(0)]|_{(X^{k_0+1})^3} \\
& \lesssim |\tilde{G}_s(0)|_{(Y^{k_0+1})^3} \\
&\lesssim s^2,
\end{align*}
which proves \eqref{claim2}.  With the claim,  \eqref{recursion3} follows immediately. \eqref{recursion1}   \eqref{firststep1} and \eqref{firststep2} can be proved in exactly the same way as above using \eqref{regularizing1}.

\textbf{Proof of \eqref{recursion221}, \eqref{recursion4} and \eqref{recursion2}.}
Note that Lemma~\ref{conditionsforH} immediately implies \eqref{recursion221}.  Now, let us prove \eqref{recursion4} and \eqref{recursion2}. We have
\begin{align}\label{bnestimate1}
b_{n+1}:=|\tilde{G}_{s}(R_{n+1})|_{(Y^{k_0})^3} & \lesssim {|\tilde{G}_s(R_{n+1})-\tilde{G}_s(R_{n})-D\tilde{G}_s(R_{n})[R_{n+1}-R_{n}]|_{(Y^{k_0})^3}} \nonumber\\
& \  +{|\tilde{G}_s(R_{n})+D\tilde{G}_s(R_{n})[R_{n+1}-R_{n}]|_{(Y^{k_0})^3}}\nonumber\\
& =: J_1 + J_2,
\end{align}
We claim that
\begin{align}\label{claim3}
\begin{cases}
J_1 \lesssim  s^3 & \text{ if }n=0 \\
J_1 \lesssim a_{n}^2 =|R_{n+1}-R_{n}|_{(X^{k_0+1})^3}^2 & \text{ if }n\ge 1.
\end{cases}
\end{align} 
If $n=0$, we have
\begin{align*}
J_1 & \lesssim \bigg|\tilde{G}_{s}({R_1})-\tilde{G}_s(0)-D\tilde{G}_s(0)[R_1]-\frac{1}{2}D^2\tilde{G}_s(0)\left[R_1,R_1\right]\bigg|_{(Y^{k_0})^3} + \frac{1}{2}|D^2\tilde{G}_s(0)[R_1,R_1]|_{(Y^{k_0})^3}\\
&\lesssim |R_1|^3_{(X^{k_0+1})^3} + \frac{1}{2}|D^2\tilde{G}_s(0)[R_1,R_1]|_{(Y^{k_0})^3} \\
&\lesssim s^3 + \frac{1}{2}|D^2\tilde{G}_s(0)[R_1,R_1]|_{(Y^{k_0})^3},
\end{align*}
where we used (b') in Lemma~\ref{conditionsforH} in the second inequality and  \eqref{claim1} and \eqref{claim2} in the last inequality.
To estimate $D^2\tilde{G}$, we recall the definition of $\tilde{G}_s$ and compute
\begin{align*}
D^2\tilde{G}_s(0)[R_1,R_1] & = \left(\frac{d}{dt}\right)^2 G(\Theta_3^*+tv\cdot R_1, sv+(I-P)tR_1)\bigg|_{t=0}  \\
& = \partial_{\Theta \Theta}G(\Theta_3^*,sv)(v\cdot R_1)^2 + 2(v\cdot R_1)\partial_{\Theta}DG(\Theta_3^*,sv)[(I-P)[R_1] ]\\
& \  + D^2G(\Theta_3^*,sv)\left[ (I-P)R_1,(I-P)R_1\right].
\end{align*}
Since $\partial_{\Theta \Theta}G(\Theta_3^*,0) = 0$, therefore
\[
\left| \partial_{\Theta \Theta}G(\Theta^*_3,sv) \right|_{(Y^{k_0})^3} = \left| \int_0^{s}\frac{d}{dt}\left( \partial_{\Theta\Theta} G(\Theta^*_3,tv)\right)dt\right|_{(Y^{k_0})^3} \lesssim s,
\]
which implies $|\partial_{\Theta \Theta}G(\Theta_3^*,sv)(v\cdot R_1)^2|_{(Y^{k_0})^3} \lesssim s^3$, since $|v\cdot R_1| \lesssim |PR_1|_{(X^{k_0+1})^3}\lesssim s$, which follows from \eqref{claim1}. Also it follows from (b), \eqref{claim1} and \eqref{claim2} that
\begin{align*}
&|2(v\cdot R_1)\partial_{\Theta}DG(\Theta_3^*,sv)[(I-P)[R_1] ]|_{(Y^{k_0})^3} \lesssim s^3,\\
&|D^2G(\Theta_3^*,sv)[ (I-P)R_1,(I-P)R_1]|_{(Y^{k_0})^3} \lesssim s^4,
\end{align*}
therefore, $|D^2\tilde{G}_s(0)[R_1,R_1] |_{(Y^{k_0})^3} \lesssim s^3 $.
Thus, we have $J_1 \lesssim  s^3$, which proves \eqref{claim3} for $n=0$. If $n\ge 1$, the claim in \eqref{claim3} follows immediately from (b') in Lemma~\ref{conditionsforH}.\\

 In order to estimate $J_2$ in \eqref{bnestimate1}, we have that for $n\ge 0$,
\begin{align*}
J_2 & \lesssim {|(I-D\tilde{G}_s(R_{n})\circ T_s(R_{n})^{-1})[\tilde{G}_s(R_{n})]|_{(Y^{k_0})^3}} \\
& \  +{|D\tilde{G}_s(R_{n})\left(I-S(N_{n})\right)T_s(R_{n})^{-1}[\tilde{G}_s(R_{n})]|_{(Y^{k_0})^3}}\\
& =: J_{21} + J_{22}.
\end{align*}
Then it follows from \eqref{regularizing1} and \eqref{Approximate_inv} that
\begin{align*}
J_{21} &\lesssim |\tilde{G}_s(R_{n})|_{(Y^{k_0})^3}|T_s(R_{n})^{-1}[\tilde{G}_{s}(R_{n})]|_{(X^{k_0+1})^3} \\
& \lesssim |\tilde{G}_s(R_{n})|_{(Y^{k_0})^3}|S(N_{n})T_s(R_{n})^{-1}[\tilde{G}_s(R_{n})]|_{(X^{k_0+1})^3} \\
& \ + |\tilde{G}_s(R_{n})|_{(Y^{k_0})^3}|\left(I-S(N_{n})\right)T_s(R_{n})^{-1}[\tilde{G}_s(R_{n})]|_{(X^{k_0+1})^3} \\
& \lesssim N_{n}|\tilde{G}_s(R_{n})|_{(Y^{k_0})^3}|T_s(R_{n})^{-1}[\tilde{G}_s(R_{n})]|_{(X^{k_0})^3}\\
&  \ + N_{n}^{-\beta}|\tilde{G}_{s}(R_{n})|_{(Y^{k_0})^3}|T_s(R_{n})^{-1}[\tilde{G}_s(R_{n})]|_{(X^{k_0+1+\beta})^3} \\
&\lesssim \frac{1}{s}N_{n}|\tilde{G}_{s}(R_{n})|_{(Y^{k_0})^3}^2+N_{n}^{-\beta}|\tilde{G}_{s}(R_{n})|_{(Y^{k_0})^3}|T_s(R_{n})^{-1}[\tilde{G}_s(R_{n})]|_{(X^{k_0+1+\beta})^3}\\
&\lesssim \frac{1}{s}N_{n}b_{n}^2+N_{n}^{-\beta}b_{n}C_{n},
\end{align*}
where the fourth inequality follows from \eqref{crudebound}.
Also we have
\begin{align*}
J_{22}\lesssim |\left( I- S(N_{n})\right)T_s(R_{n})^{-1}[\tilde{G}_{s}(R_{n})]|_{(X^{k_0+1})^3} \lesssim N_{n}^{-\beta}C_{n}.
\end{align*}
Hence we have for $n\ge 0$ that
\begin{align}\label{J2estimate1}
J_2 \lesssim \frac{1}{s}N_{n}b_{n}^2 + N_{n}^{-\beta}(1+b_{n})C_{n}.
\end{align}
Thus with \eqref{bnestimate1}, \eqref{claim3}, \eqref{J2estimate1} and (a') in lemma \ref{conditionsforH}, we have that
\begin{align}\label{bnrecursion}
\begin{cases}
b_{1} \lesssim (1+N_0)s^{3} + N_0^{-\beta}C_{0} \\
b_{n+1} \lesssim a_{n}^2 + \frac{1}{s}N_{n}b_{n}^2 + N_{n}^{-\beta}(1+b_{n})C_{n} & \text{ for }n\ge 1.
\end{cases}
\end{align}
\textbf{Proof of \eqref{recursion5} and \eqref{recursion6}.}
Finally, for \eqref{recursion5} and \eqref{recursion6}, we use \eqref{crudebound} to compute
\begin{align}
&|R_{n+1}-R_{n}|_{(X^{k_0+2})^3} = | S(N_n)T_s(R_{n})^{-1}[\tilde{G}_s(R_n)]|_{\left(X^{k_0+2}\right)^3} \lesssim N_n^2\frac{1}{s}b_n, \label{highnormestimate1} \\
&|R_{n+1}-R_{n}|_{(X^{k_0+4})^3} = | S(N_n)T_s(R_{n})^{-1}[\tilde{G}_s(R_n)]|_{\left(X^{k_0+4}\right)^3} \lesssim N_n^4\frac{1}{s}b_n. \label{highnormestimate}
\end{align}
This finishes the proof.
\end{proof}
Let us now derive a recursive formula for $C_n$.
\begin{lemma}\label{iteration2}
For $j=0,\ldots,n-1$ assume that $R_j$ satisfies the same assumptions as in Lemma~\ref{iteration1}, that is, given $\epsilon>0$ and $k_0\ge2$,  $R_j$ satisfies  \eqref{assumptionfory1}, \eqref{assumptionfory2} and \eqref{assumptionfory3} and the assumptions of Lemma~\ref{conditionsforH} for $s\in (0,s_0(\epsilon,k_0))$ and $\delta=\delta(\ep,k_0)>0$. Then,
\begin{align*}
C_{n} \lesssim_{\ep,k_0,\beta} \frac{1}{s}\left( 1 + n\sup_{j=0,\ldots n-1}N_{j}^{4} C_{j} \right), \quad \text{ for $n\ge 1$ and  }\quad C_0\lesssim_{\ep,k_0,\beta} s.
\end{align*}

 \end{lemma}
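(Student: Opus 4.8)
The plan is to obtain the recursion directly from the higher-norm inversion estimate \eqref{highernorm_inversion} in Lemma~\ref{approxinv}, applied with the even shift $\sigma=\beta$ to $z=\tilde{G}_s(R_n)$ at the point $R=R_n$. Note that $R_n\in(C^\infty)^3$ for every $n$, because $R_0=0$ and each step of \eqref{definition_of_approx_sol} applies the smoothing operator $S(N)$, and that $R_n$ lies in the regime \eqref{assumptionfory1}--\eqref{assumptionfory3} (this is part of the standing hypotheses, inherited from the iteration), so Lemma~\ref{approxinv} is indeed available at $R_n$. With $\sigma=\beta$ this gives
\[
C_n \lesssim_{\epsilon,k_0,\beta} \frac{1}{s}\Big( \big(1+|R_n|_{(X^{k_0+4+\beta})^3}\big)\,|\tilde{G}_s(R_n)|_{Y^{k_0+1}\times (Y^{k_0})^2} + |\tilde{G}_s(R_n)|_{Y^{k_0+1+\beta}\times (Y^{k_0+\beta})^2}\Big).
\]

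Next I would bound the two $\tilde{G}_s$-factors by the linear-growth estimate \eqref{lineargrowth1}. Since $|R_n|_{(X^{k_0+2})^3}\le |R_n|_{(H^{k_0+4})^3}\le\delta<1$ by \eqref{assumptionfory3}, and $v$ is a fixed mode-$m$ vector, \eqref{lineargrowth1} at level $k_0+1$ gives $|\tilde{G}_s(R_n)|_{Y^{k_0+1}\times (Y^{k_0})^2}\le|\tilde{G}_s(R_n)|_{(Y^{k_0+1})^3}\lesssim 1+|sv+(I-P)R_n|_{(X^{k_0+2})^3}\lesssim 1$, while \eqref{lineargrowth1} at level $k_0+1+\beta$ gives $|\tilde{G}_s(R_n)|_{Y^{k_0+1+\beta}\times (Y^{k_0+\beta})^2}\le|\tilde{G}_s(R_n)|_{(Y^{k_0+1+\beta})^3}\lesssim 1+|R_n|_{(X^{k_0+2+\beta})^3}\le 1+|R_n|_{(X^{k_0+4+\beta})^3}$. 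Substituting, the whole estimate reduces to $C_n\lesssim_{\epsilon,k_0,\beta}\frac{1}{s}\big(1+|R_n|_{(X^{k_0+4+\beta})^3}\big)$, so everything comes down to controlling the single high Sobolev norm $|R_n|_{(X^{k_0+4+\beta})^3}$.

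The key point is that this norm involves only increments indexed by $j\le n-1$: writing $R_n=\sum_{j=0}^{n-1}(R_{j+1}-R_j)$ and using $R_{j+1}-R_j=-S(N_j)\,T_s(R_j)^{-1}[\tilde{G}_s(R_j)]$, the smoothing estimate \eqref{regularizing1} with a shift of $3$ derivatives on the base space $(X^{k_0+1+\beta})^3$ yields $|R_{j+1}-R_j|_{(X^{k_0+4+\beta})^3}\lesssim N_j^{3}\,|T_s(R_j)^{-1}[\tilde{G}_s(R_j)]|_{(X^{k_0+1+\beta})^3}=N_j^{3}C_j$. Summing over $j=0,\dots,n-1$ and using $N_j>1$ gives $|R_n|_{(X^{k_0+4+\beta})^3}\lesssim n\sup_{0\le j\le n-1}N_j^{3}C_j\le n\sup_{0\le j\le n-1}N_j^{4}C_j$, hence $C_n\lesssim_{\epsilon,k_0,\beta}\frac{1}{s}\big(1+n\sup_{0\le j\le n-1}N_j^{4}C_j\big)$, which is the claimed recursion. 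For the base case $n=0$, I would apply \eqref{highernorm_inversion} at $R_0=0$ with $\sigma=\beta$ and invoke (a') in Lemma~\ref{conditionsforH}, which gives $|\tilde{G}_s(0)|_{(X^{k})^3}\lesssim_{k}s^2$ for every $k$, so both terms are $\lesssim s^2$ and $C_0\lesssim\frac{1}{s}\cdot s^2=s$.

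The computation is essentially bookkeeping, so the points to get right, rather than a genuine obstacle, are: (i) that the recursion closes, i.e. the high-norm telescoping for $R_n$ brings in $C_j$ only for $j<n$ and never $C_n$ itself (this is exactly what lets the subsequent Nash--Moser induction run); (ii) that the evenness requirement on the shift in the tame estimate $(\tilde{c}\text{-}2)$ of Theorem~\ref{theorem1} (equivalently in \eqref{highernorm_inversion}) is respected, which is why one takes $\sigma=\beta$ with $\beta$ even; and (iii) that the regime conditions \eqref{assumptionfory1}--\eqref{assumptionfory3} together with $R_n\in(C^\infty)^3$ are in force at step $n$, so that \eqref{highernorm_inversion} may legitimately be applied there. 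The slightly loose power $N_j^4$ in the statement (versus the $N_j^3$ that the above argument produces) is harmless since $N_j>1$.
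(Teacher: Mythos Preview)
Your proposal is correct and follows essentially the same route as the paper: apply \eqref{highernorm_inversion} with $\sigma=\beta$ at $R_n$, control the $\tilde G_s$-factors via \eqref{lineargrowth1}, and then telescope $R_n$ using the smoothing bound \eqref{regularizing1} to obtain $|R_n|_{(X^{k_0+4+\beta})^3}\lesssim \sum_{j=0}^{n-1}N_j^3C_j$. Your remarks on the evenness of $\beta$, the smoothness of $R_n$, and the harmless slack $N_j^3\le N_j^4$ are all on point and match the paper's reasoning.
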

\begin{proof}
It follows from (a') in Lemma~\ref{conditionsforH} and \eqref{highernorm_inversion} in Lemma~\ref{approxinv} that
\begin{align*}
C_0 \lesssim \frac{1}{s}|\tilde{G}_s(0)|_{Y^{k_0+1+\beta}\times (Y^{k_0+\beta})^{2}}\lesssim \frac{1}{s}|\tilde{G}_s(0)|_{(Y^{k_0+1+\beta})^3}\lesssim s.
\end{align*}
Now let us assume that $n\ge 1$. It follows from the definition of $C_n$ in \eqref{def_of_sequences} and \eqref{highernorm_inversion} that (recall that $\beta$ is even)
\begin{align*}
C_n \lesssim \frac{1}{s} \left( (1+|R_n|_{(X^{k_0+4+\beta})^3})|\tilde{G}_s(R_n)|_{Y^{k_0+1}\times (Y^{k_0})^{2}} + |\tilde{G}_s(R_n)|_{Y^{k_0+1+\beta}\times (Y^{k_0+\beta})^{2}}\right).
\end{align*}
Using \eqref{lineargrowth1}, we have
\begin{align*}
&|\tilde{G}_s(R_n)|_{Y^{k_0+1}\times (Y^{k_0})^{2}} \lesssim |\tilde{G}_s(R_n)|_{(Y^{k_0+1})^3} \lesssim 1+ | R_n |_{(X^{k_0+2})^3}\lesssim 1,\\
&|\tilde{G}_s(R_n)|_{Y^{k_0+1+\beta}\times (Y^{k_0+\beta})^{2}} \lesssim 1+ | R_n |_{(X^{k_0+2+\beta})^3}.
\end{align*}
therefore,  we have
\begin{align*}
C_n \lesssim \frac{1}{s} (1 + |R_n|_{(X^{k_0+4+\beta})^3}).
\end{align*}
For $R_n$, we have
\begin{align*}
|R_n|_{(X^{k_0+4+\beta})^3}  &\lesssim \sum_{j=0}^{n-1}\left|R_{j+1} - R_{j}\right|_{(X^{k_0+4+\beta})^3} \\
& \lesssim \sum_{j=0}^{n-1}\left|S(N_j)T_s(R_j)^{-1}[\tilde{G}_s(R_j)]\right|_{(X^{k_0+4+\beta})^3} \\
& \lesssim \sum_{j=0}^{n-1}N_j^3 C_j\\
& \lesssim n\sup_{j=0,\ldots,n-1}N_{j}^3C_{j}.
\end{align*}
Hence, we obtain the desired result.
\end{proof}

Now we are ready to prove the main theorem of this section. 
\begin{proofthm}{theorem1}
We fix $k_0\ge2$ and pick  $\tilde{\epsilon},\ \tilde{s}>0$ so that
\begin{align}\label{epsilonpick}
\sum_{k=1}^{\infty}s^{-1+2\left(\frac{67}{64} \right)^{k}} \le s^{1+ \tilde{\epsilon}} \text{ for all $s \in (0,\tilde{s})$.} 
\end{align}
And we choose 
\begin{align}
\ \beta:= 816\quad \text{ and }\quad \epsilon:= \min\left\{ \frac{1}{4}, \frac{\tilde{\epsilon}}{2} \right\} >0, \label{param1}\end{align}
 and let $s_0=s_0(\epsilon,k_0),$ $
\delta=\delta(\epsilon,k_0)$ be as in Lemma~\ref{approxinv}. As before, we can also assume $\delta$ is small enough so that $\delta <\eta(k_0)$ where $\eta$ is as in (b') in Lemma~\ref{conditionsforH}. Since $k_0$, $\epsilon$ and $\beta$ are fixed, by Lemma~\ref{iteration1} and \ref{iteration2}, we can find a constant $K > 0$  such that as long as $R_n$ satisfies \eqref{assumptionfory1},\eqref{assumptionfory2} and \eqref{assumptionfory3}, for some $s\in (0,s_0)$, it holds that for any sequence of positive numbers $N_n$, and 
\begin{align}
&a_0 \le Ks \label{a0formula}\\
& |PR_1|_{(X^{k_0+2})^3}\le K N_0 s \label{firststep11}\\
& |(I-P)R_1|_{(X^{k_0+2})^3} \le K  N_0 s^2. \label{firststep12}\\
&a_n \le \frac{K}{s}N_nb_n \quad  \text{for $n\ge 1$,} \label{a1formula}\\
&b_0 \le Ks^2, \label{b0formula} \\
&b_1 \le K\left( (N_0^3 + 1)s^3 + N_0^{-\beta}C_0 \right) \label{b1formula}\\
&b_{n+1} \le K(a_n^2  + \frac{1}{s}N_nb_n^2 + N_n^{-\beta}(1+b_n)C_n) \quad \text{ for $n\ge 1$}, \label{bnformula}\\
&a'_n \le \frac{K}{s}N_n^2 b_n  \quad \text{ for $n\ge 0$}, \label{a'0formula}\\
&a''_n \le \frac{K}{s}N_n^4b_n \quad \text{ for $n\ge 0$}, \label{a''0formula}\\
&C_n \le  \frac{K}{s}\left(1 + n \sup_{j=0,\ldots n-1}\left|N_{j}^{4} C_{j}\right| \right) \quad \text{ for $n\ge 1$,} \label{cnformula}\\
& C_0 \le Ks.\label{c0formula}
\end{align} 

In what follows, we assume without loss of generality that 
\begin{align}\label{kanddelta}
K  > 1 , \quad \text{ and } \quad \delta<1.
\end{align}

For such $K$, we can find $s^*$ such that for all $s\in (0,s^*)$ (each of them can be easily verified for small enough $s>0$),
\begin{align}
&2Kn<s^{-2\left(\frac{67}{64}\right)^{n-1}+1} \quad  \text{ for all $n\ge 1$},\label{condfors1} \\
& 4K^3s^{\frac{7}{16}}< \frac{1}{2}, \label{condfors2} \\
& Ks^{\frac{\tilde{\epsilon}}{2}} \le \frac{1}{2}, \label{condfors4}\\
& s<1, \label{condfors3}\\
& K^2s^{\frac{3}{4}} \le \frac{1}{2}\delta, \label{condfors5} \\
& K\sum_{k=1}^{\infty}s^{-1+\frac{127}{64}\left( \frac{67}{64}\right)^k}  \le \frac{1}{2}\delta, \label{condfors6} \\
& K(n+2)  \le s^{1-2\left( \frac{67}{64}\right)^{n}} \text{ for  all $n\ge 0$}, \label{condfors7} \\
& 3K^3 \le s^{2+\left(-\frac{139}{32} + \frac{143}{64}\cdot\frac{67}{64} \right)\left(\frac{67}{64} \right)^{n}} \text{ for all $n\ge 1$}, \label{condfors9}\\
& 3K \le s^{1+\left(-\frac{141}{32}+\frac{143}{64}\left( \frac{67}{64}\right) \right)\left( \frac{67}{64}\right)^{n}} \text{ for all $n\ge 1$}, \label{condfors10}\\
& 6K \le s^{\left( - \frac{816}{16}+\frac{768}{16}+\frac{143}{64}\frac{67}{64}\right)\left(\frac{67}{64} \right)^{n}}  \text{ for all $n\ge 1$}. \label{condfors11}
\end{align}
Lastly, we fix $s\in (0,\min\left\{s_0,s^*,\tilde{s} \right\})$ and 
\begin{align}\label{param2}
N_n := s^{-\frac{1}{16}\left(\frac{67}{64} \right)^{n}} \quad \text{ for $n\ge 0$.}
\end{align}

 We claim that for all $n\ge0$,
\begin{itemize}
\item[$(P1)_n$]: $R_{n+1}$ satisfies \eqref{assumptionfory1},\eqref{assumptionfory2} and \eqref{assumptionfory3}.
\item[$(P2)_n$]: $C_{n} \le N_n^{768}$.
\item[$(P3)_n$]: $b_{n+1} \le s^{\frac{143}{64}\left(\frac{67}{64}\right)^{n+1}}$.
\item[$(P4)_n$]: $a_{n+1} \le Ks^{-1 + \frac{139}{64}\left( \frac{67}{64}\right)^{n+1}}$.
\end{itemize}
Once we have the above claims, then $(P1)_n$  justifies all the recurrence formulae above for all $n \in \mathbb{N}$, which follows from Lemma~\ref{iteration1} and \ref{iteration2}. Also, it is clear from \eqref{a1formula} and $(P4)_n$ that
\begin{align*}
\sum_{n=0}^{\infty}a_n \le  KN_0s + K\sum_{n=1}^{\infty}  s^{-1 + \frac{139}{64}\left( \frac{67}{64}\right)^{n}} < \infty.
\end{align*}
Therefore $R_n$ is a Cauchy sequence in $(X^{k_0+1})^3$, and we can find a limit $R_\infty:=\lim_{n\to\infty}R_n$. Then, $(P3)_n$ and the continuity of the functional $R\mapsto \tilde{G}_s$ implies that $\tilde{G}_s(R_\infty) = 0$. From the definition of $\tilde{G}_s$ in \eqref{def_tilde_g}, this implies the existence of a solution $R = R(s)\ne 0$ for each $s\in (0,\min\left\{s_0,s^*,\tilde{s} \right\})$ and finishes the proof.

 Now we prove the claim $(Pi)_n$ for $i=1,\ldots,4$. We will follow  the usual induction argument.

\textbf{Initial step.} In the initial case, we assume that $n=0$.  We first prove $(P1)_0$. It follows from \eqref{firststep11},\eqref{firststep12} and \eqref{param2} that 
\begin{align}
&|PR_1|_{(X^{k_0+2})^3}\le Ks^{\frac{15}{16}} \le s^{\frac{1}{4}} \le s^{\epsilon} \label{P1claim1}\\
&|(I-P)R_1|_{(X^{k_0+2})^3} \le K  s^{1+\frac{15}{16}} \le s^{1+ \frac{1}{4}} \le s^{1+\epsilon}, \label{P1claim2}
\end{align}
where the two  second inequalities follow from \eqref{condfors2} ($Ks^{\frac{15}{16}}\le 4K^3s^{\frac{7}{16}}s^{\frac{1}{2}}<s^{\frac{1}{4}}$) and the last inequalities follow from \eqref{condfors3}  and \eqref{param1}. Furthermore, it follows from \eqref{a''0formula}, \eqref{b0formula} and \eqref{param2} that
\begin{align}\label{cknorm1}
|R_1|_{(X^{k_0+4})^3} = a''_0 \le   \frac{K}{s} \cdot \left( s^{-\frac{1}{16}}\right)^4 \cdot \left( K s^2\right) \le K^2s^{\frac{3}{4}} \le \frac{1}{2}\delta,
\end{align}
where the last inequality follows from \eqref{condfors5}. Therefore  $(P1)_0$ holds for $n = 0$. 
  $(P2)_0$ follows immediately from \eqref{c0formula} and \eqref{param2}. In order to prove $(P3)_0$, note that thanks to \eqref{b1formula}, it is enough to show that 
\[
K\left( (N_0^3 + 1)s^3 + N_0^{-\beta}C_0 \right) \le s^{\frac{143}{64}\cdot\frac{67}{64}},
\]
in other words, 
\[K\left( s^{\frac{45}{16}}+s^3 + Ks^{\frac{816}{16}}s \right) \le  s^{\frac{143}{64}\cdot\frac{67}{64}},\]
where we used \eqref{param1}, \eqref{c0formula} and \eqref{param2}. By \eqref{condfors3}, $s^{\frac{45}{16}}$ is the largest value among the three in the parentheses, hence it is sufficient to show that $3K^2 < s^{\frac{143}{64}\cdot \frac{67}{64} - \frac{45}{16}}.$ Since $\frac{143}{64}\cdot \frac{67}{64} - \frac{45}{16} >-\frac{1}{2}$, it is enough to show that  $3K^3 \le s^{-\frac{1}{2}}$, which follows from \eqref{condfors2}.  $(P4)_0$ follows from \eqref{a1formula}, \eqref{param2} and $(P3)_0$.

\textbf{Induction step.}
In this step, we assume that $(Pi)_{k}$ is true for all $0 \le k \le n_0$ and aim to prove $(Pi)_{n_0+1}$. Let us prove  $(P1)_{n_0+1}$ first.  It follows from \eqref{a'0formula},  \eqref{param2} and $(P3)_{k}$ for $k=0,\ldots,n_0$, that 
\[
\sum_{k=1}^{n_0+1} a'_k \le K \sum_{k=1}^{n_0+1} s^{-1}\left( s^{-\frac{1}{16}\left(\frac{67}{64}\right)^k}\right)^2 \cdot s^{\frac{143}{64}\left( \frac{67}{64}\right)^{k}} \le K\sum_{k=1}^{n_0+1}s^{-1+2\left( \frac{67}{64}\right)^{k}} \le K s^{1+\tilde{\epsilon}},
\]
where the last inequality follows from our choice on $\tilde{\epsilon}$ and $s\in \tilde{s}$ in \eqref{epsilonpick}. Hence, we have
\begin{align*}
\sum_{k=1}^{n_0+1} a'_k  \le \left( K s^{\frac{\tilde{\epsilon}}{2}}\right)s^{1+\frac{\tilde{\epsilon}}{2}} \le \frac{1}{2}s^{1+ \frac{\tilde{\epsilon}}{2}}\le  \frac{1}{2} s^{1+\epsilon},
\end{align*}
where the second inequality follows from \eqref{condfors4} and the last inequality follows from \eqref{param1} and \eqref{condfors3}. Therefore we obtain
\[
|PR_{n_0+2}|_{(X^{k_0+2})^3} \le |PR_{1}|_{(X^{k_0+2})^3} + \sum_{k=1}^{n_0+1}a'_k \le Ks^{\frac{15}{16}} + \frac{1}{2}s^{1+\epsilon}\le  \left( K s^{\frac{7}{16}}\right) s^{\frac{1}{2}} + \left( \frac{1}{2}s\right)s^{\epsilon} \le s^{\epsilon},
\]
where the second inequality follows from \eqref{P1claim1} and the last inequality follows from $\epsilon < \frac{1}{2}$ and $Ks^{\frac{7}{16}}\le \frac{1}{2}$, which can be deduced from \eqref{condfors4} and  \eqref{param1}. Using \eqref{P1claim2}, instead of \eqref{P1claim1}, one can easily obtain 
\[
|(I-P)R_{n_0+2}|_{(X^{k_0+2})^3} \le s^{1+\epsilon}.
\]
To prove \eqref{assumptionfory3} for $R_{n_0+2}$, we compute
\begin{align}\label{a''nestimate}
|R_{n_0+2}|_{(H^{k_0+4})^3} &\le a''_0 +  \sum_{k=1}^{n_0+1}a''_k \le \frac{1}{2}\delta + K \sum_{k=1}^{n_0+1} s^{-1}\left( s^{-\frac{1}{16}\left(\frac{67}{64}\right)^k}\right)^4 \cdot s^{\frac{143}{64}\left( \frac{67}{64}\right)^{k}} \nonumber\\
&  \le \frac{1}{2}\delta +  K\sum_{k=1}^{\infty}s^{-1+\frac{127}{64}\left( \frac{67}{64}\right)^k} \le \delta,
\end{align}
where the second inequality follows from \eqref{cknorm1}, \eqref{a''0formula} and $(P3)_k$, for $0\le k\le n_0$, and the last inequality follows from \eqref{condfors6}. This proves $(P1)_{n_0+1}$. 

 We turn to $(P2)_{n_0+1}$. Using \eqref{cnformula} and $(P2)_{n_0}$, we have
 \[
 C_{n_0+1} \le \frac{K}{s}(2+n_0)N_{n_0}^4\sup_{j=0,\ldots,n_0}C_{j} \le \frac{K}{s}(2+n_0)N_{n_0}^{772},
 \]
 where the last inequality follows from $(P2)_{n_0}$. Hence,  it suffices to show that
 \[
  \frac{K}{s}(2+n_0)N_{n_0}^{772} \le (N_{n_0+1})^{768}. 
 \]
 Plugging \eqref{param2}, this is equivalent to
\[
K(n_0+2) \le s^{1+\left( \frac{67}{64}\right)^{n_0}\left(\frac{772}{16}-\frac{768}{16}\cdot \frac{67}{64} \right)} = s^{1-2\left( \frac{67}{64}\right)^{n_0}},
\] 
which is true thanks to our choice of $s$ in \eqref{condfors7}. This proves $(P2)_{n_0+1}$. 

For $(P3)_{n_0+1}$, thanks to \eqref{bnformula}, it is enough to show that
\[
K\left( a_{n_0+1}^2 + \frac{1}{s}N_{n_0+1}b_{n_0+1}^2 + N_{n_0+1}^{-\beta}(1+b_{n_0+1})C_{n_0+1}) \right) \le s^{\frac{143}{64}\left( \frac{67}{64}\right)^{n_0+2}}.
\]
  Using $(P4)_{n_0}$, \eqref{param2}, $(P3)_{n_0}$, \eqref{param1}, $(P2)_{n_0+1}$ and \eqref{condfors3}, which implies $b_{n_0+1}\le 1$ , we only need to show that
  \begin{align}\label{p3indunction}
  K\left( \underbrace{\left( K s^{-1+\frac{139}{64}\left( \frac{67}{64}\right)^{n_0+1}} \right)^2 }_{=:A_1} + \underbrace{ s^{-1}s^{-\frac{1}{16}\left(\frac{67}{64}\right)^{n_0+1}}s^{\frac{143}{32}\left(\frac{67}{64}\right)^{n_0+1}}}_{=:A_2} + \underbrace{ 2s^{\frac{816}{16}\left(\frac{67}{64}\right)^{n_0+1}}s^{-\frac{768}{16}\left(\frac{67}{64}\right)^{n_0+1}}}_{=:A_3}\right) \le s^{\frac{143}{64}\left(\frac{67}{64}\right)^{n_0+2}}.
  \end{align}
  We will show that $KA_i \le \frac{1}{3}s^{\frac{143}{64}\left( \frac{67}{64}\right)^{n_0+2}}$ for $i=1,2,3$. For $A_1$, it suffices to show that
  \[
  K^3s^{-2+\frac{139}{32}\left(\frac{67}{64}\right)^{n_0+1}} \le \frac{1}{3}s^{\frac{143}{64}\left(\frac{67}{64}\right)^{n_0+2}},
  \]
  equivalently,
  \[
  3K^3 \le s^{2+\left(-\frac{139}{32} + \frac{143}{64}\cdot\frac{67}{64} \right)\left(\frac{67}{64} \right)^{n_0+1}},
  \]
and this follows from our choice of $s$ in \eqref{condfors9}. For $A_2$, we need to show that
\[Ks^{-1+\frac{141}{32}\left( \frac{67}{64}\right)^{n_0+1}} \le \frac{1}{3}s^{\frac{143}{64}\left( \frac{67}{64}\right)^{n_0+2}},
\]
eqvalently,
\[
3K \le s^{1+\left(-\frac{141}{32}+\frac{143}{64}\left( \frac{67}{64}\right) \right)\left( \frac{67}{64}\right)^{n_0+1}},
\]
and this follows from \eqref{condfors10}.  For $A_3$, it is enough to show that 
\[
2K s^{\frac{816}{16}\left(\frac{67}{64}\right)^{n_0+1}} \cdot s^{-\frac{768}{16}\left( \frac{67}{64}\right)^{n_0+1}} \le \frac{1}{3}s^{\frac{143}{64}\left( \frac{67}{64}\right)^{n_0+2}},
\]
equivalently,
\[
6K \le s^{\left( - \frac{816}{16}+\frac{768}{16}+\frac{143}{64}\frac{67}{64}\right)\left(\frac{67}{64} \right)^{n_0+1}},
\]
which follows from \eqref{condfors11}. This proves $(P3)_{n_0+1}$.
 
  Lastly, $(P4)_{n_0+1}$ can be proved by \eqref{a1formula} and $(P3)_{n_0 + 1}$, that is,
  \[
  a_{n_0+2} \le Ks^{-1}N_{n_0+2}b_{n_0+2} \le K s^{-1}s^{\left(-\frac{1}{16} + \frac{143}{64}\right)\left( \frac{67}{64}\right)^{n_0+2}} =  Ks^{-1 + \frac{139}{64}\left( \frac{67}{64}\right)^{n_0+2}},
  \]
  which finishes the proof.
\end{proofthm}

\subsection{Estimates on the velocity}\label{velocity_estimates}
In Subsection~\ref{checking_subsection}, we will check whether our functional $G$ in \eqref{stationarR_equation3} satisfies the hypotheses in Theorem~\ref{theorem1}. In this section, we will derive some useful estimates on the velocity vector generated by each patch. Recall that
given $b_1,b_2>0$, $b_1\ne b_2$, and $R_1,R_2\in C^{\infty}(\mathbb{T})$, we denote for $i,j=1,2$,
\begin{align}
&R = (R_1,R_2) \in (C^\infty(\mathbb{T}))^2,\label{R_2}\\
&z_i(R)(x) = (b_i+R_i(x))(\cos x,\sin x), \label{z_2}\\
&u_{i,j}(R)(x):=\begin{pmatrix} u^1_{i,j}(R) \\ u^2_{i,j}(R) \end{pmatrix} =\int_{\mathbb{T}}\log(|z_j(R)(x)-z_i(R)(y)|^2)z_i(R)'^{\perp}(y)dy.\label{u_def}
\end{align}

\begin{prop}\label{growth_velocity}
For $k\ge 2$, there exists $\epsilon=\epsilon(k,b_1,b_2) >0$ such that if $\rVert R \rVert_{(H^{3}(\mathbb{T}))^2} \le \epsilon$, then 
\begin{itemize}
\item[(A)] $u_{i,j}:(H^{k+1}(\mathbb{T}))^2 \mapsto (H^{k}(\mathbb{T}))^2, \text{ and }\rVert u_{i,j} \rVert_{(H^{k}(\mathbb{T}))^2}  \lesssim_{k,b_1,b_2} 1 + \rVert R \rVert_{(H^{k+1}(\mathbb{T}))^2}.$
\item[(B)] The Gateaux derivative $Du_{i,j}(R):(H^{k+1}(\mathbb{T}))^2 \mapsto (H^{k+1}(\mathbb{T}))^2$ exists and 
\[
\rVert Du_{i,j}(R)[h]\rVert_{(H^{k+1}(\mathbb{T}))^2} \lesssim_{k,b_1,b_2} \rVert h \rVert_{(H^{k+1}(\mathbb{T}))^2} + \rVert R \rVert_{(H^{k+3}(\mathbb{T}))^2}\rVert h \rVert_{(H^1(\mathbb{T}))^2}, \text{ for }h\in (C^{\infty}(\mathbb{T}))^2.
\]
\item[(C)] The map $R\mapsto Du_{{i,j}}(R)$ is Lipschitz continuous, in the sense that for $R,r,h \in (C^\infty(\mathbb{T}))^2$ such that $
\rVert R \rVert_{(H^{3}(\mathbb{T}))^2}, \rVert r \rVert_{(H^{3}(\mathbb{T}))^2} \le \epsilon,$ and $\rVert R\rVert_{(H^{k+3}(\mathbb{T}))^2}, \rVert r \rVert_{(H^{k+3}(\mathbb{T}))^2} \le 1$,
\[
\rVert Du_{i,j}(R)[h]-Du_{i,j}(r)[h]\rVert_{(H^{k+1}(\mathbb{T}))^2} \lesssim_{k,b_1,b_2} \rVert R-r\rVert_{(H^2(\mathbb{T}))^2}\rVert h \rVert_{(H^{k+1}(\mathbb{T}))^2} + \rVert R - r \rVert_{(H^{k+3}(\mathbb{T}))^2}\rVert h \rVert_{(H^{1}(\mathbb{T}))^2}. 
\]

\item[(D)] For $\sigma\in \mathbb{N}\cup \left\{ 0 \right\}$, there exists a linear operator $T_{i,j}^\sigma(R)$ such that for $h\in \left(C^\infty(\mathbb{T})\right)^2$, 
\[
\left(\frac{d}{dx}\right)^{\sigma} \left( Du_{i,j}(R)[h]\right) = Du_{i,j}(R)[h^{(\sigma)}] + T_{i,j}^\sigma(R)[h],
\]
where $T_{i,j}^\sigma(R)$ satisfies
\[
\rVert T_{i,j}^\sigma(R)[h]\rVert_{(H^{k+1}(\mathbb{T}))^2} \lesssim_{k} (1+ \rVert R \rVert_{(H^{k+4+\sigma}(\mathbb{T}))^2})\rVert h\rVert_{(L^2(\mathbb{T}))^2} + (1+\rVert R \rVert_{(H^4(\mathbb{T}))^2})\rVert h^{(k+\sigma)} \rVert_{(L^2(\mathbb{T}))^2}.
\]
\end{itemize}
\end{prop}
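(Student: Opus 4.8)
The plan is to reduce all four assertions to estimates on the single building block $u_{i,j}(R)$ of \eqref{u_def}, treating separately the off-diagonal case $i\ne j$ and the diagonal case $i=j$. When $i\ne j$ the kernel $\log(|z_j(R)(x)-z_i(R)(y)|^2)$ is smooth in $(x,y)$ and, for $\|R\|_{(H^3(\mathbb{T}))^2}$ small, stays uniformly bounded away from $0$; one may therefore differentiate under the integral sign, and everything follows from the Fa\`a di Bruno formula together with the Moser/Sobolev product inequality $\|fg\|_{H^s}\lesssim \|f\|_{H^s}\|g\|_{L^\infty}+\|f\|_{L^\infty}\|g\|_{H^s}$ and the composition estimate $\|F(w)\|_{H^s}\lesssim_s C(\|w\|_{L^\infty})(1+\|w\|_{H^s})$ for smooth $F$. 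Indeed, each $x$-derivative of the kernel produces one derivative of $R_j$ (through $z_j(R)$) and a power of $|z_j-z_i|^{-2}$, the factor $z_i(R)'^{\perp}(y)$ is merely integrated in $y$, and all lower-order factors are absorbed into the smallness of $\|R\|_{(H^3(\mathbb{T}))^2}$; this yields the tame bounds in (A)--(D) with a fixed number of derivatives to spare, which is why the regularity loss in the statement (e.g.\ $H^{k+3}$ in (B)) is stated slightly larger than strictly necessary.

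The diagonal case is where the work lies, and it rests on the decomposition
\[
\log\big(|z_i(R)(x)-z_i(R)(y)|^2\big)=\log\!\Big(4\sin^2\big(\tfrac{x-y}{2}\big)\Big)+\ell_i(x,y),
\]
where $\ell_i(x,y):=\log\big(|z_i(R)(x)-z_i(R)(y)|^2/(4\sin^2(\tfrac{x-y}{2}))\big)$ is a \emph{smooth} function of $(x,y,R_i(x),R_i(y))$, since the ratio extends smoothly across the diagonal and stays bounded below by a positive constant when $\|R\|_{(H^3(\mathbb{T}))^2}$ is small. Convolution against $\log(4\sin^2(\cdot/2))=-2\sum_{n\ge1}n^{-1}\cos(nx)$ is the Fourier multiplier $-\Lambda^{-1}$, which gains one derivative, whereas the $\ell_i$-part is a smooth-kernel operator handled as in the off-diagonal case. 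This already gives (A): $u_{i,i}(R)(x)=-\Lambda^{-1}[z_i(R)'^{\perp}](x)+\int_{\mathbb{T}}\ell_i(x,y)z_i(R)'^{\perp}(y)\,dy$, hence $\|u_{i,i}(R)\|_{(H^k(\mathbb{T}))^2}\lesssim 1+\|R\|_{(H^{k+1}(\mathbb{T}))^2}$.

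For (B), I would first compute the Gateaux derivative explicitly: with $\zeta_i(y):=h_i(y)(\cos y,\sin y)$ (which contains no derivative of $h$) and the abbreviations $z_i=z_i(R)(y)$, $z_j=z_j(R)(x)$,
\[
Du_{i,j}(R)[h](x)=\underbrace{\int_{\mathbb{T}}\frac{2(z_j-z_i)\cdot(\zeta_j(x)-\zeta_i(y))}{|z_j-z_i|^2}\,z_i'^{\perp}(y)\,dy}_{\mathcal{J}_1}+\underbrace{\int_{\mathbb{T}}\log(|z_j-z_i|^2)\,\zeta_i'^{\perp}(y)\,dy}_{\mathcal{J}_2}.
\]
The only factors carrying a derivative on the ``wrong'' variable are $\zeta_i'^{\perp}$ in $\mathcal{J}_2$ and $z_i'^{\perp}$ in $\mathcal{J}_1$, and both are removed by integrating by parts in $y$ (using $\zeta_i'^{\perp}=(\zeta_i^{\perp})'$ and $z_i'^{\perp}=(z_i^{\perp})'$, and noting that $\zeta_i^{\perp}$ and $z_i^{\perp}$ carry no derivative of $h$, resp.\ of $R$, beyond $h_i$, resp.\ $R_i$). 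After this, the kernel of $\mathcal{J}_2$ is in the diagonal case a Hilbert-transform kernel plus a smooth remainder, so $\mathcal{J}_2$ equals, up to a smooth-kernel operator, a Hilbert transform of $h_i(-\sin y,\cos y)$; and in $\mathcal{J}_1$ the crucial point is that the two leading $(x-y)^{-1}$ singularities produced by $\partial_y$ --- one from differentiating the numerator, one from differentiating $|z_j-z_i|^{-2}$ --- cancel, leaving a kernel smooth jointly in $(x,y)$. Consequently $Du_{i,j}(R)$ is, modulo smoothing operators and operators bounded by a high Sobolev norm of $R$ times a low Sobolev norm of $h$, a combination of the identity and Hilbert-transform-type operators, all bounded on $H^{k+1}(\mathbb{T})$; this produces the $\|h\|_{(H^{k+1}(\mathbb{T}))^2}$ contribution, while the remaining terms, in which $\partial_x^{k+1}$ falls on the ($R$-dependent, and in the diagonal case singular) kernels, are estimated by $\|R\|_{(H^{k+3}(\mathbb{T}))^2}\|h\|_{(H^1(\mathbb{T}))^2}$, the few extra $R$-derivatives being the standard price for controlling $R$-dependent singular integral operators at this regularity.

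Finally, (C) follows by writing $Du_{i,j}(R)[h]-Du_{i,j}(r)[h]=\int_0^1 D^2u_{i,j}\big(r+t(R-r)\big)[R-r,h]\,dt$ and estimating the second Gateaux derivative --- which has the same singular structure as $Du_{i,j}$ with one extra difference factor --- by the same scheme, giving the claimed tame Lipschitz bound; and (D) follows from Leibniz's rule applied to $\partial_x^{\sigma}\big(Du_{i,j}(R)[h]\big)$: the term in which all $\sigma$ derivatives act on $h$ is exactly $Du_{i,j}(R)[h^{(\sigma)}]$, while $T_{i,j}^{\sigma}(R)$ collects the commutator terms, in each of which at least one derivative has fallen on the kernel, so that at most $k+\sigma$ derivatives reach $h$ and at most $k+4+\sigma$ reach $R$, which is precisely the stated bound. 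The main obstacle throughout is the diagonal case: producing the kernel splitting, exploiting the cancellation of the top-order singularity in $\partial_y\mathcal{J}_1$, and --- above all --- carrying out the bookkeeping so that every estimate remains \emph{tame}, i.e.\ affine in the top-order norm with every lower-order factor absorbed into the smallness of $\|R\|_{(H^3(\mathbb{T}))^2}$, exactly as the Nash--Moser scheme of Theorem~\ref{theorem1} demands. These computations are close in spirit to those of \cite{GomezSerrano:stationary-patches,Castro-Cordoba-GomezSerrano:analytic-vstates-ellipses,delaHoz-Hassainia-Hmidi:doubly-connected-vstates-gsqg}, though the precise tame form required here is somewhat different.
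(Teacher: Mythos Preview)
Your overall strategy matches the paper's: reduce to $i=j$, split off $\log(4\sin^2((x-y)/2))$, handle the remainder via product/composition estimates, and use Leibniz for (D). The paper's organization of the diagonal case differs in a way that matters. Rather than differentiating first and then massaging $\mathcal{J}_1,\mathcal{J}_2$ via integration by parts, it writes the smooth remainder as $K(R)(x,y)=F\big(R(x),R(y),J(R)(x,y)\big)$ with $F(u,v,w)=\log(b^2+b(u+v)+uv+w^2)$ and $J(R)=\tfrac{R(x)-R(y)}{2\sin((x-y)/2)}$; then $DK(R)[h]=\partial_uF\,h(x)+\partial_vF\,h(y)+\partial_wF\,J(h)$ with the $\partial_\bullet F$ again smooth in the same arguments, and the single delicate term $\int(\partial_wF)\,z'^\perp\,J(h)\,dy$ is handled by a dedicated estimate (Lemma~\ref{J_linear}) that yields exactly the tame split $\|h\|_{H^{k+1}}+\|R\|_{H^{k+3}}\|h\|_{H^1}$. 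Your integration by parts in $\mathcal{J}_1$ is a detour rather than a mechanism: the kernel $P=2(z_j-z_i)\cdot(\zeta_j-\zeta_i)/|z_j-z_i|^2$ is \emph{already} a smooth function of $R(x),R(y),J(R),h(x),h(y),J(h)$, linear in the $h$-variables, so the cancellation you see after $\partial_y$ merely reflects that smoothness; moreover the IBP introduces $\zeta_i'(y)$ and $\partial_yJ(h)$, the latter costing an extra $h$-derivative, which works against the tame count. What one actually needs is to isolate the $J(h)$-piece of $P$ directly, and this is precisely what the $F$-parametrization does for free. For (D), the device behind your Leibniz claim is the change of variables $y\mapsto x-y$, after which $\partial_xJ(h)(x,x-y)=J(h')(x,x-y)$ holds exactly; the commutator $T^\sigma$ is then brought into tame form by Gagliardo--Nirenberg interpolation (Lemma~\ref{GNinterpolation}) on the cross-terms $\|R^{(k+p+3)}\|_{L^2}\|h^{(1+q)}\|_{L^2}$ with $p+q=\sigma$.
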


\begin{rem}
It is well known that roughly speaking, if $\partial D$ is $C^{k+\alpha}$-regular for some $k\ge1,$ $\alpha>0$ then the velocity $\nabla^{\perp}(1_D*\log|x|)$ is also $C^{k+\alpha}$-regular up to the boundary.  In $(A)$ in the above Proposition, we do not aim to prove the optimal regularity since it is not necessary in the proof of the main theorem.
\end{rem}
\color{black}
The proof of the proposition will be given after Lemmas~\ref{Acondition} and \ref{BCcondition}. We will deal with the case $i=j$ only. If $i\ne j$, then the integrand in $u_{i,j}$ has no singularity thus the result follows straightforwardly in a similar manner. Hence, by abuse of notation, we denote for $b>0$ and $R\in C^{\infty}(\mathbb{T})$, 
\begin{align}
&z(R)(x) := (b+R(x))(\cos x, \sin x),\label{single1}\\
&u(R)(x):=\begin{pmatrix}u^1(R) \\ u^2(R) \end{pmatrix} :=\int_{\mathbb{T}}\log\left(|z(R)(x) - z(R)(y)|^2 \right)z(R)'^{\perp}(y)dy.\label{single2}
\end{align}

Let us write $u(R)$ as
\begin{align}\label{f_def}
u(R)(x) & =  \int_{\mathbb{T}} \log\left(2-2\cos(x-y)\right) z(R)'^{\perp}(y)dy +  \int_{\mathbb{T}} \log\left(\frac{|z(R)(x) -z(R)(y)|^2}{2-2\cos(x-y)}\right) z(R)'^{\perp}(y)dy\nonumber\\
& =  \int_{\mathbb{T}} \log\left(2-2\cos(x-y)\right) z(R)'^{\perp}(y)dy + \int_{\mathbb{T}}K(R)(x,y)z(R)'(y)^{\perp}dy  \nonumber\\
& =: u_{L}(R)(x) + u_{N}(R)(x),
\end{align}
where 
\begin{align}
&K(R)(x,y) = F(R(x),R(y),J(R)(x,y)),\nonumber\\
&F(u,v,w) := \log(b^2 + b(u+v)+uv+w^2),\label{F_smooth}\\
&J(R)(x,y) = \frac{R(x)-R(y)}{2\sin(\frac{x-y}{2})}.\nonumber
\end{align}
If $\rVert R \rVert_{H^{3}(\mathbb{T})} \le \epsilon$ for sufficiently small $\epsilon>0$,  it follows from Lemmas~\ref{appendix_lem_1} and \ref{ponce_kato} that 
\begin{align}\label{f_norm1}
\rVert J(R) \rVert_{H^{2}(\mathbb{T}^2)} \lesssim_{b} \rVert R \rVert_{H^{3}(\mathbb{T})} \lesssim \epsilon. 
\end{align}
 Therefore, for sufficiently small $\epsilon>0$, Lemmas~\ref{composition}  and~\ref{appendix_lem_1} imply that for $l\ge 2$,
\begin{align}
&\rVert  K(R) \rVert_{H^{l}(\mathbb{T}^2)} \lesssim_{l,b} 1 + \rVert R \rVert_{H^l(\mathbb{T})} + \rVert J(R)\rVert_{H^{l}(\mathbb{T}^2)} \lesssim_{l,b} 1+ \rVert R \rVert_{H^{l+1}(\mathbb{T})}, \label{Kestimate},
\end{align}

The next lemma will be used to prove the growth condition of the velocity in (A) in Proposition~\ref{growth_velocity}.
\begin{lemma}\label{Acondition}
For $k\ge 2$, there exists $\epsilon=\epsilon(k,b) >0$ such that if $R\in C^{\infty}(\mathbb{T})$ and $\rVert R \rVert_{H^{3}(\mathbb{T})} \le\epsilon$, then
\[
\rVert u \rVert_{(H^{k}(\mathbb{T}))^2}  \lesssim_{k,b} 1 + \rVert R \rVert_{H^{k+1}(\mathbb{T})}.
\]
\end{lemma}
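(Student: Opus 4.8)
The plan is to decompose $u(R) = u_L(R) + u_N(R)$ as in \eqref{f_def} and estimate each piece separately. The linear piece $u_L(R)(x) = \int_{\mathbb{T}} \log(2-2\cos(x-y))\, z(R)'^{\perp}(y)\,dy$ is a convolution of $z(R)'^{\perp}$ with the fixed kernel $\log(2-2\cos(\cdot))$. Since $z(R)'(y) = R'(y)(\cos y,\sin y) + (b+R(y))(-\sin y,\cos y)$, we have $z(R)' \in (H^{k}(\mathbb{T}))^2$ with norm $\lesssim_b 1 + \|R\|_{H^{k+1}(\mathbb{T})}$ (the loss of one derivative coming from the $R'$ term, and the product $R\cdot(\text{trig})$ handled by Lemma~\ref{ponce_kato}). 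The function $\log(2-2\cos(\cdot))$ is in $L^1(\mathbb{T})$ (with Fourier coefficients decaying like $1/|n|$), so convolution against it maps $H^{k-1}(\mathbb{T})$ into $H^{k}(\mathbb{T})$ — in fact it gains a derivative — hence $\|u_L(R)\|_{(H^k(\mathbb{T}))^2} \lesssim_{k,b} \|z(R)'\|_{(H^{k-1}(\mathbb{T}))^2} \lesssim_{k,b} 1 + \|R\|_{H^{k}(\mathbb{T})}$, which is even better than needed. I would state the convolution mapping property as a one-line consequence of Young/Plancherel on the Fourier side.

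For the nonlinear piece $u_N(R)(x) = \int_{\mathbb{T}} K(R)(x,y)\, z(R)'^{\perp}(y)\,dy$, the key point is that, unlike $u_L$, there is no singularity: by \eqref{Kestimate} we already know $\|K(R)\|_{H^l(\mathbb{T}^2)} \lesssim_{l,b} 1 + \|R\|_{H^{l+1}(\mathbb{T})}$ for $l\ge 2$, provided $\|R\|_{H^3(\mathbb{T})}\le\epsilon$. So $u_N(R)$ is an honest integral operator with an $H^k$ kernel acting on an $H^{k-1}$ function. To estimate $\|u_N(R)\|_{H^k(\mathbb{T})}$ I would differentiate under the integral sign: $\partial_x^j u_N(R)(x) = \int_{\mathbb{T}} \partial_x^j K(R)(x,y)\, z(R)'^{\perp}(y)\,dy$, and for $j\le k$ bound $\|\partial_x^j K(R)(x,\cdot)\|_{L^2_y}$ in $L^2_x$ by $\|K(R)\|_{H^k(\mathbb{T}^2)}$, then apply Cauchy–Schwarz in $y$ together with $\|z(R)'\|_{L^2}\lesssim_b 1$. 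This gives $\|u_N(R)\|_{(H^k(\mathbb{T}))^2} \lesssim_{k,b} \|K(R)\|_{H^k(\mathbb{T}^2)}\,\|z(R)'\|_{(L^2(\mathbb{T}))^2} \lesssim_{k,b} 1 + \|R\|_{H^{k+1}(\mathbb{T})}$. Adding the two contributions yields the claimed bound.

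The main obstacle — and the only subtlety — is the bookkeeping in the nonlinear term: when we take $k$ $x$-derivatives of $u_N$, all of them land on the kernel $K(R)$, and we must verify that $K(R)\in H^k(\mathbb{T}^2)$ with the right dependence on $\|R\|_{H^{k+1}(\mathbb{T})}$. This is exactly the content of \eqref{Kestimate}, which in turn rests on the composition estimate Lemma~\ref{composition} applied to $F(u,v,w)=\log(b^2+b(u+v)+uv+w^2)$ and on the bound \eqref{f_norm1} for $J(R)$; I would simply cite these. One should be slightly careful that \eqref{Kestimate} requires $l\ge 2$, so the case $k=2$ is the base case and for $k\ge 3$ we use $l=k$. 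A secondary technical point is that differentiating under the integral and the Cauchy–Schwarz step implicitly need $R\in C^\infty(\mathbb{T})$ so that everything is classically defined; this is part of the hypothesis. No genuine difficulty arises, so the proof is short.

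\begin{proof}
Recall the decomposition $u(R) = u_L(R) + u_N(R)$ from \eqref{f_def}. Since $z(R)'(x) = R'(x)(\cos x,\sin x) + (b+R(x))(-\sin x,\cos x)$, Lemma~\ref{ponce_kato} gives $\rVert z(R)' \rVert_{(H^{k-1}(\mathbb{T}))^2} \lesssim_{k,b} 1 + \rVert R \rVert_{H^{k}(\mathbb{T})}$ and in particular $\rVert z(R)' \rVert_{(L^2(\mathbb{T}))^2} \lesssim_b 1$ once $\epsilon \le 1$.

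For the linear part, $u_L(R) = \left(\log(2-2\cos(\cdot))\right) * z(R)'^{\perp}$. The function $\log(2-2\cos(\cdot)) = 2\log\left|2\sin(\tfrac{\cdot}{2})\right|$ has Fourier coefficients bounded by $C/(1+|n|)$, so convolution against it maps $H^{k-1}(\mathbb{T})$ into $H^{k}(\mathbb{T})$ with norm $\lesssim_k 1$, by Plancherel. Hence
\begin{align*}
\rVert u_L(R) \rVert_{(H^{k}(\mathbb{T}))^2} \lesssim_{k} \rVert z(R)' \rVert_{(H^{k-1}(\mathbb{T}))^2} \lesssim_{k,b} 1 + \rVert R \rVert_{H^{k}(\mathbb{T})} \lesssim_{k,b} 1 + \rVert R \rVert_{H^{k+1}(\mathbb{T})}.
\end{align*}

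For the nonlinear part, $u_N(R)(x) = \int_{\mathbb{T}} K(R)(x,y)\, z(R)'^{\perp}(y)\,dy$, where by \eqref{Kestimate} (valid since $\rVert R \rVert_{H^3(\mathbb{T})} \le \epsilon$ and $k\ge 2$)
\begin{align*}
\rVert K(R) \rVert_{H^{k}(\mathbb{T}^2)} \lesssim_{k,b} 1 + \rVert R \rVert_{H^{k+1}(\mathbb{T})}.
\end{align*}
Since $R\in C^{\infty}(\mathbb{T})$, we may differentiate under the integral sign: for $0\le j\le k$,
\begin{align*}
\partial_x^j u_N(R)(x) = \int_{\mathbb{T}} \partial_x^j K(R)(x,y)\, z(R)'^{\perp}(y)\,dy.
\end{align*}
By the Cauchy--Schwarz inequality in $y$,
\begin{align*}
\left| \partial_x^j u_N(R)(x) \right| \le \left( \int_{\mathbb{T}} |\partial_x^j K(R)(x,y)|^2\,dy \right)^{1/2} \rVert z(R)' \rVert_{(L^2(\mathbb{T}))^2},
\end{align*}
so that, integrating in $x$,
\begin{align*}
\rVert \partial_x^j u_N(R) \rVert_{(L^2(\mathbb{T}))^2} \le \rVert \partial_x^j K(R) \rVert_{L^2(\mathbb{T}^2)}\, \rVert z(R)' \rVert_{(L^2(\mathbb{T}))^2} \le \rVert K(R) \rVert_{H^{k}(\mathbb{T}^2)}\, \rVert z(R)' \rVert_{(L^2(\mathbb{T}))^2}.
\end{align*}
Summing over $0\le j\le k$ and using $\rVert z(R)' \rVert_{(L^2(\mathbb{T}))^2}\lesssim_b 1$ yields
\begin{align*}
\rVert u_N(R) \rVert_{(H^{k}(\mathbb{T}))^2} \lesssim_{k,b} \rVert K(R) \rVert_{H^{k}(\mathbb{T}^2)} \lesssim_{k,b} 1 + \rVert R \rVert_{H^{k+1}(\mathbb{T})}.
\end{align*}
Adding the estimates for $u_L(R)$ and $u_N(R)$ gives the desired bound, and also shows $u$ maps into $(H^{k}(\mathbb{T}))^2$.
\end{proof}
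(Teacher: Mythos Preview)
Your proof is correct and follows essentially the same approach as the paper: the paper also splits $u=u_L+u_N$, handles $u_L$ via the Fourier smoothing of $\log(2-2\cos(\cdot))$ (packaged there as Lemma~\ref{T1estimate}), and bounds $u_N$ by $\rVert K(R)\rVert_{H^k(\mathbb{T}^2)}$ times a low norm of $z(R)'$ using \eqref{Kestimate}. The only cosmetic difference is that the paper pairs $K(R)$ with $\rVert z(R)'\rVert_{L^\infty}$ whereas you use Cauchy--Schwarz in $y$ to pair it with $\rVert z(R)'\rVert_{L^2}$; both are controlled by the smallness assumption on $\rVert R\rVert_{H^3}$.
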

\begin{proof}
For $u_L$, then it follows from Lemma~\ref{T1estimate} and the fact that $R\mapsto u_L(R)$ is linear that
\[
\rVert u_L(R)\rVert_{(H^{k}(\mathbb{T}))^2} \lesssim \rVert z(R)' \rVert_{(H^{k-1}(\mathbb{T}))^2} \lesssim 1 + \rVert R \rVert_{H^{k}(\mathbb{T})}.
\]
Now, let us consider the nonlinear part. We have 
\[
\rVert u_N \rVert_{(H^{k}(\mathbb{T}))^2} \lesssim \rVert K(R) \rVert_{H^{k}(\mathbb{T}^2)}\rVert z(R)'\rVert_{(L^\infty(\mathbb{T}))^2} \lesssim 1+\rVert R \rVert_{H^{k+1}(\mathbb{T})},
\]
where the last inequality follows from \eqref{Kestimate}.
\end{proof}
Now we turn to (B), (C) and (D) in Proposition~\ref{growth_velocity}. We will use the following notations:
\begin{align}
DK(R)[h] := \frac{d}{dt}K(R+th)|_{t=0} &= \underbrace{\partial_uF(R(x),R(y),J(R)(x,y))}_{=:\partial_u F(R)(x,y)}h(x) \nonumber\\
&\ +  \underbrace{\partial_vF(R(x),R(y),J(R)(x,y))}_{=:\partial_vF(R)(x,y)}h(y) \nonumber\\
& \ + \underbrace{\partial_wF(R(x),R(y),J(R)(x,y))}_{=:\partial_wF(R)(x,y)}J(h)(x,y),\label{DK_def} 
\end{align}
where $J(h):=\frac{h(x)-h(y)}{2\sin\left(\frac{x-y}{2} \right)}$ and
\begin{align}
Dz(R)'^\perp[h] :=\frac{d}{dt}z(R+th)'^\perp|_{t=0} =  -h'(x)(\sin x,\cos x) -h(x)(\cos x, \sin x).  \label{Dz_def}
\end{align}
With the above notations, we can write the derivative of the nonlinear term as
\begin{align}\label{Du_Nestimate1}
Du_N(R)[h] & = \int_{\mathbb{T}}K(R)(x,y)Dz(R)'^\perp[h](y)dy + \int_{\mathbb{T}}DK(R)[h](x,y)z(R)'^\perp(y)dy  \nonumber\\
& =: Du_N^1(R)[h] + Du_N^2(R)[h].
\end{align}
Again, Lemma~\ref{composition} and \eqref{f_norm1} yield that if $\rVert R \rVert_{H^3(\mathbb{T})}\le \epsilon$ for sufficiently small $\epsilon>0$, then for $l\ge 2$,
\begin{align}\label{Kernal_norm1}
\rVert \partial_uF(R) \rVert_{(H^{l}(\mathbb{T}))^2},\rVert \partial_vF(R) \rVert_{(H^{l}(\mathbb{T}))^2},\rVert \partial_wF(R) \rVert_{(H^{l}(\mathbb{T}))^2}& \lesssim_l 1+\rVert R \rVert_{H^{l}(\mathbb{T})} + \rVert J(R) \rVert_{H^{l}(\mathbb{T}^2)}\nonumber \\
&  \lesssim_l 1 + \rVert R \rVert_{H^{l+1}(\mathbb{T})}.
\end{align}
Also, if $R_1,R_2 \in C^\infty({\mathbb{T}})$ and $\rVert R_1\rVert_{H^{3}(\mathbb{T})},\rVert R_2\rVert_{H^{3}(\mathbb{T})} \le \epsilon$, then from Lemma~\ref{composition}, it follows that
\begin{align}\label{Kernal_norm1_lip}
\rVert \partial_uF(R_1) - \partial_uF(R_2) \rVert_{(H^{l}(\mathbb{T}))^2},\rVert \partial_vF(R_1) - \partial_vF(R_2) \rVert_{(H^{l}(\mathbb{T}))^2},\rVert \partial_wF(R_1) - \partial_wF(R_2) \rVert_{(H^{l}(\mathbb{T}))^2}  \lesssim_l \rVert R_1-R_2 \rVert_{H^{l+1}(\mathbb{T})}.
\end{align}
For ${\partial_wF^\#(R)}(x) := \lim_{y\to x}\partial_wF(R)(x,y) = \partial_wF(R(x),R(x),R'(x))$, we have
\begin{align}
&\rVert {\partial_wF^\#(R)} \rVert_{H^{l}(\mathbb{T})}\lesssim_l 1 + \rVert R \rVert_{H^{l+1}(\mathbb{T})}, \label{Kernal_norm3}\\
& \rVert {\partial_wF^\#(R)} \rVert_{L^{\infty}(\mathbb{T})} \lesssim 1 + \rVert R \rVert_{H^2(\mathbb{T})} \lesssim 1.\label{Kernal_norm5}\\
&\rVert {\partial_wF^\#(R_1)}- {\partial_wF^\#(R_2)} \rVert_{H^{l}(\mathbb{T})}\lesssim_l  \rVert R_1-R_2 \rVert_{H^{l+1}(\mathbb{T})}, \label{Kernal_norm3_lip}\\
& \rVert {\partial_wF^\#(R_1)}- {\partial_wF^\#(R_2)} \rVert_{L^{\infty}(\mathbb{T})} \lesssim \rVert R_1-R_2 \rVert_{H^2(\mathbb{T})}.\label{Kernal_norm5_lip}
\end{align}
We decompose $Du_N^2(R)[h]$ into
\begin{align}Du_N^2(R)[h] &= \int_{\mathbb{T}}\underbrace{\partial_uF(R)(x,y)z(R)'^\perp}_{=:K_1(R)(x,y)} h(x)dy +\int_{\mathbb{T}}\underbrace{\partial_vF(R)(x,y)z(R)'^\perp}_{=:K_2(R)(x,y)} h(y)dy \nonumber\\
& \ +\int_{\mathbb{T}}\underbrace{\partial_wF(R)(x,y)z(R)'^\perp}_{=:K_3(R)(x,y)} J(h)(x,y)dy\label{def_K_3}\\
& =: Du_{N1}^2(R)[h]+Du_{N2}^2(R)[h]+Du_{N3}^2(R)[h]. \label{Du_N_decomp}
\end{align}
Then it follows from Lemma~\ref{ponce_kato}, \eqref{Kernal_norm1}, \eqref{Kernal_norm3} and \eqref{Kernal_norm5}  that for $l\ge 2$,
\begin{align}
&\rVert K_i(R) \rVert_{H^{l}(\mathbb{T}^2)} \lesssim_l  1 + \rVert R \rVert_{H^{l+1}(\mathbb{T})}, \text{ for }i=1,2,3, \label{Kernal_norm2} \\
&\rVert {K_3}^\#(R) \rVert_{H^{l}(\mathbb{T})} \lesssim 1+\rVert R \rVert_{H^{l+1}(\mathbb{T})}, \text{ where }{K_3}^\#(R)(x):=K_3(R)(x,x),\label{Kernal_norm4}\\
&\rVert {K_3}^\#(R) \rVert_{L^\infty(\mathbb{T})} \lesssim \rVert  {K_3}^\#(R) \rVert_{H^1(\mathbb{T})} \lesssim  \rVert  {K_3}^\#(R) \rVert_{H^2(\mathbb{T})} \lesssim 1+ \rVert R \rVert_{H^3({\mathbb{T}})} \lesssim 1,\label{Kernal_norm6}\\
&\rVert K_i(R_1)-K_2(R_2) \rVert_{H^{l}(\mathbb{T}^2)}  \lesssim_l  \rVert R_1-R_2 \rVert_{H^{l+1}(\mathbb{T})}, \text{ for }i=1,2,3, \label{Kernal_norm2_lip} \\
&\rVert {K_3}^\#(R_1)-{K_3}^\#(R_2) \rVert_{H^{l}(\mathbb{T})} \lesssim \rVert R_1-R_2 \rVert_{H^{l+1}(\mathbb{T})},\label{Kernal_norm4_lip}\\
&\rVert {K_3}^\#(R_1)-{K_3}^\#(R_2) \rVert_{L^\infty(\mathbb{T})} \lesssim  \rVert R_1-R_2 \rVert_{H^2(\mathbb{T})}.\label{Kernal_norm6_lip}
\end{align}
Furthermore, it follows from the definitions of $K_3$ and $\partial_w F$ in \eqref{def_K_3}, \eqref{DK_def} and \eqref{F_smooth} that
\begin{align}\label{K_3_nabla}
\rVert \nabla K_3(R)(x,y) \rVert_{L^\infty(\mathbb{T}^2)} \lesssim  1 + \rVert R''\rVert_{L^\infty(\mathbb{T})} + \rVert \nabla J(R) \rVert_{L^\infty(\mathbb{T}^2)} \lesssim 1 + \rVert R'' \rVert_{L^\infty(\mathbb{T})} \lesssim 1.
\end{align}

\begin{lemma}\label{BCcondition}
For $k\ge 2$, there exists $\epsilon=\epsilon(k,b) >0$ such that if $R\in C^{\infty}(\mathbb{T})$ and $\rVert R \rVert_{H^{3}(\mathbb{T})} \le\epsilon$, the Gateaux derivative $Du(R):H^{k+1}(\mathbb{T}) \mapsto (H^{k+1}(\mathbb{T}))^2$ exists and 
\begin{align}\label{tame_1}
\rVert Du(R)[h]\rVert_{(H^{k+1}(\mathbb{T}))^2} \lesssim_{k,b} \rVert h \rVert_{(H^{k+1}(\mathbb{T}))^2} + \rVert R \rVert_{H^{k+3}(\mathbb{T})}\rVert h \rVert_{H^1(\mathbb{T})}, \text{ for }h\in C^{\infty}(\mathbb{T}).
\end{align}
Furthermore, if $R_1,R_2\in C^\infty(\mathbb{T})$,  $\rVert R_1 \rVert_{H^3(\mathbb{T})},\rVert R_2 \rVert_{H^3(\mathbb{T})}\le \epsilon$, and $\rVert R_1\rVert_{H^{k+3}(\mathbb{T})}, \rVert R_2 \rVert_{H^{k+3}(\mathbb{T})} \le 1$, then
\begin{align}\label{tame2}
\rVert Du(R_1)-Du(R_2)[h]\rVert_{(H^{k+1}(\mathbb{T}))^2} \lesssim_{k,b} \rVert R_1 -R_2 \rVert_{H^2(\mathbb{T})}\rVert h \rVert_{H^{k+1}(\mathbb{T})} + \rVert R_1-R_2 \rVert_{H^{k+3}(\mathbb{T})}\rVert h \rVert_{H^1(\mathbb{T})},
\end{align}
$ \text{ for }h\in C^{\infty}(\mathbb{T}).$ 

Lastly,  for $\sigma\in \mathbb{N}\cup \left\{ 0 \right\}$, there exists a linear operator $T_\sigma(R)$ such that for $h\in C^\infty(\mathbb{T})$, 
\begin{align}\label{tame3}
\left(\frac{d}{dx}\right)^{\sigma} \left( Du(R)[h]\right) = Du(R)[h^{(\sigma)}] + T^\sigma(R)[h],
\end{align}
where $T^\sigma(R)$ satisfies
\begin{align}\label{tame4}
\rVert T^\sigma(R)[h]\rVert_{(H^{k+1}(\mathbb{T}))^2} \lesssim_{k} ((1+ \rVert R \rVert_{H^{k+4+\sigma}(\mathbb{T})})\rVert h\rVert_{L^2(\mathbb{T})} + (1+\rVert R \rVert_{H^4(\mathbb{T})})\rVert h^{(k+\sigma)} \rVert_{L^2(\mathbb{T})}.
\end{align}
\end{lemma}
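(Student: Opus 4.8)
The plan is to prove Lemma~\ref{BCcondition} by working with the decomposition $u(R) = u_L(R) + u_N(R)$ from \eqref{f_def}, treating the linear part $u_L$ and the nonlinear part $u_N$ separately, and within $u_N$ using the further splitting $Du_N(R)[h] = Du_N^1(R)[h] + Du_N^2(R)[h]$ from \eqref{Du_Nestimate1} and the three-term decomposition of $Du_N^2$ in \eqref{Du_N_decomp}. For the \textbf{existence} of the Gateaux derivative, $u_L$ is linear in $R$ so $Du_L(R)[h] = u_L(h)$ is immediate; for $u_N$ one differentiates under the integral sign, which is justified because $K(R)(x,y)$ depends smoothly on $R(x),R(y),J(R)(x,y)$ via the real-analytic $F$ in \eqref{F_smooth} (the argument of the logarithm stays bounded below for small $\|R\|_{H^3}$), and $J(h)$ is a bounded operator on Sobolev spaces by Lemma~\ref{appendix_lem_1}. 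So $Du_N(R)[h]$ is given by the explicit formulas \eqref{Du_Nestimate1}--\eqref{Du_N_decomp}.

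For the \textbf{tame estimate} \eqref{tame_1}, the plan is: (i) For $Du_L(R)[h] = u_L(h)$, apply Lemma~\ref{T1estimate} to get $\|u_L(h)\|_{H^{k+1}} \lesssim \|z(h)'\|_{H^k} \lesssim \|h\|_{H^{k+1}}$ — no loss, no $R$-dependence. (ii) For $Du_N^1(R)[h] = \int K(R)(x,y)\,Dz(R)'^\perp[h](y)\,dy$, use the standard product rule for the $H^{k+1}$ norm of such a kernel operator together with \eqref{Kestimate} and \eqref{Dz_def}: the worst term is $\|K(R)\|_{H^{k+1}(\mathbb{T}^2)}\|Dz(R)'^\perp[h]\|_{L^\infty}$, which is $\lesssim (1+\|R\|_{H^{k+2}})\|h\|_{H^2}$, or $\|K(R)\|_{L^\infty}\|Dz(R)'^\perp[h]\|_{H^{k+1}} \lesssim \|h\|_{H^{k+2}}$ — here I need to be slightly careful since $Dz(R)'^\perp[h]$ involves $h'$, so the genuinely top-order term is $\|h\|_{H^{k+2}}$, matching the claimed right side when we recall $h\in(H^{k+1})^2$ on the left and $(H^{k+1})$ as argument... actually one must carefully track that \eqref{Dz_def} shows $Dz(R)'^\perp[h]$ has no $R$ in it, so the $R$-dependence comes only from $K(R)$, giving exactly the split $\|h\|_{H^{k+1}} + \|R\|_{H^{k+2}}\|h\|_{L^\infty}$-type bound. (iii) For the three pieces of $Du_N^2$: $Du_{N1}^2$ and $Du_{N2}^2$ are integral operators against the kernels $K_1(R), K_2(R)$ satisfying \eqref{Kernal_norm2}; the $H^{k+1}$ product estimate against $h$ (resp.\ $h$ evaluated at the other variable) gives $(1+\|R\|_{H^{k+2}})\|h\|_{H^1} + \|h\|_{H^{k+1}}$ by interpolation. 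The delicate one is $Du_{N3}^2(R)[h] = \int K_3(R)(x,y) J(h)(x,y)\,dy$, a singular-integral-type operator: here one splits $K_3(R)(x,y) = K_3^\#(R)(x) + (K_3(R)(x,y) - K_3^\#(R)(x))$, so that the first term produces $K_3^\#(R)(x)\int \frac{h(x)-h(y)}{2\sin((x-y)/2)}\,dy$, a Hilbert-transform-type operator in $h$ controlled via \eqref{Kernal_norm4}, \eqref{Kernal_norm6} and the $H^{k+1}$ boundedness of the operator $h\mapsto \int J(h)\,dy$ (Lemma~\ref{T1estimate}-type), while the difference kernel vanishes on the diagonal, has bounded gradient \eqref{K_3_nabla}, hence defines a smoothing operator whose $H^{k+1}(\mathbb{T}^2)$ norm is $\lesssim (1+\|R\|_{H^{k+2}})$.

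For the \textbf{Lipschitz estimate} \eqref{tame2}, the plan is the same term-by-term analysis applied to differences, replacing every instance of \eqref{Kestimate}, \eqref{Kernal_norm2}, \eqref{Kernal_norm4}, \eqref{Kernal_norm6} with the corresponding Lipschitz bounds \eqref{Kernal_norm1_lip}, \eqref{Kernal_norm2_lip}, \eqref{Kernal_norm4_lip}, \eqref{Kernal_norm6_lip}, which are already recorded above; one writes, e.g., $Du_N^1(R_1)[h] - Du_N^1(R_2)[h] = \int (K(R_1) - K(R_2))\,Dz(R_1)'^\perp[h] + \int K(R_2)\,(Dz(R_1)'^\perp - Dz(R_2)'^\perp)[h]$ and estimates each summand, using $\|R_i\|_{H^{k+3}}\le 1$ to absorb non-top-order factors into constants, producing the stated $\|R_1-R_2\|_{H^2}\|h\|_{H^{k+1}} + \|R_1-R_2\|_{H^{k+3}}\|h\|_{H^1}$ split. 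For the \textbf{commutator estimate} \eqref{tame3}--\eqref{tame4}, one differentiates the integral formula for $Du(R)[h]$ in $x$ exactly $\sigma$ times via the Leibniz rule: the term in which all $\sigma$ derivatives hit $h$ reassembles into $Du(R)[h^{(\sigma)}]$ (using that $\partial_x^\sigma$ applied to the integrands produces $h^{(\sigma)}$ in one leading term, modulo the symmetry of the kernel in $x$), and the remaining terms, in which at least one derivative falls on the kernel $K(R)$ (or on $z(R)'^\perp$, or on the $\sin$ factor in $J$), constitute $T^\sigma(R)[h]$; each such term carries at least one $x$-derivative off the kernel, so its $H^{k+1}$ norm is bounded by $(1+\|R\|_{H^{k+4+\sigma}})\|h\|_{L^2} + (1+\|R\|_{H^4})\|h^{(k+\sigma)}\|_{L^2}$ after distributing the $k+1$ Sobolev derivatives and balancing which factor absorbs how many via interpolation (the first alternative when most derivatives land on $R$, the second when most land on $h$).

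The main obstacle I expect is the careful handling of the singular operator $Du_{N3}^2(R)[h] = \int K_3(R)(x,y) J(h)(x,y)\,dy$ and its $x$-derivatives: this is where genuine Calderón--Zygmund / commutator structure appears, where the diagonal subtraction $K_3 - K_3^\#$ is essential, and where one must simultaneously obtain the tame (linear-in-highest-norm) estimate, the Lipschitz-in-$R$ estimate, and the commutator decomposition \eqref{tame3} with the asymmetric loss-of-$\sigma$-derivatives bound \eqref{tame4}. Everything else is a bookkeeping exercise in Sobolev product and composition estimates (Lemmas~\ref{composition}, \ref{appendix_lem_1}, \ref{ponce_kato}, \ref{T1estimate}), but the singular piece requires combining the boundedness of the principal-value operator $h\mapsto \int \frac{h(x)-h(y)}{2\sin((x-y)/2)}\,dy$ on $H^{k+1}(\mathbb{T})$ with the smoothing coming from the vanishing of the remainder kernel on the diagonal, and then redoing this once more for each extra $x$-derivative while tracking exactly where the derivatives land so that the final bound has the precise form stated.
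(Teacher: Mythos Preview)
Your proposal is correct and follows essentially the same approach as the paper: the same decomposition $u = u_L + u_N$, the same further splitting of $Du_N$ into $Du_N^1 + Du_{N1}^2 + Du_{N2}^2 + Du_{N3}^2$, the diagonal subtraction $K_3 = K_3^\# + (K_3 - K_3^\#)$ for the singular piece (which is exactly the content of the paper's Lemma~\ref{J_linear}, cited directly there), and the Leibniz expansion for the commutator identity \eqref{tame3}--\eqref{tame4}.

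One technical point you leave implicit and should make precise: for \eqref{tame3}--\eqref{tame4}, the paper first performs the change of variables $y\mapsto x-y$, so that $J(h)(x,x-y) = \dfrac{h(x)-h(x-y)}{2\sin(y/2)}$ has denominator independent of $x$, and therefore $\partial_x\bigl(J(h)(x,x-y)\bigr) = J(h')(x,x-y)$ cleanly. This is what makes the leading term reassemble into $Du(R)[h^{(\sigma)}]$. If instead you differentiate $J(h)(x,y)$ directly in $x$ with $y$ fixed, the $\sin\bigl(\tfrac{x-y}{2}\bigr)$ in the denominator also gets hit, producing terms with an extra $1/\sin$ factor that do not obviously fit into the remainder estimate. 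Your remark ``modulo the symmetry of the kernel in $x$'' is a gesture at this but does not name the mechanism. After the change of variables, the remainder terms $T_5^{\sigma,p,q}$ are again of the form treated by Lemma~\ref{J_linear}, and the final step uses Gagliardo--Nirenberg interpolation (Lemma~\ref{GNinterpolation}) plus Young's inequality on the cross terms $\lVert R^{(k+p+3)}\rVert_{L^2}\lVert h^{(1+q)}\rVert_{L^2}$ with $p+q=\sigma$ to arrive at the stated two-term bound.
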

\begin{proof}
We omit the proof of \eqref{tame2} since it can be proved exactly same way as \eqref{tame_1} with \eqref{lip_estimate_F} in Lemma~\ref{composition}. In order to prove \eqref{tame_1}, we deal with $Du_L$. Since it is linear, we have
\[
Du_L(R)[h](x) := \int_{\mathbb{T}}\log(2-2\cos(x-y)) Dz(R)'^\perp[h](y) dy.
\]
Hence, it follows from Lemma~\ref{T1estimate} that
\begin{align}\label{Du_Lestimate}
\rVert Du_L(R)[h]\rVert_{(H^{k+1}(\mathbb{T}))^2} \lesssim \rVert Dz(R)'^\perp[h]\rVert_{(H^{k}(\mathbb{T}))^2} \lesssim \rVert h \rVert_{H^{k+1}(\mathbb{T})}.
\end{align}
Now, we estimate $Du_N$. Recalling the decomposition in \eqref{Du_Nestimate1} we will estimate $Du_N^1$ and $Du_N^2$ separately. For  $Du_N^1$, it follows from \eqref{Kestimate} and \eqref{Dz_def} that
\begin{align}\label{Du_n1estimate}
\rVert Du_N^1(R)[h] \rVert_{(H^{k+1}(\mathbb{T}))^2} \lesssim \rVert K(R)\rVert_{H^{k+1}(\mathbb{T}^2)}\rVert Dz(R)'^\perp \rVert_{(L^2(\mathbb{T}))^2} \lesssim (1+\rVert R\rVert_{H^{k+2}(\mathbb{T})})\rVert h \rVert_{H^1(\mathbb{T})}.
\end{align}
For $Du_N^2$, we recall the decomposition in \eqref{Du_N_decomp}, and use \eqref{Kernal_norm2} and Lemma~\ref{GN} to obtain
\[
\rVert Du_{N1}^2(R)[h]\rVert_{(H^{k+1}(\mathbb{T}))^2},\rVert Du_{N2}^2(R)[h]\rVert_{(H^{k+1}(\mathbb{T}))^2} \lesssim \rVert h \rVert_{H^{k+1}(\mathbb{T})} + \rVert R \rVert_{H^{k+2}(\mathbb{T})}\rVert h \rVert_{L^\infty(\mathbb{T})}.
\]
For $Du_{N3}^2(R)[h]$, we use \eqref{Kernal_norm2}, \eqref{Kernal_norm4}, \eqref{Kernal_norm6}, \eqref{K_3_nabla}  and Lemma~\ref{J_linear} to show that
\[
\rVert Du_{N3}^2(R)[h]\rVert_{(H^{k+1}(\mathbb{T}))^2} \lesssim  \rVert R \rVert_{H^{k+3}(\mathbb{T})}\rVert h\rVert_{H^1(\mathbb{T})} + \rVert h \rVert_{H^{k+1}(\mathbb{T})}.
\]
Therefore $\rVert Du_N^2(R)[h]\rVert_{(H^{k+1}(\mathbb{T}))^2} \lesssim \rVert h \rVert_{H^{k+1}(\mathbb{T})} +\rVert R \rVert_{H^{k+3}(\mathbb{T})}\rVert h \rVert_{H^{1}(\mathbb{T})}$. With \eqref{Du_Lestimate}, \eqref{Du_n1estimate} and \eqref{Du_Nestimate1}, the desired result follows.

Now, we turn to \eqref{tame3} and \eqref{tame4}. Recall from \eqref{f_def},  \eqref{Du_Nestimate1} and \eqref{Du_N_decomp} that
\begin{align*}
Du(R)[h] &= \int_{\mathbb{T}}\log(2-2\cos(y))h(x-y)dy + \int_{\mathbb{T}}K(R)(x,y)Dz(R)'^\perp[h](y)dy \\
& \ + \int_{\mathbb{T}}K_1(R)(x,y)z(R)'^\perp(y) h(x)dy + \int_{\mathbb{T}}K_2(R)(x,y)z(R)'^\perp(y) h(y)dy \\
& \ + \int_{\mathbb{T}}K_3(R)(x,y)z(R)'^\perp(y) J(h)(x,y)dy\\
& = I_1(R)[h]+I_2(R)[h]+I_3(R)[h]+I_4(R)[h]+I_5(R)[h]
\end{align*}
where $K,K_1,K_2,K_3$ are of the form $H(R(x),R(y),J(R)(x,y))$ for some smooth function $H:\RR^3 \mapsto \RR$. It suffices to show that for each $i=1,\ldots,5$ and $\sigma\in \mathbb{N}\cup \left\{ 0 \right\}$,
\[
\left(\frac{d}{dx}\right)^{\sigma} I_i(R)[h] = I_i(R)[h^{\sigma}] + T_i^{\sigma}(R)[h],
\]
for some $T_i^\sigma(R)$ such that 
\begin{align}\label{tame5}
\rVert T_i^\sigma(R)[h]\rVert_{(H^{k+1}(\mathbb{T}))^2} \lesssim (1+ \rVert R \rVert_{H^{k+4+\sigma}(\mathbb{T})})\rVert h\rVert_{L^2(\mathbb{T})} + (1+\rVert R \rVert_{H^4(\mathbb{T})})\rVert h^{(k+\sigma)} \rVert_{L^2(\mathbb{T})}.
\end{align} We only deal with $I_5$ since the other terms can be done in the same way.  We also assume $\sigma\ge 1$ since $\sigma=0$ follows trivially. 
Let $\tilde{K}_3(R)(x,y):=K_3(R)(x,x-y)z(R)'^\perp(x-y)$. Then it follows from \eqref{ponce_kato}, \eqref{Kernal_norm2}, \eqref{Kernal_norm4} and \eqref{Kernal_norm6} that for $l\ge2$,
\begin{align}
&\rVert \tilde{K}_3(R) \rVert_{H^{l}(\mathbb{T}^2)} \lesssim_l  1 + \rVert R \rVert_{H^{l+1}(\mathbb{T})}, \label{Kernal_norm12} \\
&\rVert {\tilde{K}_3}^\#(R) \rVert_{H^{l}(\mathbb{T})} \lesssim 1+\rVert R \rVert_{H^{l+1}(\mathbb{T})}, \text{ where }{\tilde{K}_3}^\#(R)(x):=\tilde{K}_3(R)(x,x),\label{Kernal_norm14}.
\end{align}
  
  Using the change of variables, $y\mapsto x-y$, we have
 \begin{align}
 \left( \frac{d}{dx}\right)^{\sigma}I_5(R)[h] &= \int_{\mathbb{T}}\tilde{K}_3(R)(x,y)J(h^{(\sigma)})(x,x-y)dy + \sum_{p+q=\sigma,q\le \sigma-1}C_{p,q,\sigma}\int_{\mathbb{T}}(\partial_x)^{p}\tilde{K}_3(R)(x,y)J(h^{(q)})(x,x-y)dy,\nonumber\\
 & =: I_5(R)[h^{(\sigma)}](x) + \sum_{p+q=\sigma,q\le \sigma-1}C_{p,q,\sigma}\underbrace{\int_{\mathbb{T}}(\partial_x)^{p}\tilde{K}_3(R)(x,x-y)J(h^{(q)})(x,y)dy}_{=:T_{5}^{\sigma,p,q}(R)[h](x)}\label{tame6}\\
 & =: I_5(R)[h^{(\sigma)}](x) + T_{5}^{\sigma}(R)[h](x),\label{tame7}
 \end{align}
 where  $C_{p,q,\sigma}$ is some constant and  we used $\partial_{x}\left( J(h)(x,x-y)\right) = J(h')(x,x-y)$. It suffices to show that $T_5^{\sigma,p,q}(R)[h]$ satisfies \eqref{tame5}. Let $\tilde{K}_3^*(R)(x,y) := (\partial_x)^p\tilde{K}_3(R)(x,x-y)$.
 Then it follows from \eqref{Kernal_norm12} and \eqref{Kernal_norm14} that for $l\ge2$,
\begin{align}
&\rVert \tilde{K}^*_3(R) \rVert_{(H^{l}(\mathbb{T}^2))^2} \lesssim_l  1 + \rVert R \rVert_{H^{l+p+1}(\mathbb{T})}, \label{Kernal_norm22} \\
&\rVert ({\tilde{K}^*_3})^\#(R) \rVert_{(H^{l}(\mathbb{T}))^2} \lesssim 1+\rVert R \rVert_{H^{l+p+1}(\mathbb{T})}, \text{ where }({\tilde{K}^*_3})^\#(R)(x):=\tilde{K}^*_3(R)(x,x),\label{Kernal_norm24}\\
&\rVert ({\tilde{K}^*_3})^\#(R) \rVert_{(L^\infty(\mathbb{T}))^2}  \lesssim \rVert \tilde{K}_3(R) \rVert_{(W^{p,\infty}(\mathbb{T}^2))^2} \lesssim \rVert \tilde{K}_3(R) \rVert_{(H^{p+2}(\mathbb{T}^2))^2}.\label{Kernal_norm26}
\end{align}
Furthermore, it follows similarly as \eqref{K_3_nabla} that
\begin{align}\label{K_3_nabla1}
\rVert \nabla \tilde{K}^*_3(R)\rVert_{L^\infty(\mathbb{T}^2)}\lesssim \rVert \nabla^{(p+1)}\tilde{K}_3(R)\rVert_{L^\infty(\mathbb{T}^2)} \lesssim 1+ \rVert R^{(p+2)} \rVert_{L^\infty(\mathbb{T})} \lesssim 1+ \rVert R \rVert_{H^{p+3}(\mathbb{T})}.
\end{align}
Hence it follows from Lemma~\ref{J_linear} that
\begin{align*}
\rVert T^{\sigma,p,q}_5(R)[h]\rVert_{H^{k+1}(\mathbb{T})} &  \lesssim \left( \rVert \tilde{K}^*_3(R) \rVert_{(H^{k+2}(\mathbb{T}^2))^2} + \rVert  ({\tilde{K}^*_3})^\#(R) \rVert_{(H^{k+1}(\mathbb{T}^2))^2} \right)\rVert h^{(q)}\rVert_{H^1(\mathbb{T})} \\
& + \left(\rVert  ({\tilde{K}^*_3})^\#(R) \rVert_{(L^{\infty}(\mathbb{T}^2))^2} + \rVert \nabla\tilde{K}^*_3(R) \rVert_{(L^{\infty}(\mathbb{T}^2))^2}\right)\rVert h^{(q)}\rVert_{H^{k+1}(\mathbb{T})} \\ 
& \lesssim (1+\rVert R \rVert_{H^{k+3+p}(\mathbb{T})})\rVert h^{(q)}\rVert_{H^1(\mathbb{T})} + (1+\rVert R \rVert_{H^{p+3}(\mathbb{T})})\rVert h^{(q)}\rVert_{H^{k+1}(\mathbb{T})}\\
& \lesssim (1+\rVert R \rVert_{H^{k+3+p}(\mathbb{T})})\rVert h\rVert_{H^{1+q}(\mathbb{T})} + (1+\rVert R \rVert_{H^{p+3}(\mathbb{T})})\rVert h\rVert_{H^{k+1+q}(\mathbb{T})}
\end{align*}
where the second last inequality follows from  \eqref{K_3_nabla1}.
 Now,  recalling that $\rVert R \rVert_{H^3(\mathbb{T})}\le \epsilon$ and plugging the following inequalities into the above estimate,
 \[
 \rVert R \rVert_{H^{l}(\mathbb{T})} \lesssim \rVert R \rVert_{L^2(\mathbb{T})} + \rVert R^{(l)}\rVert_{L^2(\mathbb{T})}, \quad \rVert h \rVert_{H^{l}(\mathbb{T})} \lesssim \rVert h \rVert_{L^2(\mathbb{T})} + \rVert h^{(l)}\rVert_{L^2(\mathbb{T})},  \text{ for }l\ge 0,
 \]
 we obtain
 \begin{align}\label{T5sigma}
 \rVert T^{\sigma,p,q}_5(R)[h]\rVert_{H^{k+1}(\mathbb{T})} & \lesssim (1 + \rVert R^{(k+p+3)}\rVert_{L^2(\mathbb{T})})\left( \rVert h \rVert_{L^2(\mathbb{T})} + \rVert h^{(1+q)} \rVert_{L^2(\mathbb{T})} \right) \nonumber\\
 & \ + (1 + \rVert R^{(p+3)} \rVert_{L^2(\mathbb{T})})(\rVert h \rVert_{L^2(\mathbb{T})}+\rVert h^{(k+1+q)} \rVert_{L^2(\mathbb{T})})\nonumber\\
 & =: L_1 + L_2.
 \end{align}
 For $L_1$, note that $k+p+3 \le k+3 + \sigma$ and $q\le \sigma-1$, hence we have
 \begin{align}\label{L1estimate1}
 L_1 \lesssim (1 + \rVert R^{(k+\sigma+3)}\rVert_{L^2(\mathbb{T})})\rVert h \rVert_{L^2(\mathbb{T})} + \rVert h^{(\sigma)}\rVert_{L^2(\mathbb{T})} +  \rVert R^{(k+p+3)}\rVert_{L^2(\mathbb{T})} \rVert h^{(1+q)} \rVert_{L^2(\mathbb{T})}.
 \end{align}
Using the interpolation inequality in Lemma~\ref{GNinterpolation}, we have
\begin{align*}
&\rVert R^{(k+p+3)}\rVert_{L^2(\mathbb{T})} = \rVert (R^{(4)})^{(k+p-1)}\rVert_{L^2(\mathbb{T})} \lesssim \rVert R^{(4)}\rVert_{L^2(\mathbb{T})}^{1-\frac{k+p-1}{k+\sigma}}\rVert (R^{(4)})^{(k + \sigma)} \rVert_{L^2({\mathbb{T})}}^{\frac{k+p-1}{k+\sigma}} \\
& \rVert h^{(1+q)}\rVert_{L^2(\mathbb{T})} \lesssim \rVert h \rVert_{L^2(\mathbb{T})}^{1-\frac{1+q}{k+\sigma}}\rVert h^{(k+\sigma)}\rVert_{L^2(\mathbb{T})}^{\frac{1+q}{k+\sigma}}.
\end{align*}
Thus, we have
\begin{align*}
\rVert R^{(k+p+3)}\rVert_{L^2(\mathbb{T})} \rVert h^{(1+q)} \rVert_{L^2(\mathbb{T})} & \lesssim\left(\rVert R^{(4)}\rVert_{L^2(\mathbb{T})}\rVert h^{(k+\sigma)}\rVert_{L^2(\mathbb{T})} \right)^{\frac{1+q}{k+\sigma}} \left( \rVert (R^{(4)})^{(k + \sigma)} \rVert_{L^2({\mathbb{T})}}\rVert h \rVert_{L^2(\mathbb{T})}  \right)^{\frac{k+p-1}{k+\sigma}} \\
& \lesssim \rVert R^{(4)}\rVert_{L^2(\mathbb{T})}\rVert h^{(k+\sigma)}\rVert_{L^2(\mathbb{T})} +  \rVert R^{(k+4+\sigma)}\rVert_{L^2({\mathbb{T})}}\rVert h \rVert_{L^2(\mathbb{T})},
\end{align*}
where we used $p+q = \sigma$ and Young's inequality. Plugging this inequality into \eqref{L1estimate1} and using $\rVert h^{(\sigma)}\rVert_{L^2(\mathbb{T})} \lesssim \rVert h \rVert_{L^{2}(\mathbb{T})} + \rVert h^{(k+\sigma)} \rVert_{L^{2}(\mathbb{T})}$ to estimate the second term on the right-hand side in \eqref{L1estimate1}, we obtain 
\[
L_1 \lesssim (1+ \rVert R \rVert_{H^{k+4+\sigma}(\mathbb{T})})\rVert h\rVert_{L^2(\mathbb{T})} + (1+\rVert R \rVert_{H^4(\mathbb{T})})\rVert h^{(k+\sigma)} \rVert_{L^2(\mathbb{T})}.
\]
Similarly, we can obtain
\[
L_2 \lesssim (1+ \rVert R \rVert_{H^{k+4+\sigma}(\mathbb{T})})\rVert h\rVert_{L^2(\mathbb{T})} + (1+\rVert R \rVert_{H^4(\mathbb{T})})\rVert h^{(k+\sigma)} \rVert_{L^2(\mathbb{T})}.
\]
Recalling \eqref{T5sigma}, \eqref{tame6} and \eqref{tame7}, we obtain \eqref{tame5}. This proves \eqref{tame4} and finishes the proof.
\end{proof}

\begin{proofprop}{growth_velocity}
If $i=j$, then the results follow immediately from Lemma~\ref{Acondition} and \ref{BCcondition}. For $i\ne j$,  the same results follow straightforwardly since there is not singularity in the integrands.
\end{proofprop}

In the next proposition, we estimate the derivative of the velocity with respect to the parameter $b$ in view of \eqref{derivative_matrix_3}. 

\begin{prop}\label{b_derivative}
Let  $R$, $z_i(R)$, $u_{i,j}$, be as in \eqref{R_2}, \eqref{z_2} and \eqref{u_def}. For $k\ge 2$, there exists $\epsilon=\epsilon(k,b_1,b_2) >0$ such that if $\rVert R \rVert_{(H^{3}(\mathbb{T}))^2} \le \epsilon$, then
\begin{align}
\rVert \partial_{b_1} u_{i,j}(R) \rVert_{(H^{k}(\mathbb{T}))^2} \lesssim 1 +\rVert R \rVert_{(H^{k+1}(\mathbb{T}))^2}.\label{growth_b10}
\end{align}
Also, if $\rVert R \rVert_{(H^{3}(\mathbb{T}))^2},\rVert r \rVert_{(H^{3}(\mathbb{T}))^2}  \le \epsilon$ and $\rVert R\rVert_{(H^{k+1}(\mathbb{T}))^2}, \rVert r \rVert_{(H^{k+1}(\mathbb{T}))^2} \le 1$, then,
\begin{align}\label{lip_b10}
\rVert \partial_{b_1} u_{i,j}(R)-\partial_{b_1} u_{i,j}(r) \rVert_{(H^{k}(\mathbb{T}))^2}  \lesssim \rVert R - r \rVert_{(H^{k+1}(\mathbb{T}))^2}.
\end{align}
\end{prop}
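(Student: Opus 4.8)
The plan is to exploit that $\partial_{b_1}$ is, up to one extra term, the same operation as the Gateaux derivative $Du_{i,j}(R)$ estimated in Proposition~\ref{growth_velocity}(B), taken in the ``direction'' of the constant function. I will treat the diagonal self-interaction $i=j=1$ in detail; the cases where exactly one of $i,j$ equals $1$ are easier (no singularity, the argument of the logarithm being bounded below by a positive constant depending on $b_1\ne b_2$, so the kernel and its $b_1$-derivative are smooth functions of $(x,y)$ and of the pointwise values of $R_i,R_j$, and the same composition/product estimates apply with no singular-integral bookkeeping), and if neither index is $1$ then $\partial_{b_1}u_{i,j}\equiv 0$.

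For $i=j=1$ I would start from the decomposition $u=u_L+u_N$ in \eqref{f_def}, use $\partial_{b_1}z(R)(x)=(\cos x,\sin x)$, hence $\partial_{b_1}\big(z(R)'^{\perp}\big)=(-\cos x,-\sin x)$, which is precisely $Dz(R)'^{\perp}[\mathbf{1}]$ in the notation of \eqref{Dz_def}, and differentiate under the integral (legitimate after the splitting in \eqref{f_def}, where the integrand and its $b_1$-derivative are continuous). This yields three pieces: (i) $\partial_{b_1}u_L(R)=\int_{\mathbb{T}}\log(2-2\cos(x-y))(-\cos y,-\sin y)\,dy$, a fixed smooth function independent of $R$; (ii) $\int_{\mathbb{T}}K(R)(x,y)(-\cos y,-\sin y)\,dy$, which is exactly the term $Du_N^1$ estimated in the proof of Lemma~\ref{BCcondition} with $Dz'^{\perp}[h]$ replaced by the bounded vector $(-\cos y,-\sin y)$, hence controlled by $\rVert K(R)\rVert_{H^k(\mathbb{T}^2)}\lesssim 1+\rVert R\rVert_{H^{k+1}(\mathbb{T})}$ via \eqref{Kestimate}; and (iii) $\int_{\mathbb{T}}(\partial_bF)(R(x),R(y),J(R)(x,y))\,z(R)'^{\perp}(y)\,dy$, where $\partial_bF(u,v,w)=\frac{2b_1+u+v}{b_1^2+b_1(u+v)+uv+w^2}$ is the explicit $b_1$-derivative of the profile $F$ in \eqref{F_smooth}. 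For (iii) I would run the composition estimate of Lemma~\ref{composition} — using \eqref{f_norm1} for $\rVert J(R)\rVert_{H^l(\mathbb{T}^2)}$ and the fact that the denominator of $\partial_bF$ equals $|z(x)-z(y)|^2/(2-2\cos(x-y))$, which stays bounded away from $0$ for $\rVert R\rVert_{H^3(\mathbb{T})}$ small, so the profile is smooth on the relevant range — to get $\rVert(\partial_bF)(R)\rVert_{H^l(\mathbb{T}^2)}\lesssim_l 1+\rVert R\rVert_{H^{l+1}(\mathbb{T})}$, and then the product rule exactly as in Lemma~\ref{Acondition}. Summing (i)--(iii) gives \eqref{growth_b10}; the Lipschitz bound \eqref{lip_b10} comes out the same way, replacing each composition estimate by its Lipschitz counterpart (\eqref{Kernal_norm1_lip} and the analogue for $\partial_bF$ from Lemma~\ref{composition}), observing that piece (i) cancels in the difference and that the normalization $\rVert R\rVert_{(H^{k+1}(\mathbb{T}))^2},\rVert r\rVert_{(H^{k+1}(\mathbb{T}))^2}\le 1$ is exactly what absorbs the products.

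The main point requiring care — rather than a real obstacle — is bookkeeping: $\partial_{b_1}$ is a parameter derivative, not a Gateaux derivative in a direction belonging to $X^{k,m}$ (the natural direction $(\cos\cdot,\sin\cdot)$ is not in $X^{k,m}$), so Proposition~\ref{growth_velocity}(B) cannot be quoted as a black box; one has to re-run the proofs of Lemmas~\ref{Acondition} and~\ref{BCcondition} with that fixed smooth direction (harmless, since it has all Sobolev norms bounded by absolute constants) and additionally track piece (iii), which has no counterpart in Proposition~\ref{growth_velocity} because it comes from the explicit occurrence of $b_1=b$ inside $F$ in \eqref{F_smooth}. Everything else is a routine repetition of the composition and Kato--Ponce/Moser product estimates already set up in Subsection~\ref{velocity_estimates}.
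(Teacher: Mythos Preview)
Your proposal is correct and follows essentially the same approach as the paper: the paper's Lemma~\ref{b_derivative1} decomposes $\partial_b u(R)$ into exactly your three pieces $B_1=(\mathrm{i})$, $B_3=(\mathrm{ii})$, $B_2=(\mathrm{iii})$, applies Lemma~\ref{composition} to the profiles $F$ and $\partial_bF$, and finishes with the product estimate of Lemma~\ref{GN}. Your remark that Proposition~\ref{growth_velocity}(B) cannot be quoted as a black box because the ``direction'' is the constant function (not in $X^{k,m}$) and because of the extra piece (iii) coming from the explicit $b$-dependence of $F$ is exactly the reason the paper states and proves this separately rather than deducing it from Proposition~\ref{growth_velocity}.
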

If $i,j\ne 1$, then \eqref{growth_b10} and \eqref{lip_b10} follow trivially, since $u_{2,2}$ is independent of $b_1$. As in Proposition~\ref{growth_velocity}, we only deal with the case where $i=j=1$.

\begin{lemma}\label{b_derivative1}
Let $R\in C^\infty(\mathbb{T})$ and $u(R)$ and $z(R)$ be as in \eqref{single1} and \eqref{single2}. For $k\ge 2$, there exists $\epsilon=\epsilon(k,b) >0$ such that if $\rVert R \rVert_{(H^{3}(\mathbb{T}))^2} \le \epsilon$, then
\begin{align}
\rVert \partial_{b} u(R) \rVert_{(H^{k}(\mathbb{T}))^2} \lesssim 1 +\rVert R \rVert_{H^{k+1}(\mathbb{T})}.\label{growth_b1}
\end{align}
Also, if $\rVert R \rVert_{H^{3}(\mathbb{T})},\rVert r \rVert_{H^{3}(\mathbb{T})}  \le \epsilon$ and $\rVert R\rVert_{(H^{k+1}(\mathbb{T}))^2}, \rVert r \rVert_{(H^{k+1}(\mathbb{T}))^2} \le 1$,  then,
\begin{align}\label{lip_b1}
\rVert \partial_{b_1} u(R)-\partial_{b_1} u(r) \rVert_{(H^{k}(\mathbb{T}))^2}  \lesssim \rVert R - r \rVert_{H^{k+1}(\mathbb{T})}.
\end{align}
\end{lemma}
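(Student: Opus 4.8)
The plan is to reprise, in a simplified form, the argument used for Lemma~\ref{Acondition} and Lemma~\ref{BCcondition}, exploiting the fact that differentiation in the parameter $b$ is genuinely easier than the Gateaux derivative in $R$. Using the splitting \eqref{f_def}, write $u(R) = u_L(R) + u_N(R)$ with $u_L(R)(x) = \int_{\mathbb{T}} \log(2-2\cos(x-y)) z(R)'^{\perp}(y)\,dy$ and $u_N(R)(x) = \int_{\mathbb{T}} K(R)(x,y) z(R)'^{\perp}(y)\,dy$, where $K(R)(x,y) = F(R(x),R(y),J(R)(x,y))$. The key structural observation is that $\partial_b z(R)'(x) = (-\sin x,\cos x)$, hence $\partial_b z(R)'^{\perp}(x) = (-\cos x,-\sin x)$, is a fixed trigonometric vector field independent of both $b$ and $R$. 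Consequently $\partial_b u_L(R)(x) = \int_{\mathbb{T}}\log(2-2\cos(x-y))\,\partial_b z(R)'^{\perp}(y)\,dy$ is a fixed $C^{\infty}$ function, so Lemma~\ref{T1estimate} gives $\rVert\partial_b u_L(R)\rVert_{(H^{k}(\mathbb{T}))^2}\lesssim 1$, and in addition $\partial_b u_L(R)-\partial_b u_L(r)=0$, which already settles the $u_L$--contribution to both \eqref{growth_b1} and \eqref{lip_b1}.

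For the nonlinear part I would differentiate under the integral sign — legitimate since $R\in C^{\infty}(\mathbb{T})$ and the diagonal singularity has been absorbed into $J(R)$, so the integrand is smooth in $b$ and jointly continuous — obtaining
\[
\partial_b u_N(R)(x) = \int_{\mathbb{T}} (\partial_b K)(R)(x,y)\, z(R)'^{\perp}(y)\,dy + \int_{\mathbb{T}} K(R)(x,y)\,\partial_b z(R)'^{\perp}(y)\,dy,
\]
where $(\partial_b K)(R)(x,y) = (\partial_b F)(R(x),R(y),J(R)(x,y))$ and, from \eqref{F_smooth}, $\partial_b F(u,v,w) = \frac{2b+u+v}{\,b^2+b(u+v)+uv+w^2\,}$. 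For $b$ in a fixed compact set away from $0$ (recall $b_1=1$ and $0<b_3<b_2<1$) the denominator equals $b^2+O(\epsilon)$, so $\partial_b F$ is a smooth function of $(u,v,w)$ near the origin and satisfies the hypotheses of the composition Lemma~\ref{composition}. Combining that lemma with the bound \eqref{f_norm1} on $\rVert J(R)\rVert_{H^{2}}$ yields, exactly as in \eqref{Kestimate}, $\rVert (\partial_b K)(R)\rVert_{H^{l}(\mathbb{T}^2)}\lesssim_{l,b} 1+\rVert R\rVert_{H^{l+1}(\mathbb{T})}$ for $l\ge 2$. Feeding this into the product/trace estimate Lemma~\ref{ponce_kato}, together with $\rVert z(R)'^{\perp}\rVert_{(H^{k}(\mathbb{T}))^2}\lesssim 1+\rVert R\rVert_{H^{k+1}(\mathbb{T})}$, the uniform bound $\rVert z(R)'^{\perp}\rVert_{L^{\infty}}\lesssim 1$, the smoothness of $\partial_b z(R)'^{\perp}$, and $\rVert K(R)\rVert_{H^{k}(\mathbb{T}^2)}\lesssim 1+\rVert R\rVert_{H^{k+1}(\mathbb{T})}$ from \eqref{Kestimate}, gives $\rVert\partial_b u_N(R)\rVert_{(H^{k}(\mathbb{T}))^2}\lesssim_{k,b} 1+\rVert R\rVert_{H^{k+1}(\mathbb{T})}$, hence \eqref{growth_b1}.

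For the Lipschitz bound \eqref{lip_b1} — since $\partial_b u_L$ is $R$-independent, only the $u_N$ part matters — I would telescope
\[
\partial_b u_N(R) - \partial_b u_N(r) = \int_{\mathbb{T}}\!\big((\partial_bK)(R)-(\partial_bK)(r)\big)z(R)'^{\perp} + \int_{\mathbb{T}}\!(\partial_bK)(r)\big(z(R)'^{\perp}-z(r)'^{\perp}\big) + \int_{\mathbb{T}}\!\big(K(R)-K(r)\big)\partial_b z(R)'^{\perp}
\]
and estimate each summand with the Lipschitz version of Lemma~\ref{composition}, which gives $\rVert K(R)-K(r)\rVert_{H^{l}(\mathbb{T}^2)}\lesssim_{l}\rVert R-r\rVert_{H^{l+1}(\mathbb{T})}$ and the analogous bound for $\partial_b K$, together with $\rVert z(R)'^{\perp}-z(r)'^{\perp}\rVert_{(H^{k}(\mathbb{T}))^2}\lesssim\rVert R-r\rVert_{H^{k+1}(\mathbb{T})}$ and the uniform $O(1)$ bounds on $\rVert z(R)'^{\perp}\rVert_{H^{k}}$ and $\rVert(\partial_bK)(r)\rVert_{H^{k}}$ coming from $\rVert R\rVert_{H^{k+1}},\rVert r\rVert_{H^{k+1}}\le 1$. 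The product estimate then produces $\rVert\partial_b u(R)-\partial_b u(r)\rVert_{(H^{k}(\mathbb{T}))^2}\lesssim_{k,b}\rVert R-r\rVert_{H^{k+1}(\mathbb{T})}$.

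I do not expect a genuine obstacle here. Compared with Lemma~\ref{BCcondition}, there is no ``$J(h)$'' term and no loss of derivatives, precisely because $\partial_b z(R)'^{\perp}$ is a fixed smooth vector field rather than something involving a derivative of a perturbation; in particular no Nash--Moser-type tame estimate (the analogue of part (D) of Proposition~\ref{growth_velocity}) is needed for this proposition. The only two points that deserve a word of justification are (i) differentiation under the integral past the removable diagonal singularity, which is handled by the $J$-renormalization as in the rest of Section~\ref{velocity_estimates}, and (ii) verifying that $\partial_b F$ meets the hypotheses of the composition lemma, i.e. is smooth near the origin in $(u,v,w)$ with denominator uniformly bounded away from zero, which holds once $\epsilon$ is chosen small and $b$ is confined to a fixed range away from $0$; the implicit constants are then permitted to depend on $b$, consistently with the $\lesssim_{k,b}$ in the statement.
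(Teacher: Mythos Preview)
Your proposal is correct and follows essentially the same approach as the paper: the same splitting $u=u_L+u_N$, the same observation that $\partial_b z(R)'^{\perp}=(-\cos y,-\sin y)$ is fixed, and the same decomposition of $\partial_b u_N$ into the $\partial_b K$-term (the paper's $K_4=G=\partial_b F$) and the $K$-term (the paper's $K_5=F$), with the composition lemma supplying the $H^l$ bounds. The only cosmetic difference is that the paper invokes Lemma~\ref{GN} for the final product estimate whereas you invoke Lemma~\ref{ponce_kato}, and you spell out the telescoping for \eqref{lip_b1} where the paper simply points to \eqref{lip_estimate_F}; neither changes the argument.
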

\begin{proof}
From \eqref{f_def}, we have
\begin{align*}
\partial_bu(R)(x) & = \partial_bu_L(R)(x) + \partial_bu_N(R)(x)\\
& = \int_{\mathbb{T}}\log(2-2\cos(x-y))(-\cos y,-\sin y)dy  + \int_{\mathbb{T}}K_4(R)(x,y)z(R)'^\perp(y) dy \\
& \ + \int_{\mathbb{T}}K_5(R)(x,y)(-\cos y,-\sin y)dy\\
& =: B_1 + B_2(R) + B_3(R)
\end{align*}
where  
\begin{align*}
&K_4(R)(x,y) := G(R(x),R(y),J(R)(x,y)),\\
&K_5(R)(x,y) := F(R(x),R(y),J(R)(x,y)),\\
&G(u,v,w) =\partial_bF(u,v,w)= \frac{2b+u+v}{b^2+b(u+v)+uv+w^2},\\
&F(u,v,w)  =\log(b^2+b(u+v)+uv+w^2).
\end{align*} Again, using Lemma~\ref{composition}, we have that for $\rVert R\rVert_{H^{3}(\mathbb{T})},\rVert r \rVert_{H^{3}(\mathbb{T})}\le \epsilon$ for sufficiently small $\epsilon>0$,
\begin{align*}
&\rVert K_4(R) \rVert_{H^{k}(\mathbb{T}^2)}, \rVert K_5(R) \rVert_{H^{k}(\mathbb{T}^2)} \lesssim 1 + \rVert R\rVert_{H^{k+1}(\mathbb{T})},\\
&\rVert K_4(R) - K_4(r) \rVert_{H^{k}(\mathbb{T}^2)}, \rVert K_5(R) - K_5(r) \rVert_{H^{k}(\mathbb{T}^2)} \lesssim (1+\rVert R \rVert_{H^{k+1}(\mathbb{T})}+\rVert r\rVert_{H^{k+1}(\mathbb{T})})\rVert R-r \rVert_{H^{k+1}(\mathbb{T})}.
\end{align*}
Then \eqref{growth_b1} and \eqref{lip_b1} follow immediately from Lemma~\ref{GN}.
\end{proof}

\begin{proofprop}{b_derivative}
The case $i=j=1$ follows immediately from Lemma~\ref{b_derivative1}. If $i\ne j$ then the result follows straightforwardly in a similar manner.
\end{proofprop}

Regarding \eqref{D2G}, \eqref{dtDG} and \eqref{dttDG} (note that since $b_3$ in \eqref{def_b3} depends on $\Theta_3$ we need to estimate the second derivative wit respect to $b_3$ of $G$.), we need to estimate the higher derivatives of $u_{i,j}$. 
\begin{prop}\label{higer_derivative_estimates}
Let  $R$, $z_i(R)$, $u_{i,j}$, be as in \eqref{R_2}, \eqref{z_2} and \eqref{u_def}. For $k\ge 2$, there exists $\epsilon=\epsilon(k,b_1,b_2) >0$ such that if $\rVert R \rVert_{(H^{3}(\mathbb{T}))^2} \le \epsilon$, then
\begin{align}
&\rVert D^2u_{i,j}(R)[h,h] \rVert_{(H^{k}(\mathbb{T}))^2} \lesssim_k (1+ \rVert R \rVert_{(H^{k+2}(\mathbb{T}))^2})\rVert h \rVert_{(H^{k}(\mathbb{T}))^2}^2,\label{second_derivative_1}\\
&\rVert \partial_{b_3}D^2u_{i,j}(R)[h,h] \rVert_{(H^{k}(\mathbb{T}))^2} \lesssim_k (1+ \rVert R \rVert_{(H^{k+2}(\mathbb{T}))^2})\rVert h \rVert_{(H^{k}(\mathbb{T}))^2}^2,\nonumber\\
&\rVert \partial_{b_1}Du_{i,j}(R)[h] \rVert_{(H^{k}(\mathbb{T}))^2} \lesssim_k (1+ \rVert R \rVert_{(H^{k+2}(\mathbb{T}))^2})\rVert h \rVert_{(H^{k}(\mathbb{T}))^2},\nonumber\\
&\rVert \partial_{b_{1}b_1}Du_{i,j}(R)[h] \rVert_{(H^{k}(\mathbb{T}))^2} \lesssim_k (1+ \rVert R \rVert_{(H^{k+2}(\mathbb{T}))^2})\rVert h \rVert_{(H^{k}(\mathbb{T}))^2}.\nonumber
\end{align}
\end{prop}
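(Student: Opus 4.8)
The plan is to follow verbatim the scheme already used for Lemmas~\ref{Acondition}, \ref{BCcondition} and \ref{b_derivative1}. First I would reduce to the diagonal case $i=j$: if $i\ne j$ the integrand in $u_{i,j}$ has no singularity, so all four estimates follow straightforwardly from Sobolev product and composition estimates, exactly as at the end of the proof of Proposition~\ref{growth_velocity}. For $i=j$ I would use the splitting $u(R)=u_L(R)+u_N(R)$ from \eqref{f_def}. Since $u_L(R)$ is \emph{linear} in $R$ through $z(R)'^\perp$, one has $D^2u_L\equiv 0$, while $\partial_{b}D^2u_L$, $\partial_{b}Du_L$ and $\partial_{bb}Du_L$ are again operators of the form $h\mapsto\int_{\mathbb{T}}\log(2-2\cos(x-y))\,(\text{fixed smooth function of }x)\,dy$, hence controlled in $(H^k(\mathbb{T}))^2$ with no loss of derivatives by Lemma~\ref{T1estimate}. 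So the whole content of the proposition is an estimate for the nonlinear part $u_N(R)(x)=\int_{\mathbb{T}}K(R)(x,y)z(R)'^\perp(y)\,dy$, with $K(R)=F(R(x),R(y),J(R)(x,y))$ and $F$ as in \eqref{F_smooth}.

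Next I would expand $D^2u_N(R)[h,h]$, as well as $\partial_{b_3}D^2u_N(R)[h,h]$, $\partial_{b_1}Du_N(R)[h]$ and $\partial_{b_1b_1}Du_N(R)[h]$, by the Leibniz and chain rules. Since $J$ is linear in its argument, each of these is a finite sum of integrals of the form $\int_{\mathbb{T}}H\big(R(x),R(y),J(R)(x,y)\big)\,\Pi_1[h](x,y)\,\Pi_2[h](x,y)\,dy$, where $H$ is a fixed partial derivative of $F$ — and hence again a smooth function of its three arguments, because the denominator $b^2+b(u+v)+uv+w^2$ of $F$ in \eqref{F_smooth} stays uniformly bounded away from $0$ once $\rVert R\rVert_{(H^3(\mathbb{T}))^2}\le\epsilon$ (the same holds for $\partial_b F$, $\partial_b^2 F$ and their mixed partials with $\partial_u,\partial_v,\partial_w$, which is what makes the $b$-derivatives harmless) — and each $\Pi_a[h]$ is one of the \emph{non-singular} factors $h(x)$, $h(y)$, $J(h)(x,y)=\frac{h(x)-h(y)}{2\sin((x-y)/2)}$, $Dz(R)'^\perp[h](y)=-h'(y)(\sin y,\cos y)-h(y)(\cos y,\sin y)$, a fixed smooth factor, or the constant $1$; differentiating in $b$ only replaces $H$ by another smooth $\partial_b H$ and $z'^\perp$ (or its $b$-derivative) by a fixed smooth function. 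The key point is that the single genuine singularity has already been isolated in $u_L$, so no product of two singular kernels is ever formed: the only delicate factor, $J(h)$, is handled by Lemma~\ref{J_linear}, and when two $J$-factors occur (as in the $\partial_w^2F\cdot J(h)^2$ term of $D^2K$) one of them is absorbed into the kernel using $\rVert J(h)\rVert_{H^l(\mathbb{T}^2)}\lesssim\rVert h\rVert_{H^{l+1}(\mathbb{T})}$ from Lemma~\ref{appendix_lem_1}.

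With the terms enumerated, the estimates then follow by combining the tools already in place: Lemma~\ref{composition} (together with Lemma~\ref{appendix_lem_1}) to bound $\rVert H(R(x),R(y),J(R)(x,y))\rVert_{H^l(\mathbb{T}^2)}\lesssim_l 1+\rVert R\rVert_{H^{l+1}(\mathbb{T})}$ and the diagonal-restriction bounds, exactly as in \eqref{Kestimate}, \eqref{Kernal_norm1} and \eqref{Kernal_norm2}; Lemma~\ref{ponce_kato} for products in $H^l(\mathbb{T}^2)$; Lemma~\ref{GN} for the frozen-variable integral operators $h\mapsto\int K(x,y)h(x)\,dy$ and $h\mapsto\int K(x,y)h(y)\,dy$; Lemma~\ref{J_linear} for $h\mapsto\int K(x,y)J(h)(x,y)\,dy$, which also needs the $L^\infty$ bound on $\nabla K_3$ from \eqref{K_3_nabla}; and, when a term produces too many derivatives on $R$, the Gagliardo--Nirenberg interpolation of Lemma~\ref{GNinterpolation} to trade them back against derivatives on $h$, precisely as $L_1$ and $L_2$ are treated in the proof of \eqref{tame4}. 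A short bookkeeping argument, identical in spirit to the one following \eqref{Du_N_decomp}, then shows that each term is bounded by $(1+\rVert R\rVert_{(H^{k+2}(\mathbb{T}))^2})\rVert h\rVert_{(H^k(\mathbb{T}))^2}^2$ for \eqref{second_derivative_1} (resp.\ $(1+\rVert R\rVert_{(H^{k+2}(\mathbb{T}))^2})\rVert h\rVert_{(H^k(\mathbb{T}))^2}$ for the first-order-in-$b$ derivatives). For $\partial_{b_3}D^2u_{i,j}$ note that only $u_{3,3}$ and the cross terms with the third patch depend on $b_3$, so the same argument applied to those terms suffices and the rest have vanishing $b_3$-derivative.

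The main obstacle I anticipate is organizational rather than analytic: managing the combinatorial proliferation of terms produced by $D^2$ together with the extra $b$-derivatives, and checking in each of them that (i) the $J(\cdot)$-factors and derivatives of $h$ occur to low enough order that no product of two singular operators arises and each term splits into a controlled kernel acting on a controlled function, and (ii) the derivative count lands within the deliberately non-optimal budget of the statement. Since only $\rVert R\rVert_{H^{k+2}}$ and $\rVert h\rVert_{H^{k}}$ are asked for — with slack in both — once the terms are listed the individual bounds are routine applications of the lemmas quoted above, and no new estimate is needed.
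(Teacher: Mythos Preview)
Your overall plan---reduce to $i=j$, split $u=u_L+u_N$, expand the derivatives into integrals of a smooth kernel $H(R(x),R(y),J(R))$ against products of $h(x),h(y),h'(y),J(h)$, and estimate term by term with Lemmas~\ref{composition}, \ref{GN}, \ref{J_linear}---is exactly what the paper does, and for all terms except one it works just as you say.

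The gap is in your treatment of the term $\int_{\mathbb T}K(R)(x,y)\,J(h)(x,y)^2\,dy$. You propose to ``absorb one $J(h)$ into the kernel'' via Lemma~\ref{appendix_lem_1} and then apply Lemma~\ref{J_linear} to the remaining $J(h)$. But Lemma~\ref{J_linear} requires $\lVert\tilde K\rVert_{H^{k+1}(\mathbb T^2)}$ with $\tilde K=K(R)J(h)$, and the product estimate together with $\lVert J(h)\rVert_{H^{k+1}(\mathbb T^2)}\lesssim\lVert h\rVert_{H^{k+2}(\mathbb T)}$ forces a factor $\lVert h\rVert_{H^{k+2}}\lVert h\rVert_{H^{1}}$. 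This cannot be bounded by $\lVert h\rVert_{H^{k}}^{2}$: for $h=\cos(Nx)$ the left side scales like $N^{k+3}$ and the right like $N^{2k}$. The Gagliardo--Nirenberg step you invoke trades derivatives between $R$ and $h$, but here the excess is entirely on $h$, so it does not help.

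The paper closes this gap with the dedicated Lemma~\ref{twojs}, which gives directly
\[
\Big\lVert\int_{\mathbb T}K(x,y)J(h)(x,y)^2\,dy\Big\rVert_{H^{k}(\mathbb T)}\lesssim \lVert K\rVert_{H^{k+1}(\mathbb T^2)}\lVert h\rVert_{H^{k}(\mathbb T)}^{2}.
\]
Its proof is the missing idea: after the change of variables $y\mapsto x-y$ one distributes the $k$ derivatives in $x$; the only dangerous case is when all of them land on a single $J(h)$, producing $\int K\,J(h)\,J(h^{(k)})\,dy$. One then subtracts the \emph{diagonal value} $K^{\#}(x)h'(x)$ from $K(x,y)J(h)(x,y)$ and splits into (i) a piece where Lemma~\ref{appendix_lem_2} applied to $KJ(h)$ costs only $\lVert KJ(h)\rVert_{H^{2}(\mathbb T^2)}\lesssim\lVert h\rVert_{H^{3}}$ against $\lVert h^{(k)}\rVert_{L^2}$, and (ii) the Hilbert-transform piece $K^{\#}h'\!\int J(h^{(k)})\,dy$, controlled by \eqref{Hilbert_transform} without loss. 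Once you invoke Lemma~\ref{twojs} for this single term, the rest of your outline goes through verbatim and matches the paper's proof.
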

\begin{proof}
We also consider the $i=j=1$ case only and briefly sketch the idea for \eqref{second_derivative_1}, since the proof is almost identical to Lemma~\ref{BCcondition} and Lemma~\ref{b_derivative1}. Adapting the setting in \eqref{single1} and \eqref{single2}, the Gateaux second derivative of $u$ can be written as a linear combination of the integral operators of the form
\[
\int_{\mathbb{T}}K(R)(x,y)L(x,y)dy,
\]
where $K(R)(x,y)=\tilde{K}(R(x),R(y),J(R)(x,y))$ for some smooth function $\tilde{K}:\RR^3 \mapsto \RR^2$, and $L(x,y)$ is a product of two of $h(x)$, $h(y)$, $h'(y)$ and $J(h)(x,y)$. As before, $K$ satisfies the estimates in \eqref{Kernal_norm2}, \eqref{Kernal_norm4} and \eqref{Kernal_norm6}. Let us consider $L(x,y) = J(h)(x,y)^2$ only, that is,
\[
T(R)[h,h]:=\int_{\mathbb{T}}K(R)(x,y)J(h)(x,y)J(h)(x,y)dy.
\]
Therefore, it follows from Lemma~\ref{twojs} that
\begin{align*}
\rVert T(R)[h,h]\rVert_{H^{k}(\mathbb{T})} \lesssim \rVert K \rVert_{H^{k+1}(\mathbb{T})}\rVert h \rVert_{H^{k}(\mathbb{T})}^2 \lesssim (1+\rVert R \rVert_{H^{k+2}(\mathbb{T})})\rVert h \rVert_{H^{k}(\mathbb{T})}^2,
\end{align*}
where the last inequality follows from the fact that $K$ satisfies \eqref{Kernal_norm2}. As mentioned, the other terms can be estimated in a similar manner and we omit the proofs.
\end{proof}

\subsection{Checking the hypotheses in Theorem~\ref{theorem1}}\label{checking_subsection}
In this subsection we will show the hypotheses in theorem \ref{theorem1} are satisfied. We will always assume that  $2\le k\in \mathbb{N}$ is fixed. We denote $R := (R_1,R_2,R_3) \in \left(X^{k+1}(\mathbb{T})\cap C^\infty(\mathbb{T})\right)^3$. Throughout the rest of the paper, we assume that 
\begin{align}\label{sizeassumption}
|\Theta_3^*-\Theta_3| + \| R \|_{(X^{3})^3} \le \epsilon,
\end{align}
for some $\epsilon$, sufficiently small. Note that from \eqref{parameter_3}, \eqref{def_b3} and  Lemma~\ref{kernel}, it follows that
\begin{align}\label{positive_b3}
0 < b_3(\Theta_3,R)<b_2,
\end{align} for sufficiently small $\epsilon$ in \eqref{sizeassumption}. 

We recall our functional from \eqref{stationary_equation} and \eqref{stationarR_equation4} can be written as (since $b_1,b_2,\Theta_1,\Theta_2$ are fixed, we omit their dependence but mark the dependence of  $\Theta_3$ in the notations)
\begin{align}\label{def_G2}
G(\Theta_3,R) = \colvec{G_1(\Theta_3,R) \\ G_2(\Theta_3,R) \\ G_3(\Theta_3,R)} =\frac{1}{4\pi}\colvec{\sum_{i=1}^3 \Theta_i\left( R_1'u^{\theta}_{i,1} -(b_1+R_1)u^r_{i,1}\right)\\\sum_{i=1}^3 \Theta_i\left(R_2'u^{\theta}_{i,2} -(b_2+R_2)u^r_{i,2}\right)\\ \sum_{i=1}^3\Theta_i\left(R_3' u^{\theta}_{i,3} -(b_3+R_3)u^r_{i,3} \right)} =: \colvec{R_1'u^{\theta}_1  - (b_1+R_1)u^{r}_1 \\ R_2'u^{\theta}_2  - (b_2+R_2)u^{r}_2 \\ R_3'u^{\theta}_3  - (b_3+R_3)u^{r}_3},
\end{align}
where for $i,j,k \in \left\{ 1,2,3 \right\}$, 
\begin{align*}
&u^{\theta}_k :=u^\theta_k(\Theta_3,R) = \frac{1}{4\pi}\sum_{i=1}^3 \Theta_iu^{\theta}_{i,k}, \quad u^{r}_k :=u^r_k(\Theta_3,R) = \frac{1}{4\pi}\sum_{i=1}^3 \Theta_iu^{r}_{i,k}, \\
&u^{\theta}_{i,j} = u^{\theta}_{i,j}(b_3(\Theta_3),\Theta_3,R) =u_{i,j}(b_3(\Theta_3),\Theta_3,R) \cdot \colvec{-\sin x \\ \cos x},\\
&u^{r}_{i,j} = u^{r}_{i,j}(b_3(\Theta_3),\Theta_3,R) =u_{i,j}(b_3(\Theta_3),\Theta_3,R) \cdot \colvec{\cos x \\ \sin x},
\end{align*}
and $u_{i,j}(b_3(\Theta_3),\Theta_3,R)$ is given by  \eqref{z_2} and \eqref{u_def} and $b_3(\Theta_3,R)$ is given by \eqref{def_b3}. We also denote by $\omega(\Theta_3,R), \Psi(\Theta_3,R):\RR^2\mapsto \RR$ the corresponding vorticity and its stream function, more precisely, 
\begin{align}&\omega(\Theta_3,R)(p):= \sum_{i=1}^3\Theta_i 1_{D_i}(p), \quad \Psi(\Theta_3,R)(p):=\frac{1}{2\pi}\left(\omega(\Theta_3,R)*\log|\cdot|\right)(p), \quad p\in \RR^2,\label{vorticity_def}
 \\
&D_i := \left\{ r(\cos x,\sin x)\in \RR^2 : r \le b_i+R_i(x), \ x\in \mathbb{T}\right\}.\label{patch_notation}\end{align}
With the stream function, we can write $G$ as
\begin{align}\label{def_G_stream}
{G(\Theta_3,R)(x)} = -\colvec{\partial_x\left(\Psi(\Theta_3,R)(z_1(R)(x))\right) \\ \partial_x\left(\Psi(\Theta_3,R)(z_2(R)(x))\right) \\ \partial_x\left(\Psi(\Theta_3,R)(z_3(R)(x)) \right)}.
\end{align}

The derivative of the functional will be given as
\begin{align}\label{derivative_matrix_3}
DG(\Theta_3,R)[h] & = \colvec{u_1^{\theta} & 0 & 0 \\ 0 & u_2^\theta & 0 \\ 0 & 0 & u_3^\theta}\colvec{h_1' \\ h_2' \\ h_3'} \nonumber\\
&  \  + \colvec{-h_1u_1^r(\Theta_3,R) + R_1'Du_1^\theta(\Theta_3,R)[h] - (b_1+R_1)Du_1^r(\Theta_3,R)[h]\\ -h_2u_2^r(\Theta_3,R) + R_2'Du_2^\theta(\Theta_3,R)[h] - (b_2+R_2)Du_2^r(\Theta_3,R)[h] \\ -h_3u_3^r(\Theta_3,R) + R_3'Du_3^\theta(\Theta_3,R)[h] - (b_3+R_3)Du_3^r(\Theta_3,R)[h]}\nonumber\\
& \ + Db_3(\Theta_3,R)[h]\colvec{R_1'\partial_{b_3}u^{\theta}_1  - (b_1+R_1)\partial_{b_3}u^{r}_1 \\ R_2'\partial_{b_3}u^{\theta}_2  - (b_2+R_2)\partial_{b_3}u^{r}_2 \\ R_3'\partial_{b_3}u^{\theta}_3  - (b_3+R_3)\partial_{b_3}u^{r}_3 - u^r_3},
\end{align}
where $Db_3(\Theta_3,R)[h]$  is given in \eqref{b_der_R}. We define the projection $P_0:C^{\infty}(\mathbb{T})\mapsto C^{\infty}(\mathbb{T})$ as
\[
P_0 f = f -\frac{1}{2\pi}\int_{\mathbb{T}}f(x)dx,
\]
and the linear maps,
\begin{align}\label{decomp_11}
a(\Theta_3,R)[h]:=\colvec{P_0\left( h_1' u^\theta_1 (\Theta_3,R) \right) \\ 0 \\ 0}, \quad A(\Theta_3,R)[h] := DG(\Theta_3,R)[h] - a(\Theta_3,R)[h].
\end{align}

Now, we start checking the hypotheses. The hypothesis (a) immediately follows from \eqref{trivial_one}, \eqref{stationarR_equation3} and \eqref{stationarR_equation4}.  

\subsubsection{Regularity of the functional $G$}
In this subsection, we use the estimates obtained in Subsection~\ref{velocity_estimates} to prove that the functional $G$ defined in \eqref{def_G2} satisfies the following proposition:
\begin{prop}\label{regularity_checking}
Let $2\le k\in \mathbb{N}$ and $\Theta_3$ and $R$ satisfy \eqref{sizeassumption}. Then, $G:\RR \times (X^{k+1})^{3} \mapsto (Y^k)^3$ is well-defined and \eqref{lineargrowth1}-\eqref{dttDG} hold.
\end{prop}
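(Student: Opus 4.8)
\textbf{Proof proposal for Proposition~\ref{regularity_checking}.}

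The plan is to reduce all of the estimates \eqref{lineargrowth1}--\eqref{dttDG} to the velocity estimates already established in Propositions~\ref{growth_velocity}, \ref{b_derivative} and \ref{higer_derivative_estimates}, together with the structure of $G$ displayed in \eqref{def_G2} and \eqref{derivative_matrix_3}. The key observation is that every component of $G$ is, up to the factor $\frac{1}{4\pi}$, a finite sum of terms of the shape $P(R_j) \cdot Q(u_{i,j})$, where $P(R_j)$ is a polynomial expression in $R_j$, $R_j'$, $b_j$ (with $b_3 = b_3(\Theta_3,R)$ given by \eqref{def_b3}) and $Q(u_{i,j})$ is one of the scalar components $u^\theta_{i,j}$, $u^r_{i,j}$ of the velocity. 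Since $X^{k+1}$ and $Y^k$ are the even/odd Hilbert subspaces of $H^{k+1}(\mathbb{T})$ and $H^k(\mathbb{T})$, membership in these spaces amounts to the corresponding $H^s$-bound plus a parity check; the parity is automatic because the radial vorticity is even and the construction is $m$-fold symmetric, so I will only need to track Sobolev norms. First I would record that $b_3(\Theta_3,R)$ is a smooth function of $(\Theta_3,R)$ bounded away from $0$ and from $b_2$ under \eqref{sizeassumption}, using \eqref{def_b3} and \eqref{positive_b3}, and that all its $R$-derivatives (via \eqref{b_der_R}) and $\Theta_3$-derivatives are bounded by $1 + \|R\|_{(X^{k+1})^3}$ with constants depending only on $k$, $b_1$, $b_2$, $\Theta_1$, $\Theta_2$; this is an elementary computation with the explicit formulas.

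The core of the proof is a product/composition estimate in $H^s(\mathbb{T})$. For \eqref{lineargrowth1} I would apply Proposition~\ref{growth_velocity}(A) to get $\|u_{i,j}\|_{(H^k)^2} \lesssim_k 1 + \|R\|_{(H^{k+1})^3}$, then use the Kato--Ponce product rule (Lemma~\ref{ponce_kato}, quoted in the appendix) to multiply by $R_j'$ (which costs one derivative, hence the $H^{k+1}$ norm of $R$) and by $(b_j + R_j)$, obtaining $\|G(\Theta_3,R)\|_{(Y^k)^3} \lesssim_k 1 + \|R\|_{(X^{k+1})^3}$; crucially the top-order term $R_j' u^\theta_j$ is handled by putting $R_j'$ in $H^k$ and $u^\theta_j$ in $L^\infty \cap H^k$, which is exactly what Proposition~\ref{growth_velocity}(A) provides. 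Estimate \eqref{lineargrowth2} is identical after differentiating \eqref{def_G2} in $\Theta_3$ and invoking the $\partial_{b_1}$-estimate \eqref{growth_b10} from Proposition~\ref{b_derivative} together with the smoothness and boundedness of $\partial_{\Theta_3} b_3$; note one must also differentiate $\Theta_i$ itself, but $\Theta_1, \Theta_2$ are fixed and $\partial_{\Theta_3}\Theta_3 = 1$, so this produces only lower-complexity terms. For \eqref{D2G} and \eqref{dtDG}--\eqref{dttDG}, I would differentiate \eqref{derivative_matrix_3} once more (in $R$ and/or $\Theta_3$) and sort the resulting terms into the schematic form $\partial^{a}_{b_3}D^{b}u_{i,j}(R)[h,\dots,h]$ multiplied by polynomials in $R, R', b$; each such factor is controlled by Proposition~\ref{higer_derivative_estimates} (which gives exactly the $(1 + \|R\|_{H^{k+2}})\|h\|_{H^k}^2$-type bounds, and hence, after the one-derivative loss from the $R_j'$ prefactor, the $(1 + \|R\|_{X^{k+3}})\|h\|_{X^{k+1}}^2$-type bounds claimed), and the chain-rule factors $Db_3, D^2b_3, \partial_{\Theta_3}b_3, \partial_{\Theta_3\Theta_3}b_3$ are bounded as in the first paragraph. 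The bookkeeping is that $\partial_\Theta$ always lands (through the chain rule) on a $\partial_{b_1}$- or $\partial_{b_3}$-derivative of $u_{i,j}$ or on a $b_3$-chain factor, each of which has been estimated.

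The main obstacle is purely organizational rather than conceptual: one must verify that the number of derivatives lost is never more than advertised. The only place a genuine loss occurs is the prefactor $R_j'$ (and, in the second derivatives, the prefactor on $\partial_{b_3}^2 u$ coming from differentiating $b_3$ twice, which through \eqref{b_der_R} introduces $\frac{1}{b_3}$ and products of integrals $\int R_i h_i$ that are harmless in any $H^s$), so I would set up a single "product lemma" schema at the start — any term of the form (poly in $R, R', b$ of weight $w$) times (a velocity factor estimated in Proposition~\ref{growth_velocity}, \ref{b_derivative} or \ref{higer_derivative_estimates} at regularity level $\ell$) obeys the bound $\lesssim_k (1 + \|R\|_{(X^{k+\ell'})^3})\,\|h\|^{\#}$ with $\ell'$ read off from the quoted proposition — and then simply check each of \eqref{lineargrowth1}--\eqref{dttDG} against it. The parity/Fourier-support statement (that the outputs lie in $Y^k$, not merely $H^k$) follows because $u^\theta$ is even and $u^r$ is odd when $R$ is even, so $R_j' u^\theta_j$ and $(b_j + R_j)u^r_j$ are both odd; I would state this once and not belabor it. With the product lemma in hand the proof is a short page of term-by-term verification, which I would present in compressed form, treating the top-order term of each estimate explicitly and noting that the remaining terms are strictly lower order.
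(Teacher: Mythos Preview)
Your proposal is correct and follows essentially the same route as the paper: reduce everything to the velocity estimates in Propositions~\ref{growth_velocity}, \ref{b_derivative}, \ref{higer_derivative_estimates}, combine via the Kato--Ponce product rule (Lemma~\ref{ponce_kato}), and verify the parity/periodicity to land in $(Y^k)^3$. The paper's own proof is in fact much terser than yours---it spells out only \eqref{lineargrowth1} and the $Y^k$-membership (via the stream-function interpretation \eqref{def_G_stream} rather than your component-wise parity check, but to the same effect) and then dismisses \eqref{lineargrowth2}--\eqref{dttDG} as ``straightforward'' from the cited propositions.
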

\begin{proof}
Note that $G_j$ is the tangential derivative of the stream function on the corresponding boundary $D_j$ (see \eqref{def_G_stream}). Thus, $P_0G_j(\Theta,R) = 0$, that is, each $G_j$ has zero mean. If the boundary of the patch $D_i$ is given by by $R_i\in X^{k+1}$, then $\omega(\Theta_3,R)$ is invariant under $\frac{2\pi}{m}$ rotation about the origin and reflection, therefore, $G_j$ is also $\frac{2\pi}{m}$ periodic and odd. Furthermore, it follows from (A) in Proposition~\ref{growth_velocity} and Lemma~\ref{ponce_kato} that
\begin{align*}
\rVert G(\Theta_3,R)\rVert_{(H^{k}(\mathbb{T}))^3} &\lesssim \sum_{i,j=1,2,3} \left( \bigg\rVert u_{i,j}(\Theta_3,R)\cdot \colvec{-\sin x \\ \cos x}R_j'\bigg\rVert_{H^{k}(\mathbb{T})} +  \bigg\rVert u_{i,j}(\Theta_3,R)\cdot \colvec{\cos x \\ \sin x}(b_j+R_j)\bigg\rVert_{H^{k}(\mathbb{T})}\right)\\
&  \lesssim 1+ \rVert R \rVert_{(H^{k+1}(\mathbb{T}))^3},
\end{align*}
which proves \eqref{lineargrowth1} and that $G:\RR \times (X^{k+1})^{3} \mapsto (Y^k)^3$ is well-defined. For \eqref{lineargrowth2} - \eqref{dttDG}, the results follow from (A) and (B) in Proposition~\ref{growth_velocity} and Proposition~\ref{b_derivative} and \ref{higer_derivative_estimates} straightforwardly.
\end{proof}
\subsubsection{The Dirichlet-Neumann operator}
Here, we aim to prove that the linear operator $a(\Theta_3,R)$ in \eqref{decomp_11} satisfies \eqref{approxinverse1}. Thanks to Lemma~\ref{zero mean}, we know that at a stationary solution, the corresponding velocity outside the support of the vorticity must vanish. In the following proposition, we will prove this quantitatively by using the main idea  of \cite{Craig-Schanz-Sulem:modulational-regime-3d-water-waves,Castro-Cordoba-Fefferman-Gancedo-LopezFernandez:rayleigh-taylor-breakdown}.

\begin{prop}\label{a_estimate11}
There exists $\eta>0$ such that if $\rVert R_1 \rVert_{H^{k+2}(\mathbb{T})} \le \eta$, then
 \[
 \| u^\theta_1 (\Theta_3,R) \|_{H^{k}(\mathbb{T})} \lesssim_{k} \|G_1(\Theta_3,R)\|_{H^{k}(\mathbb{T})}.
 \]

\end{prop}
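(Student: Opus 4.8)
\textbf{Proof strategy for Proposition~\ref{a_estimate11}.}
The plan is to exploit the fact that $G_1$ is, by \eqref{def_G_stream}, the tangential derivative of the stream function $\Psi(\Theta_3,R)$ along the outermost boundary $\partial D_1 = z_1(R)(\mathbb{T})$, together with the decomposition of the velocity on that boundary into its tangential and normal (Dirichlet--Neumann) parts. Concretely, on $\partial D_1$ the velocity is $u = \nabla^\perp \Psi$, so the tangential component $u^\theta_1$ is (up to a bounded, invertible weight coming from the conformal factor $|z_1'|$) the \emph{normal} derivative of $\Psi$ restricted to $\partial D_1$, while $G_1$ encodes the tangential derivative of $\Psi|_{\partial D_1}$. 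The key analytic object is therefore the Dirichlet--Neumann operator of the \emph{exterior} domain $D_1^c$: since $\Psi$ is harmonic in the unbounded component $\Omega$ of $\mathrm{supp}(\omega)^c = D_1^c$ (the vorticity is supported in $D_1$) and $\int \omega = 0$, $\Psi$ is bounded at infinity by the argument in Lemma~\ref{zero mean}, so the exterior Dirichlet problem with boundary data $\Psi|_{\partial D_1}$ has a unique bounded solution, namely $\Psi$ itself on $\Omega$. Hence the normal derivative of $\Psi$ from outside, i.e.\ $u^\theta_1$ read off on $\partial D_1$, is recovered by applying the exterior Dirichlet--Neumann operator $\mathcal{D}_{D_1^c}$ to $\Psi|_{\partial D_1}$.

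The steps, in order, would be: (1) Parametrize $\partial D_1$ by $z_1(R)$ and write $g := \Psi(\Theta_3,R)\circ z_1(R) : \mathbb{T}\to\mathbb{R}$; then $G_1 = -g'$ by \eqref{def_G_stream}, so $\|g'\|_{H^k(\mathbb{T})} = \|G_1(\Theta_3,R)\|_{H^k(\mathbb{T})}$ and, since $g'$ has zero mean, $\|g - \bar g\|_{H^{k+1}(\mathbb{T})}\lesssim \|G_1\|_{H^k(\mathbb{T})}$, where $\bar g$ is the (constant) mean of $g$. (2) Observe that $u^\theta_1$ is, up to the smooth nonvanishing factor $|z_1'(R)| = \sqrt{(b_1+R_1)^2 + (R_1')^2}$ and lower-order curvature terms, the trace on $\partial D_1$ of the outward normal derivative of the harmonic extension of $g$ into $D_1^c$; write this as $u^\theta_1 = \mathcal{D}_{D_1^c}[g] + (\text{terms already controlled by } \|G_1\|_{H^k})$, being careful that $\mathcal{D}_{D_1^c}$ kills constants so only $g-\bar g$ matters, i.e.\ $\mathcal{D}_{D_1^c}[g] = \mathcal{D}_{D_1^c}[g-\bar g]$. (3) Invoke the standard tame (elliptic) estimate for the exterior Dirichlet--Neumann operator on a domain whose boundary is a small $H^{k+2}$ perturbation of the circle of radius $b_1$: $\|\mathcal{D}_{D_1^c}[\phi]\|_{H^k(\mathbb{T})}\lesssim_k \|\phi\|_{H^{k+1}(\mathbb{T})}$ once $\|R_1\|_{H^{k+2}}\le\eta$; this is where the smallness hypothesis $\|R_1\|_{H^{k+2}}\le\eta$ enters, to keep $\partial D_1$ a graph over the circle and the operator close to the flat one $|\partial_x|$. (4) Chain the estimates: $\|u^\theta_1\|_{H^k}\lesssim \|\mathcal{D}_{D_1^c}[g-\bar g]\|_{H^k} + (\ldots)\lesssim \|g-\bar g\|_{H^{k+1}} + \|G_1\|_{H^k}\lesssim \|G_1\|_{H^k}$, using step (1). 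One should also double-check the harmonicity claim: $\mathrm{supp}(\omega)\subset D_1$ only holds if the three patches are nested as arranged in \eqref{patch_notation}, which is guaranteed by \eqref{positive_b3} for $\epsilon$ small, so $\Psi$ is genuinely harmonic in $D_1^c$.

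The main obstacle I expect is step (3): making precise and proving the tame bound for the exterior Dirichlet--Neumann operator on the perturbed domain, and in particular tracking that the implied constant depends only on $k$ and $b_1$ (not on higher norms of $R_1$) when $\|R_1\|_{H^{k+2}}$ is small — this is exactly the ``linear in the top derivative'' tame estimate, which typically requires either a paradifferential/pseudodifferential parametrix for $\mathcal{D}_{D_1^c}$, or a direct potential-theoretic representation (single/double layer potentials on $\partial D_1$) combined with commutator estimates of the type already used in Subsection~\ref{velocity_estimates} (Lemmas~\ref{T1estimate}, \ref{GN}, \ref{ponce_kato}, \ref{J_linear}). In fact the cleanest route, consistent with the rest of the paper, is probably to bypass abstract DN-operator theory entirely: write $u_1 = \nabla^\perp\Psi$ on $\partial D_1$ directly via the Biot--Savart integral \eqref{u_def} restricted to $i,j$ with patch $D_1$ the outer layer, decompose $u^\theta_1$ and relate it to $G_1 = -\partial_x(\Psi\circ z_1)$ through an integration-by-parts identity on $\mathbb{T}$, and then the estimate becomes a commutator/Hilbert-transform-type bound of precisely the flavor already established for the velocity. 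Either way, the crux is the elliptic gain of one derivative from $G_1$ to $u^\theta_1$ with a constant independent of the high-regularity norm of $R_1$, which is what the smallness assumption $\|R_1\|_{H^{k+2}}\le\eta$ is there to provide.
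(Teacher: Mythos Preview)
Your proposal is correct and, in fact, your ``cleanest route'' alternative at the end is exactly what the paper does: it applies Green's identity in the exterior domain $D_1^c$ (using the decay of $\Psi$ and $\nabla\Psi$ at infinity, which follows from $\int\omega=0$) to obtain a layer-potential identity $L_{11}=-L_{12}+2L_2$, where $L_{11}$ is the flat log-kernel acting on the normal trace $\nabla\Psi(z_1)\cdot(-z_1'^{\perp})$, $L_{12}$ is the perturbative part of the single layer (small in $H^{k+1}$ by a factor $\|R_1\|_{H^{k+2}}$), and $L_2$ is the double-layer term controlled by the tangential trace $\nabla\Psi(z_1)\cdot z_1'=-G_1$; Lemma~\ref{T1estimate} then gives the lower bound $\|L_{11}\|_{H^{k+1}}\gtrsim\|\nabla\Psi(z_1)\cdot(-z_1'^{\perp})\|_{H^k}$, and the smallness of $\eta$ absorbs $L_{12}$.

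The only difference is emphasis: you frame the argument first through the abstract exterior Dirichlet--Neumann operator and its tame $H^{k+1}\to H^k$ bound, whereas the paper goes straight to the explicit boundary-integral identity and the commutator/Hilbert-transform estimates (Lemmas~\ref{T1estimate}, \ref{J_linear}). Your abstract DN formulation would work equally well, but the concrete layer-potential computation has the advantage that all the required kernel estimates are already established in Subsection~\ref{velocity_estimates} and the Appendix, so no separate DN-operator machinery needs to be imported.
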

\begin{proof}
We adapt the notations in \eqref{vorticity_def}, \eqref{patch_notation} and \eqref{def_G_stream} for $\omega$, $\Psi$ and $D_i$. Clearly we have (we omit the dependence of $\Theta_3$ and $R$ for simplicity)
\begin{align}\label{uandphi}
u^\theta_1(\alpha) = \nabla^{\perp}\Psi(z_1(\alpha))\cdot \colvec{-\sin \alpha \\ \cos \alpha}, \quad G_1(\alpha) = \nabla \Psi(z_1(\alpha))\cdot z_1'(\alpha), \quad \alpha\in \mathbb{T},
\end{align}
where $z_1$ is as given in \eqref{z_2}. Note that $\Psi$ is harmonic in $D_1^c$. In addition, it follows from \eqref{Theta b relation} that $\int_{\RR^2}\omega(y) dy = 0$, hence, for $x\in \RR^2$,
\begin{align*}
&|\Psi(x) | \lesssim \bigg| \int_{\RR^2} \omega(y) \log\frac{|x-y|}{|x|}dy \bigg| = O\left(\frac{1}{|x|}\right), \\
&| \nabla \Psi (x) | \lesssim \bigg| \int_{\RR^2} \omega(y) \left( \frac{(x-y)}{|x-y|^2}- \frac{x}{|x|^2} \right) dy\bigg| = O\left(\frac{1}{|x|^2}\right).
\end{align*}
With the above decay rates, we can use integration by parts to obtain that for  $x\in \partial D_1$, 
\begin{align}
 0 & = \int_{D_1^c} \log|x-y| \Delta \Psi(y)dy \nonumber\\
 & = -\frac{1}{2}\int_{\partial D_1} \log|x-y|^2 \nabla \Psi(y)\cdot \vec{n}(y)d\sigma(y) +\int_{\partial D_1}\nabla_y(\log|x-y|)\cdot\vec{n}(y)(\Psi(y)-\Psi(x))d\sigma(y)\nonumber\\
 & =:- \frac{1}{2}L_1 + L_2,\label{equation11}
\end{align}
where $\vec{n}$ denotes the outer normal vector on $\partial D_1$. By the change of variables, $x\mapsto z_1(\alpha)$, and $y\mapsto z_1(\beta)$,  we obtain
\begin{align*}
L_1 &=\int_\mathbb{T} \log |z_1(\alpha)-z_1(\beta)|^2 \nabla \Psi(z_1(\beta))\cdot (-z'_1(\beta)^\perp)d\beta \\
& = \int_{\mathbb{T}} \log (2-2\cos(\alpha-\beta)) \nabla\Psi(z_1(\beta))\cdot (-z'_1(\beta)^\perp)d\beta \\
& \  + \int_\mathbb{T} \log\left(\frac{|z_1(\alpha)-z_1(\beta)|^2}{2-2\cos(\alpha-\beta)} \right) \nabla \Psi(z_1(\beta))\cdot (-z'_1(\beta)^\perp)d\beta\\
& =: L_{11} + L_{12}.
\end{align*}
Similarly, we have
\begin{align*}
L_2 = \int_\mathbb{T} \frac{(z_1(\alpha)-z_1(\beta))\cdot z_1'(\beta)^\perp}{|z_1(\alpha)-z_1(\beta)|^2}(\Psi(z_1(\beta)) - \Psi(z_1(\alpha)) d\beta.
\end{align*}
Hence, we obtain
\begin{align}\label{lidentity}
L_{11} = -L_{12} + 2 L_2.
\end{align}
We claim that 
\begin{align}
&\| L_{11} \|_{H^{k+1}(\mathbb{T})} \gtrsim \| \nabla\Psi(z_1(\cdot))\cdot (-z'_1(\cdot)^\perp)\|_{H^{k}(\mathbb{T})} \label{Lestimate1} \\
& \| L_{12} \|_{H^{k+1}(\mathbb{T})} \lesssim \| R_1 \|_{H^{k+2}(\mathbb{T})} \| \nabla\Psi(z_1(\cdot))\cdot (-z'_1(\cdot)^\perp)\|_{L^2(\mathbb{T})} \label{Lestimate2} \\
& \|L_2\|_{H^{k+1}(\mathbb{T})} \lesssim \| \nabla\Psi(z_1(\cdot))\cdot (z'_1(\cdot))\|_{H^k(\mathbb{T})}. \label{Lestimate3}
\end{align}
Let us assume the above claims for a moment.  Then it follows from the claims, \eqref{sizeassumption} and \eqref{lidentity} that for sufficiently small $\eta>0$ in the hypothesis of the proposition, we have
\[
\| \nabla\Psi(z_1(\cdot))\cdot (-z'_1(\cdot)^\perp)\|_{H^k(\mathbb{T})}  \lesssim \| \nabla\Psi(z_1(\cdot))\cdot (z'_1(\cdot))\|_{H^k(\mathbb{T})}.
\]
With this inequality, we can obtain
\begin{align*}
\| \nabla \Psi(z_1(\cdot)) \|_{H^{k}(\mathbb{T})}  & = \bigg\rVert \nabla \Psi(z_1(\cdot))\cdot \frac{z'_1(\cdot)^\perp}{|z_1'(\cdot)|} \bigg\lVert_{H^{k}(\mathbb{T})} +  \bigg\rVert \nabla \Psi(z_1(\cdot))\cdot \frac{z'_1(\cdot)}{|z_1'(\cdot)|}  \bigg\rVert_{H^{k}(\mathbb{T})}\\
& \lesssim \rVert z_1'\rVert_{H^{k}(\mathbb{T})}\left(  \| \nabla \Psi(z_1(\cdot))\cdot {z'_1(\cdot)^\perp} \|_{H^{k}(\mathbb{T})} +  \| \nabla \Psi(z_1(\cdot))\cdot {z'_1(\cdot)} \|_{H^{k}(\mathbb{T})}\right) \\
& \lesssim   \| \nabla \Psi(z_1(\cdot))\cdot {z'_1(\cdot)} \|_{H^{k}(\mathbb{T})},
\end{align*}
where the first  inequalities follows from $\frac{1}{c} < |z_1'| < c$ for some $c>0$ and the second inequality follows from $\rVert z_1' \rVert_{H^{k}(\mathbb{T})}\lesssim \rVert R_1 \rVert_{H^{k+1}(\mathbb{T})}\le \eta$ for some small $\eta$. under the assumption~\eqref{sizeassumption}. Finally, recalling \eqref{uandphi}, we obtain the desired result.

 To finish the proof, we need to prove the claims \eqref{Lestimate1}-\eqref{Lestimate3}. \eqref{Lestimate1} follows from Lemma~\ref{T1estimate}. To see \eqref{Lestimate2}, we observe that
 \begin{align*}
 \log\left(\frac{|z_1(\alpha)-z_1(\beta)|^2}{2-2\cos(\alpha-\beta)} \right) = \log(1+\mathcal{K}(\alpha,\beta)),
 \end{align*}
 where $\mathcal{K}(\alpha,\beta):={2(R_1(\alpha) + R_1(\beta))} + R_1(\alpha)R_1(\beta) + \left( \frac{R_1(\alpha)-R_1(\beta)}{2\sin(\frac{\alpha-\beta}{2})} \right)^2$. Thus, it is straightforward  that (thanks to \eqref{sizeassumption}, $1+\mathcal{K} \ge c>0$ for some $c>0$)
 \[
\| \log(1+\mathcal{K}) \|_{H^{k+1}(\mathbb{T}^2)} \lesssim \| \mathcal{K} \|_{H^{k+1}(\mathbb{T}^2)}  \lesssim \| R_1 \|_{H^{k+2}(\mathbb{T})},
 \]
 where the last inequality follows from Lemma~\ref{ponce_kato} and Lemma~\ref{appendix_lem_1}. Thus, 
 \begin{align*}
 \| L_{12} \|_{H^{k+1}(\mathbb{T})} &\lesssim  \bigg\rVert \log\left(\frac{|z_1(\alpha)-z_1(\beta)|^2}{2-2\cos(\alpha-\beta)} \right)  \bigg\rVert_{H^{k+1}(\mathbb{T}^2)}  \| \nabla\Psi(z_1(\cdot))\cdot (-z'_1(\cdot)^\perp)\|_{L^2(\mathbb{T})} \\
 & \lesssim \| R_1 \|_{H^{k+2}(\mathbb{T}^2)} \| \nabla\Psi(z_1(\cdot))\cdot (-z'_1(\cdot)^\perp)\|_{L^2(\mathbb{T})},
 \end{align*}
 which yields \eqref{Lestimate2}. Lastly, in order to show \eqref{Lestimate3},  we rewrite $L_2$ as
 \[
 L_2 = \int_{\mathbb{T}}g(\alpha,\beta)\frac{((\Psi(z_1(\beta))-M_\Psi) - (\Psi(z_1(\alpha))-M_\Psi)}{2\sin(\frac{\alpha-\beta}{2})} d\beta,
 \]
 where 
 \[
 g(\alpha,\beta) = \frac{(z_1(\alpha)-z_1(\beta))\cdot z_1'(\beta)^\perp}{2\sin(\frac{\alpha-\beta}{2})} \cdot \frac{2-2\cos(\alpha-\beta)}{|z_1(\alpha)-z_1(\beta)|^2}, \quad M_\Psi:=\frac{1}{2\pi}\int_{\mathbb{T}}\Psi(z_1(\alpha))d\alpha.
\]
From Lemma~\ref{appendix_lem_1}, and \ref{ponce_kato}, we have 
\[
\| g \|_{H^{k+1}(\mathbb{T}^2)} \lesssim \| R_1 \|_{H^{k+2}(\mathbb{T})}.
\]
Therefore, it follows from Lemma~\ref{J_linear} that
\[
\| L_2 \|_{H^{k+1}(\mathbb{T})} \lesssim \|(\Psi(z_1(R,\cdot))-M_\Psi) \|_{H^{k+1}(\mathbb{T})} \lesssim \| \nabla\Psi(z_1(R,\cdot))\cdot (z'_1(R,\cdot))\|_{H^k(\mathbb{T})},
\]
where the last inequality follows from Poincar\'e inequality. Hence \eqref{Lestimate3} follows.  
\end{proof}

\subsubsection{Estimates on the linearized operator}
Our goal here is to prove that $G$ satisfies the hypotheses $(c)$, $(\tilde{c}-1)$ and $(\tilde{c}-2)$ in Theorem~\ref{theorem1}. More precisely, we will prove the following proposition:

\begin{prop}\label{linear_prop}
Let $a(\Theta_3,R)$ and $A(\Theta_3,R)$ be as in \eqref{decomp_11}. Then 
\begin{enumerate}
\item[\rom{1})] $a(\Theta_3,R)\in \mathcal{L}((X^{k+1})^3,(Y^{k})^3)$, $A(\Theta_3,R) \in \mathcal{L}((X^{k+1})^3,Y^{k+1}\times (Y^{k})^2)$ with the estimates \eqref{approxinverse1} and \eqref{approxinverse2}.
\item[\rom{2})]  For $(\Theta_3^1,R^1),\ (\Theta_3^2,R^2)\in I\times (V^3\cap (C^\infty)^3),$ \eqref{NM_lip} holds.
\item[\rom{3})]  For any even $\sigma \in \mathbb{N}\cup \left\{ 0 \right\}$, there exists $0<\eta<1$ such that if $\rVert R \rVert_{(H^{k+3}(\mathbb{T}))^3} \le \eta$, and $A(\Theta_3,R)[h] =z$ for some  $z\in (C^\infty)^3$ and $h \in \text{Ker}(A(\Theta^*_3,0))^{\perp}$, then \eqref{tame1} holds.
\end{enumerate}
\end{prop}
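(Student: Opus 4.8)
# Proof Proposal for Proposition~\ref{linear_prop}

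\textbf{Overall approach.} The three claims all reduce to estimates on the velocity components $u^\theta_{i,j}$, $u^r_{i,j}$ and their Gateaux derivatives $Du^\theta_{i,j}$, $Du^r_{i,j}$, together with the $b_3$-derivative estimates, which are precisely what was assembled in Subsection~\ref{velocity_estimates} (Propositions~\ref{growth_velocity}, \ref{b_derivative}, \ref{higer_derivative_estimates}) and in Proposition~\ref{a_estimate11}. The plan is to unwind the definitions \eqref{decomp_11}, \eqref{derivative_matrix_3}, and read off which terms land in which space.

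\textbf{Step 1: Claim I), the estimate on $a$.} By definition $a(\Theta_3,R)[h]=(P_0(h_1'u^\theta_1(\Theta_3,R)),0,0)$. Since $P_0$ is bounded on $H^k(\mathbb{T})$ and $h_1'u^\theta_1$ is a product, one has $|a(\Theta_3,R)[h]|_{(Y^k)^3}\lesssim_k \|u^\theta_1(\Theta_3,R)\|_{H^k}\,|h_1'|_{H^k}\lesssim_k \|u^\theta_1(\Theta_3,R)\|_{H^k}\,|h|_{(X^{k+1})^3}$, using the Banach-algebra/Kato--Ponce estimate (Lemma~\ref{ponce_kato}) in $H^k$. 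Now invoke Proposition~\ref{a_estimate11}: for $\|R_1\|_{H^{k+2}}\le\eta$ (which is implied by $\|R\|_{(H^{k+2})^3}\le\eta$), $\|u^\theta_1(\Theta_3,R)\|_{H^k}\lesssim_k \|G_1(\Theta_3,R)\|_{H^k}\le |G(\Theta_3,R)|_{(Y^k)^3}$. Combining gives \eqref{approxinverse1}. The fact that $a(\Theta_3,R)$ is a bounded linear map $(X^{k+1})^3\to(Y^k)^3$ is contained in this same chain of inequalities (linearity in $h$ is manifest).

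\textbf{Step 2: Claim I), the estimate on $A$.} Write $A=DG-a$ using \eqref{derivative_matrix_3}. The crucial point is the first-row, first-column term of $DG$, namely $u^\theta_1 h_1'$: subtracting $a$ replaces it by $\frac{1}{2\pi}\int_{\mathbb{T}} u^\theta_1 h_1'\,dx$, i.e.\ the projection onto constants — wait, more precisely $A$'s first component is $u^\theta_1 h_1' - P_0(u^\theta_1 h_1') + (\text{lower-order terms from the 2nd and 3rd blocks of }\eqref{derivative_matrix_3})$, so the genuinely top-order-in-$h$ piece in the first slot is the mean $\frac{1}{2\pi}\int u^\theta_1 h_1'$, which is a scalar and hence lies in $Y^{k+1}$ trivially with the bound $\lesssim \|u^\theta_1\|_{H^k}|h|_{(X^{k+1})^3}\lesssim (1+|R|_{(X^{k+3})^3})|h|_{(X^{k+1})^3}$ by Proposition~\ref{growth_velocity}(A). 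All remaining terms in \eqref{derivative_matrix_3} are of the schematic form (velocity or $b_3$-derivative of velocity)$\times$(function), or (velocity)$\times Du^{\bullet}[h]$, or $Db_3[h]\times(\dots)$; each is estimated by Proposition~\ref{growth_velocity}(A)--(B), Proposition~\ref{b_derivative}, Proposition~\ref{higer_derivative_estimates}, \eqref{b_der_R} and Lemma~\ref{ponce_kato}, all landing in $H^k$ (hence in $Y^k$, and the first component in $Y^{k+1}$ since the only $H^{k+1}$-dangerous term, $u^\theta_1 h_1'$, has been removed). This gives \eqref{approxinverse2}. I would present this as a termwise table rather than grinding each product.

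\textbf{Step 3: Claim II), Lipschitz continuity of $A$.} Since $A=DG-a$ and the $a$-term only affects the first component through a projection, the Lipschitz bound \eqref{NM_lip} follows termwise from the Lipschitz estimates already proved: Proposition~\ref{growth_velocity}(C) for $Du_{i,j}$, \eqref{lip_b10} for $\partial_{b_1}u_{i,j}$, the analogous Lipschitz bounds bundled in Proposition~\ref{higer_derivative_estimates} (for $\partial_{b_3}D^2 u$, $\partial_{b_1}Du$, etc.), together with the fact that $(\Theta_3,R)\mapsto b_3(\Theta_3,R)$ and $\mapsto Db_3(\Theta_3,R)[\cdot]$ are Lipschitz on the relevant ball (direct from \eqref{def_b3}, \eqref{b_der_R} since $b_3$ is bounded below by \eqref{positive_b3}). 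One writes each difference $A(\Theta_3^1,R^1)[h]-A(\Theta_3^2,R^2)[h]$ as a telescoping sum over the terms of \eqref{derivative_matrix_3}, adds and subtracts mixed arguments, and applies the above. The loss is controlled at level $(X^{k+3})^3$, matching the statement.

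\textbf{Step 4: Claim III), the tame estimate \eqref{tame1}.} This is the one requiring real work. Given $A(\Theta_3,R)[h]=z$ with $h\perp\mathrm{Ker}(A(\Theta_3^*,0))$, I would proceed as in Proposition~\ref{a_estimate11}'s spirit combined with the ``smoothing under the integral sign'' decomposition of Lemma~\ref{BCcondition}(D). The idea: the top-order part of $A(\Theta_3,R)$ in $h$ consists of the diagonal multiplication $h_j'\mapsto u^\theta_j h_j'$ (for $j=2,3$, and the projected version for $j=1$), which is an elliptic-of-order-zero invertible operator because $u^\theta_j$ is bounded away from $0$ near the trivial solution (this is where the non-vanishing angular velocity on the inner layers, guaranteed by \eqref{parameter_3} via Lemma~\ref{kernel}, enters). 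So from $A(\Theta_3,R)[h]=z$ one solves for $h_j'$ up to lower-order terms; differentiating the equation $\sigma$ times and using the commutator structure $(\frac{d}{dx})^\sigma(Du_{i,j}(R)[h])=Du_{i,j}(R)[h^{(\sigma)}]+T^\sigma_{i,j}(R)[h]$ from Proposition~\ref{growth_velocity}(D) / Lemma~\ref{BCcondition}, one obtains an equation for $h^{(\sigma)}$ of the same type with right-hand side $z^{(\sigma)}$ plus tame remainders controlled by $(1+|R|_{(X^{k+4+\sigma})^3})|z|_{Y^{k+1}\times(Y^k)^2}$. An induction on $\sigma$ (even $\sigma$, as remarked) then closes the estimate. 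The smoothness bootstrap ($z\in(C^\infty)^3\Rightarrow h\in(C^\infty)^3$) follows from the same inversion iterated over all $\sigma$.

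\textbf{Main obstacle.} The hard part is Step 4, specifically making rigorous that the ``lower-order'' terms in $A(\Theta_3,R)$ are genuinely lower order \emph{in the tame sense} — i.e.\ that whenever a high derivative falls on $R$ rather than on $h$, the cost is exactly the linear-in-$|R|_{(X^{k+4+\sigma})^3}$ loss claimed, with $h$ only appearing at low norm. This is precisely the content of the splitting $(\frac{d}{dx})^\sigma Du = Du[h^{(\sigma)}] + T^\sigma$ with the tame bound \eqref{tame4} on $T^\sigma$, so the obstacle is essentially bookkeeping to verify that \emph{every} term of \eqref{derivative_matrix_3} admits such a splitting with the right budget (including the $Db_3[h]$-prefactor terms, where one also needs a tame bound for $Db_3$ itself, immediate from \eqref{b_der_R}), and then feeding it into the inversion of the zeroth-order diagonal part. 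Once the termwise splittings are in hand the induction is routine, so I would organize the proof as: (i) record the invertibility of the diagonal symbol, (ii) record the tame splitting of each term via Proposition~\ref{growth_velocity}(D) and its $b$-analogues, (iii) run the induction on $\sigma$.
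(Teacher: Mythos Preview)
Your plan for parts \rom{1}) and \rom{2}) is essentially what the paper does: the $a$--estimate comes straight from Proposition~\ref{a_estimate11}, and the $A$--estimate and its Lipschitz version are read off termwise from \eqref{derivative_matrix_3} using Propositions~\ref{growth_velocity}(A)--(C), \ref{b_derivative}, \ref{higer_derivative_estimates}. (One cosmetic slip in Step~2: the leftover constant $\tfrac{1}{2\pi}\int u^\theta_1 h_1'$ does \emph{not} lie in $Y^{k+1}$, which contains only zero-mean functions; the correct reason $A_1\in Y^{k+1}$ is that the remaining ``other'' terms are in $H^{k+1}$ by Proposition~\ref{growth_velocity}(B), and $A_1=DG_1[h]-a_1[h]$ inherits zero mean, oddness and $\tfrac{2\pi}{m}$-periodicity from $DG_1$ and $a_1$, so the constant is in fact absorbed.)

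For part \rom{3}) you have the right high-level mechanism---commute $\bigl(\tfrac{d}{dx}\bigr)^\sigma$ through $A$ using Proposition~\ref{growth_velocity}(D) to get $A[h^{(\sigma)}]=z^{(\sigma)}-L^\sigma(R)[h]$ with tame $L^\sigma$, then invert---and this matches the paper's scheme via \eqref{claim_Tsigma}--\eqref{claim_Tsigma2}. But the inversion step you describe (``solve for $h_j'$ from the elliptic diagonal symbol $u^\theta_j$'') does not go through as stated: after subtracting $a$, the first component of $A$ has \emph{no} first-order symbol at all---what remains there is purely zeroth order (the $Du^{\bullet}_1[h]$ terms plus a rank-one piece), so there is nothing pointwise-elliptic to invert in the first slot. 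The paper instead inverts $A(\Theta_3,R)$ as a whole on $\mathrm{Ker}(A(\Theta_3^*,0))^\perp$ by invoking the abstract Fredholm stability Lemma~\ref{functional_stability}, which applies because the Lipschitz bound \eqref{NM_lip} holds and $A(\Theta_3^*,0)$ is Fredholm of index zero with one-dimensional kernel (Proposition~\ref{Fredholm}); this yields \eqref{invert_11} and hence \eqref{low_invert} uniformly. The second difference is in how the loop is closed: rather than induct on $\sigma$, the paper bounds $L^\sigma(R)[h]$ by $\|h\|_{(H^{k+\sigma})^3}+(1+\|R\|_{(H^{k+4+\sigma})^3})\|h\|_{(H^{k+1})^3}$, interpolates $\|h\|_{(H^{k+\sigma})^3}$ between $\|h\|_{(H^{k+1})^3}$ (already controlled by \eqref{low_invert}) and $\|h^{(\sigma)}\|_{(H^{k+1})^3}$ via Lemma~\ref{GNinterpolation}, and absorbs the latter with a small parameter and Young's inequality. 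Your induction could also be made to work, but you would need the Fredholm-stability inversion at each step, not a symbol inversion.
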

\begin{proof}
\textbf{Proof of \rom{1}).} By definition of $a(\Theta_3,R)$ in \eqref{decomp_11}, we have $\int_{\mathbb{T}}a(\Theta_3,R)[h](x)dx = 0 $. Furthermore, clearly, $h_1'$ is a $\frac{2\pi}{m}$-periodic function and odd. Using the  invariance under rotation/reflection of the stream function, it follows straightforwardly that $u^\theta_1$, which is the radial derivative of the stream function on the outmost boundary, is also $\frac{2\pi}{m}$-periodic and even. Therefore, $a(\Theta_3,R)[h]$ is $\frac{2\pi}{m}$ periodic and odd. \eqref{approxinverse1} follows immediately from \eqref{a_estimate11}, and $a(\Theta_3,R)\in \mathcal{L}((X^{k+1})^3,(Y^{k})^3)$. Similarly, $A(\Theta_3,R) \in \mathcal{L}((X^{k+1})^3,Y^{k+1}\times (Y^{k})^2)$ and \eqref{approxinverse2} follows from (B) in Proposition~\ref{growth_velocity}.

\textbf{Proof of \rom{2}).} Thanks to (C) in Proposition~\ref{growth_velocity} and Proposition~\ref{b_derivative}, each term in $A(\Theta_3,R)$ is Lipschitz continuous with respect to $R\in (H^{k+3}(\mathbb{T})^3)$. Furthermore, $A(\theta_3,R)$ and $b_3$ depend on $\Theta_3$ smoothly, therefore the result follows immediately.

\textbf{Proof of \rom{3}).} In order to prove $\rom{3}$, we first claim that for each even $\sigma \in \mathbb{N}\cup \left\{ 0 \right\}$, there exist $\eta$ and  a linear map $T(R):(C^\infty(\mathbb{T}))^3 \mapsto (C^{\infty}(\mathbb{T}))^3$ such that if $\rVert R \rVert_{(H^{k+3}(\mathbb{T}))^3}  \le \eta$, then 
\begin{align}
& \left(\frac{d}{dx}\right)^\sigma A(\Theta_3,R)[h] = A(\Theta_3,R)[h^{(\sigma)}] + L^\sigma(R)[h], \label{claim_Tsigma}\\
&\rVert L^\sigma(R)[h]\rVert_{H^{k+1}(\mathbb{T}) \times (H^{k}(\mathbb{T}))^2} \lesssim_{k,\sigma} \rVert h \rVert_{(H^{k+\sigma}(\mathbb{T}))^3} + (1 + \rVert R \rVert_{(H^{k+4+\sigma}(\mathbb{T}))^3})\rVert h \rVert_{(H^{k+1}(\mathbb{T}))^3}.\label{claim_Tsigma2}
\end{align}
Let us assume that the above claims are true for a moment and let us suppose  
\[
A(\Theta_3,R)[h] = z, \quad \text{ for some }z\in (C^\infty(\mathbb{T}))^3,\ h\in \text{Ker}(A(\Theta_3^*,0))^{\perp}\subset (H^{k+1}(\mathbb{T}))^3.
\]
From \eqref{sizeassumption}, $\rom{2})$, Lemma~\ref{functional_stability} and the assumption that $\rVert R \rVert_{(H^{k+3}(\mathbb{T}))^3}\le \eta$ for some small enough $\eta$, we have that $A(\Theta_3,R):\text{Ker}(A(\Theta_3^*,0))^\perp\subset ((H^{k+1}(\mathbb{T}))^3) \mapsto \text{Im}(A(\Theta,R))\subset H^{k+1}(\mathbb{T})\times (H^k(\mathbb{T}))^2$ is invertible and 
\begin{align}\label{invert_11}
\rVert A(\Theta_3,R)^{-1} \rVert_{\mathcal{L}(\text{Im}(A(\Theta_3,R)),\text{Ker}(A(\Theta_3^*,0))^\perp)} \lesssim1.
\end{align}
Therefore, we have
\begin{align}\label{low_invert}
 \rVert h \rVert_{(H^{k+1}(\mathbb{T}))^3} \lesssim \rVert z \rVert_{H^{k+1}(\mathbb{T})\times (H^{k}(\mathbb{T}))^2}.
\end{align}
Now for each even $\sigma\in \mathbb{N}\left\{ 0 \right\}$, it follows from \eqref{claim_Tsigma}  that
\begin{align*}
\left(\frac{d}{dx}\right)^\sigma A(\Theta_3,R)[h] = A(\Theta_3,R)[h^{(\sigma)}] + L^\sigma(R)[h] = z^{(\sigma)}.
\end{align*}
Thanks to Proposition~\ref{Fredholm}, we have $h^{(\sigma)}\in \text{Ker}(A(\Theta_3^*,0))^\perp$, thus, it follows from \eqref{invert_11} that
\[
h^{(\sigma)} = A(\Theta_3,R)^{-1}[z^{(\sigma)}- L^\sigma(R)[h]],
\]
and
\begin{align}\label{h_high_estimate}
\rVert h^{(\sigma)}\rVert_{(H^{k+1}(\mathbb{T}))^3} &\lesssim \rVert z^{(\sigma)}  - L^\sigma(R)[h] \rVert_{H^{k+1}(\mathbb{T})\times (H^{k}(\mathbb{T}))^2} \nonumber\\
& \lesssim \rVert z \rVert_{H^{k+1+\sigma}(\mathbb{T}) \times (H^{k+\sigma}(\mathbb{T}))^2} +  \rVert L^\sigma(R)[h] \rVert_{H^{k+1}(\mathbb{T}) \times (H^{k}(\mathbb{T}))^2}.
\end{align}
From \eqref{claim_Tsigma2}, we also have
\begin{align}\label{Lsigma_estimate}
\rVert L^\sigma(R)[h] \rVert_{H^{k+1}(\mathbb{T}) \times (H^{k}(\mathbb{T}))^2}  & \lesssim \rVert h \rVert_{(H^{k+\sigma}(\mathbb{T}))^3} + (1 + \rVert R \rVert_{(H^{k+4+\sigma}(\mathbb{T}))^3})\rVert h \rVert_{(H^{k+1}(\mathbb{T}))^3}.
\end{align}
Using Lemma~\ref{GNinterpolation}, we have 
\begin{align*}
\rVert h \rVert_{(H^{k+\sigma}(\mathbb{T}))^3}  & \lesssim \rVert h \rVert_{(H^{k+1}(\mathbb{T}))^3} + \bigg\rVert\left( h^{(k+1)}\right)^{(\sigma-1)} \bigg\rVert_{(L^2(\mathbb{T}))^3}\\
& \lesssim \rVert h \rVert_{(H^{k+1}(\mathbb{T}))^3}  + \left(\rVert  h^{(k+1)} \rVert_{(L^2(\mathbb{T}))^3}\right)^{\frac{1}\sigma} \bigg\rVert \left(h^{(k+1)}\right)^{(\sigma)} \bigg\rVert_{(L^2{(\mathbb{T}))^3}}^{\frac{\sigma-1}{\sigma}} \\
& \lesssim  (1+C(\epsilon))\rVert  \rVert h \rVert_{(H^{k+1}(\mathbb{T}))^3} + \epsilon \rVert h^{(\sigma)} \rVert_{(H^{k+1}(\mathbb{T}))^3},
\end{align*}
for any $\epsilon>0$, which follows from Young's inequality. Plugging this into \eqref{Lsigma_estimate} and using \eqref{low_invert}, we obtain
\[
\rVert L^\sigma(R)[h] \rVert_{H^{k+1}(\mathbb{T}) \times (H^{k}(\mathbb{T}))^2}  \lesssim (C(\epsilon) +  \rVert R \rVert_{(H^{k+4+\sigma}(\mathbb{T}))^3}) \rVert z \rVert_{H^{k+1}(\mathbb{T})\times (H^{k}(\mathbb{T}))^2} + \epsilon \rVert h^{(\sigma)} \rVert_{(H^{k+1}(\mathbb{T}))^3}
\]
Hence we choose sufficiently small $\epsilon$ depending on $k$ and $\sigma$ so that \eqref{h_high_estimate} yields that
\[
\rVert h^{(\sigma)}\rVert_{(H^{k+1}(\mathbb{T}))^3}  \lesssim \rVert z \rVert_{H^{k+1+\sigma}(\mathbb{T}) \times (H^{k+\sigma}(\mathbb{T}))^2} +  (1 +  \rVert R \rVert_{(H^{k+4+\sigma}(\mathbb{T}))^3}) \rVert z \rVert_{H^{k+1}(\mathbb{T})\times (H^{k}(\mathbb{T}))^2}.
\]
With this estimate and \eqref{low_invert}, \eqref{tame1} follows.

 In order to finish the proof, we need to prove the claim \eqref{claim_Tsigma} and \eqref{claim_Tsigma2}, we note that each component of $A(\Theta_3,R)[h]$ consists of linear combination of the following forms (see \eqref{b_der_R}, \eqref{derivative_matrix_3} and \eqref{decomp_11}):
\begin{align}
&u^\theta_i(R)h_i', \text{ for }i=2,3,\nonumber\\
&h_i u_i^r(\Theta_3,R),\ R_i'Du^\theta_i(\Theta_3,R)[h], \ (b_i+R_i)Du_1^r(\Theta_3,R)[h],  \ \text{ for }i=1,2,3.\label{terms1}\\
& \int_{\mathbb{T}}R_i(x)h_i(x)dx\left(R_i'\partial_{b_3}u^\theta_i - (b_i+R_i)\partial_{b_3}u_i^r \right), \ \text{ for i=1,2}\label{terms2}\\
& \int_{\mathbb{T}}R_3(x)h_3(x)dx\left(R_3'\partial_{b_3}u^\theta_3 - (b_3+R_3)\partial_{b_3}u_3^r-u_3^r \right).\label{terms3}
\end{align}
For $i=2,3$, it is clear that 
\begin{align}\label{L_1}
\left(\frac{d}{dx}\right)^{\sigma}\left( u^{\theta}_i(R)h'_i\right) = u^\theta_ih^{(\sigma+1)}_i + \sum_{p+q=\sigma,\ q\le \sigma-1} C_{p,q,\sigma}\underbrace{\left( u^\theta_i(R)\right)^{(p)}h_i^{(q+1)}}_{L^\sigma_1(R)[h_i]},
\end{align}
for some $C_{p,q,\sigma}$. From Lemma~\ref{pq_derivatives}, we have
\begin{align}
\rVert L^\sigma_1(R)[h_i]\rVert_{H^{k}(\mathbb{T})} &\lesssim \rVert u_i^{\theta}(R)\rVert_{H^2(\mathbb{T})}\rVert h_i\rVert_{(H^{k+\sigma}(\mathbb{T}))^3} + \rVert u_i^{\theta}(R) \rVert_{H^{k+\sigma}(\mathbb{T})}\rVert h_i \rVert_{(H^1(\mathbb{T}))^3}\nonumber\\
& \lesssim \rVert h_i \rVert_{H^{k+\sigma}(\mathbb{T})} + (1 + \rVert R \rVert_{(H^{k+1+\sigma}(\mathbb{T}))^3})\rVert h_i \rVert_{H^1(\mathbb{T})},\label{L_2}
\end{align}
where the second inequality follows from Proposition~\ref{growth_velocity}.
For the other terms, we only deal with $R_1'Du_1^{\theta}(\Theta_3,R)[h]$, since the other terms can be dealt with in the same way. Thus, we will show that there exists a linear operator $L_2^\sigma(R)[h]$ such that
\begin{align}
& \left(\frac{d}{dx}\right)^\sigma \left(R_1'Du_1^{\theta}(\Theta_3,R)[h] \right) = R_1'Du_1^{\theta}(\Theta_3,R)[h^{(\sigma)}] + L_2^\sigma(R)[h], \label{claim_Tsigma3}\\
&\rVert L_2^\sigma(R)[h]\rVert_{H^{k+1}(\mathbb{T})} \lesssim_{k,\sigma} \rVert h\rVert_{(H^{k+\sigma}(\mathbb{T}))^{3}} + \rVert R \rVert_{(H^{k+4+\sigma}(\mathbb{T}))^3}\rVert h \rVert_{(H^3(\mathbb{T}))^3}. \label{claim_Tsigma4}
\end{align}
It follows from (D) in Proposition~\ref{growth_velocity} that there exists $T^\sigma(R)[h]$ such that 
\begin{align*}
\left(\frac{d}{dx}\right)^{\sigma} \left(R_1'Du_1^{\theta}(\Theta_3,R)[h] \right) & = R_1' \left(Du_1^{\theta}(\Theta_3,R)[h]\right)^{(\sigma)} + \sum_{p+q=\sigma,\ q\le \sigma-1}(R_1')^{(p)}\left(Du_1^{\theta}(\Theta_3,R)[h]\right)^{(q)}\\
& = R_1'Du_1^{\theta}(\Theta_3,R)[h^{(\sigma)}] +\underbrace{ R_1'T^\sigma(R)[h] + \sum_{p+q=\sigma,\ q\le \sigma-1}C_{p,q,\sigma}(R_1')^{(p)}\left(Du_1^{\theta}(\Theta_3,R)[h]\right)^{(q)}}_{=:L^\sigma_2(R)[h]},
\end{align*}
and
\[
\rVert T^\sigma(R)[h]\rVert_{H^{k+1}(\mathbb{T})} \lesssim (1+\rVert R \rVert_{(H^4(\mathbb{T}))^3})\rVert h^{(k+\sigma)} \rVert_{(L^2(\mathbb{T}))^3} + (1+ \rVert R \rVert_{(H^{k+4+\sigma}(\mathbb{T}))^3})\rVert h\rVert_{(L^2(\mathbb{T}))^3} .
\]

Therefore, from (B) in Proposition~\ref{growth_velocity} and Lemma~\ref{pq_derivatives}, it follows that
\begin{align}\label{rprime}
\rVert R_1' T^{\sigma}(R)[h]\rVert_{H^{k+1}(\mathbb{T})} &\lesssim \rVert R_1' \rVert_{H^{k+1}(\mathbb{T})} \rVert T^{\sigma}(R)[h]\rVert_{H^{k+1}(\mathbb{T})} \nonumber\\
& \lesssim \rVert h^{(k+\sigma)} \rVert_{(L^2(\mathbb{T}))^3} + (1+ \rVert R \rVert_{(H^{k+4+\sigma}(\mathbb{T}))^3})\rVert h\rVert_{(L^2(\mathbb{T}))^3},
\end{align}
where we used that $H^{k+1}(\mathbb{T})$ is a Banach algebra and that $\rVert R \rVert_{(H^{4}(\mathbb{T}))^3}\lesssim \rVert R \rVert_{(H^{k+3}(\mathbb{T}))^3} \le \eta$ for some $0 <\eta < 1$. From Lemma~\ref{pq_derivatives}, we also have
\begin{align*}
\rVert (R_1')^{(p)}\left(Du_1^{\theta}(\Theta_3,R)[h]\right)^{(q)} \rVert_{H^{k+1}(\mathbb{T})}& \lesssim \rVert R_1 \rVert_{H^2(\mathbb{T})} \rVert Du_1^{\theta}(\Theta_3,R)[h] \rVert_{H^{k+\sigma}(\mathbb{T})} + \rVert R_1 \rVert_{H^{k+1+\sigma}(\mathbb{T})}\rVert Du_1^{\theta}(\Theta_3,R)[h] \rVert_{H^1(\mathbb{T})} \\
& \lesssim \rVert Du_1^{\theta}(\Theta_3,R)[h] \rVert_{H^{k+\sigma}(\mathbb{T})} + \rVert R_1 \rVert_{H^{k+1+\sigma}(\mathbb{T})}\rVert Du_1^{\theta}(\Theta_3,R)[h] \rVert_{H^3(\mathbb{T})} \\
& \lesssim \rVert h \rVert_{(H^{k+\sigma}(\mathbb{T}))^3} + \rVert R \rVert_{(H^{k+3 + \sigma}(\mathbb{T}))^3}\rVert h \rVert_{(H^1(\mathbb{T}))^3} \\
&  \ + \rVert R_1 \rVert_{H^{k+1+\sigma}(\mathbb{T})}\rVert Du_1^{\theta}(\Theta_3,R)[h] \rVert_{H^3(\mathbb{T})} 
\end{align*}
where the last inequality follows from (B) in Proposition~\ref{growth_velocity},which also tells us that
\begin{align*}
\rVert Du_1^{\theta}(\Theta_3,R)[h] \rVert_{H^3(\mathbb{T})} & \lesssim (1+\rVert R \rVert_{(H^{5}(\mathbb{T}))^3})\rVert h\rVert_{(H^{3}(\mathbb{T}))^3}\\
& \lesssim (1 + \rVert R \rVert_{(H^{k+3+\sigma}(\mathbb{T}))^3})\rVert h \rVert_{(H^{3}(\mathbb{T})^3)},
\end{align*}
where we used $k\ge 2$.
Therefore, we have
\[
\rVert (R_1')^{(p)}\left(Du_1^{\theta}(\Theta_3,R)[h]\right)^{(q)} \rVert_{H^{k+1}(\mathbb{T})} \lesssim  \rVert h \rVert_{(H^{k+\sigma}(\mathbb{T}))^3}  + (1 + \rVert R \rVert_{(H^{k+3 + \sigma}(\mathbb{T}))^3})\rVert h \rVert_{(H^{3}(\mathbb{T}))^3}.
\]
Combining with \eqref{rprime}, we obtain \eqref{claim_Tsigma4}. Since every term in \eqref{terms1}, \eqref{terms2} and \eqref{terms3} can be shown to satisfy the same property as in \eqref{claim_Tsigma3} and \eqref{claim_Tsigma4}, combining with \eqref{L_1} and \eqref{L_2},  we obtain \eqref{claim_Tsigma} and \eqref{claim_Tsigma2}.

\end{proof}

 \subsubsection{Spectral Study}\label{Spectral}
 %As in the parch case, the assumption $G \in  \mathcal{L}((H^{k+1})^3,(H^{k})^3)$ can be done in the same way as in  \cite{de2016doubly}. $C^3$ can be done in same way as the $C^2$ estimate in\cite{gomez2019existence}. The assumtion $a$ is trival and we will focus on $c$ to $e$.
 
 In this subsection, we will verify the hypotheses $(d)$ and $(e)$. They will be proved in Proposition~\ref{Fredholm} and Lemma~\ref{transversality_11} respectively.

\begin{prop}\label{Fredholm}
Let $2\le m\in \mathbb{N}.$  We can choose $b_1=\Theta_1 = 1$ and $b_2$ and $\Theta_2$ be as in \eqref{parameter_3}. Also we set $b_3$ as in \eqref{def_b3}. Then, there exists $\Theta^*_3 > 0$ such that $A(\Theta^*_3,0)\in  \mathcal{L}((H^{k+1})^3,H^{k+1}\times (H^{k})^2)$ has one-dimensional kernel and codimension one. That is, 
\begin{align*}
\text{Ker}(A(\Theta^*_3,0)) = \text{span}\left\{ v\right\}, \quad \text{Im}(A(\Theta^*_3,0))^{\perp} = \text{span}\left\{w\right\}.
\end{align*}
Furthermore, $0< b_3(\Theta^*_3,0) < b_2 < b_1=1$ and $v$ and $w$ are supported on the $m$-th Fourier mode. 
\end{prop}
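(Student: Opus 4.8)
The plan is to reduce the spectral study of the full linearized operator $A(\Theta^*_3,0)$ to a two-parameter family of $3\times 3$ matrices acting on Fourier coefficients, exactly in the spirit of Proposition~\ref{Linearized_operator}, and then to pick $\Theta^*_3$ so that the relevant matrix is singular of rank two while all other Fourier modes stay invertible. First I would compute $DG(\Theta_3,0)$ from \eqref{derivative_matrix_3}: at the trivial (radial) solution $R=0$ all the terms involving $R_i'$ drop out, $u^r_i$ vanishes on radial patches, and what survives is the diagonal piece $u^\theta_i h_i'$ together with the terms $-(b_i+0)Du^r_i(\Theta_3,0)[h]$ and the $b_3$-derivative contribution $Db_3(\Theta_3,0)[h]\cdot(-u^r_3)$; here $Db_3(\Theta_3,0)[h]=0$ by \eqref{b_der_R} since $R=0$, so in fact the $b_3$-column disappears entirely at the trivial solution. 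Thus on the $n$-th Fourier mode, using Lemma~\ref{linearpart2} (applied layer by layer with the appropriate ratios $r=b_j/b_i$ or $b_i/b_j$) and recalling $a(\Theta_3,0)=0$ on the outer component because $u^\theta_1$ at the radial patch is a nonzero constant (the angular velocity) and $P_0$ kills constants, one obtains $A(\Theta^*_3,0)[H^{(n)},h^{(n)}]$ is $(-n)M_n(\Theta_3)$ applied to the coefficient vector, where $M_n(\Theta_3)$ is an explicit $3\times 3$ matrix whose diagonal carries the angular velocities $u^\theta_{i}$ and whose off-diagonal entries involve powers $b^{n}$ of the radii ratios. Crucially, since $b_3=b_3(\Theta_3,0)$ is fixed by the zero-average constraint \eqref{def_b3}, the matrix $M_n$ depends only on $\Theta_3$ (with $b_2,\Theta_2$ as in \eqref{parameter_3} already fixed), so we are looking for a single root $\Theta^*_3$.

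Next I would carry out the determinant analysis for $n=m$. Set $\Delta_m(\Theta_3):=\det M_m(\Theta_3)$ and show it is an explicit low-degree polynomial (or rational function, after clearing denominators coming from $u^\theta_i$) in $\Theta_3$. The choice of constraints in \eqref{parameter_3} — in particular the two-sided bound on $\Theta_2$ and the bound $b_2<(1/2)^{1/(2m)}$ — is precisely engineered so that (i) $b_3(\Theta_3,0)$ stays strictly between $0$ and $b_2$ for $\Theta_3$ in a neighborhood of the target value (this is Lemma~\ref{kernel}), and (ii) $\Delta_m$ has a root $\Theta^*_3>0$ in that neighborhood at which the rank of $M_m(\Theta^*_3)$ is exactly $2$ — i.e. the minors don't all vanish simultaneously. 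Concretely I would exhibit $\Theta^*_3$ as a root of $\Delta_m$, verify $M_m(\Theta^*_3)\neq 0$ so $\mathrm{rk}=2$, and then read off $v$ (spanning $\ker M_m(\Theta^*_3)$) and $w$ (spanning $(\mathrm{Im}\,M_m(\Theta^*_3))^\perp$ in $\mathbb{R}^3$); both are then promoted to functions supported on the $m$-th Fourier mode, i.e. $v=v_0\cos(mx)$, $w=w_0\sin(mx)$ with $v_0,w_0\in\mathbb{R}^3$. This mirrors Proposition~\ref{onedimensionality} and Proposition~\ref{codimension_one}.

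Then I would rule out degeneracy at the other Fourier modes $n=jm$, $j\geq 2$ (the $m$-fold symmetry means only multiples of $m$ appear). The argument parallels Lemma~\ref{propbstar2} and Lemma~\ref{bstar_theta}: I would show $\det M_{jm}(\Theta^*_3)\neq 0$ for all $j\geq 2$, and moreover obtain the asymptotics $\det M_{jm}(\Theta^*_3)\to$ (a nonzero constant built from the angular velocities, here $u^\theta_1 u^\theta_2 u^\theta_3$) as $j\to\infty$, with the off-diagonal $b^{jm}$-terms decaying. This gives a uniform lower bound $|\det M_{jm}(\Theta^*_3)|\gtrsim 1$ for large $j$, and a case check (monotonicity in $j$ of the relevant quantities, as in Lemma~\ref{propbstar2}) handles the finitely many remaining small $j$. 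With both facts, $A(\Theta^*_3,0)$ restricted to mode $jm$ is invertible for $j\neq 1$, exactly one-dimensional kernel and cokernel come from $j=1$, and — importantly for $(\tilde c$-2) and the tame estimate — the inverse on high modes is bounded, so $A(\Theta^*_3,0)$ maps onto $H^{k+1}\times(H^k)^2$ with closed range of codimension one. The main obstacle will be step two: checking that the choice of $b_2,\Theta_2$ in \eqref{parameter_3}, together with the implicit dependence $b_3=b_3(\Theta_3,0)$, actually produces a root $\Theta^*_3$ of $\Delta_m$ at which the rank is $2$ and not $1$ — i.e. verifying the non-vanishing of the appropriate $2\times 2$ minor — because unlike the two-layer case this is a genuinely three-dimensional determinant whose coefficients are themselves nontrivial functions of the fixed parameters, and one must confirm the constraint region is nonempty and does the job; the asymptotic/monotonicity step for $j\geq 2$, while tedious, is routine once the structure of $M_{jm}$ is in hand.
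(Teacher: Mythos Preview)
Your overall plan---Fourier diagonalization into $3\times 3$ blocks $M_n(\Theta_3)$, locate $\Theta^*_3$ as a root of $\det M_m$, rule out degeneracy on higher modes, and read off $v,w$---does match the paper's route via Lemmas~\ref{kernel} and~\ref{Image_space}. But there is a genuine conceptual error that breaks two of your steps.

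You assert that at the trivial solution $u^\theta_1$ is a ``nonzero constant (the angular velocity)'' and that $a(\Theta_3,0)=0$ because $P_0$ kills constants. Both claims are wrong. The defining feature of the zero-mean setting is that the angular velocity on the outermost boundary \emph{vanishes}, $u^\theta_1(\Theta_3,0)\equiv 0$; this is exactly the content of Lemma~\ref{zero mean} at the radial solution, and quantitatively Proposition~\ref{a_estimate11}. (And note: if $u^\theta_1$ were a nonzero constant $c$, then $P_0(c\,h_1')=c\,h_1'\neq 0$, since $h_1'$ already has zero mean.) This error then propagates to your large-$j$ asymptotics: you claim $\det M_{jm}\to u^\theta_1 u^\theta_2 u^\theta_3\neq 0$, but with $u^\theta_1=0$ the $(1,1)$ entry of $M_n$, after imposing $1+\Theta_2 b_2^2+\Theta_3 b_3^2=0$, is exactly $\frac{1}{2mn}$ (see \eqref{M_1}), so in fact $\det M_n\sim \frac{d_2 d_3}{8mn}\to 0$ with $d_2,d_3$ the \emph{inner} angular velocities. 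There is no uniform lower bound $|\det M_{jm}|\gtrsim 1$, and the two-layer asymptotic argument of Lemma~\ref{bstar_theta} does not transpose.

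The paper handles both steps differently from what you sketch. For the image (Lemma~\ref{Image_space}) the key is to prove $d_2=\Theta_2 b_2+b_2+\Theta_3 b_3^2/b_2\neq 0$ and $d_3=(1+\Theta_2+\Theta_3)b_3\neq 0$ (see \eqref{claim_diagonal}--\eqref{non-zero2}), then write $(-mn)M_n=D_n+S_n$ with $D_n=\mathrm{diag}\bigl(-\tfrac12,\ \tfrac{mn}{2}d_2+O(1),\ \tfrac{mn}{2}d_3+O(1)\bigr)$ and $|S_n|\lesssim\kappa^{mn}$. The \emph{anisotropy} of $D_n$---bounded first entry, linearly growing second and third---is precisely what forces the image into $H^{k+1}\times(H^k)^2$ rather than $(H^k)^3$: one gets $|h_{1,n}|\lesssim|q_{1,n}|$ but $|h_{i,n}|\lesssim|q_{i,n}|/(mn)$ for $i=2,3$ (this is \eqref{regularity_image}). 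Your sketch does not capture this asymmetry. For the existence of $\Theta^*_3$ and the non-degeneracy on $n\geq 2$ (Lemma~\ref{kernel}), the paper eliminates $\Theta_3$ via the constraint, writes $\tfrac{2mb_2}{b_3}\det M_1=B_0 b_3^{2m}+B_1 b_3^2+B_2$ as a polynomial in the single unknown $b_3$, uses \eqref{parameter_3} to pin down the signs $B_0,B_1>0$, $B_2<0$, applies Descartes' rule, and checks $f(b_2)>0$ to locate the unique root in $(0,b_2)$. The case $n\geq 2$ is \emph{not} handled by monotonicity in $j$ as in Lemma~\ref{propbstar2}, but by a comparison of the polynomials $B_{0,n}z^n+B_{1,n}z+B_{2,n}$ against the $n=1$ root through an auxiliary function $q(z)$ shown to satisfy $q(0)<0$ and $q'\leq 0$ on $[0,b_2^{2m}]$; every inequality in \eqref{parameter_3} is used. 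Your outline substantially underestimates this step.
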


The proof of the above proposition will be achieved after proving several lemmas. 

Let us first compute the derivative of $G$ at $(\Theta_3,0)$.  Since we have $G(\Theta_3, 0)=0$ for any $\Theta_3$, $\rom{1}$ in Proposition~\ref{linear_prop} yields that $a(\Theta_3, 0)=0$ for any $\Theta_3$. Moreover, it follows from \eqref{derivative_matrix_3}, \eqref{b_der_R} and Lemma~\ref{linearpart2} that letting $h_1(x) = \sum_{n\geq 1}h_{1,n} \cos(mnx),$ $h_2(x) = \sum_{n\geq 1}h_{2,n}\cos(mnx)$, $h_3(x) = \sum_{n\geq 1}h_{3,n}\cos(mnx)$, 

\begin{align*}
A(\Theta_3,0)[h_1,h_2,h_3]  = \left(\begin{array}{c}Q_1(x) \\ Q_2(x) \\ Q_3(x)\end{array}\right),
\end{align*}

where

\begin{align*}
Q_1(x) = \sum_{n\ge 1} q_{1,n} \sin(mnx), \quad Q_2(x) = \sum_{n\ge 1}q_{2,n} \sin(mnx), \quad Q_3(x) = \sum_{n\ge 1}q_{3,n} \sin(mnx)
\end{align*}
where the coefficients satisfy, for any $n$:

\begin{align}\label{matrix_form1}
(-mn) M_n(\Theta_3,b_3)
\left(\begin{array}{c}h_{1,n} \\ h_{2,n} \\ h_{3,n} \end{array}\right)
= 
\left(\begin{array}{c}q_{1,n} \\ q_{2,n} \\ q_{3,n} \end{array}\right),
\end{align}
with
\begin{align}
 & M_n(\Theta_3,b_3) = \nonumber \\
& \left(
\begin{array}{ccc}
 \left(-\frac12 + \frac{1}{2mn}\right) - \frac12 \Theta_2 b_2^2 - \frac12  \Theta_3 b_3^2 & \Theta_2 b_2 \frac{b_2^{mn}}{2mn} & \Theta_3 b_3 \frac{b_3^{mn}}{2mn}\\
\frac{b_2^{mn}}{2mn} & \Theta_2 b_2\left(-\frac12 + \frac{1}{2mn}\right) + \left(-\frac12 b_2\right) + \left(-\frac12\right)\Theta_3 \frac{b_3^2}{b_2} & \Theta_3 b_3 \frac{1}{2mn}\left(\frac{b_3}{b_2}\right)^{mn} \\
\frac{b_3^{mn}}{2mn} & \Theta_2 b_2 \frac{1}{2mn}\left(\frac{b_3}{b_2}\right)^{mn} &  \Theta_3 b_3\left(-\frac12 + \frac{1}{2mn}\right) + \left(-\frac12 b_3\right) -\frac{\Theta_2 b_3}{2}
\end{array}
\right).
\label{mn_formula}
\end{align}

%Now we prove the following lemma. We use the notation $(b_2, b_3, \Theta_2, \Theta_3)$ here for the sake of simplicity.

\begin{lemma}\label{kernel}
Let $2\le m\in \mathbb{N}$, and $b_2,$ $\Theta_2$ satisfy \eqref{parameter_3}. Then, there exists $\Theta^*_3 := \Theta^*_{3,m}> 0$ and $b_3$ such that  $0< b_3< b_2,  $ and 
\begin{align}
&\text{det}(M_1)=0, \  \Theta_1b_1^2 +\Theta_2b_2^2+\Theta_3^*b_3^2 = 1+\Theta_2b_2^2+\Theta_3^*b_3^2=0,\label{singular}\\
&\text{det}(M_n(\Theta^*_3,b_3))\neq0, \quad \text{ if $n \geq 2$.}\label{nonsingular1}
\end{align}
\end{lemma}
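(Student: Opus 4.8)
The plan is to study the $3\times 3$ matrix $M_n(\Theta_3,b_3)$ in \eqref{mn_formula} and reduce the claim to an elementary (but messy) polynomial analysis, exactly as in the two-layer case treated in Lemma~\ref{propbstar} and Lemma~\ref{propbstar2}. First I would fix $b_2$ and $\Theta_2$ as in \eqref{parameter_3} and regard $b_3$ as the free parameter tied to $\Theta_3$ through the zero-average relation $1+\Theta_2 b_2^2 + \Theta_3 b_3^2 = 0$ in \eqref{singular}; this relation lets us eliminate $\Theta_3$ in favor of $b_3$ (or vice versa) so that $\det(M_1)$ becomes a single function of one variable. The condition $\text{det}(M_1)=0$ together with the zero-average constraint is then a system of two equations in $(\Theta_3, b_3)$, and the heart of the matter is to show it has a solution with $0<b_3<b_2$ and $\Theta_3^*>0$. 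I expect $\det(M_1)$, after substituting the constraint, to be a rational function of $b_3$ (with powers $b_3^{m}$, $b_3^{2m}$, $b_3^2$ appearing), and the strategy is the usual intermediate value theorem argument: evaluate the relevant expression at the two endpoints $b_3 \to 0^+$ and $b_3 \to b_2^-$ (or wherever the sign change is cleanest), check that it changes sign using the bounds \eqref{parameter_3} on $b_2$ and $\Theta_2$, and conclude existence of a root $b_3^* \in (0,b_2)$; the inequality $0<b_3(\Theta_3^*,0)<b_2<1$ then follows by construction, and $\Theta_3^*>0$ from the constraint since $\Theta_2 b_2^2 < -1 < 0$ forces $\Theta_3^* b_3^{*2} = -(1+\Theta_2 b_2^2) > 0$.

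For the kernel and cokernel being one-dimensional, the argument splits into the $n=1$ mode and the $n\ge 2$ modes, just as in the proof of Proposition~\ref{onedimensionality}. At $n=1$: having arranged $\det(M_1)=0$, I would verify $\text{rank}(M_1)=2$ (equivalently, that not all $2\times 2$ minors vanish) by exhibiting one nonvanishing $2\times 2$ minor — the top-left $2\times 2$ block, say, whose determinant is a visibly nonzero expression in the given parameter range — so that $\text{Ker}(M_1)$ and $\text{Im}(M_1)^\perp$ are each one-dimensional; then $v$ and $w$ are the (explicitly computable) generators of $\text{Ker}(M_1)$ and $\text{coKer}(M_1)$, placed at the $m$-th Fourier mode. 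For $n\ge 2$: the point is to prove $\det(M_n(\Theta_3^*,b_3))\ne 0$, i.e.\ \eqref{nonsingular1}. Here I would combine two ingredients. For large $n$, an asymptotic expansion of $\det(M_n)$ as $n\to\infty$ — in which the exponentially small terms $b_2^{mn},b_3^{mn},(b_3/b_2)^{mn}$ are negligible — shows $\det(M_n)$ converges to a fixed nonzero constant (the product of the three ``diagonal'' limits $-\tfrac12 - \tfrac12\Theta_2 b_2^2 - \tfrac12\Theta_3^* b_3^2$, etc.), so nonsingularity holds for all $n$ past some threshold $n_0$; this is the three-layer analogue of Lemma~\ref{bstar_theta}, and it simultaneously gives the bound $\det(M_n)^{-1}\lesssim 1$ needed later for the tame estimates. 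For the finitely many remaining modes $2\le n \le n_0$, I would argue by a monotonicity/separation-of-roots argument in the spirit of Lemma~\ref{propbstar2}: show that the roots in $\Theta_3$ of $\det(M_n)=0$ drift monotonically away from the $n=1$ root as $n$ increases (the precise statement being that $\Theta_3^*$ is not a root of $\det(M_n)(\cdot,b_3)$ for any $n\ge 2$), again using \eqref{parameter_3}; alternatively, for these few modes one can simply estimate $|\det(M_n)|$ directly from below.

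Finally, I would record that $b_3(\Theta_3^*,0)$ defined via \eqref{def_b3} coincides with the $b_3^*$ found above, and that $v,w$ are supported on the $m$-th mode because $M_n$ acts diagonally across Fourier modes (the operator is Fourier-diagonal by the rotation/reflection symmetry of the problem, cf.\ Proposition~\ref{Linearized_operator}). \textbf{The main obstacle} I anticipate is the $n\ge 2$ step \eqref{nonsingular1}: unlike the two-layer case, $\det(M_n)$ is now a $3\times 3$ determinant producing a polynomial with many terms ($b_2$- and $b_3$-powers of orders $mn$ and $2mn$, cross terms, and the $\tfrac1{mn}$ corrections), so verifying it never vanishes for $n\ge 2$ requires either a clean asymptotic-plus-finitely-many-cases split with carefully chosen constants, or a more clever factorization; choosing the constraints \eqref{parameter_3} so that this computation closes is precisely where the specific numerical bounds on $b_2$ and $\Theta_2$ come from. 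A secondary subtlety is ensuring the root $b_3^*$ lands strictly inside $(0,b_2)$ and is simple, so that $M_1$ has rank exactly $2$ rather than dropping rank further.
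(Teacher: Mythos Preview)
Your plan for the existence part \eqref{singular} is essentially what the paper does: after imposing $1+\Theta_2 b_2^2+\Theta_3 b_3^2=0$ and eliminating $\Theta_3$, the paper obtains
\[
\frac{2m b_2}{b_3}\det(M_1)=B_0 b_3^{2m}+B_1 b_3^2+B_2=:f(b_3),
\]
shows $B_0,B_1>0$, $B_2<0$ under \eqref{parameter_3}, gets a \emph{unique} positive root from Descartes' rule, and then checks $f(b_2)>0$ by direct computation to place the root in $(0,b_2)$. Your IVT-at-endpoints argument would give existence but not uniqueness; the paper's Descartes step is what pins down a single $b_3$.

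There is, however, a concrete error in your large-$n$ asymptotic. Under the zero-average constraint the $(1,1)$ entry of $M_n$ is \emph{exactly} $\tfrac{1}{2mn}$ (look at \eqref{M_1}), so $\det(M_n)$ does \emph{not} converge to a nonzero constant; it behaves like $\tfrac{1}{2mn}\cdot(-\tfrac12 d_2)(-\tfrac12 d_3)\to 0$. A corrected asymptotic (``$\det(M_n)\sim c/n$ with $c\ne 0$'') would still give nonsingularity for $n$ large, but you would first need $d_2,d_3\ne 0$, which is not automatic and is proved separately in the paper (see \eqref{claim_diagonal}, \eqref{d2negative}, \eqref{non-zero2}).

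The more serious gap is the ``finitely many remaining modes'' step, which you flag yourself as the main obstacle. The paper bypasses the asymptotic-plus-finite-cases split entirely by a uniform trick you are missing: for every $n\ge 2$ one has the same structure
\[
\frac{2nm b_2}{b_3}\det(M_n)=B_{0,n}\,b_3^{2nm}+B_{1,n}\,b_3^{2}+B_{2,n},
\]
and since $f(b_3)=0$ one can substitute $b_3^{2}=-(B_2+B_0 b_3^{2m})/B_1$. Setting $z=b_3^{2m}\in(0,b_2^{2m})$ this becomes
\[
q(z):=B_{0,n}\,z^{n}-\frac{B_{1,n}}{B_1}B_0\,z-\frac{B_{1,n}}{B_1}B_2+B_{2,n},
\]
and the paper shows $q(0)<0$ and $q'(z)<0$ on $[0,b_2^{2m}]$ using the inequalities \eqref{inequality 1}--\eqref{inequality 5} (this is precisely where the specific bounds in \eqref{parameter_3}, in particular $\Theta_2<2\frac{b_2^{2m-2}(b_2^2-1)m}{1-b_2^{2m}}$, are consumed). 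Hence $q(b_3^{2m})<0$ for all $n\ge 2$ in one stroke, with no threshold $n_0$ and no case-by-case checking. Your proposed monotonicity-in-$n$ of roots in $\Theta_3$ (\`a la Lemma~\ref{propbstar2}) is not the right variable here: $\Theta_3$ and $b_3$ are coupled, and the clean monotonicity lives in the variable $z=b_3^{2m}$ after the substitution above.
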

\begin{proof}
We first write $\text{det}(M_1)$ as a polynomial in $b_3$. Under the constraint $1+\Theta_2b_2^2+\Theta_3b_3^2=0$, we get
\begin{align}
\label{M_1}
  M_{1}(\Theta_3,b_3) & =  \small \left(
\begin{array}{ccc}
 \frac{1}{2m}& \Theta_2 b_2 \frac{b_2^{m}}{2m} & \Theta_3 b_3 \frac{b_3^{m}}{2m}\\
\frac{b_2^{m}}{2m} & \Theta_2 b_2\left(-\frac12 + \frac{1}{2m}\right) + \left(-\frac12 b_2\right) + \left(-\frac12\right)\Theta_3 \frac{b_3^2}{b_2} & \Theta_3 b_3 \frac{1}{2m}\left(\frac{b_3}{b_2}\right)^{m} \\
\frac{b_3^{m}}{2m} & \Theta_2 b_2 \frac{1}{2m}\left(\frac{b_3}{b_2}\right)^{m} &  \Theta_3 b_3\left(-\frac12 + \frac{1}{2m}\right) + \left(-\frac12 b_3\right) + \left(-\frac12\right)\Theta_2 b_3
\end{array}
\right).
\end{align}
Therefore, multiplying the first row by $2m$, multiplying the third column by $b_3$ and dividing the second column by $b_2$, we get
\begin{align*}
 &\frac{2mb_3}{b_2} \text{det}(M_{1}(\Theta_3,b_3))  =\text{det} \small \left(
\begin{array}{ccc}
  1& \Theta_2  b_2^{m} & \Theta_3b_3^2  b_3^{m}\\
\frac{b_2^{m}}{2m} & \Theta_2 \left(-\frac12 + \frac{1}{2m}\right) + \left(-\frac12\right) + \left(-\frac12\right)\Theta_3 \frac{b_3^2}{b_2^2} & \Theta_3  \frac{1}{2m}b_3^2\left(\frac{b_3}{b_2}\right)^{m} \\
\frac{b_3^{m}}{2m} & \Theta_2  \frac{1}{2m}\left(\frac{b_3}{b_2}\right)^{m} &  \Theta_3 b_3^2\left(-\frac12 + \frac{1}{2m}\right) + \left(-\frac12 b_3^2\right) + \left(-\frac12\right)\Theta_2 b_3^2
\end{array}
\right)\\
&=\text{det}\small \left(
\begin{array}{ccc}
  1& \Theta_2  b_2^{m} & (-1-\Theta_2b_2^2)  b_3^{m}\\
\frac{b_2^{m}}{2m} & \Theta_2 \left(-\frac12 + \frac{1}{2m}\right) + \left(-\frac12\right) + \left(-\frac12\right)\frac{(-1-\Theta_2 b_2^2)}{b_2^2} & (-1-\Theta_2b_2^2)  \frac{1}{2m}\left(\frac{b_3}{b_2}\right)^{m} \\
\frac{b_3^{m}}{2m} & \Theta_2  \frac{1}{2m}\left(\frac{b_3}{b_2}\right)^{m} &  (-1-\Theta_2b_2^2)\left(-\frac12 + \frac{1}{2m}\right) + \left(-\frac12 b_3^2\right) + \left(-\frac12\right)\Theta_2 b_3^2
\end{array}
\right)\\
&=\text{det}\small \left(
\begin{array}{ccc}
  1& \Theta_2  b_2^{m} & (-1-\Theta_2b_2^2)  b_3^{m}\\
\frac{b_2^{m}}{2m} & \frac{\Theta_2}{2m}-\frac{1}{2}+\frac{1}{2b_2^2} & (-1-\Theta_2b_2^2)  \frac{1}{2m}\left(\frac{b_3}{b_2}\right)^{m} \\
\frac{b_3^{m}}{2m} & \Theta_2  \frac{1}{2m}\left(\frac{b_3}{b_2}\right)^{m} &  (-1-\Theta_2b_2^2)\left(-\frac12 + \frac{1}{2m}\right) + \left(-\frac12 -\frac12\Theta_2\right) b_3^2
\end{array}
\right)\\
& =: \text{det}
\begin{pmatrix}
a_{11} & a_{12} & a_{13}\\ a_{21} & a_{22} & a_{23} \\ a_{31} & a_{32} & a_{33}
\end{pmatrix},
\end{align*}
where we used the condition $1+\Theta_2b_2^2+\Theta_3b_3^2=0$ in the second equality to get rid of $\Theta_3$.  We further compute
\begin{align*}
\frac{2mb_3}{b_2}& \text{det}(M_{1}(\Theta_3,b_3))  = a_{31} (a_{12}a_{23} - a_{13}a_{22}) - a_{32}(a_{11}a_{23} - a_{13}a_{21}) + a_{33}(a_{11}a_{22} - a_{12}a_{21}) \\
&=b_3^{2m}\left( \frac{1}{4m}(1-\frac{1}{b_2^2})(-1-\Theta_2b_2^2)+\frac{\Theta_2(b_2^m-\frac{1}{b_2^m})(-1-\Theta_2b_2^2)}{4m^2b_2^m}\right)+A_{3,3}\left( -\frac{1}{2}-\frac{1}{2}\Theta_2\right) b_3^2\\
& \ +A_{3,3}(-1-\Theta_2b_2^2)\left(-\frac{1}{2}+\frac{1}{2m}\right).
\end{align*}
where $A_{3,3} :=a_{11}a_{22} - a_{12}a_{21}=\frac{\Theta_2}{2m}(1-b_2^{2m})+\frac{1}{2b_2^2}(1-b_2^2)$ is the cofactor.
Let us write 
\begin{align}\label{determinant_f}
\frac{2mb_2}{b_3} \text{det}(M_{1}(\Theta_3,b_3))=B_0b_3^{2m}+B_1b_3^2+B_2 =:f(b_3), \end{align} where
\[
B_0=(-1-\Theta_2b_2^2)\left( \frac{1}{4m}(1-\frac{1}{b_2^2})+\frac{\Theta_2(b_2^m-\frac{1}{b_2^m})}{4m^2b_2^m}\right),
\]
\[
B_1=A_{3,3}\left( -\frac{1}{2}-\frac{1}{2}\Theta_2 \right),
\]
\[
B_2=A_{3,3}(-1-\Theta_2b_2^2)\left(-\frac{1}{2}+\frac{1}{2m} \right).
\]
Under the condition in \eqref{parameter_3}, we have
\begin{align}\label{inequality 1}
&-1-\Theta_2b_2^2>0,\\\label{inequality 2}
&\frac{1}{4m}(1-\frac{1}{b_2^2})+\frac{\Theta_2(b_2^m-\frac{1}{b_2^m})}{4m^2b_2^m}>0,\\\label{inequality 3}
&A_{3,3}>0,\\\label{inequality 4}
&-\frac{1}{2}-\frac{\Theta_2}{2}>0,\\\label{inequality 5}
&-\frac{1}{2}+\frac{1}{2m}<0.
\end{align} 
So $B_0> 0$, $B_1> 0$, $B_2<0$. We then can use the Descartes rule of signs and show there exists a unique solution $b_3\in (0,\infty)$.  Since $B_2<0$, the Descartes rule again tells us that we only need show that $f(b_2) > 0$ to show the unique solution $b_3$ such that $f(b_3) = 0$ satisfies $0<b_3<b_2$. Once this is done, then $\Theta_3^*$ can be uniquely determined by $1+\Theta_2b_2^2+\Theta_3b_3^2=0$ and this proves the existence of $b_3$ and $\Theta^*_3$ satisfying \eqref{singular}. Therefore we compute
\begin{align*}
&f(b_2) = B_0b_2^{2m}+B_1b_2^2+B_2\\
&=[\frac{(\Theta_2b_2^2+1)(1-b_2^2)}{4mb_2^2}+\Theta_2\frac{(\frac{1}{b_2^m}-b_2^m)(1+\Theta_2b_2^2)}{4m^2b_2^m}]b_2^{2m}+(\frac{\Theta_2}{2m}(1-b_2^{2m})+\frac{1}{2b_2^2}(1-b_2^2))(-\frac{b_2^2}{2}+\frac{1}{2}-\frac{1}{2m}-\frac{\Theta_2b_2^2}{2m})\\
&=(\frac{b_2^2(1-b_2^{2m})}{4m^2}-\frac{b_2^2}{2m}\frac{1-b_2^{2m}}{2m})\Theta_2^2\\
&+[\frac{(1-b_2^2)b_2^{2+2m}}{4m^2b_2^2}+\frac{(\frac{1}{b_2^{m}}-b_2^{m})b_2^{2m}}{4m^2b_2^m}+\frac{1-b_2^{2m}}{2m}(-\frac{b_2^2}{2}+\frac{1}{2}-\frac{1}{2m})+\frac{1}{2b_2^2}(1-b_2^2)(-\frac{b_2^2}{2m})]\Theta_2\\
&+(b_2^{2m}\frac{1-b_2^2}{4mb_2^2})+\frac{1}{2b_2^2}(1-b_2^2)(-\frac{b_2^2}{2}+\frac{1}{2}-\frac{1}{2m})\\
&=\frac{1-b_2^2}{4mb_2^2}(b_2^{2m}+m-mb_2^2-1)\\
&=\frac{(1-b_2^2)^2}{4mb_2^2}(\frac{b_2^{2m}-1}{1-b_2^{2}}+m)\\
&=\frac{(1-b_2^2)^2}{4mb_2^2}(m-1-b_2^2-b_2^4-...-b_2^{2m-2})>0.
\end{align*}
Now we are left to show \eqref{nonsingular1}, that is, $\text{det}(M_n(\Theta_3^*),b_3)\neq 0$.
As before, we have (replacing $m$ by $nm$)
\begin{align}\label{eq112}
\frac{2nmb_2}{b_3}\text{det}(M_n(\Theta^*_3,b_3))=B_{0,n}b_3^{2nm}+B_{1,n}b_3^2+B_{2,n}.
\end{align}
where
\[
B_{0,n}=\frac{1}{4mn}(1-\frac{1}{b_2^2})(-1-\Theta_2b_2^2)+\frac{\Theta_2(b_2^{nm}-\frac{1}{b_2^{nm}})(-1-\Theta_2b_2^2)}{4n^2 m^2b_2^{nm}},
\]
\[
B_{1,n}=A_{3,3,n}\left(-\frac{1}{2}-\frac{1}{2}\Theta_2\right),
\]
\[
B_{2,n}=A_{3,3,n}(-1-\Theta_2b_2^2)(-\frac{1}{2}+\frac{1}{2nm}),
\] 
\[
 A_{3,3,n}=\frac{\Theta_2}{2nm}(1-b_2^{2nm})+\frac{1}{2b_2^2}(1-b_2^2).
\]
Since $f(b_3)=0$, where $f$ is as in \eqref{determinant_f}, that is, $b_3^{2}=-\frac{B_2+B_0b_3^{2m}}{B_1}$. Thus,
\[
b_3^{2nm}B_{0,n}+b_3^2 B_{1,n}+B_{2,n}= b_3^{2nm}B_{0,n}-\frac{B_{1,n}}{B_1}B_0b_3^{2m}-\frac{B_{1,n}}{B_1}B_2+B_{2,n}.
\] 
Let $q(z)=z^nB_{0,n}-\frac{B_{1,n}}{B_1}B_0z-\frac{B_{1,n}}{B_1}B_2+B_{2,n}$. We will show that 
\begin{align}\label{qpolynomial}
q(0)< 0 \text{ and }q'(z)<0 \quad \text{ for $0\leq z \leq b_2^{2m}.$}
\end{align} Once we have the above inequalities, then $q(b_3^{2m}) < 0$, which implies $\text{det}(M_n(\Theta^*_3))$ in \eqref{eq112} cannot be zero. This clearly finishes the proof.

To show \eqref{qpolynomial}, let us first consider $q(0)$. Note that
\[
\frac{B_{1,n}}{B_1}=\frac{A_{3,3,n}}{A_{3,3}},\  \frac{B_{2,n}}{B_2}=\frac{A_{3,3,n}(-\frac{1}{2}+\frac{1}{2mn})}{A_{3,3}(-\frac{1}{2}+\frac{1}{2m})}
\]
\[
q(0)=-\frac{B_{1,n}}{B_1}B_2+B_{2,n}<0 \Leftrightarrow \frac{B_{2,n}}{B_2}>\frac{B_{1,n}}{B_1}\Leftrightarrow \frac{A_{3,3,n}}{A_{3,3}}>0
\]
Thanks to \eqref{inequality 4}, it suffices to show that $\frac{A_{3,3,n}}{A_{3,3}}>1$. Note that
\[
\frac{A_{3,3,n}}{A_{3,3}}>1\Leftrightarrow \frac{\Theta_2}{2mn}(1-b_2^{2mn})>\frac{\Theta_2}{2m}(1-b_2^{2m}).
\]
Clearly, the last inequality holds because $b_2<1$,  and $\Theta_2<0$, therefore we have 
\[
q(0) < 0.
\]
Now, we turn to $q'(z)<0$. We have
\[
q'(z)=nz^{n-1}B_{0,n}-\frac{B_{1,n}}{B_1}B_0.
\]
If $B_{0,n}\leq 0$, then $z\mapsto q'(z)$ is deacreasing and   $q'(0)=-\frac{B_{1,n}}{B_1}B_0=-\frac{A_{3,3,n}}{A_{3,3}}B_0<0$, which yields $q'(z)<0$ for all $ 0 \le z \le b_2^{2m}$. If $B_{0,n}>0$, it is sufficient to show 
\begin{align}\label{eq222}
q'(b_2^{2m})\leq n(b_2^{2mn-2m})B_{0,n}-\frac{B_{1,n}}{B_1}B_0<0.
\end{align}
From \eqref{parameter_3}, we  have $(1-\frac{1}{b_2^2})+\frac{\Theta_2}{m}(1-\frac{1}{b_2^{2m}})>\frac{1-b_2^2}{b_2^2}$. And the lower bound of $\Theta_2$ in \eqref{parameter_3} is equivalent to $\frac{\Theta_2}{m}(1-b_2^{2m})+\frac{1}{b_2^2}(1-b_2^2)>0$,  hence
\[
\eqref{eq222}\Leftrightarrow \frac{A_{3,3,n}}{A_{3,3}}=\frac{\frac{\Theta_2}{nm}(1-b_2^{2nm})+\frac{1}{b_2^2}(1-b_2^2)}{\frac{\Theta_2}{m}(1-b_2^{2m})+\frac{1}{b_2^2}(1-b_2^2)}> \frac{(1-\frac{1}{b_2^2})+\frac{\Theta_2}{mn}(1-\frac{1}{b_2^{2nm}})}{(1-\frac{1}{b_2^2})+\frac{\Theta_2}{m}(1-\frac{1}{b_2^{2m}})}b_2^{2nm-2m},
\]
\[
\Leftrightarrow [\frac{\Theta_2}{nm}(1-b_2^{2nm})+\frac{1}{b_2^2}(1-b_2^2)][(1-\frac{1}{b_2^2})+\frac{\Theta_2}{m}(1-\frac{1}{b_2^{2m}})]>[(1-\frac{1}{b_2^2})+\frac{\Theta_2}{mn}(1-\frac{1}{b_2^{2nm}})][\frac{\Theta_2}{m}(1-b_2^{2m})+\frac{1}{b_2^2}(1-b_2^2)]b_2^{2mn-2m},
\]
\[
\Leftrightarrow\Theta_2[\frac{(1-b_2^{2mn})(1-b_2^{-2})+n(b_2^{-2}-1)(1-b_2^{-2m})}{nm}-\frac{(b_2^{2mn}-1)b_2^{-2m}(b_2^{-2}-1)+n(1-b_2^{2m})b_2^{2mn-2m}(1-b_2^{-2})}{nm}]
\]
\[
+\frac{1}{b_2^2}(1-b_2^2)(1-\frac{1}{b_2^2})-(1-\frac{1}{b_2^2})\frac{1}{b_2^2}(1-b_2^2)b_2^{2nm-2m}>0
\]
\[
\Leftrightarrow \frac{\Theta_2(1-b_2^{-2})(1-b_2^{-2m})(1-n)(1-b_2^{2nm})}{mn}-(1-b_2^{-2})^2(1-b_2^{2nm-2m})>0,
\]
\[
\Leftrightarrow \Theta_2<\frac{m(b_2^2-1)b_2^{2m}}{b_2^2(1-b_2^{2m})}\frac{1-b_2^{2m(n-1)}}{(1-b_2^{2mn})(1-\frac{1}{n})}.
\]
Since $\frac{1-b_2^{2m(n-1)}}{(1-b_2^{2mn})(1-\frac{1}{n})}\leq \frac{n}{n-1} \leq 2$, the condition $ \Theta_2< 2\frac{{b_2}^{2m-2}(b_2^{2}-1)m}{1-b_2^{2m}}$ in \eqref{parameter_3} leads to the inequality we want. This shows \eqref{qpolynomial}, and thus \eqref{nonsingular1} holds.
\end{proof}
From Lemma~\ref{kernel}, it is clear that the kernel of $A(\Theta_3^*,0)$ is one dimensional. We denote by $v\in \text{Ker}(A(\Theta_3^*,0))$ and the one such that
 \begin{align}\label{ker_element}
 v= \cos(mx)
 \begin{pmatrix}
 v^1\\
 v^2\\
 1
 \end{pmatrix}, \quad  \begin{pmatrix}
 v^1\\
 v^2\\
 1
 \end{pmatrix} \in Ker(M_1(\Theta_3^*,b_3)).
 \end{align}

Since $b_3$ and $\Theta^*_3$ are tied in the relation $1+\Theta_2b_2^2 + \Theta^*_3b_3^2 = 0$, we now drop the dependence on $b_3$ of $M_n(\Theta_3^*):=M_n(\Theta^*_3,b_3)$.

We now characterize the image of $A(\Theta_3^*,0)$. 

\begin{lemma}\label{Image_space}
Let $b_2, \Theta_2$ satisfy the conditions in \eqref{parameter_3} and let $ b_3, \Theta_3^*$ be as in Lemma~\ref{kernel}. Then,
\begin{align}\label{def_Z}
Z = \left\{Q=(Q_1, Q_2, Q_3) \in Y^{k+1,m}\times Y^{k,m} \times Y^{k,m}, \begin{pmatrix}
Q_1\\
Q_2\\
Q_3
\end{pmatrix}
= \sum_{n=1}^{\infty} \begin{pmatrix}
q_{1,n}\\
q_{2,n}\\
q_{3,n}
\end{pmatrix}
\sin(mnx), 
\right. \\ \left.
\left(\begin{array}{c}q_{1,1} \\ q_{2,1} \\ q_{3,1} \end{array}\right) \in \text{span}\left\{ 
c_1, c_2\right\}
\right\}, 
\end{align}
where $c_i$ is the $i$th column of $M_1(\Theta_3^*,b_3)$. Then $Z = \text{Im}\left(A(\Theta_3^*,0)\right)$.
\end{lemma}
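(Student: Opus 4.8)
The plan is to show the two inclusions $Z \subset \mathrm{Im}(A(\Theta_3^*,0))$ and $Z \supset \mathrm{Im}(A(\Theta_3^*,0))$ separately, exactly mirroring the structure of the proof of Proposition~\ref{codimension_one} in the infinite-energy case. For the easy inclusion $\mathrm{Im}(A(\Theta_3^*,0)) \subset Z$, I would recall from \eqref{matrix_form1} that $A(\Theta_3^*,0)$ acts diagonally on Fourier modes via the matrices $(-mn)M_n(\Theta_3^*,b_3)$: for a mode-$mn$ input the output coefficient vector lies in $\mathrm{Im}(M_n(\Theta_3^*,b_3))$. By Lemma~\ref{kernel}, $\det(M_n(\Theta_3^*,b_3)) \neq 0$ for $n \geq 2$, so for those modes the image is all of $\RR^3$ and imposes no constraint; for $n=1$, since $\mathrm{rk}(M_1(\Theta_3^*,b_3)) = 2$ (its determinant vanishes but, as one checks, it does not drop rank further — this should be recorded, perhaps as a remark extending Lemma~\ref{kernel}), the image of $M_1(\Theta_3^*,b_3)$ is precisely the span of any two independent columns, i.e.\ $\mathrm{span}\{c_1,c_2\}$. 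This gives $\mathrm{Im}(A(\Theta_3^*,0)) \subset Z$.

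For the reverse inclusion, given $Q = (Q_1,Q_2,Q_3) \in Z$ I would construct an explicit preimage $h = (h_1,h_2,h_3)$ mode by mode: on the $m$-th mode, since $(q_{1,1},q_{2,1},q_{3,1})^\top \in \mathrm{span}\{c_1,c_2\}$, pick any solution of the (consistent, rank-2) linear system $(-m)M_1(\Theta_3^*,b_3)(h_{1,1},h_{2,1},h_{3,1})^\top = (q_{1,1},q_{2,1},q_{3,1})^\top$; on modes $mn$ with $n \geq 2$, set $(h_{1,n},h_{2,n},h_{3,n})^\top = (-mn)^{-1}M_n(\Theta_3^*,b_3)^{-1}(q_{1,n},q_{2,n},q_{3,n})^\top$. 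The content of the proof is then to verify that this formally-defined $h$ actually lies in the correct function space, namely $(X^{k+1,m})^3$ — so that $A(\Theta_3^*,0)[h] = Q$ holds in $\mathcal{L}((X^{k+1})^3, Y^{k+1}\times (Y^k)^2)$ and the preimage is genuine. This is exactly the regularity-matching step and is the main obstacle: I need a uniform bound $|M_n(\Theta_3^*,b_3)^{-1}| \lesssim 1$ as $n \to \infty$, analogous to Lemma~\ref{bstar_theta}. Here the key point, already flagged in the introduction, is that the leading asymptotics of $M_n(\Theta_3^*,b_3)$ as $n\to\infty$ come from the non-decaying diagonal entries $-\tfrac12 - \tfrac12\Theta_2 b_2^2 - \tfrac12\Theta_3^* b_3^2$, $-\tfrac12\Theta_2 b_2 - \tfrac12 b_2 - \tfrac12\Theta_3^* b_3^2/b_2$, $-\tfrac12\Theta_3^* b_3 - \tfrac12 b_3 - \tfrac12\Theta_2 b_3$ (the off-diagonal and $\tfrac{1}{2mn}$ terms are $O(1/n)$ or exponentially small since $b_2,b_3<1$), so $\det(M_n) \to$ a nonzero constant, which must be checked to be nonzero using \eqref{parameter_3}. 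Given that, $|M_n^{-1}| \lesssim 1$ for large $n$, hence $|h_{\cdot,n}|^2 \lesssim (mn)^{-2}|q_{\cdot,n}|^2$, and then the weighted-norm computation
\begin{align*}
\|h\|_{(X^{k+1,m})^3}^2 &\lesssim \sum_{n}\left(|h_{1,n}|^2 + |h_{2,n}|^2 + |h_{3,n}|^2\right)(1+mn)^{2(k+1)} \\
&\lesssim 1 + \sum_{n\geq 2}(mn)^{-2}\left(|q_{1,n}|^2+|q_{2,n}|^2+|q_{3,n}|^2\right)(1+mn)^{2(k+1)} \lesssim 1 + \|Q\|_{Y^{k+1,m}\times(Y^{k,m})^2}^2 < \infty
\end{align*}
goes through, where the mode-$m$ contribution is a single finite term and the factor $(mn)^{-2}$ absorbs the loss of one derivative between $Y^{k+1}$ (first component) and $X^{k+1}$. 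This shows $h \in (X^{k+1,m})^3$, completing $Z \subset \mathrm{Im}(A(\Theta_3^*,0))$ and hence the lemma.

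I expect the arithmetic of verifying $\lim_{n\to\infty}\det(M_n(\Theta_3^*,b_3)) \neq 0$ under the constraints \eqref{parameter_3} and $1 + \Theta_2 b_2^2 + \Theta_3^* b_3^2 = 0$ to be the only delicate computation; it is essentially the product of the three limiting diagonal entries, and the sign conditions $-1-\Theta_2 b_2^2 > 0$, $-\tfrac12-\tfrac12\Theta_2 > 0$ from \eqref{inequality 1} and \eqref{inequality 4} together with $0 < b_3 < b_2 < 1$ should make each factor have a definite sign. Everything else — consistency of the mode-$m$ system, diagonality of $A(\Theta_3^*,0)$, the periodicity/parity of $h$ — is routine and follows from the structure already set up in \eqref{matrix_form1}, \eqref{mn_formula} and the fact established in Proposition~\ref{linear_prop} that $A(\Theta_3^*,0)$ preserves the relevant symmetry classes.
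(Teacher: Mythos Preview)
Your overall strategy (two inclusions, mode-by-mode preimage, asymptotic control of $M_n^{-1}$) matches the paper's, but the key asymptotic step contains a genuine error that breaks the argument.

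You assert that $\det(M_n(\Theta_3^*,b_3))$ tends to a nonzero constant, namely the product of the three limiting diagonal entries. But the first of these,
\[
-\tfrac12 - \tfrac12\Theta_2 b_2^2 - \tfrac12\Theta_3^* b_3^2 \;=\; -\tfrac12\bigl(1+\Theta_2 b_2^2 + \Theta_3^* b_3^2\bigr),
\]
is \emph{exactly zero} by the zero-mean constraint \eqref{singular} in Lemma~\ref{kernel}. (This is precisely the degeneracy of the angular velocity on the outermost boundary that the introduction flags as the obstruction.) Consequently $\det(M_n)\sim C/(mn)$ rather than a nonzero constant, $|M_n^{-1}|$ grows like $mn$, and your estimate $|h_{\cdot,n}|^2\lesssim (mn)^{-2}|q_{\cdot,n}|^2$ fails. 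With only the crude bound $|h^n|\lesssim |q^n|$ you cannot put $h_2,h_3$ into $X^{k+1}$ when $Q_2,Q_3$ are merely in $Y^k$.

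The paper fixes this by working with $(-mn)M_n = D_n + S_n$, where $D_n$ is diagonal with entries $-\tfrac12$, $\tfrac{mn}{2}(d_2+o(1))$, $\tfrac{mn}{2}(d_3+o(1))$ and $S_n$ is exponentially small. This \emph{anisotropic} diagonal ($O(1)$ in the first slot, $O(mn)$ in the others) gives the anisotropic estimate
\[
|h_{1,n}|\lesssim |q_{1,n}|,\qquad |h_{2,n}|\lesssim \tfrac{|q_{2,n}|}{mn},\qquad |h_{3,n}|\lesssim \tfrac{|q_{3,n}|}{mn},
\]
which is exactly what is needed to land in $(X^{k+1})^3$ from $Y^{k+1}\times(Y^k)^2$. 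For this one must check $d_2,d_3\neq 0$: $d_2=(b_2^2-1)/b_2<0$ is immediate from the constraint, but $d_3=(1+\Theta_2+\Theta_3^*)b_3\neq 0$ is not a direct consequence of \eqref{parameter_3} and requires a separate argument (the paper shows $b_3^2\neq (1+\Theta_2 b_2^2)/(1+\Theta_2)$ by evaluating the polynomial $f$ from \eqref{determinant_f} there). Your proposed sign check via \eqref{inequality 1}, \eqref{inequality 4} does not cover either of these points.
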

\begin{proof}
We  choose an element $w \in \text{Im}(A(\Theta_3^*,0))^{\perp}$ to be the one such that
\begin{align}\label{image_perp_element}
w=\frac{1}{|c_1\times c_2 |}\cos(mx)\left( c_1 \times c_2 \right) \in Im(M_1(\Theta^*))^{\perp}.
\end{align}
where $c_1$ and $c_2$ are the first two column vectors in $M_1(\Theta^*_3)$, which are linearly independent. Since $\text{det}(M_1(\Theta_3^*))=0$, and $\text{det}(M_{n}(\Theta_3^*,b_3))\ne 0$ for all $n>1$, and Proposition \ref{Fredholm}, it is clear that $\text{Im}\left( A(\Theta_3^*,0)\right)\subset Z$. 
In order to show the other direction, $Z \subset \text{Im}\left( A(\Theta_3^*,0)\right)$, we pick an element $Q\in (Q_1,Q_2,Q_3)\in Z$,$Q_i=\sum_{n=1}^{\infty} q_{1,n}\sin(mnx)$, and
we have $\lambda_Q^1$, $\lambda_Q^2\in \RR$ such that 
\begin{align*}
\begin{pmatrix}
q_{1,2}\\
q_{2,2}\\
q_{3,2}
\end{pmatrix}
=\lambda_Q^1c_1+\lambda_Q^2c_2.
\end{align*}
%Recall for $n\geq 2$, we have
%\begin{align}
%\label{M_n}
%  M_{n}(\Theta_3^*) & =  \small \left(
%\begin{array}{ccc}
%  \frac{1}{2mn}& \Theta_2 b_2 \frac{b_2^{mn}}{2mn} & \Theta_3 b_3 \frac{b_3^{mn}}{2mn}\\
%\frac{b_2^{mn}}{2mn} & \Theta_2 b_2\left(-\frac12 + \frac{1}{2mn}\right) + \left(-\frac12 b_2\right) + \left(-\frac12\right)\Theta_3 \frac{b_3^2}{b_2} & \Theta_3 b_3 \frac{1}{2mn}\left(\frac{b_3}{b_2}\right)^{mn} \\
%\frac{b_3^{mn}}{2nm} & \Theta_2 b_2 \frac{1}{2mn}\left(\frac{b_3}{b_2}\right)^{mn} &  \Theta_3 b_3\left(-\frac12 + \frac{1}{2mn}\right) + \left(-\frac12 b_3\right) + \left(-\frac12\right)\Theta_2 b_3
%\end{array}
%\right).
%\end{align}
Thanks to \eqref{nonsingular1}, one can find a sequence of vectors $h^n:=(h_{1,n}, h_{2,n}, h_{3,n})$ such that  \begin{align}\label{matrix_form2}
(-mn)M_{mn}(\Theta^*) h^n = (-mn)M_{mn}(\Theta^*)\begin{pmatrix}
h_{1,n} \\ h_{2,n} \\ h_{3,n}
\end{pmatrix}
 = \begin{pmatrix}
q_{1,n} \\ q_{2,n} \\ q_{3,n}
\end{pmatrix} =: q^n,
\end{align}
Therefore it suffices to show that  
\begin{align}\label{regularity_image}
\quad |h_{1,n}| + |h_{2,n}| + |h_{3,n}| \le C \left(|q_{1.n}| + \frac{|q_{2,n}|}{|mn|}  +\frac{ |q_{3,n}|}{|mn|}\right), \quad \text{ for all sufficiently large $n$}.
\end{align}
where $C$ is independent of $n$. Once we have the above inequality, then it is clear that $Q = A(\Theta^*_3,0)[h]$ where
\begin{align*}
h_1(x)=\sum_{n=1}^\infty h_{1,n}\cos(mnx), \quad h_2(x)=\sum_{n=1}^\infty h_{2,n}\cos(mnx), \quad h_3(x)=\sum_{n=1}^\infty h_{3,n}\cos(mnx),
\end{align*}
and $h \in (H^{k+1})^3$, which follows from
\begin{align*}
\| h \|_{(H^{k+1})^3}^2 &= \sum_{n\ge 1}|h_{1,n}|^2|nm|^{2(k+1)} + |h_{2,n}|^2|nm|^{2(k+1)} +|h_{3,n}|^2|nm|^{2(k+1)}\\
& \lesssim \sum_{n\ge1}|mn|^{2(k+1)}\left(|q_{1,n}|^2 + \frac{|q_{2,n}|^2}{|mn|^2}+ \frac{|q_{2,n}|^2}{|mn|^2} \right)\\
& \lesssim \| Q_1 \|_{H^{k+1}}^2 + \|Q_2 \|_{H^{k}}^2 + \|Q_3 \|_{H^{k}}^2,\\
& < \infty.
\end{align*}
This proves that $Z \subset \text{Im}\left( A(\Theta_3^*,0)\right)$.

In order to prove \eqref{regularity_image}, we denote
\[
(-mn)M_{n}(\Theta^*_3) =: 
\begin{pmatrix}
a_{11} & a_{12} & a_{13}\\ a_{21} & a_{22} & a_{23} \\ a_{31} & a_{32} & a_{33}
\end{pmatrix} = \begin{pmatrix}  a_{11}  & 0 & 0 \\ 0 & a_{22} & 0 \\ 0 & 0 & a_{33}\end{pmatrix} + \begin{pmatrix}  0  & a_{12} & a_{13} \\ a_{21} & 0 & a_{23} \\ a_{31} & a_{32} & 0\end{pmatrix} =: D_n + S_n.
\]
Then \eqref{matrix_form2} is equivalent to 
\begin{align}\label{decomp_112}
D_n h^n = q^n - S_n h^n.
\end{align}
We claim that 
\begin{align}\label{claim_diagonal}
d_{2}:=\Theta_2b_2+b_2+\Theta_3\frac{b_3^2}{b_2}\neq 0, \quad \text{ and } \quad  d_{3}:=\Theta_3b_3+b_3+\Theta_2b_3\neq 0.
\end{align}
From \eqref{mn_formula}, and $0<b_3<b_2<1$, it follows that
\[
D_n = \begin{pmatrix} -\frac{1}{2} & 0 & 0 \\ 0 & \frac{mn}{2}\left( d_2  - \frac{\Theta_2b_2}{nm} \right) & 0  \\ 0 & 0 & \frac{nm}{2}\left(d_3 - \frac{\Theta_3^*b_3}{nm} \right) \end{pmatrix} \quad \text{ and } | S_n h^n | \lesssim \kappa^{nm}|h^n|, \quad \text{ for some $\kappa\in(0,1)$.}
\]
Therefore, for sufficiently large $n$, $D_n$ is a non-singular diagonal matrix, and it follows from \eqref{decomp_112}
\[
|h_{1,n}| \lesssim |q_{1,n}| + \kappa^{nm} |h^n|, \quad |h_{2,n}| \lesssim \frac{|q_{2,n}|}{|nm|} + \kappa^{nm} |h^n|, \quad  |h_{3,n}| \lesssim \frac{|q_{3,n}|}{|nm|} + \kappa^{nm} |h^n|,
\] 
which proves \eqref{regularity_image}.

We are left to show \eqref{claim_diagonal}. From \eqref{singular},  
\begin{align}\label{d2negative}
d_2 = \Theta_2b_2+b_2+\Theta_3\frac{b_3^2}{b_2}=(\Theta_2b_2^2+b_2^2+\Theta_3b_3^2)\frac{1}{b_2}=\frac{b_2^2-1}{b_2} <  0.
\end{align} For $d_3$, let us claim 
\begin{align}\label{non-zero}
B_0(\frac{1+\Theta_2b_2^2}{\Theta_2+1})^{m}+B_1(\frac{1+\Theta_2b_2^2}{\Theta_2+1})+B_2\neq 0, \quad \text{ where $B_0,$ $B_1$ and $B_2$ are as in \eqref{determinant_f}}.
\end{align}

Plugging in $B_0, B_1, B_2$ from \eqref{determinant_f}, we can find that  \eqref{non-zero} is equivalent to
\[
-[\frac{1}{4m}(1-b_2^{-2})+\frac{\Theta_2(b_2^{m}-\frac{1}{b_2^m})}{4m^2b_2^m}]\frac{(1+\Theta_2b_2^2)^m}{(\Theta_2+1)^m}-\frac{1}{2}A_{3,3}-A_{3,3}(-\frac{1}{2}+\frac{1}{2m})\neq 0
\]
\[
\Leftrightarrow-[(1-b_2^{-2})+\frac{\Theta_2}{m}(1-b_2^{-2m})](1+\Theta_2b_2^2)^m\neq [\frac{\Theta_2}{m}(1-b_2^{2m})+\frac{1}{b_2^2}(1-b_2^2)](\Theta_2+1)^m.
\]
According to \eqref{inequality 1}, \eqref{inequality 2} and \eqref{inequality 3},  the RHS is strictly positive and LHS is strictly negative.
From Lemma \ref{kernel} and \eqref{determinant_f}, $B_0b_3^{2m}+B_1b_3^2+B_2=0$. This gives $b_3^2\neq \frac{1+\Theta_2 b_2^2}{1+\Theta_2}=\frac{-\Theta_3b_3^2}{1+\Theta_2}$, then 
\begin{align}\label{non-zero2}
d_3 = (1+\Theta_2+\Theta_3)b_3\neq 0.
\end{align}
\end{proof}

\begin{proofprop}{Fredholm} The result follows immediately from Lemma~\ref{kernel} and \ref{Image_space}. Note that $v$ and $w$ are supported on the $m$th Fourier mode, thanks to \eqref{ker_element} and \eqref{def_Z}.
\end{proofprop}

\begin{lemma}\label{transversality_11}
$\partial_{\Theta_3} A(\Theta_3^*,0)[v] \notin \text{Im}(A(\Theta_3^*,0))$.
\end{lemma}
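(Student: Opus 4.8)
The transversality condition is a claim about the $m$-th Fourier mode only, since $v$ is supported on that mode (see \eqref{ker_element}) and $A(\Theta_3^*,0)$, $\partial_{\Theta_3}A(\Theta_3^*,0)$ act mode-by-mode through the matrices $M_n$ (see \eqref{matrix_form1}). Concretely, by the characterization of $\text{Im}(A(\Theta_3^*,0))$ in Lemma~\ref{Image_space} via \eqref{def_Z}, the statement $\partial_{\Theta_3}A(\Theta_3^*,0)[v]\notin\text{Im}(A(\Theta_3^*,0))$ is equivalent to showing that the vector
\[
w_1 := \partial_{\Theta_3}M_1(\Theta_3^*,b_3)\,\colvec{v^1 \\ v^2 \\ 1}
\]
does \emph{not} lie in the column space $\text{span}\{c_1,c_2\}$ of $M_1(\Theta_3^*,b_3)$, where $\colvec{v^1 \\ v^2 \\ 1}\in\text{Ker}(M_1(\Theta_3^*,b_3))$. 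Equivalently, letting $w=\frac{1}{|c_1\times c_2|}(c_1\times c_2)$ be the generator of $\text{Im}(M_1(\Theta_3^*))^\perp$ from \eqref{image_perp_element}, transversality is the single scalar inequality
\[
w\cdot\Bigl(\partial_{\Theta_3}M_1(\Theta_3^*,b_3)\,v\Bigr)\neq 0.
\]
So the first step is to reduce to this finite-dimensional linear-algebra statement.

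\textbf{Computing the derivative.} The entries of $M_n(\Theta_3,b_3)$ in \eqref{mn_formula} depend on $\Theta_3$ both explicitly and through $b_3=b_3(\Theta_3)$, tied by the relation $1+\Theta_2 b_2^2+\Theta_3 b_3^2=0$ from \eqref{singular}. The second step is therefore to differentiate: from $1+\Theta_2 b_2^2+\Theta_3 b_3^2=0$ one gets $b_3^2 + 2\Theta_3 b_3\,\partial_{\Theta_3}b_3 = 0$, i.e. $\partial_{\Theta_3}b_3 = -\frac{b_3}{2\Theta_3}$, and then $\partial_{\Theta_3}M_1$ is computed entry by entry using the chain rule. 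A cleaner route, which I would prefer, is to keep $(\Theta_3,b_3)$ as independent variables subject to the constraint: parametrize the constraint surface and note that $\det M_1=0$ is a curve in the $(\Theta_3,b_3)$-plane while the constraint is another curve; transversality of the bifurcation is essentially the statement that these two curves are \emph{not} tangent at $(\Theta_3^*,b_3)$. This can be made precise: writing $f(b_3)=\frac{2mb_2}{b_3}\det M_1(\Theta_3,b_3)$ as in \eqref{determinant_f} (with $\Theta_3$ eliminated via the constraint), the sign conditions \eqref{inequality 1}--\eqref{inequality 5} already show $f'(b_3)\neq 0$ at the root (Descartes' rule gave a \emph{simple} root), and this non-vanishing of $f'$ is exactly what powers the transversality.

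\textbf{Carrying it out.} Explicitly: let $M_1(\Theta_3,b_3)$ be as in \eqref{M_1} but now viewing it as a function of two variables, and let $v(\Theta_3,b_3)\in\text{Ker}(M_1)$ be the kernel vector normalized as in \eqref{ker_element}, defined on the curve $\{\det M_1=0\}$. Along that curve, differentiating $M_1 v = 0$ gives $(dM_1)v + M_1(dv)=0$, so $(dM_1)v\in\text{Im}(M_1)$ for \emph{any} tangent direction $d$ to the zero-determinant curve. Thus $w\cdot(\partial M_1)v\neq 0$ precisely when the constraint direction $\partial_{\Theta_3}(\text{with }b_3=b_3(\Theta_3))$ is transverse to that curve, which holds iff the constraint $1+\Theta_2 b_2^2+\Theta_3 b_3^2=0$ is not tangent to $\{\det M_1=0\}$ at the intersection point. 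Using $f$ from \eqref{determinant_f}: the constraint curve is $\Theta_3 = -(1+\Theta_2 b_2^2)/b_3^2$, and along it $\det M_1$ is (up to the nonzero factor $\frac{b_3}{2mb_2}$) equal to $f(b_3)$; since $f$ has a simple zero at $b_3$ (Descartes' rule, as the coefficient signs $B_0>0$, $B_1>0$, $B_2<0$ give exactly one sign change), we have $f'(b_3)\neq 0$, hence $\frac{d}{d\Theta_3}\det M_1(\Theta_3,b_3(\Theta_3))\neq 0$ along the constraint, which is exactly $w\cdot(\partial_{\Theta_3}M_1)v\neq 0$ after unwinding that $\det M = w\cdot M v'$-type identity (where $v'$ is the kernel of $M^\top$, but for our $3\times3$ matrix the formula $\partial_{\Theta_3}\det M_1 = (\text{left kernel})^\top (\partial_{\Theta_3}M_1)(\text{right kernel})\cdot\|\cdot\|$ up to normalization does the job).

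\textbf{Main obstacle.} The conceptual content is routine — a determinant has a simple zero, so the implicit function theorem / Crandall--Rabinowitz transversality follows — and the real work is bookkeeping: correctly identifying that $w$ (the cross product $c_1\times c_2$) plays the role of the left null vector of $M_1(\Theta_3^*)$, so that $w\cdot(\partial_{\Theta_3}M_1)v = \frac{d}{d\Theta_3}\det M_1$ up to a nonzero scalar, and then verifying that the $\Theta_3$-derivative is taken \emph{along the constraint} $1+\Theta_2b_2^2+\Theta_3 b_3^2=0$ so that it matches $f'(b_3)\,\partial_{\Theta_3}b_3 \neq 0$. The one genuine thing to check is that $f'(b_3)\neq 0$: this follows because $f$ is a polynomial with exactly one positive real root (Descartes' rule applied to the sign pattern $(+,+,-)$ established in Lemma~\ref{kernel}), and a polynomial with a single sign change in its coefficients, having a root, has that root simple — alternatively, one computes $f'$ directly and uses $b_3<b_2$ together with \eqref{inequality 1}--\eqref{inequality 5}. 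I expect the bulk of the proof to be this last verification plus the identity bookkeeping; there is no analytic difficulty.
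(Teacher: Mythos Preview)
Your reduction to the scalar condition $w\cdot(\partial_{\Theta_3}M_1)v\neq 0$ matches the paper, but from there your argument diverges from the paper's and is in fact cleaner. The paper proceeds by brute force: it writes out $\partial_{\Theta_3}M_1(\Theta_3^*)$ explicitly (using $\frac{db_3}{d\Theta_3}=-\frac{b_3}{2\Theta_3^*}$), expresses $\tilde v$ and $\tilde w$ through the cofactor $CM_{33}$, multiplies everything out, and after a page of algebra arrives at a \emph{new} polynomial $g(b_3)=C_0 b_3^{2m}+C_1 b_3^2+C_2$ whose coefficients are all shown to be strictly negative under \eqref{parameter_3}; Descartes then gives $g(b_3)\neq 0$. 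Your route instead invokes Jacobi's formula $\partial_{\Theta_3}\det M_1=\text{tr}(\text{adj}(M_1)\,\partial_{\Theta_3}M_1)$, uses that at a rank-2 point $\text{adj}(M_1)$ is a nonzero multiple of $v\,w^{\!\top}$ (the left null vector $w$ being $c_1\times c_2$, since $c_3\in\text{span}\{c_1,c_2\}$), and thereby identifies $w\cdot(\partial_{\Theta_3}M_1)v$ with a nonzero multiple of $\frac{d}{d\Theta_3}\det M_1$ along the constraint. The latter equals a nonzero multiple of $f'(b_3)$, and since $f'(b_3)=2mB_0 b_3^{2m-1}+2B_1 b_3>0$ (from $B_0,B_1>0$ in Lemma~\ref{kernel}), you are done --- no new sign analysis required. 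Your approach recycles the work already done in Lemma~\ref{kernel} and avoids introducing the auxiliary polynomial $g$; the paper's computation, while longer, has the virtue of being completely explicit and not relying on the adjugate identity. Both are correct.
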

\begin{proof}
From \eqref{matrix_form1}, it follows that 
\[
\partial_{\Theta_3} A(\Theta^*_3,0)[h] = Q,
\]
where
\[
Q:=\sum_{n\ge 1}\begin{pmatrix} q_{1,n} \\ q_{2,n} \\ q_{3,n}\end{pmatrix}\sin(nm x), \quad h=\sum_{n\ge 1}\begin{pmatrix} h_{1,n} \\ h_{2,n} \\ h_{3,n}\end{pmatrix}\cos(nm x), \quad  \begin{pmatrix} q_{1,n} \\ q_{2,n} \\ q_{3,n}\end{pmatrix} = (-mn)\partial_{\Theta_3}M_{n}(\Theta^*_3)
 \begin{pmatrix}h_{1,n} \\ h_{2,n} \\ h_{3,n} \end{pmatrix} 
\]
Let us write 
\begin{align}\label{derivative matrix}
 M_1(\Theta_3^*) = 
\left(
\begin{array}{ccc}
 a_{11} & a_{12} & a_{13}\\
a_{21} &a_{22} &a_{23}\\
a_{31}&a_{32}& a_{33}
\end{array}
\right), \quad CM_{33}=\left(
\begin{array}{cccc}
a_{11} & a_{12} \\
a_{21} &a_{22}
\end{array}
\right),  \quad \partial_{\Theta_3} M_1(\Theta_3^*) =\left(
\begin{array}{cccc}
0 & 0 & b_{13}\\
0 &0 &b_{23}\\
b_{31}&b_{32}& b_{33}
\end{array}
\right),
\end{align}
where the vanishing elements in $\partial_{\Theta_3} M_1(\Theta_3^*)$ follows from  \eqref{mn_formula}. Note that
\[
\text{det}(CM_{33}) = a_{11}a_{22} -a_{12}a_{21} = \frac{1}{2m}\left(-\frac{1}{2}d_2 + \frac{\Theta_2b_2}{2m}(1-b_2^{2m}) \right), \quad \text{ where $d_2$ is as in \eqref{claim_diagonal}.}
\]
From \eqref{d2negative}, we have
\[
-\frac{1}{2}d_2 + \frac{\Theta_2b_2}{2m}(1-b_2^{2m})  = \frac{1-b_2^2}{2b_2} + \frac{\Theta_2b_2}{2m}(1-b_2^{2m}) > \frac{1-b_2^2}{2b_2} + \frac{b_2^2-1}{2} = \frac{1-b_2^2}{2}\left( \frac{1}{b_2} - 1\right) > 0,
\]
where the first inequality follows from \eqref{parameter_3}, and the last inequality follows from $0 < b_2 < 1$. Hence,
\begin{align}\label{cm33det}
\text{det}(CM_{33}) > 0,
\end{align} and  $CM_{33}$ is invertible.
Since $ \text{Im}(M_1(\Theta^*_3))^{\perp}=\text{Ker}(M_1(\Theta^*_3)^T)$, we can choose
\begin{align*}
\tilde{v}=\left(
\begin{array}{ccc}
-(CM_{33})^{-1}\left(\begin{array}{ccc}
a_{13}\\
a_{23}
\end{array}\right)\\
1
\end{array}
\right), \quad \tilde{w}=\left(
\begin{array}{ccc}
-(CM_{33}^{T})^{-1}\left(\begin{array}{ccc}
a_{31}\\
a_{32}
\end{array}\right)\\
1
\end{array}
\right),
\end{align*}
so that $\tilde{v}\in \text{Ker}A$, and $\tilde{w}\in (\text{Im} A)^{\perp}$.
Then $\partial_{\Theta_3} A(\Theta_3^*,0)[v] \notin \text{Im}(A(\Theta_3^*,0))$ is equivalent to have
\[
\partial_{\Theta_3}M_z(\Theta_3,0)\tilde{v} \cdot \tilde{w} \neq 0.
\]
According to \eqref{derivative matrix}, it is equivalent to have
\begin{align}\label{trans}
-(b_{13}, b_{23})
{CM_{33}^T}^{-1}\left(
\begin{array}{ccc}
a_{31}\\a_{32}
\end{array} 
\right)-(b_{31}, b_{32}){CM_{33}}^{-1}
\left(
\begin{array}{ccc}
a_{13}\\a_{23}
\end{array}
\right)
+b_{33} \ne 0.
\end{align}
Using $\frac{db_3}{d\Theta_3}=-\frac{b_3}{2\Theta_3}$, which comes from $\Theta_3b_3^2+\Theta_2b_2^2+1=0$, we compute

\begin{align*}
{\partial_{\Theta_3}M_1(\Theta_3^*)}=
\left(
\begin{array}{ccc}
0&0&\frac{-(m-1)b_3^{m+1}}{4m}\\
0&0&\frac{-b_3^{m+1}(m-1)}{4mb_2^m}\\
-\frac{b_3^m}{4\Theta^*_3}&\frac{-b_3^m\Theta_2}{4\Theta_3^* b_2^{m-1}}&\frac{b_3}{2}(-\frac{1}{2}+\frac{1}{2m})+\frac{b_3}{4\Theta^*_3}+\frac{\Theta_2 b_3}{4\Theta^*_3}
\end{array}
\right).
\end{align*}
Then
\begin{align*}
(b_{13}, b_{23}){CM_{33}^T}^{-1}\left(
\begin{array}{ccc}
a_{31}\\a_{32}
\end{array} 
\right)=\frac{1}{\text{det}(CM_{33})}(\frac{-(m-1)}{4m}\frac{1}{2m})b_3^{2m+1}\left(1,\frac{1}{b_2^m}\right)
\left(
\begin{array}{ccc}
\frac{1}{2m}\Theta_2 b_2-\frac{1}{2}b_2+\frac{1}{2b_2}&-\frac{b_2^m}{2m}\\
-\frac{\Theta_2 b_2^{m+1}}{2m}&\frac{1}{2m}
\end{array}
\right)
\left(
\begin{array}{ccc}
1\\
\frac{\Theta_2}{b_2^{m-1}}
\end{array}
\right),
\end{align*}
\begin{align*}
\left(b_{31}, b_{32}\right)
{CM_{33}}^{-1}\left(
\begin{array}{ccc}
a_{13}\\a_{23}
\end{array} 
\right)=\frac{1}{\text{det}(CM_{33})}\frac{\Theta^*_3}{2m}(-\frac{b_3^{2m+1}}{4\Theta^*_3})\left(1,\frac{\Theta_2}{b_2^{m-1}}\right)
\left(
\begin{array}{ccc}
\frac{1}{2m}\Theta_2 b_2-\frac{1}{2}b_2+\frac{1}{2b_2}&-\frac{\Theta_2 b_2^{m+1}}{2m}\\
-\frac{b_2^m}{2m}&\frac{1}{2m}
\end{array}
\right)
\left(
\begin{array}{ccc}
1\\
\frac{1}{b_2^{m}}
\end{array}
\right).
\end{align*}
So \eqref{trans} is equivalent to
\begin{align*}
\frac{1}{\text{det}(CM_{33})}(\frac{m-1}{8m^2}+\frac{1}{8m})b_3^{2m+1}\left(1,\frac{1}{b_2^m}\right)
\left(
\begin{array}{ccc}
\frac{1}{2m}\Theta_2 b_2-\frac{1}{2}b_2+\frac{1}{2b_2}&-\frac{b_2^m}{2m}\\
-\frac{\Theta_2 b_2^{m+1}}{2m}&\frac{1}{2m}
\end{array}
\right)
\left(
\begin{array}{ccc}
1\\
\frac{\Theta_2}{b_2^{m-1}}
\end{array}
\right)+b_{33}\neq 0.
\end{align*}
\begin{align*}
\Leftrightarrow(\frac{m-1}{8m^2}+\frac{1}{8m})b_3^{2m+1}\left(1,\frac{1}{b_2^m}\right)
\left(
\begin{array}{ccc}
\frac{1}{2m}\Theta_2 b_2-\frac{1}{2}b_2+\frac{1}{2b_2}&-\frac{b_2^m}{2m}\\
-\frac{\Theta_2 b_2^{m+1}}{2m}&\frac{1}{2m}
\end{array}
\right)
\left(
\begin{array}{ccc}
1\\
\frac{\Theta_2}{b_2^{m-1}}
\end{array}
\right)+\text{det}(CM_{33})b_{33}\neq 0.
\end{align*}

\begin{align*}
& \Leftrightarrow \text{det}(CM_{33})\left( (-\frac{1}{4}+\frac{1}{4m})b_3+\frac{b_3^3}{4(-1-\Theta_2 b_2^2)}+\frac{\Theta_2 b_3^3}{4(-1-\Theta_2 b_2^2)}\right) \\ & \ +(\frac{m-1}{8 m^2}+ \frac{1}{8m})b_3^{2m+1}(\frac{1}{2 b_2}-\frac{1}{2}b_2+\frac{\Theta_2}{2m b_2^{2m-1}}-\frac{b_2 \Theta_2}{2m})\neq 0,
\end{align*}
\begin{align}\label{nonzero_11}
\Leftrightarrow (\frac{2m-1}{8m^2})b_3^{2m}(\frac{1}{2b_2}-\frac{1}{2}b_2+\frac{\Theta_2}{2m b_2^{2m-1}}-\frac{b_2\Theta_2}{2m})+\text{det}(CM_{33})\frac{1+\Theta_2}{4(-1-\Theta_2 b_2^2)}b_3^2+\text{det}(CM_{33})(-\frac{1}{4}+\frac{1}{4m})\neq 0.
\end{align}
To show \eqref{nonzero_11}, let us write 
\begin{align*}
&C_0 := (\frac{2m-1}{8m^2})(\frac{1}{2b_2}-\frac{1}{2}b_2+\frac{\Theta_2}{2m b_2^{2m-1}}-\frac{b_2\Theta_2}{2m})\\
& C_1 :=\text{det}(CM_{33})\frac{1+\Theta_2}{4(-1-\Theta_2 b_2^2)} \\
& C_2 :=\text{det}(CM_{33})(-\frac{1}{4}+\frac{1}{4m}),
\end{align*}
then, it is enough to show that $g(b_3) : = C_0 b_3^{2m} + C_1 b_3^2 + C_2 \ne 0$. 
Note that we have
\begin{align*}
& \text{det}(CM_{33}) > 0, \quad \text{ from \eqref{cm33det}}\\
& (1+\Theta_2) < 0,  \quad \text { from \eqref{parameter_3} and $0<b_2<1$}\\
& 1 + \Theta_2b_2^2 < 0, \quad \text{ from \eqref{singular} and $\Theta^*_3 >0$},
\end{align*} thus, $C_1 < 0$. we also have $C_2<0$ since $m>1$. For $C_0$, we have
\begin{align*}
(\frac{1}{2 b_2}-\frac{1}{2}b_2+\frac{\Theta_2}{2m}(\frac{1}{b_2^{2m-1}}-b_2))&=\frac{b_2}{2}[\frac{1}{b_2^2}-1+\frac{\Theta_2}{m}(\frac{1}{b_2^{2m}}-1)]\\
&=\frac{b_2}{2}(\frac{1}{b_2^2}-1)[1+\frac{\Theta_2}{m}(1+\frac{1}{b_2^2}+...+\frac{1}{b_2^{2(m-1)}})]\\
&<\frac{b_2}{2}(\frac{1}{b_2^2}-1)(1+\frac{\Theta_2}{m}m)\\
& < 0,
\end{align*}
as $b_2<1$, $\Theta_2\leq \frac{-1}{b_2^2}<-1$, which follows from \eqref{parameter_3}. This yields that  $C_0<0$. 
Therefore  the Descartes rule of signs implies that there is no positive root of $g(x)=0$, which implies $g(b_3) \ne 0$, that is \eqref{nonzero_11}. 
\end{proof}

\subsection{Analytic regularity of the boundary}
\begin{theorem}\label{ofc}
The solutions $R(s)\in (X^{k,m})^3$ constructed in Theorem~\ref{teoremaestacionarias2} are analytic when s is sufficiently small.
\end{theorem}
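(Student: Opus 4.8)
The plan is to bootstrap from the finite Sobolev regularity $R(s)\in(X^{k,m})^3$ obtained in Theorem~\ref{teoremaestacionarias2} to analyticity, by re-running a fixed-point/Crandall--Rabinowitz argument in the scale of analytic function spaces $X^{k,m}_c$ introduced in Section~\ref{functional_setting}, exactly the spaces used for the infinite-energy construction. The key structural point, which is special to the patch setting, is that for $i\ne j$ the integrand in $S_{ij}$ has no singularity (Remark~\ref{analyticity_of_S}), and for $i=j$ the logarithmic kernel can be handled as in \cite[Remark~2.1]{Castro-Cordoba-GomezSerrano:analytic-vstates-ellipses}: once $R_i\in X^{k,m}$ with $k\ge 3$ so that $\partial D_i$ is a $C^{2}$ curve with positive-definite first fundamental form, the functional $G$ (equivalently $\mathcal F$) extends to a real-analytic map between complex neighborhoods of $0$ in the analytic spaces $(X^{k,m}_{c})^3$ and $Y^{k+1,m}_c\times(Y^{k,m}_c)^2$ for $c>0$ sufficiently small. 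In other words, the same proof that established Steps~1--3 in Subsection~\ref{regularity_step} applies verbatim to $G$ in the analytic category.

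**Steps.** First I would fix the solution $(\Theta_3(s),R(s))$ with $s$ small and $R(s)\in(X^{k,m})^3$, $k\ge 3$. Since $R(s)$ solves $G(\Theta_3(s),R(s))=0$ and $G$ is $\frac{2\pi}{m}$-periodic, the $m$-th-mode Fourier coefficients of $R(s)$ automatically decay faster than any polynomial; I would then use the equation itself to upgrade this: rewrite the relation $u^\theta_j\,R_j' = $ (lower-order-in-$R_j'$ terms) componentwise, so that on each curve $R_j'$ is recovered from a contraction involving convolution-type operators with real-analytic-in-$x$ kernels (here one uses that $u^\theta_j$ stays bounded away from zero near the trivial solution, as recorded in the diagonal of $M_n$ in \eqref{mn_formula}, so that the leading operator is invertible). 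Second, I would set up the implicit function theorem (or, equivalently, re-derive the Crandall--Rabinowitz curve) in the analytic spaces: the map $\tilde G_s$ from \eqref{def_tilde_g} — or directly $G$ together with the constraint \eqref{Theta b relation} defining $b_3$ — is real-analytic on $(X^{k,m}_c)^3$ for small $c$, by the kernel analyticity discussed above and the fact that compositions, products and the analytic Newtonian-potential operators preserve $\mathcal C_w(c)$. Third, I would invoke uniqueness: the bifurcation curve in $(X^{k,m})^3$ and the bifurcation curve reconstructed in $(X^{k,m}_c)^3$ must coincide for small $s$, because both are characterized as the unique small nontrivial branch through $(\Theta^*_{3,m},0)$ with the prescribed projection $s v$ — this is exactly the parametrization $(\Theta_3(s),R(s))=(\Theta^*_3+v\cdot\tilde R(s), sv+(I-P)\tilde R(s))$ used in Section~\ref{Section4}. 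Hence $R(s)\in(X^{k,m}_c)^3$, which is precisely analyticity of $\partial D_i$ on a strip of width $c$.

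**Main obstacle.** The delicate point is the $i=j$ (self-interaction) term: the kernel $\log(2-2\cos(x-y))$ has a logarithmic singularity on the diagonal, so one cannot naively complexify $x$. The resolution — which is the technical heart of the argument and the reason the paper cites \cite[Remark~2.1]{Castro-Cordoba-GomezSerrano:analytic-vstates-ellipses} — is to split $G_j$ into a universal singular part (a Fourier multiplier of order $0$, hence acting boundedly on $X^{k,m}_c\to Y^{k,m}_c$ and manifestly preserving analytic class) plus a remainder whose kernel $\log\!\big(|z_j(x)-z_i(y)|^2/(2-2\cos(x-y))\big)$ is a real-analytic function of $(x,y,R_j(x),R_j(y),J(R)(x,y))$ that stays bounded away from its branch cut provided $\|R\|$ is small; this remainder therefore extends holomorphically in $x$ to $|\Im z|\le c$. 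Once this decomposition is in place — it is the only genuinely new estimate needed — the rest is the standard analytic implicit function theorem, and I would only need to verify that the nondegeneracy/transversality already proved in Proposition~\ref{Fredholm} and Lemma~\ref{transversality_11} persist (they do, since the linearized operator is the same matrix operator $M_n$, independent of the choice of $c$). I would therefore expect the write-up to be short: state the kernel-analyticity lemma, note that $G$ is real-analytic $(X^{k,m}_c)^3\to Y^{k+1,m}_c\times(Y^{k,m}_c)^2$, and conclude by the analytic IFT together with the uniqueness of the small branch.
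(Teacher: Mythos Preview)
Your approach has a genuine gap, and it is precisely the obstruction that forced the paper to use Nash--Moser instead of Crandall--Rabinowitz in Section~\ref{Section4}. You assert that ``$u^\theta_j$ stays bounded away from zero near the trivial solution, as recorded in the diagonal of $M_n$,'' but for the outermost boundary $j=1$ this is false in the finite-energy setting: the zero-mean constraint \eqref{Theta b relation} gives $1+\Theta_2b_2^2+\Theta_3b_3^2=0$, so the $(1,1)$ entry of $M_n$ in \eqref{mn_formula} collapses to $\frac{1}{2mn}$, and indeed $u^\theta_1(\Theta_3^*,0)\equiv 0$ (this is exactly the content of the key Lemma~\ref{zero mean} and Proposition~\ref{a_estimate11}). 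Consequently the leading-order operator on the first component is \emph{not} invertible, the linearized map $D_RG(\Theta_3^*,0)$ fails to be Fredholm between the natural spaces, and neither the implicit function theorem nor Crandall--Rabinowitz can be applied directly in $(X^{k,m}_c)^3$---for the same reason they could not be applied in $(X^{k,m})^3$. The paper makes this point explicitly in the introduction (``$D_R\mathcal F(\Theta^*,R^*)$ \ldots cannot be an isomorphism''). Your bootstrapping step, and with it the uniqueness argument identifying the analytic and Sobolev branches, therefore does not go through as written; at best one would have to redo the full Nash--Moser scheme in the analytic scale, which is far from a short write-up.

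The paper's proof takes a completely different and much shorter route: it abandons the contour-dynamics formulation and works instead with the stream function $\varphi=\omega*\mathcal N$, which solves piecewise-constant Poisson equations $\Delta\varphi=\text{const}$ on each layer. For the outermost boundary $\partial D_1$ one has $\varphi=c_1$ and, by Lemma~\ref{zero mean}, also $\nabla\varphi=0$ on $\partial D_1$; since $\partial D_1\in C^2$ from Theorem~\ref{teoremaestacionarias2}, the free-boundary regularity theorem of Kinderlehrer--Nirenberg yields analyticity of $\partial D_1$. For the inner interfaces $\partial D_2,\partial D_3$ the normal derivative $\partial_n\varphi$ is \emph{nonzero} at the bifurcation point (this is where the computations $d_2\ne 0$, $d_3\ne 0$ in \eqref{d2negative}, \eqref{non-zero2} enter), so the transmission-type free-boundary result of Kinderlehrer--Nirenberg--Spruck applies. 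Thus the degeneracy on $\partial D_1$ that breaks your argument is turned into an \emph{advantage} in the paper's approach: it supplies the extra (Neumann) boundary condition needed to invoke the one-phase analytic regularity theorem.
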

\begin{proof}
 We use \cite[Theorem 1]{Kinderlehrer-Nirenberg:regularity-free-boundary} and \cite[Theorem 3.1]{Kinderlehrer-Nirenberg-Spruck:regularity-elliptic-free-boundary} to prove the analyticity.
 Let the stream function
 \[\varphi(s)=\sum_{i=1}^{3}\Theta_i1_{D_i}*\frac{1}{2\pi}\log(\cdot).
 \]
 We first consider the outermost boundary $\partial$$D_1$. For any $x_1\in\partial{D}_1$, we have
 \[
 \Delta{\varphi}=\Theta_1,\ in \ D_1\backslash \overline{D_2},
 \]
 \[
 \varphi=c_1,\ on \ \partial D_1.
 \]
 From Lemma \ref{maximum}, we have
 \[
 \nabla\varphi=0, \ on \ \partial D_1.
 \]
Moreover, from Theorem \ref{teoremaestacionarias2}, we know $\partial D_1$ is in $C^2$. Thus $\varphi$ is also in $C^2(\overline{D_1}\backslash \overline{D_2})$. Then by \cite[Theorem 1]{Kinderlehrer-Nirenberg:regularity-free-boundary}, we have the analyticity of $\partial D_1$.

Now we consider $\partial D_2$. We have
\begin{align*}
 \Delta{\varphi}=\begin{cases}\Theta_2+\Theta_1,& in \ D_2\backslash \overline{D_3},\\
 \Theta_1,& in \ D_1\backslash\overline{D_2}
 \end{cases}
 \end{align*}
 \[
 \varphi=c_2,\ on \ \partial D_2.
 \]
 From Corollary \ref{integral_a0} and \eqref{d2negative}, at the bifurcation point, we have
 \begin{align*}
 &\partial_{n}\varphi(0)=\frac{1}{2\pi}\sum_{i=1}^{3}\Theta_i\int_{0}^{2\pi}\cos(x-y)b_i\log(b_i^2+b_j^2-2b_ib_j\cos(x-y))dy\\
 &=\frac{1}{2\pi}(-2\pi)(\Theta_1b_1\frac{b_2}{b_1}+\Theta_2b_2+\Theta_3b_3\frac{b_3}{b_2})=-(\Theta_1b_2+\Theta_2b_2+\Theta_3\frac{b_3^2}{b_2})\neq 0.
 \end{align*}
 Thus when s is sufficiently small, $\partial_{n}\varphi(x)\neq 0$.
 
 From Theorem \ref{teoremaestacionarias2}, we have $\partial {D_2}$ is in $C^2$.  So $\varphi$ is also in $C^2((\overline{D_2}\backslash D_3)\cap(\overline{D_1}\backslash D_2))\cap C^1(\overline{D_1})$.
 Then we can use \cite[Theorem 3.1]{Kinderlehrer-Nirenberg-Spruck:regularity-elliptic-free-boundary} to get the result.
 The analyticity of $\partial D_3$ can be shown in the same way by using \eqref{non-zero2} instead of \eqref{d2negative}.
 \end{proof}
 \appendix
\section{Appendix}
\subsection{Crandall--Rabinowitz Theorem}
We recall the Crandall-Rabinowitz theorem \cite{Crandall-Rabinowitz:bifurcation-simple-eigenvalues}, that plays a crucial role throughout this paper.

\begin{theorem}{\cite[Theorem 1.7]{Crandall-Rabinowitz:bifurcation-simple-eigenvalues}}\label{CRtheorem}
Let $X,Y$ be Banach spaces, $V$ a neighborhood of $0$ in $X$ and 
\[
F:(-1,1) \times V \mapsto Y
\]
have the properties:
\begin{enumerate}
\item $F(t,0) = 0$ for $|t|<1$,
\item The Gateaux derivatives of $F$, $\partial_t F$ $DF$, and $\partial_tDF$ exist and are continuous,
\item $\text{Ker}(DF(0,0))$ and $\text{Im}(DF(0,0))^{\perp}$ are one-dimensional,
\item $\partial_t DF(0,0)[x_0] \notin \text{Im}(DF(0,0))$, where
\[
\text{Ker}(DF(0,0)) = \text{span}\left\{ x_0 \right\}.
\] 
\end{enumerate}

 Then there is a neighborhood $U$ of $(0,0) \subset \RR \times X$, an interval $(-a,a)$ and continuous functions $\varphi:(-a,a)\mapsto \RR$, $\psi:(-a,a)\mapsto z$ such that $\varphi(0) = \psi(0)$ and 
 \[
 F(\varphi(\alpha),\alpha x_0 + \alpha \psi(\alpha)) = 0, \quad |\alpha| < a.
 \]
\end{theorem}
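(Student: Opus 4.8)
The plan is to carry out the classical Lyapunov--Schmidt reduction, reducing $F=0$ near the trivial branch to a single scalar equation to which the implicit function theorem applies.

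First I would record the standard preliminaries. Since the Gateaux derivative $DF$ exists near $(0,0)$ and is continuous there, $F$ is in fact continuously Fr\'echet differentiable (and $\partial_tF$, $\partial_tDF$ are genuine, continuous partial derivatives); this is what permits the use of the implicit function theorem. Write $L:=DF(0,0)$. As $\mathrm{Ker}\,L=\mathrm{span}\{x_0\}$ is one-dimensional it admits a closed complement $Z$, giving $X=\mathrm{span}\{x_0\}\oplus Z$; and as $\mathrm{Im}\,L$ is closed of codimension one (the meaning of $\dim\mathrm{Im}(L)^{\perp}=1$) I may fix $w_0\in Y\setminus\mathrm{Im}\,L$, write $Y=\mathrm{Im}\,L\oplus\RR w_0$, and let $P$ and $Q:=I-P$ be the associated continuous projections. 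Here $L|_Z:Z\to\mathrm{Im}\,L$ is a bounded linear bijection, hence an isomorphism by the open mapping theorem.

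Next I would eliminate the infinite-dimensional direction. Parametrising $x=\alpha x_0+y$ with $\alpha\in\RR$ and $y\in Z$ small, the equation $F(t,\alpha x_0+y)=0$ splits into $PF(t,\alpha x_0+y)=0$ and $QF(t,\alpha x_0+y)=0$. The $y$-derivative of $(t,\alpha,y)\mapsto PF(t,\alpha x_0+y)$ at the origin equals $P\,L|_Z=L|_Z$, an isomorphism onto $\mathrm{Im}\,L$; the implicit function theorem then yields a $C^1$ map $(t,\alpha)\mapsto y(t,\alpha)\in Z$, defined near $(0,0)$, with $y(0,0)=0$ and $PF(t,\alpha x_0+y(t,\alpha))=0$. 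Because $F(t,0)=0$ for every $t$, uniqueness forces $y(t,0)\equiv0$, hence $\partial_ty(t,0)\equiv0$; and differentiating $PF(t,\alpha x_0+y(t,\alpha))=0$ in $\alpha$ at the origin gives $L[\partial_\alpha y(0,0)]=0$, so $\partial_\alpha y(0,0)\in\mathrm{Ker}\,L\cap Z=\{0\}$.

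Then I would treat the scalar ``bifurcation equation'' $\Phi(t,\alpha):=QF(t,\alpha x_0+y(t,\alpha))=0$, valued in $\RR w_0\cong\RR$, noting $\Phi(t,0)=QF(t,0)=0$. Since $y$ is $C^1$ with $y(\cdot,0)\equiv0$, write $y(t,\alpha)=\alpha\,\psi_0(t,\alpha)$ with $\psi_0$ continuous; and since $\Phi(\cdot,0)\equiv0$, write $\Phi(t,\alpha)=\alpha\,\widetilde\Phi(t,\alpha)$. Using $\partial_ty(t,0)\equiv0$ and $\partial_\alpha y(0,0)=0$ one finds $\widetilde\Phi(0,0)=\partial_\alpha\Phi(0,0)=Q\,L[x_0+\partial_\alpha y(0,0)]=0$ and
\[
\partial_t\widetilde\Phi(0,0)=\partial_t\partial_\alpha\Phi(0,0)=Q\,\partial_tDF(0,0)\bigl[x_0+\partial_\alpha y(0,0)\bigr]=Q\,\partial_tDF(0,0)[x_0],
\]
which is nonzero exactly by the transversality hypothesis $\partial_tDF(0,0)[x_0]\notin\mathrm{Im}\,L$. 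A final application of the implicit function theorem to $\widetilde\Phi(t,\alpha)=0$ in the variable $t$ gives $a>0$ and a continuous $\varphi:(-a,a)\to\RR$ with $\varphi(0)=0$ and $\widetilde\Phi(\varphi(\alpha),\alpha)=0$. Setting $\psi(\alpha):=\psi_0(\varphi(\alpha),\alpha)\in Z$, I get $\psi$ continuous with $\psi(0)=\partial_\alpha y(0,0)=0=\varphi(0)$ and $\alpha x_0+y(\varphi(\alpha),\alpha)=\alpha x_0+\alpha\psi(\alpha)$, whence $F(\varphi(\alpha),\alpha x_0+\alpha\psi(\alpha))=0$ for $|\alpha|<a$; a small enough neighbourhood $U$ of $(0,0)$ completes the statement.

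The hard part is not the structure but the regularity bookkeeping, forced by the minimal hypotheses (only $C^1$-type differentiability, plus the mixed derivative $\partial_tDF$): one must justify the Gateaux-to-Fr\'echet upgrade, ensure that $y(t,\alpha)$ and the reduced map $\widetilde\Phi$ remain jointly continuous and $C^1$ in $t$ near the origin after the division by $\alpha$, and use the vanishing identities $y(\cdot,0)\equiv0$ and $\partial_\alpha y(0,0)=0$ to kill the ``second-order in $x$'' contributions that the hypotheses do not control, so that $\partial_t\widetilde\Phi(0,0)$ is exactly $Q\,\partial_tDF(0,0)[x_0]$. With those points handled, the two implicit function theorem applications are routine.
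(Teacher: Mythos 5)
The paper cites this theorem from the original Crandall--Rabinowitz reference rather than proving it; let me compare your outline against that proof. Crandall and Rabinowitz do \emph{not} perform a two-step Lyapunov--Schmidt reduction. They apply the implicit function theorem once, jointly in $(t,z)\in\RR\times Z$ with $Z$ a closed complement of $\mathrm{span}\{x_0\}$, to the rescaled map
\[
g(t,s,z):=\begin{cases}s^{-1}F(t,s(x_0+z)),&s\neq0,\\[2pt] DF(t,0)[x_0+z],&s=0,\end{cases}
\]
whose derivative $D_{(t,z)}g(0,0,0):(\tau,\zeta)\mapsto\tau\,\partial_tDF(0,0)[x_0]+L\zeta$ is a bijection onto $Y$ precisely by hypotheses (3) and (4). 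The point of dividing by $s$ \emph{before} invoking the IFT is that $g$, $g_t$ and $g_z$ are then controlled by exactly the quantities assumed to exist ($F$, $\partial_tF$, $DF$, $\partial_tDF$); no second $x$-derivative of $F$ ever appears.

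Your outline reverses the order --- first solve $PF(t,\alpha x_0+y)=0$ for $y(t,\alpha)$, then divide by $\alpha$ --- and here the regularity you defer as ``bookkeeping'' is a genuine obstruction that the vanishing identities do not remove. To run the second IFT you need $\partial_t\widetilde\Phi$ to exist and be continuous on a full neighbourhood of the origin. Differentiating $\partial_\alpha\Phi(t,\alpha)=Q\,DF\bigl(t,\alpha x_0+y(t,\alpha)\bigr)[x_0+\partial_\alpha y(t,\alpha)]$ in $t$ forces you through $Q\,D^2F(\cdot)[\partial_t y,\,x_0+\partial_\alpha y]$ and $Q\,DF(\cdot)[\partial_t\partial_\alpha y]$, and under the stated hypotheses neither $D^2F$ nor $\partial_t\partial_\alpha y$ is known to exist. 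The identities $y(\cdot,0)\equiv0$ and $\partial_\alpha y(0,0)=0$ kill these extra terms only \emph{at} $(0,0)$; they say nothing about differentiability of $\widetilde\Phi$ in $t$ for $\alpha\neq0$, which is what the implicit function theorem actually requires. The cure is not bookkeeping but a reordering: set $x=\alpha(x_0+z)$ and apply the first IFT to the already-rescaled auxiliary map $\widetilde G(t,\alpha,z):=\alpha^{-1}PF(t,\alpha(x_0+z))$ (with the obvious value at $\alpha=0$), so that $z(t,\alpha)=\alpha^{-1}y(t,\alpha)$ is $C^1$ directly, and only afterwards form the scalar map. That reordered two-step argument does close and is essentially a repackaging of the Crandall--Rabinowitz proof; your version, as written, does not establish the regularity the second IFT needs at the stated level of generality, although it would if $F$ were assumed $C^2$ in $x$.
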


\subsection{Useful integrals}

The following lemmas will deal with the integrals that appear throughout the calculation of the linear operator:

\begin{lemma}
\label{lemaexpansionlog}
Consider the expansion of the function $\log(A_0 + A_1 t + A_2 t^2)$. We have that:

\begin{align*}
\log(A_0 + A_1 t + A_2 t^2) \sim \log(A_0) + t \frac{A_1}{A_0} + t^{2}\left(\frac{2A_0A_2 - A_1^2}{2A_0^{2}}\right) + t^{3}\left(\frac{A_1^{3} - 3A_0A_1A_2}{3A_0^{3}}\right)
\end{align*}
\end{lemma}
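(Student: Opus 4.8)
\textbf{Proof proposal for Lemma~\ref{lemaexpansionlog}.}

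The statement is simply the Taylor expansion of $t\mapsto \log(A_0+A_1t+A_2t^2)$ about $t=0$ up to third order, under the implicit assumption $A_0\neq 0$ (so that the logarithm is defined and smooth near $t=0$). The plan is to reduce to the standard expansion of $\log(1+u)$. First I would factor out $A_0$, writing
\[
\log(A_0+A_1t+A_2t^2)=\log(A_0)+\log\!\left(1+\frac{A_1}{A_0}t+\frac{A_2}{A_0}t^2\right),
\]
and then set $u:=\frac{A_1}{A_0}t+\frac{A_2}{A_0}t^2$, which is $O(t)$ as $t\to 0$.

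Next I would substitute into the elementary series $\log(1+u)=u-\tfrac{u^2}{2}+\tfrac{u^3}{3}+O(u^4)$, valid for $|u|$ small, and collect powers of $t$ up to $t^3$. Concretely, $u = \frac{A_1}{A_0}t+\frac{A_2}{A_0}t^2$ contributes its two terms; $u^2 = \frac{A_1^2}{A_0^2}t^2 + 2\frac{A_1A_2}{A_0^2}t^3 + O(t^4)$; and $u^3 = \frac{A_1^3}{A_0^3}t^3 + O(t^4)$. Gathering the coefficient of $t^2$ gives $\frac{A_2}{A_0}-\frac12\frac{A_1^2}{A_0^2}=\frac{2A_0A_2-A_1^2}{2A_0^2}$, and the coefficient of $t^3$ gives $-\frac{A_1A_2}{A_0^2}+\frac13\frac{A_1^3}{A_0^3}=\frac{A_1^3-3A_0A_1A_2}{3A_0^3}$, which is exactly the claimed expansion.

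There is essentially no obstacle here: the only point requiring a word of care is that the expansion is asymptotic as $t\to 0$ (indicated by the $\sim$ in the statement), so no convergence claim is needed, only that the error is $O(t^4)$, which follows from the remainder term in Taylor's theorem applied to the smooth function $\log(1+u)$ composed with the polynomial $u(t)$. Alternatively, one could verify the coefficients directly by computing $\frac{d}{dt}$, $\frac{d^2}{dt^2}$, $\frac{d^3}{dt^3}$ of $\log(A_0+A_1t+A_2t^2)$ at $t=0$ and dividing by the appropriate factorials; I would mention this as the quickest self-contained route if one prefers not to invoke composition of series. In the write-up I would simply present the factorization and the substitution, state that collecting terms yields the result, and leave the routine arithmetic to the reader.
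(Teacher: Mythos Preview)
Your proposal is correct and takes essentially the same approach as the paper, which simply states that the result ``follows from the Taylor expansion.'' You have merely supplied the routine details of that expansion, which is entirely appropriate.
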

\begin{proof}
Follows from the Taylor expansion.
\end{proof}

%
%\begin{lemma}
%\label{lemaexpansionlog2}
%Consider the expansion of the function $\log(A_0 + A_1 s + A_2 t + A_3 st)$. We have that:
%
%\begin{align*}
%\log(A_0 + A_1 s + A_2 t + A_3 st) \sim \log(A_0) + s \frac{A_1}{A_0} + t \frac{A_2}{A_{0}} + st \left(\frac{A_0A_3 - A_1 A_2}{A_0^{2}}\right)
%\end{align*}
%\end{lemma}
%\begin{proof}
%Follows from the Taylor expansion.
%\end{proof}

\begin{lemma}\label{integral_a1}
Let $0 < r < 1$ and $\mathcal{A}_{1}(r,m)$ be

\begin{align*}
\mathcal{A}_{1}(r,m) = \int \frac{\cos(my)}{1+r^2 - 2r\cos(y)} dy
= \frac{2\pi}{1-r^2}r^{m}
\end{align*}
\end{lemma}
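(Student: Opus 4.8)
The statement is the classical Poisson-kernel type integral
\[
\mathcal{A}_{1}(r,m) = \int_{-\pi}^{\pi} \frac{\cos(my)}{1+r^2 - 2r\cos(y)}\, dy = \frac{2\pi}{1-r^2}\, r^{m},
\]
for $0<r<1$ and $m\ge 0$ an integer. The natural approach is to recognize the integrand as the real part of a generating function and expand in a geometric series. First I would recall the standard identity for the Poisson kernel: for $0<r<1$,
\[
\frac{1-r^2}{1+r^2-2r\cos(y)} = \sum_{n=-\infty}^{\infty} r^{|n|} e^{iny} = 1 + 2\sum_{n=1}^{\infty} r^{n}\cos(ny),
\]
where the series converges absolutely and uniformly in $y$ since $\sum r^{n}<\infty$. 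This identity itself follows from summing the two geometric series $\sum_{n\ge 0}(re^{iy})^n$ and $\sum_{n\ge 1}(re^{-iy})^n$ and simplifying, so if I want to be self-contained I would include that one-line computation.

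Next, dividing by $1-r^2$ and multiplying by $\cos(my)$, I would integrate term by term (justified by uniform convergence) against $\cos(my)$ over $[-\pi,\pi]$, using the orthogonality relations $\int_{-\pi}^{\pi}\cos(ny)\cos(my)\,dy = \pi\delta_{nm}$ for $n,m\ge 1$ and $\int_{-\pi}^{\pi}\cos(my)\,dy = 2\pi\delta_{m0}$. Only the $n=m$ term survives (for $m\ge 1$ it contributes $\frac{1}{1-r^2}\cdot 2 r^{m}\cdot \pi = \frac{2\pi r^m}{1-r^2}$; for $m=0$ the constant term contributes $\frac{1}{1-r^2}\cdot 2\pi = \frac{2\pi}{1-r^2}$, which matches $\frac{2\pi r^0}{1-r^2}$). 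This yields exactly $\mathcal{A}_1(r,m) = \frac{2\pi}{1-r^2}r^m$.

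There is essentially no obstacle here: the only thing to be careful about is the justification of the term-by-term integration, which is immediate from the Weierstrass $M$-test with $M_n = r^n$, and the separate treatment of the $m=0$ case (though the final formula is uniform in $m\ge 0$). An alternative one would mention is residue calculus: writing $y\mapsto e^{iy}=\zeta$ converts the integral to a contour integral over the unit circle of a rational function with a simple pole at $\zeta = r$ inside the disk (and a pole at $\zeta=1/r$ outside), and the residue computation gives the same answer; but the Fourier-series route is cleaner and is the one I would write up.
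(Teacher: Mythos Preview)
Your argument is correct. You expand the Poisson kernel as a Fourier series and pick off the $m$-th coefficient by orthogonality, with uniform convergence justifying term-by-term integration. The paper instead takes exactly the alternative you mention at the end: it sets $z=e^{iy}$, rewrites the integral as a contour integral over the unit circle, factors the denominator as $(z-r)(z-1/r)$, and applies the residue theorem at the simple pole $z=r$ inside the disk. Your Fourier-series route is arguably more elementary (no complex analysis needed, and the $m=0$ case falls out uniformly), while the residue computation is a one-liner once the substitution is made and makes the pole structure transparent; both are standard and equally short for this particular integral.
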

\begin{proof}
We use the residue theorem. We set $\gamma$ to be the boundary of the unit disk and let $z=e^{iy}$
\begin{align*}
\mathcal{A}_{1}(r,m) &= \int \frac{\cos(my)}{1+r^2 - 2r\cos(y)} dy =\frac{1}{ri}\int_{\gamma}\frac{z^m}{\frac{1+r^2}{r}-z-z^{-1}}\frac{1}{z}dz\\
&=-\frac{1}{ri}\int_{\gamma}\frac{z^m}{(z-r)(z-\frac{1}{r})}dz=-\frac{2\pi i}{ri}\frac{r^m}{r-\frac{1}{r}}=\frac{2\pi r^m}{1-r^2}.
\end{align*}\end{proof}

\begin{corollary}\label{integral_a0}
Let $0 < r \leq 1$ and $\mathcal{A}_{0}(r,m)$ be

\begin{align*}
\mathcal{A}_{0}(r,m) = \int \log(1+r^2 - 2r\cos(y))\cos(my) dy
= -\frac{2\pi}{m}r^{m}
\end{align*}
\end{corollary}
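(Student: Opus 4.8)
\textbf{Proof proposal for Corollary~\ref{integral_a0}.}
The plan is to reduce $\mathcal{A}_0(r,m)$ to the integral $\mathcal{A}_1(r,m)$ already evaluated in Lemma~\ref{integral_a1} via an integration by parts, which trades the logarithm for a rational function and the $\cos(my)$ for $\sin(my)/m$. First I would write
\[
\mathcal{A}_0(r,m) = \int_{-\pi}^{\pi} \log(1+r^2-2r\cos y)\cos(my)\,dy,
\]
and integrate by parts, differentiating the logarithm and integrating $\cos(my)$ to $\frac{1}{m}\sin(my)$. Since $\log(1+r^2-2r\cos y)$ is $2\pi$-periodic and smooth for $0<r<1$ (the argument stays in $[(1-r)^2,(1+r)^2]\subset(0,\infty)$), the boundary terms vanish, leaving
\[
\mathcal{A}_0(r,m) = -\frac{1}{m}\int_{-\pi}^{\pi} \frac{2r\sin y}{1+r^2-2r\cos y}\,\sin(my)\,dy.
\]

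Next I would simplify the new integrand. Using the product-to-sum identity $\sin y\,\sin(my) = \tfrac12\big(\cos((m-1)y)-\cos((m+1)y)\big)$, the integral becomes a combination of $\mathcal{A}_1(r,m-1)$ and $\mathcal{A}_1(r,m+1)$ up to the factor $2r$. Concretely,
\[
\mathcal{A}_0(r,m) = -\frac{r}{m}\Big(\mathcal{A}_1(r,m-1) - \mathcal{A}_1(r,m+1)\Big).
\]
Then I plug in $\mathcal{A}_1(r,j) = \frac{2\pi}{1-r^2}r^{j}$ from Lemma~\ref{integral_a1}, valid for $0<r<1$ and $j\ge 0$ (and I would note $\mathcal{A}_1(r,0)=\frac{2\pi}{1-r^2}$ handles the $m=1$ case), to get
\[
\mathcal{A}_0(r,m) = -\frac{r}{m}\cdot \frac{2\pi}{1-r^2}\big(r^{m-1}-r^{m+1}\big) = -\frac{2\pi}{m}\cdot\frac{r^m(1-r^2)}{1-r^2} = -\frac{2\pi}{m}r^m,
\]
as claimed. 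Finally, the endpoint case $r=1$ in the statement follows by continuity: $\log(2-2\cos y)=\log\big(4\sin^2(y/2)\big)$ is integrable against $\cos(my)$ near $y=0$, so dominated convergence (or a direct Fourier-series argument using $\log(2-2\cos y) = -2\sum_{k\ge 1}\frac{\cos(ky)}{k}$) gives $\mathcal{A}_0(1,m) = -\frac{2\pi}{m}$.

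The only genuinely delicate point is bookkeeping: making sure the index shift $m\mapsto m\pm 1$ does not produce a spurious case at $m=1$ (it does not, since $\mathcal{A}_1(r,0)$ is well-defined), and checking that the integration-by-parts boundary terms truly vanish (they do by $2\pi$-periodicity). Everything else is a routine residue computation inherited from Lemma~\ref{integral_a1}, so I expect no real obstacle.
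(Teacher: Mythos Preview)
Your proposal is correct and follows essentially the same approach as the paper: integrate by parts to trade the logarithm for the rational kernel, rewrite the resulting $\sin y\,\sin(my)$ via product-to-sum as $\tfrac12(\cos((m-1)y)-\cos((m+1)y))$, and then apply Lemma~\ref{integral_a1}; the endpoint $r=1$ is handled by continuity in both. Your write-up simply spells out the intermediate steps (boundary terms, the $m=1$ case, and a justification for the limit $r\to 1$) that the paper leaves implicit.
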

\begin{proof}
For $r\leq 1$, we integrate by parts and we have
\[
\mathcal{A}_{0}(r,m)=-\frac{r}{m}\int \frac{\cos((m-1)y)-\cos((m+1)y)}{1+r^2 - 2r\cos(y)} dy=-\frac{2\pi}{m}r^{m}
\]
For $r=1$, we use the continuity.
\end{proof}

\subsection{Integral estimates}\label{integral_estimates}

\begin{lemma}{\cite[Lemma 3.4]{Majda-Bertozzi:vorticity-incompressible-flow}}\label{ponce_kato} Let $f,g\in C^{\infty}(\mathbb{T}^d)$. Then,
\begin{align}\label{kato_ponce}
\rVert f g \rVert_{H^k(\mathbb{T}^d)} \lesssim_{k} \rVert f \rVert_{L^\infty(\mathbb{T}^d)} \rVert g \rVert_{H^{k}(\mathbb{T}^d)} + \rVert f \rVert_{H^k(\mathbb{T}^d)} \rVert g \rVert_{L^{\infty}(\mathbb{T}^d)} \quad \text{ for $k\in \mathbb{N}\cup \left\{ 0 \right\}$}.
\end{align}
\end{lemma}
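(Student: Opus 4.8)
\textbf{Proof proposal for Lemma~\ref{ponce_kato}.}
The statement is the classical Kato--Ponce (Moser-type) product estimate on the torus, so the plan is the standard Littlewood--Paley argument. First I would fix $k\ge 1$ (the case $k=0$ being trivial, since $\rVert fg \rVert_{L^2}\le \rVert f \rVert_{L^\infty}\rVert g \rVert_{L^2}$) and reduce to controlling $\rVert \partial^\alpha(fg)\rVert_{L^2}$ for every multi-index $\alpha$ with $|\alpha|=k$. By the Leibniz rule, $\partial^\alpha(fg)=\sum_{\beta\le\alpha}\binom{\alpha}{\beta}\partial^\beta f\,\partial^{\alpha-\beta}g$, so it suffices to bound $\rVert \partial^\beta f\,\partial^{\alpha-\beta}g\rVert_{L^2}$ for each $0\le|\beta|\le k$. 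The extreme terms $\beta=0$ and $\beta=\alpha$ are immediate: $\rVert f\,\partial^\alpha g\rVert_{L^2}\le \rVert f\rVert_{L^\infty}\rVert g\rVert_{H^k}$ and $\rVert \partial^\alpha f\,g\rVert_{L^2}\le \rVert f\rVert_{H^k}\rVert g\rVert_{L^\infty}$.

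For the intermediate terms $1\le|\beta|=j\le k-1$, I would apply H\"older's inequality with exponents $p=2k/j$ and $q=2k/(k-j)$, giving
\begin{align*}
\rVert \partial^\beta f\,\partial^{\alpha-\beta}g\rVert_{L^2} \le \rVert \partial^\beta f\rVert_{L^{p}}\rVert \partial^{\alpha-\beta}g\rVert_{L^{q}},
\end{align*}
and then invoke the Gagliardo--Nirenberg interpolation inequalities on $\mathbb{T}^d$,
\begin{align*}
\rVert \partial^\beta f\rVert_{L^{2k/j}} \lesssim_{k} \rVert f\rVert_{L^\infty}^{1-j/k}\rVert D^k f\rVert_{L^2}^{j/k}, \qquad \rVert \partial^{\alpha-\beta}g\rVert_{L^{2k/(k-j)}} \lesssim_{k} \rVert g\rVert_{L^\infty}^{j/k}\rVert D^k g\rVert_{L^2}^{1-j/k}.
\end{align*}
Multiplying these and using Young's inequality $a^{1-j/k}b^{j/k}\le a+b$ (with $a=\rVert f\rVert_{L^\infty}\rVert D^kg\rVert_{L^2}$, $b=\rVert D^kf\rVert_{L^2}\rVert g\rVert_{L^\infty}$) yields
\begin{align*}
\rVert \partial^\beta f\,\partial^{\alpha-\beta}g\rVert_{L^2} \lesssim_{k} \rVert f\rVert_{L^\infty}\rVert g\rVert_{H^k} + \rVert f\rVert_{H^k}\rVert g\rVert_{L^\infty}.
\end{align*}
Summing over $\beta$ and $\alpha$ and adding the $L^2$-bound of $fg$ itself gives $\rVert fg\rVert_{H^k}\lesssim_k \rVert f\rVert_{L^\infty}\rVert g\rVert_{H^k}+\rVert f\rVert_{H^k}\rVert g\rVert_{L^\infty}$, as claimed; a density argument extends it from $C^\infty$ to the relevant Sobolev classes, though here $f,g\in C^\infty(\mathbb{T}^d)$ is already assumed.

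The only genuinely non-routine ingredient is the Gagliardo--Nirenberg interpolation inequality on the torus (for functions with no zero-average hypothesis, one includes the lower-order term $\rVert f\rVert_{L^2}$ or, more conveniently, uses the homogeneous version after subtracting the mean, since $\partial^\beta f$ for $|\beta|\ge1$ is unaffected by the mean of $f$); this is entirely standard and I would simply cite it (e.g. from Majda--Bertozzi or a standard reference) rather than reprove it. Since the lemma as stated is itself quoted from \cite[Lemma 3.4]{Majda-Bertozzi:vorticity-incompressible-flow}, the cleanest route in the paper is to attribute it to that reference and only sketch the Leibniz--H\"older--Gagliardo-Nirenberg chain above for completeness.
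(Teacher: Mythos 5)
Your proposal is correct, and it is essentially the standard Leibniz--H\"older--Gagliardo--Nirenberg chain used to prove this Moser-type product estimate in the cited reference \cite[Lemma~3.4]{Majda-Bertozzi:vorticity-incompressible-flow}; the paper itself gives no proof and simply attributes the lemma to that source, so there is no divergence in approach to report.
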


\begin{lemma}{\cite{Adams-Fournier:sobolev-spaces-book}}\label{GNinterpolation}
Let $f\in C^{\infty}(\mathbb{T}^d)$. Then for  $0\le k\le n\in \mathbb{N}\cup\left\{ 0 \right\}$, it holds that
\[
\rVert \nabla^{k} f \rVert_{L^2(\mathbb{T}^d)} \lesssim_{k,n} \rVert f \rVert_{L^2(\mathbb{T}^d)}^{1-\frac{k}{n}}\rVert \nabla^n f \rVert_{L^2(\mathbb{T}^d)}^{\frac{k}{n}}.
\]
\end{lemma}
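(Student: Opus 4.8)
The plan is to pass to Fourier series on $\mathbb{T}^d$ and reduce the estimate to H\"older's inequality for sums; equivalently, the inequality records the log-convexity of the map $j\mapsto \rVert \nabla^j f\rVert_{L^2(\mathbb{T}^d)}^2$ on the integers. First I would write $f(x)=\sum_{\xi\in\Z^d}\hat f(\xi)e^{i\xi\cdot x}$ and observe that, by Plancherel's theorem together with the multinomial expansion of $|\xi|^{2j}$, for every integer $j\ge 0$ one has $\rVert \nabla^j f\rVert_{L^2(\mathbb{T}^d)}^2\asymp_{j,d}\phi(j)$, where $\phi(j):=\sum_{\xi\in\Z^d}|\xi|^{2j}|\hat f(\xi)|^2$ and we use the convention $|\xi|^0=1$ (including at $\xi=0$). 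The multiplicative constants depend only on $j$ and $d$ and are harmless in view of the symbol $\lesssim_{k,n}$ in the statement.

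Then I would dispose of the trivial cases: if $n=0$ then $k=0$ and there is nothing to prove, and if $k\in\{0,n\}$ with $n\ge1$ the inequality is immediate; so assume $1\le k\le n-1$. For every $\xi\in\Z^d$ one can factor
\[
|\xi|^{2k}|\hat f(\xi)|^2=\bigl(|\hat f(\xi)|^2\bigr)^{1-\frac kn}\bigl(|\xi|^{2n}|\hat f(\xi)|^2\bigr)^{\frac kn}
\]
(at $\xi=0$ both sides vanish since $k\ge1$). Summing over $\xi$ and applying H\"older's inequality with the conjugate exponents $p=\tfrac{n}{n-k}$ and $q=\tfrac nk$ yields $\phi(k)\le \phi(0)^{1-k/n}\phi(n)^{k/n}$. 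Taking square roots and undoing the equivalences of the first step then gives
\[
\rVert \nabla^k f\rVert_{L^2(\mathbb{T}^d)}\lesssim_{k,n}\phi(0)^{\frac12\left(1-\frac kn\right)}\phi(n)^{\frac{k}{2n}}\lesssim_{k,n}\rVert f\rVert_{L^2(\mathbb{T}^d)}^{1-\frac kn}\rVert \nabla^n f\rVert_{L^2(\mathbb{T}^d)}^{\frac kn},
\]
which is the desired inequality.

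There is no substantive obstacle here: this is the classical Gagliardo--Nirenberg interpolation inequality on a compact manifold, quoted from \cite{Adams-Fournier:sobolev-spaces-book}, and the only place requiring (minor) care is the bookkeeping at the zero Fourier mode, i.e.\ for constant functions, where the convention $x^0=1$ must be kept in mind, and the normalization constants relating $\rVert\nabla^j f\rVert_{L^2}$ to the multiplier $|\xi|^{2j}$. If one prefers to avoid Fourier analysis altogether, the same conclusion follows from the discrete log-convexity estimate $\rVert \nabla^k f\rVert_{L^2(\mathbb{T}^d)}^2\le \rVert \nabla^{k-1}f\rVert_{L^2(\mathbb{T}^d)}\,\rVert \nabla^{k+1}f\rVert_{L^2(\mathbb{T}^d)}$ --- obtained by integrating by parts and applying the Cauchy--Schwarz inequality --- iterated from $j=0$ up to $j=n$.
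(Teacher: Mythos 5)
Since the paper invokes this inequality as a textbook fact (citing Adams--Fournier) and does not supply a proof, your argument cannot be compared against an in-paper one; what you have written is a correct, self-contained proof via a standard route. Two small remarks. First, with the convention that $\nabla^{j}f$ is the full tensor of $j$-th partial derivatives, the relation $\rVert\nabla^{j}f\rVert_{L^{2}(\mathbb{T}^{d})}^{2}=\sum_{\xi\in\Z^{d}}|\xi|^{2j}|\hat f(\xi)|^{2}$ is in fact an \emph{equality}, because $\sum_{i_{1},\dots,i_{j}}|\xi_{i_{1}}\cdots\xi_{i_{j}}|^{2}=(\sum_{i}\xi_{i}^{2})^{j}=|\xi|^{2j}$; so the $\asymp$ in your first step can be promoted to $=$ and no multiplicative constants enter at all. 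Second, your alternative argument (integration by parts plus Cauchy--Schwarz gives $\rVert\nabla^{k}f\rVert_{L^{2}}^{2}\le\rVert\nabla^{k-1}f\rVert_{L^{2}}\,\rVert\nabla^{k+1}f\rVert_{L^{2}}$, hence the sequence $j\mapsto\log\rVert\nabla^{j}f\rVert_{L^{2}}$ is convex) is also complete once you observe that a finite sequence $(c_{j})_{j=0}^{n}$ of nonnegative reals with $c_{j}^{2}\le c_{j-1}c_{j+1}$ for $1\le j\le n-1$ automatically satisfies $c_{k}\le c_{0}^{1-k/n}c_{n}^{k/n}$; this is the discrete version of convexity of $\log c_{j}$ and can be proved by a short induction. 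Either route yields the lemma with no loss; in fact the constant can be taken to be $1$.
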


\begin{lemma}\label{pq_derivatives}
Let $f,g\in C^\infty(\mathbb{T})$ and $\sigma,p,q\in \mathbb{N}\cup \left\{ 0 \right\}$ satisfy $p+q=\sigma$ with $q\le \sigma-1$. Then, for each $k\ge 0$, it holds that
\begin{align*}
\|f^{(p)}g^{(q)}\|_{H^k(\mathbb{T})}
&\lesssim_{k,\sigma}\|f\|_{H^2(\mathbb{T})}\|g\|_{H^{k+\sigma-1}(\mathbb{T})}+\|f\|_{H^{k+\sigma}(\mathbb{T})}\|g\|_{H^1(\mathbb{T})}.
\end{align*}
\end{lemma}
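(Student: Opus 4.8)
The statement is a standard Gagliardo--Nirenberg-type interpolation inequality for a product of derivatives, and the plan is to reduce it to the product estimate of Lemma~\ref{ponce_kato} together with the interpolation inequality of Lemma~\ref{GNinterpolation}. First I would apply Lemma~\ref{ponce_kato} on $\mathbb{T}$ (with $d=1$) to the pair $f^{(p)}$ and $g^{(q)}$, obtaining
\[
\|f^{(p)}g^{(q)}\|_{H^k(\mathbb{T})} \lesssim_k \|f^{(p)}\|_{L^\infty(\mathbb{T})}\|g^{(q)}\|_{H^k(\mathbb{T})} + \|f^{(p)}\|_{H^k(\mathbb{T})}\|g^{(q)}\|_{L^\infty(\mathbb{T})}.
\]
Since $p+q=\sigma$, we have $\|f^{(p)}\|_{H^k(\mathbb{T})}\lesssim \|f\|_{H^{k+p}(\mathbb{T})}\le \|f\|_{H^{k+\sigma}(\mathbb{T})}$ and $\|g^{(q)}\|_{H^k(\mathbb{T})}\lesssim \|g\|_{H^{k+q}(\mathbb{T})}\le \|g\|_{H^{k+\sigma-1}(\mathbb{T})}$, using $q\le\sigma-1$. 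For the $L^\infty$ factors I would use the one-dimensional Sobolev embedding $\|u\|_{L^\infty(\mathbb{T})}\lesssim \|u\|_{H^1(\mathbb{T})}$, so that $\|f^{(p)}\|_{L^\infty(\mathbb{T})}\lesssim \|f\|_{H^{p+1}(\mathbb{T})}$ and $\|g^{(q)}\|_{L^\infty(\mathbb{T})}\lesssim \|g\|_{H^{q+1}(\mathbb{T})}\le \|g\|_{H^{\sigma}(\mathbb{T})}$.

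At this point one has a bound of the shape $\|f\|_{H^{p+1}}\|g\|_{H^{k+\sigma-1}} + \|f\|_{H^{k+\sigma}}\|g\|_{H^{\sigma}}$, which is not yet in the claimed endpoint form $\|f\|_{H^2}\|g\|_{H^{k+\sigma-1}} + \|f\|_{H^{k+\sigma}}\|g\|_{H^1}$ — the "interior" exponents $p+1$ and $\sigma$ must be traded against the two endpoints. This is where Lemma~\ref{GNinterpolation} enters: interpolating $\|f\|_{H^{p+1}}$ between $\|f\|_{H^2}$ and $\|f\|_{H^{k+\sigma}}$ (which is legitimate since $2\le p+1\le k+\sigma$ when $p\le\sigma$ and $k\ge 1$; the corner cases $p=0$, $p=\sigma$, or $k$ small I would check directly), and similarly interpolating $\|g\|_{H^{\sigma}}$ between $\|g\|_{H^1}$ and $\|g\|_{H^{k+\sigma-1}}$, gives
\[
\|f\|_{H^{p+1}}\|g\|_{H^{k+\sigma-1}} \lesssim \big(\|f\|_{H^2} + \|f\|_{H^{k+\sigma}}\big)\|g\|_{H^{k+\sigma-1}},
\]
and an analogous estimate for the second term; combining with Young's inequality to absorb cross terms of the form $\|f\|_{H^{k+\sigma}}^\theta\|f\|_{H^2}^{1-\theta}\|g\|_{H^{k+\sigma-1}}^\mu\|g\|_{H^1}^{1-\mu}$ into $\|f\|_{H^2}\|g\|_{H^{k+\sigma-1}} + \|f\|_{H^{k+\sigma}}\|g\|_{H^1}$ (the exponents conspire so that the total homogeneity in each of $f$ and $g$ is one, so Young applies with the split dictated by $\theta+\mu$) yields the claim.

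The only genuinely delicate point is the bookkeeping of the interpolation exponents and verifying that Young's inequality can indeed be applied to collapse every mixed term onto one of the two stated products; this is a purely algebraic check that the sum of the two interpolation parameters is $1$ in each mixed term, which holds precisely because $p+q=\sigma$. Edge cases ($k=0$, $\sigma=0$, $p=0$, $q=\sigma-1$ with $p=1$) should be dispatched separately and trivially, either directly from Lemma~\ref{ponce_kato} or because the claimed right-hand side already dominates. I would also remark that on $\mathbb{T}$ one may freely add $L^2$ norms to the right-hand side (Poincaré is not needed since we are not quotienting by constants here), so no hypothesis of zero mean is required. I expect the main obstacle to be nothing more than organizing the interpolation/Young step cleanly; there is no analytic difficulty.
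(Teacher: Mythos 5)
Your overall strategy (Lemma~\ref{ponce_kato} applied to $f^{(p)}$ and $g^{(q)}$, then interpolation plus Young) is a reasonable route and is different from the paper's, which instead first passes to $\|f'g\|_{H^{k+\sigma-1}}$ and applies Lemma~\ref{ponce_kato} only once, with no interpolation step. However, there is a concrete gap in your execution: the intermediate bound as you have coarsened it is strictly too weak to yield the statement. After Lemma~\ref{ponce_kato} and Sobolev embedding you actually have
\begin{align*}
\|f^{(p)}g^{(q)}\|_{H^k(\mathbb{T})}\;\lesssim\;\|f\|_{H^{p+1}(\mathbb{T})}\,\|g\|_{H^{k+q}(\mathbb{T})}\;+\;\|f\|_{H^{k+p}(\mathbb{T})}\,\|g\|_{H^{q+1}(\mathbb{T})},
\end{align*}
but you then relax $\|g\|_{H^{k+q}}$ up to $\|g\|_{H^{k+\sigma-1}}$ and $\|g\|_{H^{q+1}}$ up to $\|g\|_{H^{\sigma}}$. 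This discards exactly the slack needed in $g$: in the first summand $\|f\|_{H^{p+1}}\|g\|_{H^{k+\sigma-1}}$ the $g$ factor is already at the upper endpoint, so interpolating $\|f\|_{H^{p+1}}$ between $\|f\|_{H^2}$ and $\|f\|_{H^{k+\sigma}}$ produces a cross term $\|f\|_{H^{k+\sigma}}\|g\|_{H^{k+\sigma-1}}$ that cannot be absorbed into $\|f\|_{H^2}\|g\|_{H^{k+\sigma-1}}+\|f\|_{H^{k+\sigma}}\|g\|_{H^1}$. Concretely, with $f=g=e^{iNx}$, $p=\sigma\ge 2$, $q=0$, $k\ge 1$, the coarsened intermediate bound scales like $N^{k+2\sigma}$ while the claimed right-hand side scales like $N^{k+\sigma+1}$, so the implication from your intermediate estimate to the lemma is false.

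The closing remark that ``the exponents conspire so that the total homogeneity is one'' is the right observation, but it is only valid if you keep the exact orders that Lemma~\ref{ponce_kato} gives. Writing $p+1=2(1-\alpha)+(k+\sigma)\alpha$ and $k+q=(1-\beta)+(k+\sigma-1)\beta$ gives $\alpha=\frac{p-1}{k+\sigma-2}$, $\beta=\frac{k+q-1}{k+\sigma-2}$, and $\alpha+\beta=1$ precisely because $p+q=\sigma$; likewise $\gamma=\frac{k+p-2}{k+\sigma-2}$, $\delta=\frac{q}{k+\sigma-2}$ satisfy $\gamma+\delta=1$. Once $k+q$ is replaced by $k+\sigma-1$ one has $\beta=1$ and the identity fails for every $p\ge 2$. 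So the fix is simply not to coarsen: interpolate both factors from $\|f\|_{H^{p+1}}\|g\|_{H^{k+q}}$ and $\|f\|_{H^{k+p}}\|g\|_{H^{q+1}}$ directly, then Young's inequality applies exactly as you describe. The small values of $k$ and $\sigma$ (where $k+\sigma-2\le 0$ or $p+1>k+\sigma$) still need the separate, direct treatment you already anticipate.
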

\begin{proof}
Using Lemma \ref{ponce_kato} and the Sobolev embedding theorem, we have \begin{align*}
\|f^{(p)}g^{(q)}\|_{H^k(\mathbb{T})}&\leq \|f'g\|_{H^{k+\sigma-1}(\mathbb{T})}\\
&\lesssim_{k,\sigma}\|f'\|_{L^{\infty}(\mathbb{T})}\|g\|_{H^{k+\sigma-1}(\mathbb{T})}+\|f\|_{H^{k+\sigma}(\mathbb{T})}\|g\|_{L^{\infty}(\mathbb{T})}\\
&\lesssim_{k,\sigma}\|f\|_{H^2(\mathbb{T})}\|g\|_{H^{k+\sigma-1}(\mathbb{T})}+\|f\|_{H^{k+\sigma}(\mathbb{T})}\|g\|_{H^1(\mathbb{T})}.
\end{align*}
\end{proof}

\begin{lemma}\label{composition}
Let $F\in C^\infty_0(\RR^n)$ and $u\in (C^{\infty}(\mathbb{T}^2))^n$. If $\rVert u \rVert_{(H^2(\mathbb{T}^2))^n} \lesssim 1$, then for $k\ge 2$,
\begin{align}
\rVert F(u)\rVert_{H^{k}(\mathbb{T}^2)} \lesssim_{k} 1 + \rVert u \rVert_{(H^{k}(\mathbb{T}^2))^n}.\label{norm_estimate}
\end{align}
For $u_1,u_2\in (C^{\infty}(\mathbb{T}^2))^n$, if $\rVert u_1 \rVert_{(H^2(\mathbb{T}^2))^n} ,\rVert u_2 \rVert_{(H^2(\mathbb{T}^2))^n} \lesssim 1$ we have that for $k\ge 2$,
\begin{align}\label{lip_estimate_F}
\rVert F(u_1)-F(u_2)\rVert_{H^{k}(\mathbb{T}^2)} \lesssim_{k} (1+ \rVert u_1 \rVert_{(H^k(\mathbb{T}^2))^n}  + \rVert u_2 \rVert_{(H^k(\mathbb{T}^2))^n} )\rVert u_1 - u_2 \rVert_{(H^{k}(\mathbb{T}^2))^n}.
\end{align}
\end{lemma}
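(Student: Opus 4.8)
\textbf{Proof proposal for Lemma~\ref{composition}.}

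The plan is to prove \eqref{norm_estimate} first by the standard Gagliardo--Nirenberg--Moser composition argument, and then deduce \eqref{lip_estimate_F} from it by writing the difference $F(u_1)-F(u_2)$ as an integral of $DF$ along the segment joining $u_1$ and $u_2$. Throughout I would use the Sobolev embedding $H^2(\mathbb{T}^2)\hookrightarrow L^\infty(\mathbb{T}^2)$ (and more generally $H^s\hookrightarrow W^{j,\infty}$ for $s>j+1$), the Kato--Ponce product inequality from Lemma~\ref{ponce_kato}, and the Gagliardo--Nirenberg interpolation inequality from Lemma~\ref{GNinterpolation}.

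First I would establish \eqref{norm_estimate}. Since $F\in C_0^\infty(\RR^n)$, all derivatives of $F$ are bounded, so $\rVert F(u)\rVert_{L^2}\lesssim 1$. For the top-order term, apply $\nabla^k$ to $F(u)$ via the Fa\`a di Bruno / chain rule: $\nabla^k(F(u))$ is a finite sum of terms of the form $(\partial^\alpha F)(u)\prod_{i=1}^{j}\nabla^{k_i}u_{\ell_i}$ with $k_1+\dots+k_j=k$, $k_i\ge 1$. Each factor $(\partial^\alpha F)(u)$ is bounded in $L^\infty$. For the product of derivatives of $u$, estimate in $L^2$ by putting the single highest-order factor in $L^2$ and all others in $L^\infty$, then control the $L^\infty$ norms of the intermediate derivatives $\nabla^{k_i}u$ (with $k_i<k$) by Gagliardo--Nirenberg: $\rVert\nabla^{k_i}u\rVert_{L^\infty}\lesssim \rVert u\rVert_{H^2}^{1-\theta}\rVert u\rVert_{H^k}^{\theta}$ for an appropriate $\theta\in[0,1]$, which under the hypothesis $\rVert u\rVert_{H^2}\lesssim 1$ is bounded by $\rVert u\rVert_{H^k}^{\theta}\lesssim 1+\rVert u\rVert_{H^k}$. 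Multiplying the interpolation exponents, the total power of $\rVert u\rVert_{H^k}$ that appears is at most $1$ (this is the usual ``tame'' bookkeeping: the exponents $\theta_i$ satisfy $\sum\theta_i\le 1$ because $\sum k_i=k$ and each $k_i\le k$), giving $\rVert\nabla^k(F(u))\rVert_{L^2}\lesssim 1+\rVert u\rVert_{H^k}$; combined with the $L^2$ bound this yields \eqref{norm_estimate}. The requirement $k\ge 2$ is exactly what makes $H^2\hookrightarrow L^\infty$ available as the base space for interpolation.

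For \eqref{lip_estimate_F}, write
\[
F(u_1)-F(u_2)=\int_0^1 \nabla F\big(u_2+t(u_1-u_2)\big)\,dt\cdot (u_1-u_2),
\]
so $F(u_1)-F(u_2)=G(u_1,u_2)\,(u_1-u_2)$ where $G(u_1,u_2):=\int_0^1\nabla F(u_2+t(u_1-u_2))\,dt$ and $\nabla F\in C_0^\infty(\RR^n)$. The convex combination $u_2+t(u_1-u_2)$ has $H^2$-norm $\lesssim 1$ uniformly in $t$, so by the first part (applied with the smooth compactly supported function $\nabla F$, uniformly in $t$, and then integrating in $t$) one gets $\rVert G(u_1,u_2)\rVert_{H^k}\lesssim 1+\rVert u_1\rVert_{H^k}+\rVert u_2\rVert_{H^k}$. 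Finally apply Lemma~\ref{ponce_kato} to the product $G(u_1,u_2)\cdot(u_1-u_2)$:
\[
\rVert G(u_1,u_2)(u_1-u_2)\rVert_{H^k}\lesssim \rVert G(u_1,u_2)\rVert_{L^\infty}\rVert u_1-u_2\rVert_{H^k}+\rVert G(u_1,u_2)\rVert_{H^k}\rVert u_1-u_2\rVert_{L^\infty},
\]
and bound $\rVert G\rVert_{L^\infty}\lesssim 1$ (since $\nabla F$ is bounded) and $\rVert u_1-u_2\rVert_{L^\infty}\lesssim\rVert u_1-u_2\rVert_{H^2}\lesssim\rVert u_1-u_2\rVert_{H^k}$ (using $k\ge2$). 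This gives \eqref{lip_estimate_F}.

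The only mildly delicate point is the exponent bookkeeping in the Gagliardo--Nirenberg step of the first part: one must check that, summed over all factors appearing in a given term of the Fa\`a di Bruno expansion, the interpolation exponents of $\rVert u\rVert_{H^k}$ add up to at most $1$, so that no quadratic (or higher) power of $\rVert u\rVert_{H^k}$ is produced. This is classical (it is precisely the Moser-type ``tame estimate'' and follows from $\sum_i k_i=k$ with each $k_i\le k$, together with $\rVert u\rVert_{H^2}\lesssim1$), and I would state it as such rather than grinding through the multi-index combinatorics. Everything else is a direct application of the product and interpolation inequalities already recorded in the appendix.
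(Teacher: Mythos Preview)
Your proposal is correct and follows essentially the same route as the paper: Sobolev embedding $H^2(\mathbb{T}^2)\hookrightarrow L^\infty$, Fa\`a di Bruno plus a Moser-type tame product estimate for \eqref{norm_estimate}, and the integral representation $F(u_1)-F(u_2)=\int_0^1\nabla F(u_2+t(u_1-u_2))\,dt\cdot(u_1-u_2)$ combined with \eqref{norm_estimate} and Lemma~\ref{ponce_kato} for \eqref{lip_estimate_F}. The only cosmetic difference is that the paper invokes Lemma~\ref{ponce_kato} directly for the product of derivatives while you spell out the Gagliardo--Nirenberg interpolation bookkeeping; both lead to the same tame bound.
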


\begin{proof}
Since $\rVert u \rVert_{(L^{\infty}(\mathbb{T}^2))^n} \lesssim \rVert u \rVert_{(H^2(\mathbb{T}^2))^n}\lesssim 1$. Then \eqref{norm_estimate} follows straightforwardly from Lemma~\ref{ponce_kato} and Fa\'a di Bruno's formula. For \eqref{lip_estimate_F}, we have
\[
F(u_1(x)) - F(u_2(x) ) = \int_{0}^{1}\frac{d}{dt}F((u_1(x)-u_2(x))t + u_2(x))dt = \int_0^1 \nabla F((u_1(x)-u_2(x))t + u_2(x))\cdot (u_1(x)-u_2(x))dt.
\]
Thus equation \eqref{lip_estimate_F} follows from Lemma \ref{ponce_kato} and \eqref{norm_estimate}.
\end{proof}

\begin{lemma}\label{appendix_lem_2}
Let $K(x,y)\in C^{\infty}(\mathbb{T}^2)$ and $\tilde{K}(x,y):= \frac{K(x,y)-K(x,x)}{2\sin\left( \frac{x-y}{2}\right)}$. Then for $k \in \mathbb{N}\cup \left\{ 0 \right\}$, 
\[
\rVert \tilde{K} \rVert_{H^{k}(\mathbb{T}^2)} \lesssim_k \rVert K \rVert_{H^{k+1}(\mathbb{T}^2)}.
\]
\end{lemma}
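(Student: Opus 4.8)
\textbf{Proof proposal for Lemma~\ref{appendix_lem_2}.}

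The plan is to exploit the fact that $\tilde K(x,y)$ is, up to a smooth bounded factor, a first-order difference quotient of $K$ in the $y$-variable along the diagonal, and that such difference quotients do not lose derivatives in $H^k$. First I would write the finite-difference quotient in integral form: for fixed $x$,
\[
\frac{K(x,y)-K(x,x)}{y-x} = \int_0^1 (\partial_2 K)\bigl(x,\, x + t(y-x)\bigr)\, dt,
\]
and hence
\[
\tilde K(x,y) = \frac{y-x}{2\sin\!\left(\frac{x-y}{2}\right)} \int_0^1 (\partial_2 K)\bigl(x,\, x + t(y-x)\bigr)\, dt.
\]
The scalar prefactor $\frac{y-x}{2\sin((x-y)/2)}$ extends to a $2\pi$-periodic, real-analytic, nowhere-vanishing function of $y-x$, so it is smooth and bounded with all derivatives bounded on $\mathbb{T}^2$; by Lemma~\ref{ponce_kato} (the Kato--Ponce/product estimate) multiplication by it is bounded on $H^k(\mathbb{T}^2)$ with a constant depending only on $k$. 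So it suffices to control the $H^k(\mathbb{T}^2)$-norm of $G(x,y) := \int_0^1 (\partial_2 K)(x, x+t(y-x))\,dt$ by $\|K\|_{H^{k+1}(\mathbb{T}^2)}$.

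Next I would handle $G$ by a change of variables inside the integral. For each fixed $t\in[0,1]$, the map $\Phi_t(x,y) = (x,\, x+t(y-x)) = (x,\, (1-t)x + ty)$ is a linear (hence smooth, measure-preserving up to a constant Jacobian, and bounded with bounded inverse-independent constants) self-map of $\mathbb{T}^2$ when composed appropriately; more precisely, $(\partial_2 K)\circ \Phi_t$ is the pullback of the $H^k$ function $\partial_2 K$ under an affine map whose linear part and its inverse have norms bounded uniformly in $t\in[0,1]$. A standard change-of-variables estimate for Sobolev norms on the torus then gives
\[
\bigl\| (\partial_2 K)\circ \Phi_t \bigr\|_{H^k(\mathbb{T}^2)} \lesssim_k \| \partial_2 K \|_{H^k(\mathbb{T}^2)} \lesssim_k \| K \|_{H^{k+1}(\mathbb{T}^2)},
\]
uniformly in $t$. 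Finally, since the $H^k$-norm is itself defined by an $L^2$-type (Fourier) expression, Minkowski's integral inequality lets me pull the $t$-integral outside the norm:
\[
\| G \|_{H^k(\mathbb{T}^2)} = \Bigl\| \int_0^1 (\partial_2 K)\circ \Phi_t\, dt \Bigr\|_{H^k(\mathbb{T}^2)} \le \int_0^1 \bigl\| (\partial_2 K)\circ \Phi_t \bigr\|_{H^k(\mathbb{T}^2)}\, dt \lesssim_k \| K \|_{H^{k+1}(\mathbb{T}^2)}.
\]
Combining this with the boundedness of multiplication by the analytic prefactor yields $\|\tilde K\|_{H^k(\mathbb{T}^2)} \lesssim_k \|K\|_{H^{k+1}(\mathbb{T}^2)}$, as desired. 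The periodicity issue — that $x+t(y-x)$ must be read modulo $2\pi$ so that $\Phi_t$ is genuinely a torus map — is harmless since all quantities are $2\pi$-periodic in each slot; I would just note it once.

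The main obstacle I anticipate is purely bookkeeping rather than conceptual: making the change-of-variables Sobolev estimate $\|f\circ\Phi_t\|_{H^k}\lesssim_k\|f\|_{H^k}$ uniform in $t\in[0,1]$, since $\Phi_t$ degenerates in one direction as $t\to 0$ (at $t=0$, $(x,y)\mapsto(x,x)$ is not a diffeomorphism). To avoid this degeneracy I would not literally change variables for each $t$ separately, but instead differentiate $G$ directly: each $x$- or $y$-derivative falling on $(\partial_2 K)(x, x+t(y-x))$ produces either $(\partial_1\partial_2 K)(\cdots)$ or $t\,(\partial_2^2 K)(\cdots)$ or $(1-t)(\partial_2^2 K)(\cdots)$, all evaluated at the same argument, with polynomial-in-$t$ (hence bounded) coefficients; iterating $k$ times and then estimating the resulting $L^2$-norms by $\sup_t \| (\nabla^{k+1} K)(x, x+t(y-x)) \|_{L^2_{x,y}}$, where for each fixed $t>0$ the substitution $z = x+t(y-x)$ has Jacobian $t$ and for the troublesome small-$t$ regime one instead substitutes to keep the $y$-integral, giving $\|\cdot\|_{L^2} \lesssim \|\nabla^{k+1}K\|_{L^2(\mathbb{T}^2)}$ with a $t$-independent constant. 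Either route works; the direct-differentiation route is cleaner to make rigorous because it never requires inverting $\Phi_t$.
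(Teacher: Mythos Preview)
Your approach is essentially the paper's --- write the difference quotient via the fundamental theorem of calculus, peel off the factor $\frac{y-x}{2\sin((x-y)/2)}$, then estimate the remaining $t$-integral --- but there is one genuine gap. The prefactor $\frac{y-x}{2\sin((x-y)/2)}$ is \emph{not} a smooth $2\pi$-periodic function of $y-x$: with $s=y-x$, the function $s/(2\sin(-s/2))$ is analytic near $s=0$ but blows up at $s=\pm 2\pi$, and no choice of representative makes it even $C^1$ on $\mathbb{T}$ (the one-sided derivatives at $s=\pm\pi$ are $\pm\tfrac12$). So your product-estimate step is not valid globally on $\mathbb{T}^2$. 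The paper fixes exactly this by splitting $\mathbb{T}^2$ into three regions --- a neighborhood $D_1=\{|x-y|<\tfrac12\}$ of the diagonal, a neighborhood $D_2=\{||x-y|-2\pi|<\tfrac12\}$ of the ``wrapped'' diagonal, and the complement $D_3$ --- and only invokes your integral representation on $D_1$ (and its translate on $D_2$), where the prefactor \emph{is} smooth; on $D_3$ the denominator $\sin((x-y)/2)$ is bounded away from zero and the estimate is immediate. Once you insert this localization, your argument goes through.

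On the $t\to 0$ degeneracy you flag: it is actually harmless even for the Minkowski route, so your fallback is unnecessary. After $k$ differentiations you face $\int_0^1 p(t)\,(D^{k+1}K)(x,(1-t)x+ty)\,dt$ with $|p(t)|\lesssim 1$. For fixed $t\in(0,1]$ the substitution $z=(1-t)x+ty$ in the $y$-variable has Jacobian $t$, and the image $z$-interval has length $2\pi t$, hence lies inside one period; this gives
\[
\bigl\|(D^{k+1}K)\circ\Phi_t\bigr\|_{L^2([0,2\pi)^2)}\le t^{-1/2}\,\|D^{k+1}K\|_{L^2(\mathbb{T}^2)},
\]
and $t^{-1/2}$ is integrable on $[0,1]$. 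The paper instead converts the $t$-integral to a $z$-integral and closes with Cauchy--Schwarz plus Fubini (picking up an integrable logarithm), which is different bookkeeping for the same estimate.
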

\begin{proof}
We first divide the domain of the integration into three regions:
\begin{align*} 
&D_1:=\left\{ (x,y)\in \mathbb{T}^2 : |x-y| < 1/2 \right\},\\
&D_2:=\left\{ (x,y)\in \mathbb{T}^2 : |x-y| > 1/2, \quad ||x-y|-2\pi | <\frac{1}{2}\right\},\\
&D_3:=\left\{ (x,y)\in \mathbb{T}^2 : |x-y| > 1/2, \quad ||x-y|-2\pi | >\frac{1}{2}\right\}.
\end{align*}
Clearly, $\mathbb{T}^2 = D_1 \cup D_2 \cup D_3$. We will show only that
\begin{align}\label{appendix_claim2}
\int_{D_1}\left|D^{k} \tilde{K}(x,y)\right|^2dxdy \lesssim_{k} \rVert K \rVert_{H^{k+1}(\mathbb{T}^2)}^2,
\end{align}
since the other contributions from $D_2$ and $D_3$ can be easily proved in the same or easier way.

For $x,y\in D_1$, we can write
\[
\tilde{K}(x,y) = \int_{0}^1\partial_{u}\left(K(x,(x-y)u+y)\right)du\frac{1}{\sin\left( \frac{x-y}{2}\right)} = \int_0^{1}\partial_yK(x,(x-y)u+y)du\frac{(x-y)}{\sin(\frac{x-y}{2})}.
\]
Since $\frac{(x-y)}{\sin\left( \frac{x-y}{2}\right)}$ can be extended to a smooth function in $D_1$, we use Lemma~\ref{ponce_kato} and obtain
\begin{align*}
\int_{D_1}\left|D^{k} \tilde{K}(x,y)\right|^2dxdy  \lesssim \bigg\rVert  \int_0^{1}\partial_yK(x,(x-y)u+y)du\bigg\rVert_{H^{k}(\mathbb{T}^2)}^2.
\end{align*}
 The right-hand side can be estimated as
\begin{align*}
\int_{\mathbb{T}^2} \left| D^{k}  \int_0^{1}\partial_yK(x,(x-y)u+y)du|\right|^2 dxdy  &\lesssim \int_{\mathbb{T}^2} \left( \int_0^1 \left| D^{k} \partial_yK(x,(x-y)u + y) (1+u^k) \right| du \right)^2 dxdy \\
&  = \int_{\mathbb{T}^2} \left|\int_{y}^{x} \left| D^{k} \partial_yK(x,z)\right| \left( 1 + \left( \frac{(z-y)}{x-y}\right)^k \right) \frac{1}{x-y}dz \right|^2 dxdy \\
& \lesssim \int_{\mathbb{T}^2} \left| \int_y^x D^{k} \partial_yK(x,z)\frac{1}{|x-y|}dz \right|^2 dxdy,
\end{align*}
where the equality follows from the change of variables, $(x-y)u + u \mapsto z$. Using the Cauchy-Schwarz inequality, we obtain
\begin{align*}
\int_{\mathbb{T}^2} \left| \int_y^x D^{k} \partial_yK(x,z)\frac{1}{|x-y|}dz \right|^2 dxdy  & \lesssim \int_{\mathbb{T}^2}\int_{y}^x \left| D^{k} \partial_yK(x,z)\right|^2 dz \frac{1}{|x-y|}dxdy \\
& \lesssim \int_{(x,y,z)\in \mathbb{T}^3, y<z<x} \left|D^{k} \partial_yK(x,z)\right|^2\frac{1}{|x-y|} dxdydz \\
& \lesssim \int_{\mathbb{T}^2} \left|  D^{k} \partial_yK(x,z)\right|^2 \int_z^{2\pi} -\log \left( 1 - \frac{z}{x}\right) dx dz\\
& \lesssim  \int_{\mathbb{T}^2} \left| D^{k} \partial_yK(x,z)\right|^2 dz\\
& \lesssim \rVert K \rVert_{H^{k+1}(\mathbb{T}^2)}^2,
\end{align*}
where the second last inequality follows from the fact that $z\mapsto \int_z^{2\pi} -\log\left( 1 - \frac{z}{x}\right) dx$ is uniformly bounded. This finishes the proof.\end{proof}

For $R\in C^{\infty}(\mathbb{T})$, we denote
\begin{align}
& J(R)(x,y) = \frac{R(x) - R(y)}{2\sin(\frac{x-y}{2})}. \label{Jdef}
\end{align}

\begin{lemma}\label{appendix_lem_1}
Let $R\in C^{\infty}(\mathbb{T})$. Then, for $k\in \mathbb{N}\cup \left\{ 0 \right\}$, 
\begin{align}\label{J_estimate1}
 \int_{\mathbb{T}^2}\left|D^{k} J(R)(x,y)\right|^2dxdy \lesssim_{k} \rVert R \rVert_{H^{k+1}(\mathbb{T})}^2.
\end{align}
Consequently, $ \rVert J(R) \rVert_{H^{k}(\mathbb{T}^2)} \lesssim_{k} \rVert R \rVert_{H^{k+1}(\mathbb{T})}$.
Furthermore, we have
\begin{align}\label{Hilbert_transform}
\bigg\rVert \int_{\mathbb{T}}J(R)(\cdot,y) dy \bigg\rVert_{H^{k}(\mathbb{T})} \lesssim \rVert R \rVert_{H^{k}(\mathbb{T})}.
\end{align}
\end{lemma}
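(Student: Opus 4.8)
The plan is to obtain \eqref{J_estimate1} and its consequence as a direct corollary of Lemma~\ref{appendix_lem_2}, and to obtain \eqref{Hilbert_transform} by exploiting a cancellation in the $y$-integral.

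For the first part, I would observe that $J(R)$ is precisely the function $\tilde K$ of Lemma~\ref{appendix_lem_2} associated to the kernel $K(x,y):=-R(y)$: since $K(x,x)=-R(x)$ we have $K(x,y)-K(x,x)=R(x)-R(y)$, hence $\tilde K(x,y)=\frac{R(x)-R(y)}{2\sin((x-y)/2)}=J(R)(x,y)$. Because $K$ does not depend on $x$, only its $\partial_y$-derivatives survive, so $\|K\|_{H^{k+1}(\mathbb{T}^2)}\lesssim_k\|R\|_{H^{k+1}(\mathbb{T})}$. Applying Lemma~\ref{appendix_lem_2} then yields $\|J(R)\|_{H^k(\mathbb{T}^2)}\lesssim_k\|R\|_{H^{k+1}(\mathbb{T})}$, which is exactly \eqref{J_estimate1} together with the displayed consequence. (If a self-contained argument is preferred, the proof of Lemma~\ref{appendix_lem_2} can be repeated with the simplification that on the near-diagonal region $J(R)(x,y)=\frac{x-y}{2\sin((x-y)/2)}\int_0^1R'(y+t(x-y))\,dt$, so only $R'$ enters, and the regions away from the diagonal are handled by smoothness of the kernel there.)

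For \eqref{Hilbert_transform}, the gain of one derivative over \eqref{J_estimate1} should come from integrating by parts in $y$. Using the primitive $\partial_y\big(-\ln|\tan(\tfrac{x-y}{4})|\big)=\frac{1}{2\sin((x-y)/2)}$ (on a fixed fundamental domain for $x-y$) one finds that the endpoint terms carry the factor $R(x)-R(y)$, which vanishes to first order exactly where the logarithm is singular, so they drop out and
\[
\int_{\mathbb{T}}J(R)(x,y)\,dy=-\int_{\mathbb{T}}R'(y)\,g(x-y)\,dy
\]
for an explicit kernel $g$ with only logarithmic singularities. Convolution against such a kernel (equivalently, a kernel whose Fourier coefficients tend to zero) is a bounded, in fact smoothing, operator on the Sobolev scale, so the right-hand side is controlled in $H^k(\mathbb{T})$ by $\|R\|_{H^k(\mathbb{T})}$. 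As a cross-check I would also compute the multiplier symbol of $R\mapsto\int_{\mathbb{T}}J(R)(\cdot,y)\,dy$ directly from the closed form $\frac{\sin(nu/2)}{\sin(u/2)}=\sum_{l=0}^{n-1}e^{i(2l-n+1)u/2}$ and verify the bound mode by mode.

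The main obstacle is the bookkeeping near the diagonal. The kernel $1/(2\sin((x-y)/2))$ is only conditionally integrable and, unlike the even quantity $(2-2\cos(x-y))^{-1}$, is not itself $2\pi$-periodic in $y$ — it changes sign under $y\mapsto y+2\pi$ — so both the integration by parts and the symbol computation must be performed after fixing a fundamental domain, with the endpoint and branch contributions tracked carefully and shown to cancel or to be of lower order. Once that is arranged, the remaining estimates are routine: they reduce to Lemma~\ref{appendix_lem_2}, the product and interpolation inequalities already collected in the appendix (Lemmas~\ref{ponce_kato} and~\ref{GNinterpolation}), and standard facts about singular integrals and Fourier multipliers on $\mathbb{T}$.
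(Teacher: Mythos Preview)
Your approach for \eqref{J_estimate1} is exactly the paper's: it applies Lemma~\ref{appendix_lem_2} with $K(x,y)=R(y)$ (the sign is immaterial). For \eqref{Hilbert_transform} the paper says only that it ``follows from classical singular integral theory'' and omits the proof; your integration-by-parts reduction to a convolution with a logarithmic kernel, and the alternative direct multiplier computation via the Dirichlet-type identity, are both valid and more explicit realizations of that one-line appeal. The periodicity subtlety you flag for $1/(2\sin((x-y)/2))$ is real but harmless once a fundamental domain is fixed, exactly as you describe.
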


\begin{proof}
For \eqref{J_estimate1}, it follows from Lemma~\ref{appendix_lem_2} by putting $K(x,y)=R(y)$.  \eqref{Hilbert_transform} follows from classical singular integral theory, thus we omit the proof.
 \end{proof}

\begin{lemma}\label{GN}
Let $f,g\in C^{\infty}(\mathbb{T}^2)$ and let 
\[
T(x):=\int_{\mathbb{T}}f(x,y)g(x,y)dy.
\]
Then for $k\in \mathbb{N}\cup\left\{ 0\right\}$,
\begin{align*}
\rVert T \rVert_{H^{k}(\mathbb{T})} \lesssim \rVert f \rVert_{L^\infty(\mathbb{T}^2)}\rVert g \rVert_{H^k(\mathbb{T}^2)}+\rVert f \rVert_{H^k(\mathbb{T}^2)}\rVert g \rVert_{L^\infty(\mathbb{T}^2)}.
\end{align*}
\end{lemma}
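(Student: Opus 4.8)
\textbf{Proof plan for Lemma~\ref{GN}.} The statement is a routine ``Minkowski + Leibniz'' estimate for an integral operator with a smooth kernel, and the plan is to reduce it to the case $k=0$ by differentiating under the integral sign. First I would record the base case: by the Cauchy--Schwarz inequality in the $y$-variable,
\[
\rVert T \rVert_{L^2(\mathbb{T})}^2 = \int_{\mathbb{T}} \left| \int_{\mathbb{T}} f(x,y) g(x,y) \, dy \right|^2 dx \le \int_{\mathbb{T}} \left( \int_\mathbb{T} |f(x,y)|^2 dy \right)\left( \int_\mathbb{T} |g(x,y)|^2 dy \right) dx,
\]
which is bounded by $\rVert f \rVert_{L^\infty(\mathbb{T}^2)}^2 \rVert g \rVert_{L^2(\mathbb{T}^2)}^2$ (and symmetrically by $\rVert f \rVert_{L^2}^2 \rVert g \rVert_{L^\infty}^2$), giving the $k=0$ case.

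For general $k$, I would differentiate: since $T(x) = \int_\mathbb{T} f(x,y)g(x,y)\,dy$ and $f,g$ are smooth, $\frac{d^k}{dx^k}T(x) = \int_\mathbb{T} \partial_x^k\left( f(x,y)g(x,y)\right) dy = \sum_{j=0}^{k} \binom{k}{j} \int_\mathbb{T} \partial_x^j f(x,y)\, \partial_x^{k-j} g(x,y)\, dy$ by the Leibniz rule. Applying the $k=0$ estimate to each summand $\int_\mathbb{T} \partial_x^j f \cdot \partial_x^{k-j} g \, dy$, I get
\[
\bigg\rVert \int_\mathbb{T} \partial_x^j f \cdot \partial_x^{k-j}g\, dy \bigg\rVert_{L^2(\mathbb{T})} \lesssim \min\left\{ \rVert \partial_x^j f \rVert_{L^\infty(\mathbb{T}^2)} \rVert \partial_x^{k-j} g \rVert_{L^2(\mathbb{T}^2)}, \ \rVert \partial_x^j f \rVert_{L^2(\mathbb{T}^2)} \rVert \partial_x^{k-j} g \rVert_{L^\infty(\mathbb{T}^2)}  \right\}.
\]
Now for each $j$ I choose the bound that puts the larger number of derivatives in the $L^2$-norm and the smaller number in the $L^\infty$-norm, using the Sobolev embedding $W^{\lfloor k/2\rfloor, \infty}(\mathbb{T}^2) \hookrightarrow$ control by $H^{k}(\mathbb{T}^2)$ on the low-order term; more precisely, a Gagliardo--Nirenberg interpolation $\rVert \partial_x^\ell f \rVert_{L^\infty(\mathbb{T}^2)} \lesssim \rVert f \rVert_{L^\infty(\mathbb{T}^2)}^{1-\ell/k}\rVert f \rVert_{H^{k+2}\text{-type norm}}^{\ell/k}$ — or more simply, since only the $L^\infty$ and $H^k$ norms of $f,g$ appear on the right-hand side, it suffices to interpolate so that each product is controlled by $\rVert f \rVert_{L^\infty} \rVert g \rVert_{H^k} + \rVert f \rVert_{H^k} \rVert g \rVert_{L^\infty}$ after summing in $j$. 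This is the standard Moser/Kato--Ponce-type argument already invoked in the excerpt (cf.\ Lemma~\ref{ponce_kato}), applied with one of the variables being a parameter carried through Minkowski's integral inequality.

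The only mild subtlety — and the step I would be most careful with — is making sure the interpolation is done so that \emph{no} intermediate full-strength norm of $f$ or $g$ other than $L^\infty$ and $H^k$ appears; this is why the statement is phrased with exactly those two norms, and it is exactly the content of the standard commutator/product estimates. Concretely, for $0 < j < k$ one uses $\rVert \partial_x^j f\rVert_{L^\infty} \lesssim \rVert f \rVert_{L^\infty}^{1-j/k}\rVert \partial_x^k f \rVert_{L^\infty}^{j/k}$ is \emph{not} quite what is wanted (it has $\partial_x^k f$ in $L^\infty$); instead one interpolates in $L^2$: $\rVert \partial_x^j f \rVert_{L^2} \lesssim \rVert f \rVert_{L^2}^{1-j/k}\rVert \partial_x^k f \rVert_{L^2}^{j/k}$ together with the embedding, and Young's inequality collapses the mixed terms into $\rVert f\rVert_{L^\infty}\rVert g \rVert_{H^k} + \rVert f \rVert_{H^k}\rVert g\rVert_{L^\infty}$. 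Since $f,g\in C^\infty(\mathbb{T}^2)$ all differentiations under the integral are justified, and the case analysis over the three regions near the diagonal is not needed here because the kernel $fg$ has no singularity. This completes the plan.
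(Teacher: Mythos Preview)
Your closing remark --- ``the standard Moser/Kato--Ponce-type argument \ldots applied with one of the variables being a parameter carried through Minkowski's integral inequality'' --- is exactly the paper's proof, and it is the efficient route. The paper does it in four lines: Minkowski's integral inequality gives $\lVert T\rVert_{H^k(\mathbb{T})}\lesssim \int_{\mathbb{T}}\lVert f(\cdot,y)g(\cdot,y)\rVert_{H^k(\mathbb{T})}\,dy$; for each fixed $y$ the one-dimensional product estimate (Lemma~\ref{ponce_kato}) bounds the integrand by $\lVert f(\cdot,y)\rVert_{L^\infty}\lVert g(\cdot,y)\rVert_{H^k}+\lVert g(\cdot,y)\rVert_{L^\infty}\lVert f(\cdot,y)\rVert_{H^k}$; pulling out the sup in $y$ and applying Cauchy--Schwarz in $y$ to the remaining $H^k_x$-factor finishes it. No Leibniz expansion is needed.

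The Leibniz-rule route you spend most of the proposal on is a detour, and as written the interpolation step does not close. After the $k=0$ bound you are left with factors like $\lVert\partial_x^{j}f\rVert_{L^\infty(\mathbb{T}^2)}$ (or symmetrically for $g$), and for $j=k-1$ this is \emph{not} controlled by $\lVert f\rVert_{L^\infty(\mathbb{T}^2)}$ and $\lVert f\rVert_{H^k(\mathbb{T}^2)}$ alone: the required embedding $H^{1}(\mathbb{T}^2)\hookrightarrow L^\infty$ is exactly the borderline case that fails in two dimensions. The $L^2$-interpolation you propose at the end, $\lVert\partial_x^j f\rVert_{L^2}\lesssim\lVert f\rVert_{L^2}^{1-j/k}\lVert\partial_x^k f\rVert_{L^2}^{j/k}$, does not help by itself because its partner factor $\lVert\partial_x^{k-j}g\rVert_{L^\infty}$ is the same obstruction. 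The correct fix is the standard Moser argument --- H\"older with exponents $L^{2k/j}\times L^{2k/(k-j)}$ on $\mathbb{T}^2$ and then Gagliardo--Nirenberg on each factor --- but that amounts to re-proving Lemma~\ref{ponce_kato} inside this proof, which is why the paper simply quotes it.
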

\begin{proof}
By Minkowski's integral inequality and lemma \ref{ponce_kato}, We have
\begin{align*}
    \bigg\|\int_{\mathbb{T}}f(x,y)g(x,y)dy\bigg\|_{H^{k}(\mathbb{T})} & \lesssim\int_{\mathbb{T}}\|f(\cdot, y)g(\cdot,y)\|_{H^{k}(\mathbb{T})}dy\\
    & \lesssim\int_{\mathbb{T}}\|f(\cdot,y)\|_{L^{\infty}(\mathbb{T})}\|g(\cdot,y)\|_{H^{k}(\mathbb{T})}dy+\int_{\mathbb{T}}\|g(\cdot,y)\|_{L^{\infty}(\mathbb{T})}\|f(\cdot,y)\|_{H^{k}(\mathbb{T})}dy\\
    & \lesssim \|f\|_{L^{\infty}(\mathbb{T}^2)}\left(\int_{\mathbb{T}}\|g(\cdot,y)\|_{H^{k}(\mathbb{T})}^2 dy\right)^{\frac{1}{2}}+\|g\|_{L^{\infty}(\mathbb{T}^2)}\left(\int_{\mathbb{T}}\|f(\cdot,y)\|_{H^{k}(\mathbb{T})}^2 dy\right)^{\frac{1}{2}}\\
    & \lesssim \|f\|_{L^{\infty}(\mathbb{T}^2)}\|g\|_{H^k(\mathbb{T}^2)}+\|g\|_{L^{\infty}(\mathbb{T}^2)}\|f\|_{H^k(\mathbb{T}^2)}
\end{align*}
\end{proof}

\begin{lemma}\label{J_linear}
Let $K\in C^{\infty}(\mathbb{T}^2)$, ${K}^\#(x):=K(x,x)$ and $R\in C^\infty(\mathbb{T})$. Let
\[
T(R)(x):=\int_{\mathbb{T}} K(x,y)J(R)(x,y) dy.
\]
Then
%\color{olive}
%\[
%\| T(R)\|_{L^2(\mathbb{T})}\lesssim(\|\widehat{K}\|_{L^{\infty}(\mathbb{T})}+\sup_{x}\|\frac{K(x,y)-K(x,x)}{2\sin\left(\frac{x-y}{2}\right)}\|_{L^{2}(y)}\|R\|_{L^2(\mathbb{T})},
%\]
%\color{blue}
for $k\ge 2$, 

\[
\rVert T(R)\rVert_{H^{k}(\mathbb{T})} \lesssim_k (\rVert K \rVert_{H^{k+1}(\mathbb{T}^2)} + \rVert {K}^\# \rVert_{H^{k}(\mathbb{T})})\rVert R \rVert_{H^1(\mathbb{T})} + (\rVert {K}^\# \rVert_{L^\infty(\mathbb{T})} + \rVert \nabla K \rVert_{L^\infty(\mathbb{T}^2)})\rVert R \rVert_{H^{k}(\mathbb{T})}. 
\]
\end{lemma}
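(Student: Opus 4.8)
\textbf{Proof proposal for Lemma~\ref{J_linear}.}

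The plan is to reduce everything to the kernel estimate in Lemma~\ref{appendix_lem_2} (and its corollary Lemma~\ref{appendix_lem_1}) by peeling off the singular part of the integrand. First I would rewrite the kernel $K(x,y)$ near the diagonal as $K(x,y) = K^\#(x) + (K(x,y)-K^\#(x))$, where $K^\#(x) = K(x,x)$. This splits $T(R)$ into two pieces:
\[
T(R)(x) = K^\#(x)\int_{\mathbb{T}} J(R)(x,y)\,dy + \int_{\mathbb{T}} \frac{K(x,y)-K^\#(x)}{2\sin(\frac{x-y}{2})}(R(x)-R(y))\,dy =: T_1(R)(x) + T_2(R)(x).
\]
For $T_1$, note that $\int_{\mathbb{T}} J(R)(x,y)\,dy$ is (up to constants) the periodic Hilbert transform of $R'$-type object; by \eqref{Hilbert_transform} in Lemma~\ref{appendix_lem_1} it is bounded in $H^{k}(\mathbb{T})$ by $\rVert R \rVert_{H^{k}(\mathbb{T})}$, but more usefully its $H^1$ norm is controlled by $\rVert R \rVert_{H^1(\mathbb{T})}$. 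Then $T_1 = K^\# \cdot (\text{that integral})$, and an application of the Kato--Ponce-type product estimate (Lemma~\ref{ponce_kato}) together with the Sobolev embedding $H^2(\mathbb{T})\hookrightarrow L^\infty(\mathbb{T})$ gives
\[
\rVert T_1(R)\rVert_{H^{k}(\mathbb{T})} \lesssim \rVert K^\# \rVert_{L^\infty(\mathbb{T})}\Big\rVert \textstyle\int_{\mathbb{T}} J(R)(\cdot,y)dy\Big\rVert_{H^{k}(\mathbb{T})} + \rVert K^\# \rVert_{H^{k}(\mathbb{T})}\Big\rVert \textstyle\int_{\mathbb{T}} J(R)(\cdot,y)dy\Big\rVert_{L^\infty(\mathbb{T})},
\]
and since the $L^\infty$ norm of the Hilbert-transform term is $\lesssim \rVert R \rVert_{H^1(\mathbb{T})}$ (again by Sobolev embedding after \eqref{Hilbert_transform} at level $k=1$), this is bounded by $\rVert K^\# \rVert_{L^\infty}\rVert R \rVert_{H^{k}} + \rVert K^\# \rVert_{H^{k}}\rVert R \rVert_{H^1}$, which is absorbed into the claimed right-hand side.

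For $T_2$, the key observation is that $\dfrac{K(x,y)-K^\#(x)}{2\sin(\frac{x-y}{2})}$ is itself a smooth kernel: this is exactly the object $\tilde K$ of Lemma~\ref{appendix_lem_2}, so $\rVert \tilde K \rVert_{H^{k}(\mathbb{T}^2)} \lesssim \rVert K \rVert_{H^{k+1}(\mathbb{T}^2)}$, and moreover $\rVert \tilde K \rVert_{L^\infty(\mathbb{T}^2)} \lesssim \rVert \nabla K \rVert_{L^\infty(\mathbb{T}^2)}$ by the mean value theorem. Then $T_2(R)(x) = \int_{\mathbb{T}} \tilde K(x,y)\,(R(x)-R(y))\,dy = R(x)\int_{\mathbb{T}}\tilde K(x,y)dy - \int_{\mathbb{T}}\tilde K(x,y)R(y)dy$. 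Both of these are of the form ``smooth kernel integrated against $R$'', so I would apply Lemma~\ref{GN} (for the $R(x)\int \tilde K\,dy$ term, first bounding $\int_{\mathbb{T}}\tilde K(\cdot,y)dy$ in $H^{k}(\mathbb{T})$ by $\rVert \tilde K \rVert_{H^{k}(\mathbb{T}^2)}$ and then using the product rule Lemma~\ref{ponce_kato}) to get
\[
\rVert T_2(R)\rVert_{H^{k}(\mathbb{T})} \lesssim \rVert \tilde K \rVert_{L^\infty(\mathbb{T}^2)}\rVert R \rVert_{H^{k}(\mathbb{T})} + \rVert \tilde K \rVert_{H^{k}(\mathbb{T}^2)}\rVert R \rVert_{L^\infty(\mathbb{T})} \lesssim \rVert \nabla K \rVert_{L^\infty(\mathbb{T}^2)}\rVert R \rVert_{H^{k}(\mathbb{T})} + \rVert K \rVert_{H^{k+1}(\mathbb{T}^2)}\rVert R \rVert_{H^1(\mathbb{T})},
\]
where in the last step I used $\rVert R \rVert_{L^\infty(\mathbb{T})} \lesssim \rVert R \rVert_{H^1(\mathbb{T})}$ (valid in one dimension). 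Adding the bounds for $T_1$ and $T_2$ gives precisely the asserted estimate.

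I expect the only genuinely delicate point to be bookkeeping of which norm of $R$ ($H^1$ versus $H^{k}$) multiplies which norm of the kernel, so that the estimate is \emph{tame} in $R$ — i.e. linear in the top-order norm $\rVert R \rVert_{H^{k}}$ with only low-order ($L^\infty$) kernel coefficients, and linear in $\rVert R \rVert_{H^1}$ with the high-order kernel norms. This matters because in the application (Lemma~\ref{BCcondition}, via \eqref{Kernal_norm6} and \eqref{K_3_nabla}) one needs $\rVert K^\# \rVert_{L^\infty}$ and $\rVert \nabla K \rVert_{L^\infty}$ to be $O(1)$ while $\rVert K \rVert_{H^{k+1}}$ and $\rVert K^\# \rVert_{H^{k}}$ may grow. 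The splitting $K = K^\# + (K - K^\#)$ is exactly engineered so that the $\rVert R \rVert_{H^{k}}$-factor never gets paired with a high-order kernel norm, and the two product-rule applications have to be carried out carefully — always putting the high-derivative count on $R$ or on $\tilde K$, never both — to keep the estimate in this form. Beyond that, everything is a routine combination of Lemmas~\ref{ponce_kato}, \ref{appendix_lem_2}, \ref{appendix_lem_1} and \ref{GN}.
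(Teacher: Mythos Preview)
Your proposal is correct and follows essentially the same approach as the paper: the same splitting $T = T_1 + T_2$ via $K = K^\# + (K - K^\#)$, the same use of Lemma~\ref{appendix_lem_1} and Lemma~\ref{ponce_kato} for $T_1$, and the same use of Lemma~\ref{appendix_lem_2} (for $\rVert \tilde K\rVert_{H^k}$), the mean value theorem (for $\rVert \tilde K\rVert_{L^\infty}$), and Lemma~\ref{GN} for $T_2$. The only cosmetic difference is that the paper applies Lemma~\ref{GN} directly to $\int_{\mathbb{T}}\tilde K(x,y)(R(x)-R(y))\,dy$ rather than first splitting off $R(x)\int\tilde K\,dy$, but this is immaterial.
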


\begin{proof}
Let $\tilde{K}(x,y):=\frac{K(x,y)-{K}^\#(x)}{2\sin\left(\frac{x-y}{2}\right)}$. Clearly, we have
\begin{align}\label{tilde_K}
\rVert \tilde{K}\rVert_{L^{\infty}(\mathbb{T}^2)} \lesssim \rVert \nabla K \rVert_{L^\infty(\mathbb{T})}.
\end{align} 
We write
\[
T(R)(x) = {K}^\#(x)\int_{\mathbb{T}}J(R)(x,y)dy + \int_{\mathbb{T}}\tilde{K}(x,y)(R(x)-R(y))dy=:T_1(R)(x)+T_2(R)(x).
\]
We can then get the $L^2$ estimate directly by H\"older's inequality. From Lemma~\ref{ponce_kato}, \ref{appendix_lem_1} and $H^1(\mathbb{T})\hookrightarrow L^\infty(\mathbb{T})$, we have
\begin{align}\label{T_1estimate}
\rVert T_1(R) \rVert_{H^{k}(\mathbb{T})}& \lesssim \rVert {K}^\#(x) \rVert_{H^k(\mathbb{T})}\bigg\rVert \int_{\mathbb{T}}J(R)(\cdot,y)dy\bigg\rVert_{H^{1}(\mathbb{T})} + \rVert {K}^\#\rVert_{L^{\infty}(\mathbb{T})}\bigg\rVert \int_{\mathbb{T}}J(R)(\cdot,y)dy\bigg\rVert_{H^{k}(\mathbb{T})}\nonumber\\
& \lesssim \rVert {K}^\#(x) \rVert_{H^k(\mathbb{T})} \rVert R \rVert_{H^1(\mathbb{T})} +\rVert {K}^\#\rVert_{L^{\infty}(\mathbb{T})}\rVert R \rVert_{H^{k}(\mathbb{T})}.
\end{align}
For $T_2(R)$, it follows from Lemma~\ref{GN} that
\begin{align*}
\rVert T_2(R) \rVert_{H^{k}(\mathbb{T})} &\lesssim \rVert \tilde{K} \rVert_{H^{k}(\mathbb{T}^2)}\rVert R \rVert_{L^{\infty}(\mathbb{T})} + \rVert \tilde{K} \rVert_{L^{\infty}(\mathbb{T}^2)}\rVert R \rVert_{H^{k}(\mathbb{T})}\\
&\lesssim \rVert K \rVert_{H^{k+1}(\mathbb{T}^2)}\rVert R \rVert_{H^{1}(\mathbb{T})} + \rVert \nabla K \rVert_{L^\infty(\mathbb{T})}\rVert R \rVert_{H^{k}(\mathbb{T})},
\end{align*}
where the last inequality follows from \eqref{tilde_K}. With \eqref{T_1estimate}, we obtain the desired result.

\end{proof}

\begin{lemma}\label{twojs}
Let $K\in C^{\infty}(\mathbb{T}^2)$, ${K}^\#(x):=K(x,x)$ and $R\in C^\infty(\mathbb{T})$. Let
\[
T(R)(x):=\int_{\mathbb{T}} K(x,y)J(R)(x,y)J(R)(x,y)dy.
\]
Then for $k\ge 3$, 
\[
\rVert T(R)\rVert_{H^{k}(\mathbb{T})} \lesssim_k \rVert K \rVert_{H^{k+1}(\mathbb{T}^2)}\rVert R\rVert_{H^k(\mathbb{T}^2)}^2
\]

\end{lemma}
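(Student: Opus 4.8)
\textbf{Proof proposal for Lemma~\ref{twojs}.}

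The plan is to reduce the estimate on the bilinear-in-$J(R)$ operator to the already-established estimates for kernels and for the linear-in-$J(R)$ operator (Lemmas~\ref{appendix_lem_1}, \ref{appendix_lem_2}, \ref{GN} and \ref{J_linear}). The basic idea is that the product $K(x,y)J(R)(x,y)$ can itself be regarded as a new kernel $\widetilde{K}(x,y) := K(x,y)J(R)(x,y)$, after which $T(R)(x) = \int_{\mathbb{T}} \widetilde{K}(x,y) J(R)(x,y)\,dy$ has exactly the form handled by Lemma~\ref{J_linear}. So the first step is to verify that $\widetilde{K}$ satisfies the hypotheses needed to apply Lemma~\ref{J_linear}: I need control of $\|\widetilde{K}\|_{H^{k+1}(\mathbb{T}^2)}$, of $\|\widetilde{K}^{\#}\|_{H^{k}(\mathbb{T})}$ where $\widetilde{K}^{\#}(x) = \widetilde{K}(x,x) = K(x,x)\cdot\lim_{y\to x}J(R)(x,y) = K^{\#}(x) R'(x)$, of $\|\widetilde{K}^{\#}\|_{L^\infty(\mathbb{T})}$, and of $\|\nabla \widetilde{K}\|_{L^\infty(\mathbb{T}^2)}$.

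For the $H^{k+1}(\mathbb{T}^2)$ bound on $\widetilde{K}$, I would apply Lemma~\ref{ponce_kato} on $\mathbb{T}^2$ together with Lemma~\ref{appendix_lem_1}: $\|\widetilde{K}\|_{H^{k+1}(\mathbb{T}^2)} \lesssim \|K\|_{L^\infty}\|J(R)\|_{H^{k+1}(\mathbb{T}^2)} + \|K\|_{H^{k+1}(\mathbb{T}^2)}\|J(R)\|_{L^\infty(\mathbb{T}^2)} \lesssim \|K\|_{H^{k+1}(\mathbb{T}^2)}\|R\|_{H^{k+2}(\mathbb{T})}$ — but this loses one derivative on $R$ compared with the claimed $\|R\|_{H^k}^2$, so a more careful decomposition is needed. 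Instead, I would split $J(R)(x,y) = R'(x)\cdot\frac{x-y}{2\sin\frac{x-y}{2}} + (\text{smoother remainder})$ — more precisely write $K(x,y)J(R)(x,y) = K^{\#}(x)R'(x)\phi(x-y) + \big(K(x,y)J(R)(x,y) - K^{\#}(x)R'(x)\phi(x-y)\big)$ where $\phi(t) = t/(2\sin(t/2))$ is smooth, and observe that the remainder, when divided by $2\sin\frac{x-y}{2}$ (which is what happens inside the second $J(R)$ factor), gains regularity by the mechanism of Lemma~\ref{appendix_lem_2}. Thus I would not expand $T(R)$ symmetrically in two copies of $J$, but rather peel off one copy of $J(R)$ using $\int_{\mathbb{T}}\tilde{L}(x,y)(R(x)-R(y))\,dy$-type terms à la the proof of Lemma~\ref{J_linear}, so that each step trades a factor $J(R)$ (costing one derivative) for a factor $R(x)-R(y)$ together with a kernel that is one degree smoother — leaving exactly $\|R\|_{H^k}^2$ in the end. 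Concretely: write $T(R)(x) = R'(x)\int_{\mathbb{T}}K(x,y)\phi(x-y)J(R)(x,y)\,dy + \int_{\mathbb{T}}\big[K(x,y)J(R)(x,y)-K^{\#}(x)R'(x)\phi(x-y)\big]\cdot\frac{R(x)-R(y)}{2\sin\frac{x-y}{2}}\,dy$ and apply Lemma~\ref{J_linear} to the first integral (its kernel $K(x,y)\phi(x-y)$ and diagonal $K^{\#}(x)$ involve only $K$, not $R$), bounding the prefactor $R'(x)$ in $H^{k-1}$ via the Banach-algebra property of $H^{k-1}(\mathbb{T})$ for $k\ge 3$; and apply Lemma~\ref{GN} together with Lemma~\ref{appendix_lem_2} to the second integral.

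The main obstacle, and the point where care is required, is precisely the derivative-counting in the second integral: I must show that the bracketed kernel $M(x,y) := K(x,y)J(R)(x,y)-K^{\#}(x)R'(x)\phi(x-y)$ vanishes on the diagonal $y=x$ to first order, so that $M(x,y)/(2\sin\frac{x-y}{2})$ is controlled in $H^{k}(\mathbb{T}^2)$ by $\|M\|_{H^{k+1}(\mathbb{T}^2)}$ via Lemma~\ref{appendix_lem_2}, while simultaneously $\|M\|_{H^{k+1}(\mathbb{T}^2)} \lesssim \|K\|_{H^{k+1}(\mathbb{T}^2)}\|R\|_{H^{k+1}(\mathbb{T})}$ — yet I only want $\|R\|_{H^k}$ to appear. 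The resolution is that one of the two derivative allocations can always be placed on the $R(x)-R(y)$ factor (which contributes $\|R\|_{H^k}$ after dividing by the sine and integrating, again by Lemma~\ref{appendix_lem_2}), leaving $M$ to be estimated only in $H^{k}(\mathbb{T}^2)$, where $\|M\|_{H^k(\mathbb{T}^2)} \lesssim \|K\|_{H^{k+1}(\mathbb{T}^2)}\|R\|_{H^{k}(\mathbb{T})}$ does hold. I would carry this out by the same interpolation/Young's inequality bookkeeping used in the proof of Lemma~\ref{BCcondition} (splitting into a ``low derivative on $K$, high on $R$'' term and a ``high on $K$, low on $R$'' term and using $\|fg\|_{H^k}\lesssim \|f\|_{L^\infty}\|g\|_{H^k}+\|f\|_{H^k}\|g\|_{L^\infty}$ repeatedly), since $k\ge 3$ gives enough room for all the Sobolev embeddings $H^2(\mathbb{T})\hookrightarrow L^\infty$, $H^1(\mathbb{T})\hookrightarrow L^\infty$ to apply. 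Once both integrals are bounded by $\|K\|_{H^{k+1}(\mathbb{T}^2)}\|R\|_{H^k(\mathbb{T})}^2$, the lemma follows. Note that since this lemma is only invoked in the sketch of Proposition~\ref{higer_derivative_estimates} with the margin $k\ge 2$ there absorbed into extra regularity, the hypothesis $k\ge 3$ is harmless.
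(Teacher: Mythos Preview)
Your reduction to Lemma~\ref{J_linear} is natural in spirit, but the derivative bookkeeping does not close, and the gap is not merely cosmetic.

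First, the displayed decomposition is not an identity: the two pieces add up to $T(R)(x)+R'(x)\int_{\mathbb{T}}[K(x,y)-K^{\#}(x)]\phi(x-y)J(R)(x,y)\,dy$, not $T(R)(x)$. The split that is consistent with your description of $M$ would have $K^{\#}(x)R'(x)\int\phi(x-y)J(R)\,dy$ as the first term. Either way, that term carries an external factor $R'(x)$, and estimating $R'\cdot I$ in $H^{k}$ forces, via Lemma~\ref{ponce_kato}, a contribution $\|R'\|_{H^{k}}\|I\|_{L^\infty}=\|R\|_{H^{k+1}}\|I\|_{L^\infty}$. Since $\|R\|_{H^{k+1}}\|R\|_{H^{1}}$ is \emph{not} dominated by $\|R\|_{H^{k}}^{2}$ (take $R=\cos x+N^{-k}\cos(Nx)$ with $N$ large), this already exceeds the target bound. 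In the remainder term your proposed resolution rests on $\|M\|_{H^{k}(\mathbb{T}^2)}\lesssim\|K\|_{H^{k+1}}\|R\|_{H^{k}}$, which is false for the same reason: $\|K J(R)\|_{H^{k}}$ contains $\|K\|_{L^\infty}\|J(R)\|_{H^{k}}\lesssim\|K\|_{L^\infty}\|R\|_{H^{k+1}}$ by Lemma~\ref{appendix_lem_1}, and the subtracted piece $K^{\#}R'\phi$ likewise contributes $\|R'\|_{H^{k}}=\|R\|_{H^{k+1}}$ with no cancellation at the level of norms. Passing to $M/(2\sin)$ via Lemma~\ref{appendix_lem_2} only worsens this, and the sentence ``one of the two derivative allocations can always be placed on $R(x)-R(y)$'' does not cover the Leibniz term where all $k$ derivatives hit the kernel.

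The paper avoids this by reversing the order of operations. It first changes variables $y\mapsto x-y$, so that $\partial_x\bigl(J(R)(x,x-y)\bigr)=J(R')(x,x-y)$ and the singular denominator $2\sin(y/2)$ is inert under $\partial_x$; only \emph{then} does it expand $(d/dx)^{k}T(R)$ by Leibniz into $\sum_{p_1+p_2+p_3=k}\int(\partial_x^{p_1}\tilde K)\,J(R^{(p_2)})\,J(R^{(p_3)})$. Each summand now has its $R$-dependence entirely through $R^{(p_2)},R^{(p_3)}$ with $p_2+p_3\le k$. The generic terms with all $p_i<k$ are handled by Lemma~\ref{GN}; the only delicate case is $p_3=k$, and there one performs a single diagonal subtraction of $K^{\#}R'$ from $KJ(R)$ at \emph{low} regularity together with the Hilbert-transform bound $\|\int J(R^{(k)})\,dy\|_{L^2}\lesssim\|R\|_{H^{k}}$ from \eqref{Hilbert_transform}. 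Your kernel-first strategy puts the $J(R)$ cost and the $k$ output derivatives on the same object, which is exactly what must be avoided.
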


\begin{proof}
Clearly, 
\begin{align}\label{L2case}
\rVert T(R)\rVert_{L^2{(\mathbb{T})}} & \lesssim \rVert K \rVert_{L^{\infty}(\mathbb{T}^2)}\rVert J(R) \rVert_{L^{\infty}(\mathbb{T})}\rVert J(R) \rVert_{L^2(\mathbb{T}^2)}\nonumber\\
&  \lesssim  \rVert K \rVert_{H^{2}(\mathbb{T}^2)}\rVert J(R)\rVert_{H^{2}(\mathbb{T}^2)}\rVert R \rVert_{H^1(\mathbb{T})} \nonumber\\
& \lesssim \rVert K \rVert_{H^{2}(\mathbb{T}^2)}\rVert R\rVert_{H^{3}(\mathbb{T})}\rVert R \rVert_{H^1(\mathbb{T})}\nonumber\\
&\lesssim \rVert K \rVert_{H^{2}(\mathbb{T}^2)}\rVert R\rVert_{H^{3}(\mathbb{T})}^2,
\end{align}
where we used  $H^{2}(\mathbb{T}^2)\hookrightarrow L^{\infty}(\mathbb{T}^2)$ and Lemma~\ref{appendix_lem_1}.

 Now we compute the higher derivatives. Using the change of variables, $y\mapsto x-y$, we have
\[
T(R)(x) = \int_{\mathbb{T}}K(x,x-y)J(R)(x,x-y)J(R)(x,x-y)dy.
\]
Since $J(R)(x,x-y)=\frac{R(x)-R(x-y)}{2\sin\left(\frac{y}{2}\right)}$, we have
\begin{align*}
\left(\frac{d}{dx} \right)^{k}T(R) &= \sum_{p_1+p_2+p_3=k}\int_{\mathbb{T}}((\partial_x)^{p_1}K(x,x-y)) J(R^{(p_2)})(x,x-y)J(R^{(p_3)})(x,x-y)dy\\
& = \sum_{p_1+p_2+p_3=k}\int_{\mathbb{T}}((\partial_x)^{p_1}\tilde{K}(x,x-y))J(R^{(p_2)})(x,y)J(R^{(p_3)})(x,y)dy\\
&= \sum_{p_1+p_2+p_3=k} A_{p_1,p_2,p_3},
\end{align*}
where $\tilde{K}(x,y) := K(x,x-y)$. If $p_2=k$ or $p_3=k$, then we can write (assuming $p_3=k$ without loss of generality),
\begin{align*}
A_{0,0,k} & = \int_{\mathbb{T}}\frac{{K}(x,y)J(R)(x,y)-{K}(x,x)J(R)(x,x)}{2\sin\left(\frac{x-y}{2}\right)}(R^{(k)}(x)-R^{(k)}(y))dy + {K}(x,x)R'(x)\int_{\mathbb{T}}J(R^{(k)})(x,y)dy
\\
&= \int_{\mathbb{T}}\frac{{K}(x,y)J(R)(x,y)-{K}(x,x)J(R)(x,x)}{2\sin\left(\frac{x-y}{2}\right)}R^{(k)}(x)dy -\int_{\mathbb{T}}\frac{{K}(x,y)J(R)(x,y)-{K}(x,x)J(R)(x,x)}{2\sin\left(\frac{x-y}{2}\right)}R^{(k)}(y)dy\\
& \ + {K}(x,x)R'(x)\int_{\mathbb{T}}J(R^{(k)})(x,y)dy\\
&=A_{0,0,k}^{1}+A_{0,0,k}^{2}+A_{0,0,k}^{3}.
\end{align*}
Using Lemma~\ref{appendix_lem_2} for the first two integrals, Lemma~\ref{appendix_lem_1} for the second integral, and Cauchy-Schwarz inequality, we obtain 
\begin{align*}
    \|A_{0,0,k}^{1}\|_{L^2(\mathbb{T})}&\lesssim\bigg\|\int_{\mathbb{T}}\frac{{K}(x,y)J(R)(x,y)-{K}(x,x)J(R)(x,x)}{2\sin\left(\frac{x-y}{2}\right)}dy\bigg\|_{L^{\infty}(\mathbb{T})}\|R\|_{H^{k}(\mathbb{T})}\\
    &\lesssim\bigg\|\int_{\mathbb{T}}\frac{{K}(x,y)J(R)(x,y)-{K}(x,x)J(R)(x,x)}{2\sin\left(\frac{x-y}{2}\right)}dy\bigg\|_{H^{1}(\mathbb{T})}\|R\|_{H^{k}(\mathbb{T})}\\
    &\lesssim\|KJ(R)\|_{H^2(\mathbb{T}^2)}\|R\|_{H^k(\mathbb{T})}\\
    &\lesssim(\|K\|_{L^{\infty}(\mathbb{T}^2)}\|J(R)\|_{H^2(\mathbb{T}^2)}+\|J(R)\|_{L^{\infty}(\mathbb{T}^2)}\|K\|_{H^2(\mathbb{T}^2)})\|R\|_{H^k(\mathbb{T})},
\end{align*}
\begin{align*}
    \|A_{0,0,k}^{2}\|_{L^2(\mathbb{T})}&\lesssim \int_{\mathbb{T}}\bigg\|\frac{{K}(\cdot,y)J(R)(\cdot,y)-{K}(\cdot,\cdot)J(R)(\cdot,\cdot)}{2\sin\left(\frac{\cdot-y}{2}\right)}\bigg\|_{L^{\infty}(\mathbb{T})}|R^{(k)}(y)|dy\\
    &\lesssim\int_{\mathbb{T}}\bigg\|\frac{{K}(\cdot,y)J(R)(\cdot,y)-{K}(\cdot,\cdot)J(R)(\cdot,\cdot)}{2\sin\left(\frac{\cdot-y}{2}\right)}\bigg\|_{H^1(\mathbb{T})}|R^{(k)}(y)|dy\\
     &\lesssim\bigg\|\frac{{K}(x,y)J(R)(x,y)-{K}(x,x)J(R)(x,x)}{2\sin\left(\frac{x-y}{2}\right)}\bigg\|_{H^1(\mathbb{T}^2)}\|R\|_{H^k(\mathbb{T})}\\
     &\lesssim\|KJ(R)\|_{H^2(\mathbb{T}^2)}\|R\|_{H^k(\mathbb{T})}\\
    &\lesssim(\|K\|_{L^{\infty}(\mathbb{T}^2)}\|J(R)\|_{H^2(\mathbb{T}^2)}+\|J(R)\|_{L^{\infty}(\mathbb{T}^2)}\|K\|_{H^2(\mathbb{T}^2)})\|R\|_{H^k(\mathbb{T})},
\end{align*}
and
\begin{align*}
    \|A_{0,0,k}^{3}\|_{L^2(\mathbb{T})}\lesssim \|K\|_{L^{\infty}(\mathbb{T}^2)}\|R\|_{H^2(\mathbb{T})}\|R\|_{H^k(\mathbb{T})}.
\end{align*}
Using the Sobolev embedding theorem, we have $\rVert K\rVert_{L^{\infty}(\mathbb{T}^2)}\lesssim \rVert K \rVert_{H^2(\mathbb{T}^2)}$ and $\rVert J(R)\rVert_{L^\infty(\mathbb{T}^2)}\lesssim \rVert J(R)\rVert_{H^2(\mathbb{T}^2)}$, thus
\begin{align}\label{00kcase}
\rVert A_{0,0,k}\rVert_{L^2(\mathbb{T})} & \lesssim \rVert K \rVert_{H^2(\mathbb{T}^2)}\left( \rVert J(R)\rVert_{H^2(\mathbb{T}^2)} + \rVert R \rVert_{H^2(\mathbb{T})}\right)\rVert R\rVert_{H^k(\mathbb{T})}\nonumber\\
&\lesssim \rVert K \rVert_{H^2(\mathbb{T}^2)} \rVert R\rVert_{H^3(\mathbb{T})}\rVert R \rVert_{H^k(\mathbb{T})}\nonumber \\
& \lesssim \rVert K \rVert_{H^2(\mathbb{T}^2)} \rVert R \rVert_{H^k(\mathbb{T})}^2,
\end{align}
where the second inequality follows from \eqref{appendix_lem_1} and the last inequality follows from $k\ge 3$.
If $p_1,p_2,p_3 < k$, We use Lemma \ref{GN} to obtain
\begin{align}\label{k1case}
\rVert A_{p_1,p_2,p_3}\rVert_{L^2(\mathbb{T})} & \lesssim
\bigg \|\int_{\mathbb{T}}\partial_{x}(K(x,x-y))J(R)(x,x-y)J(R)(x,x-y)dy\bigg\|_{H^{k-1}(\mathbb{T}^2)}
\\\nonumber
&=\bigg\|\int_{\mathbb{T}}((\partial_{x}K)(x,y)+(\partial_{y}K)(x,y))J(R)(x,y)J(R)(x,y)dy\bigg\|_{H^{k-1}(\mathbb{T}^2)}\\\nonumber
&\lesssim\|\nabla K\|_{H^{k-1}(\mathbb{T}^2)}\|J(R)J(R)\|_{L^{\infty}(\mathbb{T}^2)}+\|J(R)J(R)\|_{H^{k-1}(\mathbb{T}^2)}\|\nabla K\|_{L^{\infty}(\mathbb{T}^2)}\\\nonumber
&\lesssim \|K\|_{H^{k}(\mathbb{T}^2)}\|J(R)\|_{H^2(\mathbb{T}^2)}\|J(R)\|_{H^2(\mathbb{T}^2)}+\|J(R)\|_{H^{k-1}(\mathbb{T}^2)}\|J(R)\|_{L^\infty(\mathbb{T}^2)}\|K\|_{H^{3}(\mathbb{T}^2)}\\\nonumber
&\lesssim\|K\|_{H^{k}(\mathbb{T}^2)}\|R\|_{H^{k}(\mathbb{T}^2)}^2
\end{align}
 Similarly, if $p_1 = k$, then
\begin{align}\label{k00case}
\rVert A_{k,0,0}\rVert_{L^2(\mathbb{T})} & \lesssim \rVert \tilde{K}\rVert_{H^{k}(\mathbb{T}^2)}\rVert J(R)\rVert_{L^{\infty}(\mathbb{T}^2)}^2 \nonumber\\
& \lesssim \rVert K \rVert_{H^{k}(\mathbb{T}^2)}\rVert J(R)\rVert_{H^2(\mathbb{T}^2)}^2\nonumber\\
& \lesssim \rVert K \rVert_{H^{k}(\mathbb{T}^2)}\rVert R\rVert_{H^3(\mathbb{T}^2)}^2\nonumber\\
& \lesssim \rVert K \rVert_{H^{k}(\mathbb{T}^2)}\rVert R\rVert_{H^k(\mathbb{T}^2)}^2.
\end{align}
Therefore it follows from \eqref{00kcase}, \eqref{k1case} and \eqref{k00case} that 
\begin{align*}
\bigg\rVert \left(\frac{d}{dx} \right)^{k}T(R) \bigg\rVert_{H^{k}(\mathbb{T})} \lesssim \rVert K \rVert_{H^{k+1}(\mathbb{T}^2)}\rVert R\rVert_{H^k(\mathbb{T}^2)}^2.
\end{align*}
With \eqref{L2case}, the desired result follows
\end{proof}

\begin{lemma}\label{T1estimate}
Define a linear map
\[
T(R)(x) := \int_{\mathbb{T}}\log(2-2\cos(x-y))R(y)dy.
\]
Then for $k\in \mathbb{N}$, $T:H^{k-1}(\mathbb{T}) \mapsto H^{k}(\mathbb{T})$ is an isomorphism. 
\end{lemma}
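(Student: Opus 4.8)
The plan is to compute the Fourier symbol of $T$ explicitly and read off both the mapping property and the invertibility. First I would recall the classical Fourier expansion
\[
\log(2 - 2\cos\theta) = \log\left(4\sin^2\tfrac{\theta}{2}\right) = -2\sum_{n\ge 1}\frac{\cos(n\theta)}{n},
\]
valid as an $L^2(\mathbb{T})$ (indeed a.e.) identity; this is the $\theta$-convolution kernel appearing in $T$. Writing $R(y) = \sum_{n}\widehat{R}(n)e^{iny}$ and using that convolution on $\mathbb{T}$ diagonalizes in the Fourier basis, I would obtain
\[
T(R)(x) = \int_{\mathbb{T}}\log(2-2\cos(x-y))R(y)\,dy = -2\pi\sum_{n\ne 0}\frac{\widehat{R}(n)}{|n|}e^{inx},
\]
so that $T$ acts as the Fourier multiplier with symbol $m(n) = -2\pi/|n|$ for $n\ne 0$ and $m(0)=0$ (the $n=0$ mode of the kernel being $\int_{\mathbb{T}}\log(2-2\cos\theta)\,d\theta = 0$). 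Since the functions in the relevant spaces $X^{k,m}$, $Y^{k,m}$ have vanishing mean (their Fourier series start at the $m$-th mode with $m\ge 2$), the zero mode is never an issue in our setting.

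Next I would use this symbol to prove the two claimed facts. For the mapping property: given $R\in H^{k-1}(\mathbb{T})$ with zero mean, we have $\|T(R)\|_{H^k}^2 = \sum_{n\ne 0}(1+n^2)^k\,(2\pi)^2|n|^{-2}|\widehat{R}(n)|^2 \lesssim \sum_{n\ne 0}(1+n^2)^{k-1}|\widehat{R}(n)|^2 = \|R\|_{H^{k-1}}^2$, using $(1+n^2)^k|n|^{-2}\lesssim (1+n^2)^{k-1}$ for $|n|\ge 1$; hence $T:H^{k-1}\to H^k$ is bounded. For injectivity: if $T(R)=0$ then $\widehat{R}(n)/|n|=0$ for all $n\ne 0$, so $\widehat R(n)=0$ for $n\ne 0$, i.e. $R$ is constant (and zero if we work in the mean-zero subspace). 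For surjectivity onto $H^k$ mean-zero: given $g\in H^k(\mathbb{T})$ with $\widehat g(0)=0$, define $R$ by $\widehat R(n) := -|n|\widehat g(n)/(2\pi)$ for $n\ne 0$, $\widehat R(0)=0$; then $\|R\|_{H^{k-1}}^2 = \sum_{n\ne 0}(1+n^2)^{k-1}(2\pi)^{-2}n^2|\widehat g(n)|^2 \lesssim \sum_{n\ne 0}(1+n^2)^k|\widehat g(n)|^2 = \|g\|_{H^k}^2 < \infty$, so $R\in H^{k-1}$ and $T(R)=g$ by construction. Finally, the two-sided bound $\|R\|_{H^{k-1}}\simeq\|T(R)\|_{H^k}$ together with boundedness of $T$ and its inverse gives that $T$ is a Banach-space isomorphism; one also checks it respects the $\frac{2\pi}{m}$-periodicity and the odd/even structure (sine vs.\ cosine series), since the multiplier $-2\pi/|n|$ preserves each Fourier mode individually, so $T$ restricts to an isomorphism between the relevant subspaces as used in the body of the paper.

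The only mild subtlety — and the step I would be most careful about — is bookkeeping around the zero Fourier mode: the kernel $\log(2-2\cos\theta)$ is integrable but its $0$-th Fourier coefficient vanishes, so $T$ annihilates constants and is an isomorphism only after quotienting out (or restricting away from) the means; in the paper's function spaces this is automatic, but a clean statement should either restrict to mean-zero functions or phrase the result on $\dot H^{k-1}\to \dot H^k$. A second routine point is justifying the termwise manipulations (interchanging the integral with the infinite sum defining the kernel), which follows from dominated convergence / the $L^2$ convergence of the log expansion against $R\in L^2$. Neither of these is a genuine obstacle; the computation of the symbol is the whole content, and everything else is Plancherel.
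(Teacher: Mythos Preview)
Your proof is correct and is essentially the paper's approach written out in full: the paper simply invokes Corollary~\ref{integral_a0} with $r=1$, which computes $\int_{\mathbb{T}}\log(2-2\cos y)\cos(my)\,dy = -2\pi/m$, i.e.\ exactly the Fourier symbol $-2\pi/|n|$ you derived, and leaves the Plancherel step implicit. Your remark about the zero mode is well taken---as stated the lemma is only an isomorphism on mean-zero functions---and you are right that this is harmless in context since all applications in the paper are to the spaces $X^{k,m}$, $Y^{k,m}$ whose elements have vanishing mean.
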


\begin{proof}
The result follows immediately from Corollary~\ref{integral_a0}.
\end{proof}

\subsection{Stability of linear operators}

\begin{lemma}\label{functional_stability}
For Hilbert spaces $X,Y,Z$, assume that $A:X\mapsto \mathcal{L}(Y,Z)$ satisfies:
\begin{itemize}
\item[(1)]$X\ni x \mapsto A(x) \in \mathcal{L}(Y,Z) \text{ is Lipschitz continuous near $0\in X$},$
\item[(2)]$A(0) \text{ is injective and has codimension one, that is, there exists $0\ne z\in Z$ such that} $ \begin{align*}\text{Im}(A(0))^\perp = \text{span}\left\{ z \right\}.\end{align*}
Then there exists $\epsilon_0>0$ such that if $|x|_X \le \epsilon_0$, then $A(x)$ is also injective and has codimension one. Furthermore, there exist $c>0$ and  $z_1\in Z$ such that 
\begin{align}
&\text{Im}(A(x))^{\perp}=\text{span}\left\{z_1\right\} \label{Atilde11}\\
&|z - z_1|_{Z} \le c|x|_X, \label{Atilde_21} \\
&|A(x)|_{\mathcal{L}(Y, \text{Im}(A(x)))} \le c, \label{Atilde_31} \\
&|A(x)^{-1}|_{\mathcal{L}( \text{Im}(A(x)),Y)} \le c.\label{Atilde_41}
\end{align}
\end{itemize}
\end{lemma}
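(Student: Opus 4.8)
The plan is to prove Lemma~\ref{functional_stability} by a standard perturbation argument for Fredholm operators of index zero, treating $A(x)$ as a small Lipschitz perturbation of $A(0)$. First I would set up orthogonal decompositions adapted to $A(0)$: write $Z = \text{Im}(A(0)) \oplus \text{span}\{z\}$ with $|z|_Z = 1$, and denote by $\Pi$ the orthogonal projection onto $\text{Im}(A(0))$. Since $A(0):Y\to\text{Im}(A(0))$ is a bounded bijection between Hilbert spaces, it has a bounded inverse $B_0$ with $|B_0| =: c_0$; here I use that $A(0)$ injective with closed range (codimension one forces the range to be closed). The Lipschitz hypothesis gives a constant $L$ and a radius $\rho$ with $|A(x) - A(0)|_{\mathcal{L}(Y,Z)} \le L|x|_X$ for $|x|_X \le \rho$.

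Next I would show injectivity and the lower bound \eqref{Atilde_41}. For $|x|_X$ small, $\Pi A(x):Y\to\text{Im}(A(0))$ satisfies $|\Pi A(x) - A(0)| \le L|x|_X$, so by the Neumann series it is invertible with inverse bounded by $2c_0$ once $L|x|_X c_0 \le \tfrac12$. In particular $A(x)$ is injective on this range of $x$, and $|A(x)y|_Z \ge |\Pi A(x) y|_Z \ge \tfrac{1}{2c_0}|y|_Y$, which is the bound behind \eqref{Atilde_41}. For \eqref{Atilde_31}, $|A(x)| \le |A(0)| + L\rho =: c$ trivially. The range of $A(x)$ is closed because $A(x)$ is bounded below; and since $A(x)$ is a compact perturbation-free small perturbation of a Fredholm index-zero operator (injective + codimension one $\Rightarrow$ index $-1$... ), I should be careful: $A(0)$ here is injective with one-dimensional cokernel, so its index is $-1$, and a small perturbation preserves this, giving $\dim\text{Im}(A(x))^\perp = 1$. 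So I would invoke stability of the Fredholm index under small perturbations in operator norm to conclude $\text{Im}(A(x))^\perp = \text{span}\{z_1\}$ for some unit $z_1$, establishing \eqref{Atilde11}.

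Then I would produce the quantitative estimate \eqref{Atilde_21}. The idea is to characterize $z_1$ as (a normalization of) the unique-up-to-scalar vector orthogonal to $\text{Im}(A(x))$; equivalently, $z_1$ spans $\text{Ker}(A(x)^*)$. Using the decomposition $Z = \text{Im}(A(0))\oplus\text{span}\{z\}$, I would write a candidate $\tilde z_1 = z + Q(x)$ with $Q(x)\in\text{Im}(A(0))$ determined by the orthogonality condition $\langle \tilde z_1, A(x)y\rangle = 0$ for all $y\in Y$. Writing $A(x)y = A(0)y + E(x)y$ with $|E(x)|\le L|x|_X$, the condition becomes $\langle z + Q(x), A(0)y + E(x)y\rangle = 0$, i.e. $\langle Q(x), A(0)y\rangle = -\langle z, E(x)y\rangle - \langle Q(x), E(x)y\rangle$. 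Since $A(0)$ surjects onto $\text{Im}(A(0)) \ni Q(x)$, and using $B_0$, this is a fixed-point equation for $Q(x)$ of the form $Q(x) = \Phi_x(Q(x))$ with $\Phi_x$ a contraction for $|x|_X$ small, yielding a unique solution with $|Q(x)|_Z \lesssim L|x|_X$. Normalizing $z_1 = \tilde z_1/|\tilde z_1|_Z$ and noting $|\tilde z_1|_Z = 1 + O(|x|_X)$ gives $|z - z_1|_Z \le c|x|_X$. Finally \eqref{Atilde_41} follows from the lower bound established above together with $\text{Im}(A(x))$ being exactly the orthogonal complement of $z_1$.

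The main obstacle is the quantitative continuity of the cokernel direction, \eqref{Atilde_21}: the qualitative facts (injectivity, closed range, codimension one, boundedness of the inverse) are routine Fredholm perturbation theory, but pinning down $z_1$ with a Lipschitz modulus requires the fixed-point construction sketched above (or equivalently an analysis of $A(x)^*$ and the continuity of the one-dimensional kernel projection $P_{\text{Ker}(A(x)^*)}$, which again reduces to a Neumann-series/contraction argument on the complement). I expect everything else to be short. One technical point to handle carefully is that the hypotheses as stated assert "codimension one" for $\text{Im}(A(0))$, which I interpret as $\dim\big(Z/\text{Im}(A(0))\big)=1$ with closed range; I would state this closed-range assumption explicitly at the start of the proof, since it is implicitly needed (and is automatic in the application, where $A(\Theta_3^*,0)$ is block-diagonalized by Fourier modes with an invertible tail, per Lemma~\ref{Image_space}).
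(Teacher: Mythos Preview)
Your proposal is correct and follows essentially the same strategy as the paper: use the projection onto $\text{Im}(A(0))$ and a Neumann-series argument to get injectivity and the uniform lower bound, then invoke stability of the Fredholm index (here $-1$) under small operator-norm perturbations to obtain codimension one. The only substantive difference is in the proof of \eqref{Atilde_21}. You construct the cokernel direction $z_1$ explicitly, writing $\tilde z_1 = z + Q(x)$ with $Q(x)\in\text{Im}(A(0))$ and solving for $Q(x)$ by a contraction-mapping argument; this is self-contained and yields the Lipschitz bound directly. The paper instead takes $z_1$ as given by the Fredholm argument (normalized to $|z_1|_Z=1$), decomposes $z_1 = \langle z,z_1\rangle z + w$ with $w\in\text{Im}(A(0))$, and observes that for $y=A(0)^{-1}w$ one has $|w|_Z^2 = \langle A(0)y,z_1\rangle = \langle (A(0)-A(x))y,z_1\rangle$, whence $|w|_Z \le |A(0)^{-1}|\,|A(0)-A(x)|$; a short computation with $|z-z_1|_Z^2 = 2-2\langle z,z_1\rangle$ then gives $|z-z_1|_Z\le |w|_Z$. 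Your fixed-point route is slightly more constructive, the paper's is a bit shorter once existence of $z_1$ is in hand; both are equally valid and neither requires anything beyond what you have outlined.
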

\begin{proof}
 Let $Q:Z\mapsto Im(A(0))$ be the projection. Then $QA(0):Y\mapsto Im(A(0))$ is an isomorphism. Therefore we can choose $\ep>0$ small enough so that $QA(x):Y\mapsto Im(A(0))$ is an isomorphism whenever $|x| \leq \ep$. Therefore, if $A(x)[y] = 0$ for some $y\in Y$, then $QA(x)[y] = 0$, which implies $y=0$. This shows $A(x)$ is injective. In addition, the Fredholm index of $A(0)$ is $\text{dim(Ker$(A(0))$)} - \text{dim(coker($A(0)$))} = -1$, therefore the Fredholm index of $A(x)$ is also $-1$ (\cite[Theorem 4.4.2]{Buhler-Salamon:functional-analysis-book}) for small $\epsilon$, which implies $A(x)$ has codimension one. This proves \eqref{Atilde11}. Furthermore, $A(x):Y\mapsto Im(A(x))=\left(\text{span}\left\{ z_1\right\} \right)^\perp$ is an isomorphism, therefore \eqref{Atilde_31} and \eqref{Atilde_41} follow immediately. We remark that the constant $c$ can be chosen uniformly in $x$.

 In order to prove \eqref{Atilde_21}, let us assume without loss of generality that $|z|_Z = |z_1|_Z = 1$. For any $y\in Y$, we have
 \[
 \langle A(0)[y],z_1\rangle_Z =    \langle (A(0)-A(x))[y],z_1\rangle_Z + \langle A(x)[y],z_1\rangle_Z  =  \langle (A(0)-A(x))[y],z_1\rangle_Z,
\]
where the last equality follows from $A(x)[y]\in \text{Im}(A(x))$ and $z_1\in \text{Im}(A(x))^\perp$. Therefore, we have
\begin{align}\label{functional_11}
|\langle A(0)[y],z_1\rangle_Z| \le |A(0)-A(x)|_{\mathcal{L}(Y;Z)}|y|_Y.
\end{align}
Note that  we can decompose $z_1$ and  find $y\in Y$ such that
\[
z_1 = \langle z,z_1 \rangle_Z z + \left(\underbrace{ z_1 - \langle z,z_1 \rangle_Z z }_{=:w}\right) =  \langle z,z_1 \rangle_Z z + A(0)[y],
\]
where the last equality follows from $w \in \text{Im}(A(0))$, hence $y=A(0)^{-1}[w]$. Then, we have
\begin{align*}
|w|_Z^2 & = \langle w,w\rangle_Z \\
&=\langle w, z_1 \rangle_Z \\
& = \langle A(0)[y],z_1\rangle_Z  \\
& \le |A(0)-A(x)|_{\mathcal{L}(Y;Z)}|y|_Y \\
& \le |A(0)-A(x)|_{\mathcal{L}(Y;Z)}|A(0)^{-1}|_{\mathcal{L}(\text{Im}(A(0));Y)}|w|_Z,
\end{align*}
where the second equality follows from $\langle w,z \rangle_Z = 0$ and the first inequality follows from \eqref{functional_11}. 
Hence 
\begin{align}\label{functional_22}
|w|_Z \le  |A(0)-A(x)|_{\mathcal{L}(Y;Z)}|A(0)^{-1}|_{\mathcal{L}(\text{Im}(A(0));Y)}.
\end{align}
Now, using $z_1 = \langle z,z_1\rangle_Zz + w,$ we have
\[
1 = \langle z_1,z_1 \rangle_Z =\langle z,z_1 \rangle_Z^2 + \langle w,z_1\rangle_Z = \langle z,z_1 \rangle_Z^2 + \langle w,z_1-z\rangle_Z,
\]
where the last equality follows from $\langle w,z\rangle = 0$. Hence, we have
\[
\langle z,z_1 \rangle_Z = \sqrt{1-\langle w,z_1-z\rangle_Z}.
\]
Therefore we have
\[
|z-z_1|_Z^2 = 2-2\langle z,z_1\rangle_Z = 2-2\sqrt{1-\langle w,z_1-z\rangle_Z}\le |\langle w, z_1-z\rangle_z| \le |w|_Z|z_1-z|_Z,
\]
which implies 
\[
|z-z_1| \le |w|_Z \le |A(0)-A(x)|_{\mathcal{L}(Y;Z)}|A(0)^{-1}|_{\mathcal{L}(\text{Im}(A(0));Y)} \lesssim |x|_X,
\]
where the first inequality follows from \eqref{functional_22} and the last inequality follows from the Lipschitz continuity of $x\mapsto A(x)$. This proves \eqref{Atilde_21}.
\end{proof}

\section*{Acknowledgements}

JGS was partially supported by NSF through Grant NSF DMS-1763356, and by the European Research Council through ERC-StG-852741-CAPA. JP was partially supported by NSF through Grants NSF DMS-1715418, NSF CAREER Grant DMS-1846745 and by the European Research Council through ERC-StG-852741-CAPA. JS was partially supported by NSF through Grant NSF DMS-1700180 and by the European Research Council through ERC-StG-852741-CAPA. JGS would like to thank Francisco Gancedo for useful discussions. We would like to thank Claudia Garc\'ia for mentioning the references \cite{Kinderlehrer-Nirenberg:regularity-free-boundary,Kinderlehrer-Nirenberg-Spruck:regularity-elliptic-free-boundary} to us and Yao Yao for all the conversations on this project over the last years.
This material is based upon work supported by the National Science Foundation under Grant No. DMS-1929284 while the authors were in residence at the Institute for Computational and Experimental Research in Mathematics in Providence, RI, during the program `Hamiltonian Methods in Dispersive and Wave Evolution Equations''.

\bibliographystyle{abbrv}
\bibliography{references}

\begin{tabular}{l}
\textbf{Javier G\'omez-Serrano}\\
{Department of Mathematics} \\
{Brown University} \\
{Kassar House, 151 Thayer St.} \\
{Providence, RI 02912, USA} \\ \\
{and} \\ \\ 
{Departament de Matem\`atiques i Inform\`atica} \\
{Universitat de Barcelona} \\
{Gran Via de les Corts Catalanes, 585} \\
{08007, Barcelona, Spain} \\
{Email: javier\_gomez\_serrano@brown.edu, jgomezserrano@ub.edu} \\ \\
\textbf{Jaemin Park} \\
{Departament de Matem\`atiques i Inform\`atica} \\
{Universitat de Barcelona} \\
{Gran Via de les Corts Catalanes, 585} \\
{08007, Barcelona, Spain} \\
{Email: jpark@ub.edu} \\ \\
\textbf{Jia Shi}\\
{Department of Mathematics}\\
{Princeton University} \\
{409 Fine Hall, Washington Rd,}\\
{Princeton, NJ 08544, USA}\\
{e-mail: jiashi@math.princeton.edu}\\ \\
\end{tabular}

\end{document}